\renewcommand{\epsilon}{\varepsilon}            
\newtheorem{theorem}{Theorem}[section]   
\newtheorem*{theorem*}{Theorem}          
\newtheorem{lemma}[theorem]{Lemma}
\newtheorem{proposition}[theorem]{Proposition}
\theoremstyle{definition}
\newtheorem{definition}[theorem]{Definition}
\newtheorem{corollary}[theorem]{Corollary}
\newtheorem{example}[theorem]{Example}
\newtheorem{remark}{Remark}[section]
\numberwithin{equation}{section}
\title[On Hopf hypersurfaces of the complex quadric]
{On Hopf hypersurfaces of the complex quadric with constant principal curvatures}
\author{Haizhong Li, Hiroshi Tamaru \and Zeke Yao}
\thanks{2020 {\it Mathematics Subject Classification.}
53C42, 53B25, 53C55}
\keywords{Complex quadric, Hopf hypersurface, constant principal curvatures, isoparametric hypersurface}
\date{}
\begin{document}
\begin{abstract}
In this paper, we classify the Hopf hypersurfaces of the complex quadric $Q^m=SO_{m+2}/(SO_2SO_m)$ ($m\geq3$) with at most five distinct constant principal curvatures. 
We also classify the Hopf hypersurfaces of $Q^m$ ($m=3,4,5$) with constant principal curvatures.  
All these real hypersurfaces are open parts of homogeneous examples.
\end{abstract}

\maketitle

\section{Introduction}\label{sect:1}

Let $\mathbb{C}P^{m+1}$ be the complex projective space that is endowed with
the Fubini-Study metric $g$ such that its holomorphic sectional curvature is $4$. The complex
quadric is a compact Einstein complex hypersurface of $\mathbb{C}P^{m+1}$ which is defined
by $Q^m:=\{[(z_1,z_2,\ldots,z_{m+2})]\in\mathbb{C}P^{m+1}:\,z_1^2+z_2^2+\cdots+z_{m+2}^2=0\}$.
Moreover, $Q^m=SO_{m+2}/(SO_2SO_m)$ can be identified with the Grassmannian manifold of oriented
$2$-dimensional linear subspaces of $\mathbb{R}^{m+2}$; associated with the induced metric
(denoted still by) $g$, it is a compact Hermitian symmetric space of rank $2$ (see Reckziegel \cite{R},
Smyth \cite{Sm} and Berndt-Suh \cite{BS}). In order to describe the Riemannian curvature
tensor of $Q^m$, it is necessary to notice that, besides the K\"ahler structure $J$ that is
induced from that of $\mathbb{C}P^{m+1}$, $Q^m$
carries a special structure $A$, called the {\it almost product structure} or the {\it complex
conjugation structure}, and they are related by $AJ=-JA$ (see \cite{BS,BS-2022,LMVVW,R}). As the complex quadric $Q^m$ is an important Riemannian manifold, the study of its
submanifolds is significant and has attracted many geometers.

%
Recall that for an almost Hermitian manifold $\overline{M}$ with almost complex structure $J$,
a connected orientable real hypersurface $M$ of $\overline{M}$ is associated with an important
notion the {\it Reeb vector field} defined by $\xi:=-JN$, where $N$ is the unit normal
vector field of $M$. If the integral curves of $\xi$ are geodesics, then $M$ is called a {\it
Hopf hypersurface}. In particular, a real hypersurface of $Q^m$ being Hopf is equivalent to that
its Reeb vector field $\xi$ is a principal vector field corresponding to
Reeb function $\alpha$ (cf. \cite{BBW}). During the last
four decades, Hopf hypersurfaces of the non-flat complex space forms and several other almost Hermitian
manifolds have been extensively and deeply investigated. 
%
For example, in complex projective space $\mathbb{C}P^m$ ($m\geq2$), Kimura \cite{KM} classified
the Hopf hypersurfaces with constant principal curvatures, in particular, the number
of distinct principal curvatures has only three possibilities: $2$, $3$, and $5$. In
complex hyperbolic space $\mathbb{C}H^m$ ($m\geq2$), Berndt \cite{B} classified the
Hopf hypersurfaces with constant principal curvatures, and in this case the number of
distinct principal curvatures has exactly two possibilities: $2$ and $3$.
Their results are very important in the study of real hypersurfaces of $\mathbb{C}P^m$ and $\mathbb{C}H^m$.
Now, we focus on the study of Hopf hypersurfaces in the complex quadric $Q^m$.
It is well known that the complex quadric $Q^2$
with Einstein constant $1$ is holomorphically isometric to the K\"ahler surface
$\mathbb{S}^2\times\mathbb{S}^2$. Urbano \cite{Ur} classified the homogeneous hypersurfaces, the isoparametric hypersurfaces and the hypersurfaces of $\mathbb{S}^2\times\mathbb{S}^2$ with at most two distinct constant principal curvatures, and
give some partial classification results about hypersurfaces of $\mathbb{S}^2\times\mathbb{S}^2$ with three distinct constant principal curvatures. The classification of
Hopf hypersurfaces of $\mathbb{S}^2\times\mathbb{S}^2$ with constant principal curvatures
can be obtained by Theorem 1.3 of \cite{ZGHY}.
Berndt and Suh \cite{BS} classified the real hypersurfaces of $Q^m$ ($m\geq3$) with isometric Reeb flow. They also classified the contact hypersurfaces of $Q^m$ ($m\geq3$) in \cite{BS-2022}.
For more studies on real hypersurfaces of $Q^m$, we refer to, e.g.,
\cite{BS-2012, BS, BS-2015, BS-2022, LVWY, Loo11, Suh1} and the references therein. Nevertheless, many fundamental problems remain open.
For example, we have the following natural and interesting problem:

\vskip 2mm

\noindent {\bf Problem} Classify all Hopf hypersurfaces of $Q^m$ ($m\geq3$) with constant principal curvatures.

\vskip 2mm

Recall that, the inverse image $\pi^{-1}(M)$ of a Hopf hypersurface $M$ of $\mathbb{C}P^m$ with constant principal curvatures under the Hopf map $\pi$ is an isoparametric hypersurface of $\mathbb{S}^{2m+1}$, then using the known results of isoparametric hypersurfaces of the unit sphere, one can determine the number and the multiplicities of distinct constant principal curvatures
of $M$ (cf. \cite{KM}). On the other hand, by establishing the Cartan's formula on
a Hopf hypersurface $M$ of $\mathbb{C}H^m$ with constant principal curvatures, and
through careful analysis, one can also determine the number and the multiplicities of distinct constant principal curvatures (cf. \cite{B}).
The situation of the Hopf hypersurface of $Q^m$ ($m\geq3$) with constant principal curvatures
is much more complicated.
According to Lemma \ref{lemma:2.5}, a Hopf hypersurface in $Q^m$ $(m\ge3)$ with constant
principal curvatures has either $\mathfrak{A}$-principal unit normal vector field $N$ or $\mathfrak{A}$-isotropic unit normal vector field $N$
(see definition \ref{def:2.1}).
Moreover, a
Hopf hypersurface in $Q^m$ $(m\ge3)$ with $\mathfrak{A}$-principal unit normal vector field $N$ is an open part of a tube over a totally geodesic $Q^{m-1}\hookrightarrow Q^{m}$
(see Theorem \ref{thm:2.1}). Thus, we only need to consider the Hopf hypersurfaces with constant principal curvatures and $\mathfrak{A}$-isotropic unit normal vector field $N$.
We can prove that $M$ is an
isoparametric hypersurface of $Q^m$ ($m\geq3$) (see Theorem \ref{thm:4.2w}). Then, using the relationship between Stiefel manifold $V_2(\mathbb{R}^{m+2})$
and $Q^m$, the inverse image $\pi^{-1}(M)$ of $M$ under the Hopf map $\pi$ is an isoparametric hypersurface of Stiefel manifold $V_2(\mathbb{R}^{m+2})$.
However, the research on isoparametric hypersurfaces of Stiefel manifold is still poorly understood.
So we need to find another approach.
Note that, we prove that all parallel hypersurfaces of $M$ still have constant principal
curvatures (see Theorem \ref{thm:4.2w}). This allows us to apply a theorem obtained by Ge-Tang \cite{GT}, 
it follows that the focal submanifolds of $M$ are austere. At the same time, we also establish
two Cartan's formulas (see \eqref{eqn:ca2} and \eqref{eqn:call}) for any Hopf hypersurface of $Q^m$ ($m\geq3$) with constant principal curvatures and $\mathfrak{A}$-isotropic unit normal vector field $N$.
In contrast to the situation of $\mathbb{C}H^m$, the Cartan's formulas of $Q^m$ also involve
the almost product structure $A$, and it is difficult to make clear the interaction among the shape operator $S$, the complex structure $J$ and the almost product structure $A$.
Considering the important role played by the almost product structure $A$ in the study of real hypersurfaces of $Q^m$, we find a very useful lemma which states that the almost product structure $A$ can project an eigenspace $V_{\lambda}$ onto the orthogonal complement of $V_{\lambda}\oplus JV_{\lambda}$ (see Lemma \ref{lemma:5.1}).

In this paper, we study the classification problem of Hopf hypersurfaces of $Q^m$ with constant principal curvatures from two aspects.
When restricting the number of distinct principal curvatures to be at most five,
we obtain the following result.

\begin{theorem}\label{thm:1.1a}
Let $M$ be a Hopf hypersurface of $Q^m$ ($m\geq3$) with at most five distinct constant principal curvatures. Then,
\begin{enumerate}
\item[(1)]
$M$ is an open part of a tube over a totally geodesic $Q^{m-1}\hookrightarrow Q^{m}$ ($m\geq3$); or

\item[(2)]
$M$ is an open part of a tube over a totally geodesic $\mathbb{C}P^k\hookrightarrow Q^{2k}$ ($m=2k$, $k\geq2$).
\end{enumerate}
\end{theorem}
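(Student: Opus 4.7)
By Lemma \ref{lemma:2.5}, a Hopf hypersurface of $Q^{m}$ ($m\ge 3$) with constant principal curvatures has either $\mathfrak{A}$-principal or $\mathfrak{A}$-isotropic unit normal. Theorem \ref{thm:2.1} already identifies the $\mathfrak{A}$-principal case with conclusion (1). Hence the entire argument reduces to showing that if $M$ has $\mathfrak{A}$-isotropic unit normal $N$, constant principal curvatures, and at most five distinct ones, then $m=2k$ and $M$ is an open part of a tube over $\mathbb{C}P^{k}\hookrightarrow Q^{2k}$.

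The strategy I would adopt is the combined use of the three tools established earlier in the paper. First, by Theorem \ref{thm:4.2w}, $M$ is isoparametric in $Q^{m}$ and every parallel hypersurface still has constant principal curvatures; by Ge--Tang \cite{GT} this forces the focal submanifolds of $M$ to be austere, which I would use to read off symmetry constraints on the set of principal curvatures (e.g.\ eigenvalues of the shape operator of each focal leaf come in pairs $\pm\mu$, with controlled multiplicities). Second, I would exploit the two Cartan-type identities \eqref{eqn:ca2} and \eqref{eqn:call}: these give, for each pair $\lambda\neq\mu$ of principal curvatures, a balance equation
\[
\sum_{\mu\neq\lambda}\frac{F(\lambda,\mu,\alpha)\,m_{\mu}}{\lambda-\mu}=G(\lambda,\alpha),
\]
where $F,G$ are explicit polynomials involving the Reeb function $\alpha$ and the components of $J$ and $A$ relating the eigenspaces; these provide rigid numerical relations among the $\lambda_i$ and the multiplicities $m_i$. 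Third, I would repeatedly apply Lemma \ref{lemma:5.1}: the projection formula for $A$ on eigenspaces says that $AV_{\lambda}$ lies in the orthogonal complement of $V_{\lambda}\oplus JV_{\lambda}$, so coupling this with the known $S$-action on $JV_{\lambda}$ (from the Hopf condition) and the Codazzi equation, I would determine which eigenspaces can be paired by $J$ and which can be linked by $A$.

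With this machinery, I would perform a case analysis on $g:=\#\{\text{distinct principal curvatures}\}\in\{1,2,3,4,5\}$. For each value of $g$, one labels the principal curvatures $\lambda_{0}=\alpha,\lambda_{1},\dots,\lambda_{g-1}$ with multiplicities $m_{0}=1,m_{1},\dots,m_{g-1}$, uses $J$- and $A$-pairings via Lemma \ref{lemma:5.1} to group them, and then plugs the data into \eqref{eqn:ca2}--\eqref{eqn:call} together with the austerity of the focal submanifolds. The small cases $g=1,2,3$ should collapse rapidly: either the eigenspace structure forced by $J,A$ is inconsistent with $g\le 3$ when $N$ is $\mathfrak{A}$-isotropic, or the resulting configuration is actually contained in a larger $g$ case. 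For $g=4$, the Cartan system together with austerity should leave no admissible multiplicity vector. The case $g=5$ is where the known tubes over $\mathbb{C}P^{k}\hookrightarrow Q^{2k}$ fit, and here the plan is to show that the Cartan system plus Lemma \ref{lemma:5.1} pins down the multiplicities to be exactly those of such a tube, and then invoke the rigidity/congruence results for isoparametric hypersurfaces (the tube is determined up to parallel displacement by its focal data) to conclude.

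The main obstacle I anticipate is the case analysis for $g=4$ and $g=5$. The difficulty is that the almost product structure $A$ can link an eigenspace $V_{\lambda}$ to several other eigenspaces simultaneously, so Lemma \ref{lemma:5.1} must be applied at each $V_{\lambda}$ and the resulting web of constraints combined with \eqref{eqn:ca2}--\eqref{eqn:call} leads to a sizeable bookkeeping problem. In particular, ruling out the hypothetical $g=4$ configurations and forbidding non-even values of $m$ in the $g=5$ case (so that the $\mathbb{C}P^{k}\hookrightarrow Q^{2k}$ model is the unique solution) will require carefully exploiting the austerity of both focal submanifolds, not just one. Once this combinatorial step is completed, identification with the homogeneous model follows from the determination of the focal data together with the classification of austere focal submanifolds already available in $Q^{m}$.
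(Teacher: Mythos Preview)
Your overall strategy---reduce to the $\mathfrak{A}$-isotropic case, then run a case analysis on the number $g$ of distinct principal curvatures using Lemma~\ref{lemma:5.1}, the Cartan identities, and austerity of focal sets---matches the paper's architecture. However, you have miscounted where the $\mathbb{C}P^{k}$ tubes actually sit, and this makes two of your predicted case outcomes wrong.

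Look at Proposition~\ref{prop:E2P}: a tube over $\mathbb{C}P^{k}\hookrightarrow Q^{2k}$ has principal curvatures $2\cot(2t),\,0,\,-\tan t,\,\cot t$, which gives $g=4$ generically and $g=3$ at the exceptional radius $t=\pi/4$. It never has $g=5$. So your claim that ``for $g=4$ the Cartan system together with austerity should leave no admissible multiplicity vector'' is false---the generic $\mathbb{C}P^{k}$ tube is precisely an admissible $g=4$ configuration---and your claim that ``the case $g=5$ is where the known tubes over $\mathbb{C}P^{k}$ fit'' is also false: the paper's Theorem~\ref{thm:5.5} shows $g=5$ is \emph{impossible}, and this is by far the longest and most delicate case. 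Your plan would have you trying to rule out a case ($g=4$) that contains the answer, and trying to find the answer in a case ($g=5$) that is empty.

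A second gap concerns the identification step. Once the surviving eigenspace configuration is isolated, the paper does not appeal to ``rigidity/congruence results for isoparametric hypersurfaces'' or a ``classification of austere focal submanifolds in $Q^{m}$'' (no such classification is available). Instead, in the surviving $\mathfrak{A}$-isotropic configurations one checks directly that each eigenspace $V_{\lambda}\subset\mathcal{Q}$ satisfies $\phi V_{\lambda}=V_{\lambda}$, hence $S\phi=\phi S$ globally, and then invokes Berndt--Suh's Theorem~\ref{thm:2.2} (isometric Reeb flow) to conclude. You should plan on this route rather than an isoparametric rigidity argument.
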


\begin{remark}\label{rem:1.1}
In Theorem \ref{thm:1.1a}, the condition of at most five distinct constant principal curvatures
is necessary. In fact, there exist Hopf hypersurfaces of $Q^{4k-2}$, $k\geq 2$, with six distinct constant principal curvatures, such as Example \ref{E3}.
In addition, the condition of constant principal curvature is also necessary. In fact, we can construct a complex submanifold $\tilde{N}^{2m-2}_k$ of $Q^m$ ($m\geq3$), the nearby tubes of $\tilde{N}^{2m-2}_k$ are Hopf hypersurfaces with
non-constant principal curvatures, see Example \ref{E6}.
\end{remark}

Then, we restrict the dimension of $Q^m$ to $m=3,4,5,6$.
As a direct consequence of Theorem \ref{thm:1.1a},
we classify the Hopf hypersurface of $Q^3$ with constant principal curvatures.

\begin{corollary}\label{cor:1.1}
Let $M$ be a Hopf hypersurface of $Q^3$ with constant principal curvatures. Then,
$M$ is an open part of a tube over a totally geodesic $Q^{2}\hookrightarrow Q^{3}$.
\end{corollary}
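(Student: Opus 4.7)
The plan is to deduce this corollary directly from Theorem \ref{thm:1.1a}, which the excerpt already announces is the right tool: the corollary is stated as a \emph{direct consequence}, so the task is simply to make the specialization from general $m$ to $m=3$ completely explicit. Two ingredients do all the work: a trivial dimension count and a parity obstruction.

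First, I would note that any real hypersurface $M$ of $Q^3$ satisfies $\dim_{\R} M = 2m-1 = 5$. Consequently, at each point of $M$ the shape operator acts on a $5$-dimensional real vector space and therefore has at most five distinct eigenvalues. Combined with the standing assumption that the principal curvatures are constant, this shows that the hypotheses of Theorem \ref{thm:1.1a} are satisfied automatically for $m=3$: the bound on the number of distinct constant principal curvatures is built into the dimension.

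Next, I would invoke Theorem \ref{thm:1.1a} and inspect which of its two conclusions can occur. Alternative (2) requires $m = 2k$ with $k \geq 2$; in particular $m$ must be even and at least $4$. Since here $m=3$ is odd and strictly less than $4$, this alternative is immediately excluded. Only alternative (1) remains, which in the case $m=3$ reads: $M$ is an open part of a tube over a totally geodesic $Q^{m-1} = Q^{2} \hookrightarrow Q^{3}$. This is exactly the statement of the corollary.

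There is no serious obstacle to overcome in this argument; all of the substance lies in Theorem \ref{thm:1.1a} itself, and the role of the corollary is merely to record the aesthetically clean form that the classification takes when the dimension is small enough to rule out the $\mathbb{C}P^k$-tube family.
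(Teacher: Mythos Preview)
Your proposal is correct and matches the paper's own proof essentially verbatim: the paper also observes that $\dim M = 5$ forces at most five distinct constant principal curvatures, invokes Theorem \ref{thm:1.1a}, and concludes with alternative (1) since alternative (2) requires $m=2k$ with $k\geq 2$. Your write-up is slightly more explicit about the parity/size obstruction ruling out case (2), but the argument is the same.
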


For the complex quadrics $Q^4$ and $Q^5$, we can get the following classification results.
\begin{theorem}\label{thm:1.3}
Let $M$ be a Hopf hypersurface of $Q^4$ with constant principal curvatures. Then,
\begin{enumerate}
\item[(1)]
$M$ is an open part of a tube over a totally geodesic $Q^{3}\hookrightarrow Q^{4}$; or

\item[(2)]
$M$ is an open part of a tube over a totally geodesic $\mathbb{C}P^2\hookrightarrow Q^{4}$.
\end{enumerate}
\end{theorem}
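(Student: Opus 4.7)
The plan is to derive Theorem~\ref{thm:1.3} from Theorem~\ref{thm:1.1a} by showing that a Hopf hypersurface of $Q^4$ with constant principal curvatures can have at most five distinct such curvatures.

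First, by Lemma~\ref{lemma:2.5} the unit normal $N$ of $M$ is either $\mathfrak{A}$-principal or $\mathfrak{A}$-isotropic. In the $\mathfrak{A}$-principal case, Theorem~\ref{thm:2.1} immediately gives conclusion~(1). So from now on I assume $N$ is $\mathfrak{A}$-isotropic.

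In this setting, $M$ has real dimension $7$, and the Reeb direction $\xi$ lies in an eigenspace $V_\alpha$ of the shape operator $S$. By Theorem~\ref{thm:4.2w}, $M$ is an isoparametric hypersurface of $Q^4$ whose parallel hypersurfaces also have constant principal curvatures; the theorem of Ge--Tang \cite{GT} then shows that the focal submanifolds of $M$ are austere. Using the two Cartan formulas \eqref{eqn:ca2} and \eqref{eqn:call}, together with Lemma~\ref{lemma:5.1}, which describes how the almost product structure $A$ maps an eigenspace $V_\lambda$ into the orthogonal complement of $V_\lambda\oplus JV_\lambda$, the decomposition of the $6$-dimensional orthogonal complement of $\xi$ into principal subspaces is tightly constrained: each eigenspace must interact with $J$ either as a $J$-invariant block or as a member of a pair $V_\lambda$, $JV_{\lambda'}$ of equal dimension, and the austere condition enforces balancing of the corresponding eigenvalues with appropriate multiplicities.

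The main obstacle is to rule out six and seven distinct principal curvatures. As Example~\ref{E3} shows, six distinct constant principal curvatures genuinely occur in $Q^{4k-2}$ for $k\geq 2$, so any exclusion for $Q^4$ must truly exploit the low dimension $m=4$. Concretely, I would enumerate all partitions of $\dim M=7$ into eigenspace dimensions that are compatible with the $J$-pairing and with Lemma~\ref{lemma:5.1}, and for each configuration containing $g=6$ or $g=7$ distinct eigenvalues I would substitute the resulting data into the Cartan formulas to extract algebraic identities among the eigenvalues. Combined with the austereness of the focal submanifolds and the $\mathfrak{A}$-isotropic constraints, these identities should be incompatible in the tight dimension range of $Q^4$, yielding contradictions in every subcase. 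Once $g\leq 5$ is established, Theorem~\ref{thm:1.1a} yields conclusion~(1) or~(2).
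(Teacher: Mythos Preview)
Your overall strategy---reduce to at most five distinct principal curvatures and then invoke Theorem~\ref{thm:1.1a}---is exactly the paper's strategy, but what you have written is a plan, not a proof. Two concrete points:

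First, seven distinct principal curvatures never needs to be considered. In the $\mathfrak{A}$-isotropic case, Lemma~\ref{lemma:2.6} gives $SAN=SA\xi=0$, so the eigenvalue $0$ has multiplicity at least $2$. Since $\dim M=7$, the $S$-invariant subspace $\mathcal{Q}=TM\ominus\mathrm{Span}\{\xi,AN,A\xi\}$ has dimension $2m-4=4$, and therefore $S$ can have at most six distinct eigenvalues: $\alpha$, $0$, and at most four in $\sigma(\mathcal{Q})$. Your proposal treats seven as a case to be ruled out by Cartan formulas; it is ruled out by arithmetic.

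Second, the paper's proof of the six-curvature case does \emph{not} use the austere focal submanifolds at all. With $\alpha>0$ and $\alpha,0\notin\sigma(\mathcal{Q})$, Lemma~\ref{lemma:5.5} forces $\mathcal{Q}=V_{\lambda_1}\oplus V_{\lambda_2}\oplus V_{\lambda_3}\oplus V_{\lambda_4}$ with each $V_{\lambda_i}$ one-dimensional, $JV_{\lambda_1}=V_{\lambda_2}$, $JV_{\lambda_3}=V_{\lambda_4}$, and Lemma~\ref{lemma:5.1}(3) gives $A(V_{\lambda_1}\oplus V_{\lambda_2})=V_{\lambda_3}\oplus V_{\lambda_4}$. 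Because everything is one-dimensional, for unit $X_i\in V_{\lambda_i}$ and $X_j\in V_{\lambda_j}$ with $i\in\{1,2\}$, $j\in\{3,4\}$ one has $g(AX_i,X_j)^2+g(AX_i,JX_j)^2=1$. One then splits into three order cases relative to $\frac{\alpha\pm\sqrt{\alpha^2+4}}{2}$ and, in each, plugs a suitable pair $(X,Y)$ into the Cartan formula~\eqref{eqn:call}: the left-hand side is a sum over the two remaining eigenvalues with a definite sign, while the right-hand side equals $-(1+\lambda\mu)$ with the opposite sign. This sign contradiction is the entire argument. Your proposal gestures at ``substituting into the Cartan formulas'' and ``austereness,'' but you need to actually carry out this short computation; the austere machinery is unnecessary here and only enters the paper's analysis of higher-dimensional cases.
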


\begin{theorem}\label{thm:1.4}
Let $M$ be a Hopf hypersurface of $Q^5$ with constant principal curvatures. Then,
$M$ is an open part of a tube over a totally geodesic $Q^{4}\hookrightarrow Q^{5}$.
\end{theorem}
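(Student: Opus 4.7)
The plan is to derive Theorem~\ref{thm:1.4} from Theorem~\ref{thm:1.1a} by ruling out the possibility of six or more distinct principal curvatures when $m=5$. By Lemma~\ref{lemma:2.5}, the unit normal $N$ is either $\mathfrak{A}$-principal or $\mathfrak{A}$-isotropic. In the $\mathfrak{A}$-principal case, Theorem~\ref{thm:2.1} immediately yields that $M$ is an open part of a tube over a totally geodesic $Q^{4}\hookrightarrow Q^{5}$, which is the desired conclusion. Moreover, since $m=5$ is odd, case~(2) of Theorem~\ref{thm:1.1a} (a tube over $\mathbb{C}P^{k}\hookrightarrow Q^{2k}$) cannot arise, and case~(1) of Theorem~\ref{thm:1.1a} is precisely the asserted conclusion. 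So it suffices to show that in the $\mathfrak{A}$-isotropic case the number $g$ of distinct principal curvatures is at most five.

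Suppose for contradiction that $g\geq 6$. Since $\dim_{\mathbb{R}}M=9$, the multiplicity vector $(m_{\lambda_{1}},\ldots,m_{\lambda_{g}})$ is a partition of $9$ into at least six positive parts, so at most three multiplicities exceed $1$. In the $\mathfrak{A}$-isotropic setting the tangent bundle splits orthogonally as $\mathbb{R}\xi\oplus\mathbb{R}AN\oplus\mathbb{R}JAN\oplus\mathcal{Q}$, with $\dim\mathcal{Q}=6$, and $\alpha$ is the eigenvalue on $\xi$. I would combine three ingredients. First, by Theorem~\ref{thm:4.2w} the hypersurface $M$ is isoparametric and all its parallel hypersurfaces have constant principal curvatures; by the theorem of Ge--Tang \cite{GT} this forces every focal submanifold of $M$ to be austere, which severely restricts how the $\lambda_{i}$ shift under parallel translation. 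Second, the two Cartan identities \eqref{eqn:ca2} and \eqref{eqn:call} yield polynomial relations among the $\lambda_{i}$ weighted by their multiplicities and by the pairing coefficients of $J$ and $A$. Third, Lemma~\ref{lemma:5.1}, which asserts that $A$ maps each eigenspace $V_{\lambda}$ into $(V_{\lambda}\oplus JV_{\lambda})^{\perp}$, pins down how $A$ permutes the eigenspaces.

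The main obstacle will be the combinatorial case analysis in the critical case $g=6$: up to order the partitions of $9$ into six positive parts are $(4,1,1,1,1,1)$, $(3,2,1,1,1,1)$ and $(2,2,2,1,1,1)$, and in each pattern one must track carefully whether each eigenspace is $J$-invariant or paired by $J$ with another of equal dimension, whether it contains the distinguished directions $\xi,AN,JAN$ or lies entirely in $\mathcal{Q}$, and how it is redistributed under $A$ by Lemma~\ref{lemma:5.1}. The Cartan identities together with austerity of the focal submanifolds should then supply enough linear and polynomial constraints on the $\lambda_{i}$ to close each case, in the spirit of the elimination arguments of Kimura~\cite{KM} and Berndt~\cite{B} for $\mathbb{C}P^{m}$ and $\mathbb{C}H^{m}$, but significantly complicated by the coupling introduced by the almost product structure $A$. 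The fact that Example~\ref{E3} realises six distinct principal curvatures exactly when $m=4k-2$ strongly suggests that the obstruction in the odd case $m=5$ is ultimately a parity incompatibility forced by Lemma~\ref{lemma:5.1}, and this is what I would expect to bring out in the contradiction.
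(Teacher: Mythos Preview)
Your reduction strategy---show $g\le 5$, then invoke Theorem~\ref{thm:1.1a} together with the oddness of $m=5$---is logically sound, but the execution you sketch for ruling out $g\ge 6$ is only a plan, not an argument: the austerity/Cartan case analysis on multiplicity partitions is precisely the heavy machinery the paper deploys for $Q^{6}$ (Theorem~\ref{thm:1.5}), and carrying it out along those lines would be substantial work that you have not done.

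The paper's actual proof for $Q^{5}$ is considerably cleaner and does not go through Theorem~\ref{thm:1.1a} at all. It shows directly that the $\mathfrak{A}$-isotropic case is impossible for $m=5$, independently of the number of distinct principal curvatures. The argument is exactly the ``parity incompatibility'' you allude to in your final paragraph, made precise. Since $\dim\mathcal{Q}=2m-4=6$, Lemma~\ref{lemma:5.1}(3) forces $\dim(V_{\lambda}\oplus JV_{\lambda})=2$ for every $\lambda\in\sigma(\mathcal{Q})$: if some such block had dimension $\ge 4$, its $A$-image (of the same dimension) would have to lie in the orthogonal complement inside $\mathcal{Q}$, of dimension $\le 2$. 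Hence $\mathcal{Q}$ splits into exactly three $J$-stable $2$-planes. Writing $AX_{1}=aY_{3}+bY_{5}$ for a unit $X_{1}$ in the first block and unit $Y_{3},Y_{5}$ in the second and third, one application of the Cartan identity~\eqref{eqn:call} with $(X,Y)=(X_{1},X_{3})$ and $(X,Y)=(X_{1},X_{5})$ (after arranging the eigenvalues into one of four symmetric orderings, all handled simultaneously) yields $a^{2}\le\tfrac12$ and $b^{2}\le\tfrac12$, hence $a^{2}=b^{2}=\tfrac12$. Then $X_{1}=\tfrac{1}{\sqrt{2}}(AY_{3}+AY_{5})$, and pairing against vectors in the third block shows $AY_{3}$ lies entirely in the first block; so $A$ interchanges the first two blocks and must therefore preserve the third, contradicting Lemma~\ref{lemma:5.1}(3). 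No focal-submanifold austerity is needed in dimension~$5$; the obstruction is purely the dimensional one you suspected.
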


In the complex quadrics $Q^6$, we find that there are Hopf hypersurfaces with six distinct constant principal curvatures.
In fact, in Section \ref{sect:3}, we describe the homogeneous real hypersurface of type (C) in Theorem 6.3.1 of
\cite{BS-2022} in detail (see Example \ref{E3}), and by Proposition \ref{prop:E3P}, we know that the tubes of radius $0<r<\frac{\pi}{4}$ around the equivariant embedding of the $(8k-7)$-dimensional homogeneous space $Sp_kSp_1/SO_2Sp_{k-2}Sp_1$ in $Q^{4k-2}$, $k\geq 2$, are Hopf hypersurfaces with six distinct constant principal curvatures. Now, we state the result for $Q^6$.

\begin{theorem}\label{thm:1.5}
Let $M$ be a Hopf hypersurface of $Q^6$ with constant principal curvatures. Then,
\begin{enumerate}
\item[(1)]
$M$ is an open part of a tube over a totally geodesic $Q^{5}\hookrightarrow Q^{6}$; or

\item[(2)]
$M$ is an open part of a tube over a totally geodesic $\mathbb{C}P^3\hookrightarrow Q^{6}$; or

\item[(3)]
$M$ has six distinct constant principal curvatures. Their values and multiplicities
are given by
$$
\begin{tabular}{|c|c|c|c|c|c|c|}
  \hline
  {\rm value} & $2\tan(2r)$ & $0$ & $\tan(r)$ & $-\cot(r)$ & $\frac{\cos(r)+\sin(r)}{\cos(r)-\sin(r)}$ & $-\frac{\cos(r)-\sin(r)}{\cos(r)+\sin(r)}$\\
  \hline
  {\rm multiplicity} & $1$ & $2$ & $2$ & $2$ & $2$ & $2$\\
  \hline
\end{tabular}
$$
where $0<r<\frac{\pi}{4}$. The Reeb function is $\alpha=2\tan(2r)$. In particular, Example \ref{E3} for $k=2$ is contained in this case.
\end{enumerate}
\end{theorem}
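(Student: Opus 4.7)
The plan is to combine Theorem~\ref{thm:1.1a} with a direct analysis of the case of exactly six distinct constant principal curvatures. By Theorem~\ref{thm:1.1a}, if $M$ has at most five distinct principal curvatures, then it must be an open part of one of the tubes in (1) or (2); so I will assume from the outset that $M$ has at least six distinct constant principal curvatures. By Lemma~\ref{lemma:2.5} the unit normal $N$ is either $\mathfrak{A}$-principal or $\mathfrak{A}$-isotropic, and in the former case Theorem~\ref{thm:2.1} forces $M$ to be an open part of a tube over a totally geodesic $Q^5 \hookrightarrow Q^6$, which has at most three distinct principal curvatures and so is excluded. Hence I may assume throughout that $N$ is $\mathfrak{A}$-isotropic.

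First I would record the elementary constraints. Since $\dim_{\mathbb{R}} M = 11$, the Reeb direction contributes a one-dimensional eigenspace with eigenvalue $\alpha$, and the remaining ten dimensions split among at least five further distinct eigenspaces of the shape operator. Theorem~\ref{thm:4.2w} implies that $M$ is isoparametric and that all sufficiently close parallel hypersurfaces also have constant principal curvatures; invoking Ge--Tang~\cite{GT} then forces the focal submanifolds of $M$ to be austere in $Q^6$, giving a strong symmetry of the eigenvalue spectrum under the natural shift along the normal geodesic.

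Next I would combine the two Cartan-type identities \eqref{eqn:ca2} and \eqref{eqn:call} with Lemma~\ref{lemma:5.1} on the $A$-projection of eigenspaces. Under the $\mathfrak{A}$-isotropic assumption, nonzero principal curvatures group into $J$-linked pairs $\{\lambda,\mu\}$ satisfying $\lambda\mu=-1$, while Lemma~\ref{lemma:5.1} embeds each eigenspace $V_\lambda$ into the orthogonal complement of $V_\lambda\oplus JV_\lambda$, thereby matching eigenspaces of equal multiplicity under $A$. A careful book-keeping of these $J$- and $A$-pairings in dimension $11$ should rule out seven or more distinct eigenvalues. Within the six-eigenvalue case, the Cartan identities together with the austerity of the focal set should force the multiplicity pattern $(1,2,2,2,2,2)$ and the explicit one-parameter form
\[
2\tan(2r),\ 0,\ \tan(r),\ -\cot(r),\ \tfrac{\cos r+\sin r}{\cos r-\sin r},\ -\tfrac{\cos r-\sin r}{\cos r+\sin r},
\]
with $r\in(0,\pi/4)$; here $r$ emerges naturally as the tube radius and $\alpha=2\tan(2r)$.

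Finally, once the spectrum is pinned down, I would identify $M$ with a homogeneous model. By Proposition~\ref{prop:E3P} the tubes of radius $r\in(0,\pi/4)$ around the equivariant embedding of $Sp_2Sp_1/SO_2Sp_1$ into $Q^6$ described in Example~\ref{E3} (with $k=2$) realize precisely this principal-curvature data, so an equivariance or rigidity argument for isoparametric hypersurfaces with the prescribed focal data identifies $M$ with an open part of such a tube. The main obstacle will be the middle paragraph: the Cartan identities alone admit many formal solutions, so Lemma~\ref{lemma:5.1} and the austerity of the focal set have to be used jointly and rather tightly to eliminate every configuration other than the one listed, and this coupling is what I expect to be the delicate part of the argument.
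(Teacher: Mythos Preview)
Your outline has the right large-scale architecture (reduce to the $\mathfrak{A}$-isotropic case, use Lemma~\ref{lemma:5.5} for the $\phi$-pairing, Lemma~\ref{lemma:5.1} for the $A$-pairing, and the austerity of $M_\pm$ from Ge--Tang), and this is indeed how the paper proceeds. But two concrete points need correction.

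First, your claimed $J$-pairing relation $\lambda\mu=-1$ is wrong in general. Lemma~\ref{lemma:5.5} gives $\mu=\dfrac{\alpha\lambda+2}{2\lambda-\alpha}$, equivalently $2\lambda\mu-\alpha(\lambda+\mu)=2$; this reduces to $\lambda\mu=-1$ only when $\alpha=0$ or when additionally $\lambda+\mu=-4/\alpha$. The latter happens to hold in the surviving configuration, but you cannot assume it at the outset: in the paper's proof the case analysis ranges over \emph{all} admissible $\phi$-pairs $(\lambda_i,\mu_i)$, $1\le i\le 4$, in $\sigma(\mathcal Q)$ (with $\dim\mathcal Q=8$), partitioned by how many lie above $\alpha/2$ and how many lie in $(0,\alpha/2)$. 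Each of the many subcases is then killed either by computing the principal curvatures of $M_+$ or $M_-$ via Propositions~\ref{prop:4.3w}--\ref{prop:4.4w} and checking that austerity fails, or by pushing the configuration to a parallel hypersurface where it reduces to an already-excluded case, or (in one subcase) by plugging specific unit eigenvectors into the Cartan formula~\eqref{eqn:ca2}. Only {\bf Case-2-3-4} survives, and there the values are forced. Your ``careful book-keeping'' paragraph is not a placeholder for a short argument; it is a placeholder for exactly this long elimination, and starting it from the false premise $\lambda\mu=-1$ would collapse the case tree incorrectly.

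Second, your final paragraph overreaches. The statement of (3) asserts only the principal-curvature data (values and multiplicities); it does \emph{not} claim that $M$ is congruent to an open part of the homogeneous tube of Example~\ref{E3}. The paper accordingly stops once the spectrum is determined and merely remarks that Example~\ref{E3} with $k=2$ realizes it. No rigidity or equivariance argument is supplied, and you should not promise one.
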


\begin{remark}\label{rem:1.2}
The examples appeared in (1) and (2) of Theorem \ref{thm:1.1a} are open parts of 
homogeneous real hypersurfaces. 
The homogeneous real hypersurfaces of the complex quadric $Q^m$ ($m\geq3$) were classified
by Kollross in \cite{Ko} (see also Theorem 6.3.1 of \cite{BS-2022}). There are four types of homogeneous real hypersurfaces, they are Examples \ref{E1}, \ref{E2}, \ref{E3} and the tube of radius $0<r<\frac{\pi}{4}$ around the equivariant embedding of the $20$-dimensional
homogeneous space $Spin_9/SO_2Spin_6$ in $Q^{14}$.
Note that, Examples \ref{E1} and \ref{E2} has been studied extensively,
there have been many articles giving the characterizations of these two types of examples,
we refer to \cite{BS-2012, BS, BS-2015, BS-2022} and the references therein.
As far as we know, there is no paper that gives the characterization of the Example \ref{E3}.
\end{remark}

The paper is organized as follows. In Section \ref{sect:2}, we collect some basic
properties of $Q^m$ ($m\geq3$) and some preliminaries of the geometry of the real hypersurfaces of $Q^m$.
In Section \ref{sect:3}, we introduce three types of the homogeneous real hypersurfaces, and we also construct a new family of Hopf hypersurfaces of $Q^m$ ($m\geq3$) with non-constant principal curvatures.
In Section \ref{sect:4}, we study the parallel hypersurfaces and focal submanifolds of Hopf hypersurfaces of $Q^m$ ($m\geq3$) with constant principal curvatures and $\mathfrak{A}$-isotropic unit normal vector field $N$, we also establish two Cartan's formulas for Hopf hypersurfaces with constant principal curvatures and $\mathfrak{A}$-isotropic unit normal vector field $N$. Section \ref{sect:5} is dedicated to the proof of Theorem \ref{thm:1.1a}. Section \ref{sect:6} is dedicated to
the proofs of Corollary \ref{cor:1.1} and Theorems \ref{thm:1.3}--\ref{thm:1.5}.

\textbf{Acknowledgments:}
H. Li was supported by NSFC Grant No.12471047. H. Tamaru was supported by JSPS KAKENHI Grant Numbers JP22H01124 and JP24K21193, and was also partly supported by MEXT Promotion of Distinctive Joint
Research Center Program JPMXP00723833165. Z. Yao was supported by NSFC Grant No. 12401061.

\section{Preliminaries}\label{sect:2}

\subsection{The complex quadric}\label{sect:2.1}~

Let $\mathbb{C}P^{m+1}$ be the $(m+1)$-dimensional complex projective space
equipped with the Fubini-Study
metric $g$ of constant holomorphic sectional curvature $4$. The Hopf map
$$
\pi:\mathbb{S}^{2m+3}(1)\rightarrow \mathbb{C}P^{m+1}: z\mapsto [z]
$$
is a Riemannian submersion from $\mathbb{S}^{2m+3}(1)$ to $\mathbb{C}P^{m+1}$.
For any $z\in \mathbb{S}^{2m+3}(1)$ we have $\pi^{-1}([z])=\{e^{\mathbf{i}t}z|t\in \mathbb{R}\}$
and ${\rm ker}(d\pi)_z={\rm Span}\{\mathbf{i}z\}$. The complex
structure $J$ on $\mathbb{C}P^{m+1}$ is induced from multiplication by
$\mathbf{i}$ on $T\mathbb{S}^{2m+3}(1)$. The complex quadric as the complex hypersurface of $\mathbb{C}P^{m+1}$ is defined by
$$
Q^m=\{[(z_1,z_2,\ldots,z_{m+2})]\in \mathbb{C}P^{m+1}|z_1^2+z_2^2+\cdots+z_{m+2}^2=0\}.
$$
$Q^m$ is equipped with the induced metric, which we still denote by $g$, and the induced almost
complex structure, which we still denote by $J$, then $(Q^m, g, J)$ is a K\"{a}hler manifold.
The inverse image of $Q^m$ under the Hopf map is the $2m+1$-dimension Stiefel manifold
$$
V_2(\mathbb{R}^{m+2})=\{z=u+\mathbf{i}v|u,v\in \mathbb{R}^{m+2}, \langle u,u\rangle=\langle v,v\rangle=\frac{1}{2},
\langle u,v\rangle=0\}\subset\mathbb{S}^{2m+3}(1),
$$
where $\langle\cdot,\cdot\rangle$ denotes the Euclidean inner product on $\mathbb{R}^{m+2}$.
The normal space to $V_2(\mathbb{R}^{m+2})$ in $\mathbb{S}^{2m+3}(1)$ at a point $z$ is spanned by
$\bar{z}$ and $\mathbf{i}\bar{z}$, which implies that
the normal space to $Q^m$ in $\mathbb{C}P^{m+1}$ at a point $[z]$ is spanned by $(d\pi)_z(\bar{z})$ and $J(d\pi)_z(\bar{z})=(d\pi)_z(\mathbf{i}\bar{z})$,
where $z$ is any representative of $[z]$. 
We denote by $\mathfrak{A}$ the set of all shape operators of $Q^m$ in $\mathbb{C}P^{m+1}$ associated with unit normal vector fields, and $\mathfrak{A}$ is a collection of $(1,1)$-tensor fields on $Q^m$.
Then, $\mathfrak{A}$ is an $S^1$-subbundle of the endomorphism
bundle End($TQ^m$). In summary, we have
\begin{lemma}[cf. \cite{R,Sm}]\label{lemma:2.1}
For each $A\in\mathfrak{A}$, it holds that
\begin{equation}\label{eqn:2.1}
A^2=Id, \ \ g(AX,Y)=g(X,AY), \ \ AJ=-JA, \ \ \forall X,Y\in TQ^m.
\end{equation}
\end{lemma}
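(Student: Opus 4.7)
The plan is to prove the three identities by computing the shape operator $A$ explicitly via the Hopf lift. Since $Q^m = \pi(V_2(\mathbb{R}^{m+2}))$ sits inside $\mathbb{C}P^{m+1} = \mathbb{S}^{2m+3}/S^1$, which in turn sits in $\mathbb{R}^{2m+4} = \mathbb{C}^{m+2}$, the strategy is to identify tangent vectors of $Q^m$ with their horizontal lifts in $\mathbb{C}^{m+2}$ and to recognize that, for the distinguished unit normal $N_1 := (d\pi)_z(\bar z)$, the shape operator is (up to sign) componentwise complex conjugation of horizontal lifts. All three identities will then be algebraic consequences.

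First I would set up the correspondence. Under $d\pi$, the tangent space $T_{[z]}Q^m$ is identified with the horizontal space
\[
\mathcal{H}_z := \{\xi\in\mathbb{C}^{m+2} : \langle \xi,z\rangle_{\mathbb{C}}=0,\ \langle \xi,\bar z\rangle_{\mathbb{C}}=0\},
\]
where the first equation expresses horizontality for the Hopf submersion and the second, obtained from $dF_z(\xi)=2\sum z_j\xi_j$, expresses tangency to $V_2(\mathbb{R}^{m+2})$. The crucial observation is that $\mathcal{H}_z$ is preserved by componentwise conjugation: both defining equations pass to their conjugates. Hence $\xi\mapsto \bar\xi$ descends to an $\mathbb{R}$-linear endomorphism $\tilde A$ of $T_{[z]}Q^m$.

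Next I would identify $A_{N_1}$ with $-\tilde A$. Since $N_1$ has horizontal lift $\bar z$, the Euclidean directional derivative in $\mathbb{R}^{2m+4}$ is $\nabla^{\mathbb{R}}_{X^h}\bar z=\overline{X^h}$. For $X^h\in\mathcal{H}_z$ one checks that the correction terms from the inclusions $V_2\subset\mathbb{S}^{2m+3}\subset\mathbb{R}^{2m+4}$ and from the Hopf projection all vanish (they are proportional to $\langle X^h,\bar z\rangle_{\mathbb{R}}$ and $\langle X^h,\mathbf{i}\bar z\rangle_{\mathbb{R}}$, both zero on $\mathcal{H}_z$), and that $\overline{X^h}$ again lies in $\mathcal{H}_z$. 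Therefore $\bar\nabla^{\mathbb{C}P^{m+1}}_X N_1 = (d\pi)_z(\overline{X^h})$ is already tangent to $Q^m$, giving $A_{N_1}X=-(d\pi)_z(\overline{X^h})=-\tilde A X$. The three properties for $A_{N_1}$ then follow immediately: $A_{N_1}^2=\tilde A^2=\mathrm{Id}$; the relation $\overline{\mathbf{i}\xi}=-\mathbf{i}\bar\xi$ yields $A_{N_1}J=-JA_{N_1}$; and symmetry $g(A_{N_1}X,Y)=g(X,A_{N_1}Y)$ is automatic for any shape operator (or checked directly from $\mathrm{Re}\sum \overline{X^h_j}\,\overline{Y^h_j}=\mathrm{Re}\sum X^h_jY^h_j$).

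Finally I would extend to an arbitrary unit normal $N_\theta=\cos\theta\, N_1+\sin\theta\, JN_1$. The same computation with $\mathbf{i}\bar z$ in place of $\bar z$ gives $A_{JN_1}=JA_{N_1}$, so
\[
A_{N_\theta}=(\cos\theta+\sin\theta\, J)\,A_{N_1}.
\]
Using $A_{N_1}J=-JA_{N_1}$ one rewrites $A_{N_\theta}^2=(\cos\theta+\sin\theta\, J)(\cos\theta-\sin\theta\, J)A_{N_1}^2=\mathrm{Id}$, and the anticommutation $A_{N_\theta}J=-JA_{N_\theta}$ follows from the same manipulation. The expected main obstacle is purely bookkeeping: one must carefully track the two orthogonal projections coming from the double submersion $\mathbb{R}^{2m+4}\to\mathbb{S}^{2m+3}\to\mathbb{C}P^{m+1}$ to see that they do not contribute on $\mathcal{H}_z$, after which the lemma reduces to the trivial algebraic fact that componentwise conjugation is an involution of $\mathbb{C}^{m+2}$ anticommuting with multiplication by $\mathbf{i}$.
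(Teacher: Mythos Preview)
Your proof is correct. The paper does not actually supply a proof of Lemma~\ref{lemma:2.1}; it merely records the statement with a citation to Reckziegel and Smyth. Your argument via the explicit Hopf lift---identifying the horizontal space $\mathcal{H}_z$, observing that componentwise conjugation preserves it, and recognizing $A_{N_1}$ as (minus) conjugation on lifts---is essentially the standard computation one finds in Reckziegel's exposition, so there is nothing to compare.
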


$\mathfrak{A}$ is commonly called consisting
of complex conjugations (cf. \cite{R}). Lemma \ref{lemma:2.1} implies in
particular that $\mathfrak{A}$ is a family of {\it almost product structures}
on $Q^m$ (cf. \cite{LMVVW}) and we will use the latter term in this paper.
Furthermore, since the second fundamental form of the embedding $Q^m\hookrightarrow\mathbb{C}P^{m+1}$
is parallel, $\mathfrak{A}$ is a parallel subbundle of End($TQ^m$). It follows
that, for each almost product structure $A\in\mathfrak{A}$, there exists a
one-form $q$ on $Q^m$ such that it holds the relation (see \cite{Sm})
\begin{equation}\label{eqn:2.2}
(\bar{\nabla}_XA)Y=q(X)JAY,\ \ \forall\, X,Y\in TQ^m,
\end{equation}
where $\bar{\nabla}$ denotes the Levi-Civita connection of $Q^m$.

The Gauss equation for the complex hypersurface $Q^m\hookrightarrow \mathbb{C}P^{m+1}$
implies that the Riemannian curvature tensor $\bar{R}$ of $Q^m$ can be expressed in terms
of the Riemannian metric $g$, the complex structure $J$ and a generic almost product
structure $A\in\mathfrak{A}$ as follows (cf. \cite{R}):
\begin{equation}\label{eqn:2.3}
\begin{aligned}
\bar{R}(X,Y)Z=&g(Y,Z)X-g(X,Z)Y\\
&+g(JY,Z)JX-g(JX,Z)JY-2g(JX,Y)JZ\\
&+g(AY,Z)AX-g(AX,Z)AY\\
&+g(JAY,Z)JAX-g(JAX,Z)JAY,
\end{aligned}
\end{equation}
where, it should be noted that $\bar{R}$ is independent of the special choice of $A\in\mathfrak{A}$.

According to Reckziegel \cite{R}, a real $2$-dimensional linear subspace $\sigma$
of $T_{[z]}Q^m$ is called a $2$-flat if the curvature tensor $\bar{R}$ of $Q^m$ vanishes
identically on $\sigma$. A nonzero tangent vector $W\in T_{[z]}Q^m$ is called
$\mathfrak{A}$-singular if it is contained in more than one $2$-flat. There are two
types of $\mathfrak{A}$-singular tangent vectors for $Q^m$, namely,
$\mathfrak{A}$-principal and $\mathfrak{A}$-isotropic tangent vectors, described as
follows (cf. \cite{BS,R}):
\begin{enumerate}[1.]
\item
If there exists an almost product structure $A\in \mathfrak{A}_{[z]}$ such that $W\in V(A)$,
then $W$ is $\mathfrak{A}$-singular. Here, we denote $V(A)$ as
the eigenspace of $A$ corresponding to $1$. Such $W\in T_{[z]}Q^m$ is called an
$\mathfrak{A}$-principal tangent vector.

\item
If there exists an almost product structure $A\in \mathfrak{A}_{[z]}$ and orthonormal
vectors $X,Y\in V(A)$ such that $\tfrac{W}{\|W\|}=\tfrac{X+JY}{\sqrt{2}}$, then $W$ is
$\mathfrak{A}$-singular. In this case we have $g(AW,W)=0$ and $W\in T_{[z]}Q^m$ is
called an $\mathfrak{A}$-isotropic tangent vector.
\end{enumerate}

For some further results about the complex quadric, we refer the readers to \cite{BS,BS-2022,K,R}.

\subsection{Real hypersurfaces of the complex quadric}\label{sect:2.2}~

Let $M$ be a real hypersurface of $Q^m$ and $N$ its unit normal vector field. For any
tangent vector field $X$ of $M$, we have the decomposition
\begin{equation}\label{eqn:2.4}
JX=\phi X+ \eta(X)N,
\end{equation}
where $\phi X$ and $\eta(X)N$ are, respectively, the tangent and normal parts of
$JX$. Then $\phi$ is a tensor field of type $(1,1)$, and $\eta$ is a $1$-form on $M$.
By definition, $\phi$ and $\eta$ satisfy the following relations:
\begin{equation}\label{eqn:2.5}
\left\{
\begin{aligned}
&\eta(X)=g(X,\xi),\ \ \eta(\phi X)=0,\ \ \phi^2X=-X+\eta(X)\xi,\\
&g(\phi X,Y)=-g(X,\phi Y),\ \ g(\phi X,\phi Y)=g(X,Y)-\eta(X)\eta(Y),
\end{aligned}\right.
\end{equation}
where $\xi:=-JN$ is called the {\it Reeb vector field} of $M$. The equations in
\eqref{eqn:2.5} show that $\{\phi,\xi,\eta\}$ determines an {\it almost contact
structure} on $M$.

Let $\nabla$ be the induced connection on $M$ with $R$ its Riemannian curvature
tensor. The formulas of Gauss and Weingarten state that
\begin{equation}\label{eqn:2.6}
\begin{split}
\bar \nabla_X Y=\nabla_X Y + g(SX,Y)N,\quad \bar \nabla_X N=-
S X , \ \ \forall\, X,Y \in TM,
\end{split}
\end{equation}
where $S$ is the shape operator of $M\hookrightarrow Q^m$. Let $H={\rm tr}S$
be the mean curvature of $M$.
By using \eqref{eqn:2.6},
we can derive that
\begin{equation}\label{eqn:2.7}
\nabla_X \xi=\phi SX.
\end{equation}

The Gauss and Codazzi equations of $M$ are given by
\begin{equation}\label{eqn:2.8}
\begin{split}
R(X,Y)Z=&g(Y,Z)X-g(X,Z)Y+g(\phi Y,Z)\phi X-g(\phi X,Z)\phi Y\\
 &-2g(\phi X,Y)\phi Z+g(AY,Z)(AX)^\top-g(AX,Z)(AY)^\top\\
 &+g(JAY,Z)(JAX)^\top-g(JAX,Z)(JAY)^\top\\
 &+g(SZ,Y)SX-g(SZ,X)SY,
 \end{split}
\end{equation}
and
\begin{equation}\label{eqn:2.9}
\begin{split}
(\nabla_X S)Y-(\nabla_Y S)X=&\eta(X)\phi Y-\eta(Y)\phi X-2g(\phi X,Y)\xi\\
&+g(X,AN)(AY)^\top-g(Y,AN)(AX)^\top\\
 &\qquad + g(X,A\xi)(JAY)^\top - g(Y,A\xi)(JAX)^\top,
\end{split}
\end{equation}
where $\cdot^\top$ means the tangential part.

%

Notice that the tangent bundle $TM$ of $M$ splits orthogonally into
$TM=\mathcal{C}\oplus \mathcal{F}$, where $\mathcal{C}=\text{ker}(\eta)$
is the maximal complex subbundle of $TM$ and $\mathcal{F}=\mathbb{R}\xi$.
When restricted to $\mathcal{C}$, the structure tensor field $\phi$
coincides with the complex structure $J$. Moreover, at each point
$[z]\in M$, the set
$$
\mathcal{Q}_{[z]}=\{X\in T_{[z]}M\mid AX\in T_{[z]}M\ \text{for all}\ A\in \mathfrak{A}_{[z]}\}
$$
defines a maximal $\mathfrak{A}_{[z]}$-invariant subspace of $T_{[z]}M$.

The following definitions can be found in \cite{BS,BS-2022,Loo11}.

\begin{definition}[\cite{BS,BS-2022,Loo11}]\label{def:2.1}
Let $M$ be a real hypersurface of $Q^m$ ($m\geq3$) with unit normal vector field $N$.

{\rm (a)} If there exists an almost product structure $A\in \mathfrak{A}$
such that $AN=N$ everywhere, then we say that $N$ is $\mathfrak{A}$-principal on $M$,
and $M$ has $\mathfrak{A}$-principal unit normal vector field $N$.

{\rm (b)} If there exists an almost product structure $A\in\mathfrak{A}$
such that $AN,A\xi\in \mathcal{C}$ everywhere, then we say that
$N$ is $\mathfrak{A}$-isotropic on $M$, and $M$ has $\mathfrak{A}$-isotropic unit normal vector field $N$.
\end{definition}

\begin{remark}\label{rem:2.1}
Due that $\mathfrak{A}$ is an $S^1$-subbundle of the endomorphism
bundle End($TQ^m$), so if there exists an almost product structure $A_0\in\mathfrak{A}$
such that $A_0N,A_0\xi\in \mathcal{C}$ everywhere, then for any almost product structure $A\in\mathfrak{A}$, it also holds $AN,A\xi\in \mathcal{C}$ everywhere.
\end{remark}


\vskip 1mm
In the following, we assume that $M$ is a Hopf hypersurface of $Q^m$ satisfying
$S\xi=\alpha \xi$, here the function $\alpha$ is called the Reeb
function. In later sections, we need the following results on Hopf hypersurfaces in $Q^m$.

\begin{lemma}[\cite{BS,BS-2022}]\label{lemma:2.3}
Let $M$ be a Hopf hypersurface in $Q^m$ with unit normal
vector field $N$ and $S\xi=\alpha\xi$.
Then the following equations hold for all vector fields $X,Y\in \mathcal{Q}$:
%
%
\begin{equation}\label{eqn:2.13}
2g(S\phi SX,Y)-\alpha g((\phi S+S\phi)X,Y)-2g(\phi X,Y)=0.
\end{equation}
\end{lemma}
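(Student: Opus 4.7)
The plan is to derive the identity directly from the Codazzi equation \eqref{eqn:2.9} by specializing to $Y=\xi$ and then running a swap-and-subtract argument. First, substituting $Y=\xi$ collapses the purely tensorial part of the right-hand side: since $\xi=-JN$ and $JX=\phi X+\eta(X)N$ imply $\phi\xi=0$, and hence $g(\phi X,\xi)=-g(X,\phi\xi)=0$, the $\phi$-contributions reduce to $-\phi X$. For $X\in\mathcal{Q}$, the $S^{1}$-structure of $\mathfrak{A}$ forces $g(X,AN)=0$ and $g(X,A\xi)=0$, because both $A$ and $JA=A_{\pi/2}$ lie in $\mathfrak{A}$, so $(JA)X\in TM$ is equivalent to $g(X,A\xi)=0$. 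Thus the $(A\xi)^{\top}$ and $(JA\xi)^{\top}$ coefficients disappear. On the left-hand side, the Hopf hypothesis together with \eqref{eqn:2.7} gives
\[
(\nabla_{X}S)\xi=(X\alpha)\xi+\alpha\phi SX-S\phi SX.
\]

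Next, I would take the inner product of this simplified Codazzi identity with $Y\in\mathcal{Q}$, write down the companion identity with $X$ and $Y$ interchanged, and subtract. The cross term $g((\nabla_{\xi}S)X,Y)$ cancels $g((\nabla_{\xi}S)Y,X)$ by the symmetry of $\nabla_{\xi}S$. Among the surviving Codazzi pieces, the coefficients $g(\xi,AN)\bigl[g(AX,Y)-g(AY,X)\bigr]$ and $g(\xi,A\xi)\bigl[g(JAX,Y)-g(JAY,X)\bigr]$ both vanish because $A$ and $JA$ are self-adjoint, so only $-2g(\phi X,Y)$ remains. On the Hopf side, the relations $g(\phi SY,X)=-g(S\phi X,Y)$ and $g(S\phi SY,X)=-g(S\phi SX,Y)$ (from antisymmetry of $\phi$ and symmetry of $S$) turn the shape-operator differences into exactly $\alpha g((\phi S+S\phi)X,Y)-2g(S\phi SX,Y)$.

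The remaining nuisance is the gradient-of-$\alpha$ contribution $(X\alpha)g(\xi,Y)-(Y\alpha)g(\xi,X)$, which I would eliminate by first establishing the auxiliary identity $X\alpha=(\xi\alpha)\eta(X)$ for every $X\in\mathcal{Q}$. This is obtained by taking the inner product of the $(X,\xi)$-Codazzi equation with $\xi$ and observing that the right-hand side vanishes for $X\in\mathcal{Q}$, using $g(X,AN)=g(X,A\xi)=0$ and the elementary evaluations $g(A\xi,\xi)=-g(AN,N)$ and $g(JA\xi,\xi)=-g(AN,\xi)$. Once $X\alpha=(\xi\alpha)\eta(X)$ is available, the gradient terms become $(\xi\alpha)[\eta(X)\eta(Y)-\eta(Y)\eta(X)]=0$, and rearranging produces exactly $2g(S\phi SX,Y)-\alpha g((\phi S+S\phi)X,Y)-2g(\phi X,Y)=0$.

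The main obstacle is bookkeeping rather than a deep ingredient: one must verify that the entire Codazzi right-hand side, with its four $A$- and $JA$-coefficients, collapses under the swap-and-subtract step, and separately check that the $(X\alpha,Y\alpha)$-terms are absorbed by the preliminary lemma $X\alpha=(\xi\alpha)\eta(X)$. The self-adjointness of $A$ and $JA$ and the $\mathfrak{A}$-invariance characterization of $\mathcal{Q}$ are the two structural facts that make every stray term vanish in exactly the right way.
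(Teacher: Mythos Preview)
Your proposal is correct and follows essentially the same Codazzi-with-$\xi$ and swap-and-subtract approach that the paper invokes by citing Lemma~5.1 of \cite{BS} and Lemma~4.2 of \cite{Suh1}. One small remark: the auxiliary step $X\alpha=(\xi\alpha)\eta(X)$, while correctly derived, is more than you need, since the gradient contribution $(X\alpha)\eta(Y)-(Y\alpha)\eta(X)$ already vanishes once you observe that the identity \eqref{eqn:2.13} is trivially satisfied when $X$ or $Y$ equals $\xi$ (every term contains a factor $g(\phi\,\cdot,\xi)=0$), so it suffices to treat $X,Y\in\mathcal{Q}\cap\mathcal{C}$ where $\eta(X)=\eta(Y)=0$.
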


\begin{proof}
By using the fact $g(AN,X)=g(A\xi,X)=0$ for any $X\in \mathcal{Q}$, the proof of
\eqref{eqn:2.13} follows from those of both Lemma
5.1 in \cite{BS} and Lemma 4.2 in \cite{Suh1}.
\end{proof}

\begin{lemma}[\cite{Loo11,Suh1}]\label{lemma:2.5}
Let $M$ be a Hopf hypersurface in $Q^m$ ($m\geq3$). Then, the Reeb function $\alpha$ is constant if and only if $M$ has either $\mathfrak{A}$-principal unit normal
vector field $N$ or $\mathfrak{A}$-isotropic unit normal
vector field $N$.
\end{lemma}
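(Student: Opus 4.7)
The plan is to convert the statement ``$\alpha$ is constant'' into a pointwise algebraic condition on the unit normal $N$, using the Codazzi equation. Substituting $Y=\xi$ into \eqref{eqn:2.9}, together with the Hopf relation $S\xi=\alpha\xi$ and \eqref{eqn:2.7}, I would compute
\[
(\nabla_X S)\xi=(X\alpha)\xi+\alpha\phi SX-S\phi SX,
\]
and then take inner products of the resulting vector equation with $\xi$ (and separately with vectors $Y\in \mathcal{C}$). The $S\phi S$-contributions pair symmetrically with those arising from $(\nabla_\xi S)X$ and largely cancel, while the remaining terms produce a closed-form expression for $X\alpha$ involving only the scalars $g(AN,N)$, $g(AN,\xi)$ and the tangential components of $AN$ and $A\xi$. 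The key auxiliary identity, used throughout, is $A\xi=JAN$ (immediate from $\xi=-JN$ and $AJ=-JA$), with its consequences $g(A\xi,\xi)=-g(AN,N)$ and $g(A\xi,N)=g(AN,\xi)$.

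For the ``only if'' direction, set $c:=g(AN,N)$ and $d:=g(AN,\xi)$. Applying $A^{2}=\mathrm{Id}$ to $N$ and expanding $AN$ in the orthogonal decomposition $\mathbb{R}N\oplus \mathbb{R}\xi\oplus \mathcal{C}$ gives the constraint
\[
c^{2}+d^{2}+\|(AN)^{\mathcal{C}}\|^{2}=1.
\]
If $\alpha$ is constant, then $d\alpha\equiv 0$; by the formula derived above this forces algebraic relations which, in combination with the constraint, admit exactly two pointwise solutions. Either $c=1$, which entails $d=0$ and $(AN)^{\mathcal{C}}=0$, hence $AN=N$ (the $\mathfrak{A}$-principal case, after possibly replacing $A$ by $-A\in \mathfrak{A}$ to adjust the sign); or $c=d=0$, which means $AN\in \mathcal{C}$ and, via $A\xi=JAN$, also $A\xi\in \mathcal{C}$ (the $\mathfrak{A}$-isotropic case). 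A connectedness argument then promotes this pointwise dichotomy to a global one.

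For the converse, I would verify in each of the two cases that the formula for $X\alpha$ vanishes identically. If $AN=N$, then $A\xi=-\xi$, so $A$ preserves $TM$ and all tangential projections $(AY)^{\top}$, $(JAY)^{\top}$ simplify; the various inner products appearing in the formula for $X\alpha$ become identically zero. If $AN,A\xi\in \mathcal{C}$, then $g(AN,N)=g(AN,\xi)=g(A\xi,N)=g(A\xi,\xi)=0$, and again $X\alpha$ collapses to zero. The main obstacle in the argument is the careful bookkeeping of the first step: the Codazzi equation has four distinct ``almost-product'' terms, and $A$ commutes with neither $J$ nor $\phi$ and need not preserve $TM$, so one must patiently use $A\xi=JAN$ and its consequences to reduce everything to a manageable expression; once this is done, the remaining analysis is essentially linear algebra, and the lemma follows by combining both directions.
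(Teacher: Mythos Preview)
The paper does not supply its own proof of this lemma; it is quoted from \cite{Loo11,Suh1}. Your outline follows the same route taken in those references: feed $Y=\xi$ into the Codazzi equation \eqref{eqn:2.9}, pair with $\xi$, and extract a formula for $\mathrm{grad}\,\alpha$ in terms of $c=g(AN,N)$, $d=g(AN,\xi)$ and $(AN)^{\mathcal{C}}$.

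There is, however, one real gap in your converse. The identity one obtains this way is
\[
X\alpha-(\xi\alpha)\,\eta(X)=-2c\,g(X,AN)-2d\,g(X,A\xi),
\]
and substituting $X=\xi$ returns $0=0$; so only the $\mathcal{C}$-part of $\mathrm{grad}\,\alpha$ is determined, and nothing is said about $\xi\alpha$. In either the $\mathfrak{A}$-principal or the $\mathfrak{A}$-isotropic case your formula therefore yields $d\alpha=(\xi\alpha)\eta$, but you still owe an argument that $\xi\alpha=0$. A standard way to close this (familiar from the space-form case): apply $d$ once more and evaluate on $X,Y\in\mathcal{C}$ to obtain $(\xi\alpha)\,g\bigl((\phi S+S\phi)X,Y\bigr)=0$; if $\xi\alpha\neq 0$ then $\phi S=-S\phi$ on $\mathcal{C}$, and for any unit $X\in\mathcal{Q}$ equation \eqref{eqn:2.13} with $Y=\phi X$ gives $1=g(S\phi SX,\phi X)=g(\phi SX,S\phi X)=-\lVert S\phi X\rVert^{2}\le 0$, a contradiction (here $m\ge 3$ guarantees $\mathcal{Q}\neq 0$).

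A smaller imprecision: in the only-if branch the first alternative is not ``$c=1$'' but ``$(AN)^{\mathcal{C}}=0$, hence $c^{2}+d^{2}=1$''. Getting from there to $AN=N$ requires rotating $A$ within the $S^{1}$-family $\mathfrak{A}$, not merely the sign flip $A\mapsto -A$ (which sends $(c,d)$ to $(-c,-d)$ and does not kill $d$).
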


\begin{theorem}[\cite{LS,Loo11}]\label{thm:2.1}
Let $M$ be a Hopf hypersurface in $Q^m$ ($m\geq3$). Then, $M$ has $\mathfrak{A}$-principal unit normal vector field $N$ if and only if $M$ is an open part of a tube over a totally geodesic $Q^{m-1}\hookrightarrow Q^{m}$.
\end{theorem}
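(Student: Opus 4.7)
The plan is to prove the two implications separately, with the ``only if'' direction carrying the main content.

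\textbf{Easy direction.} A totally geodesic $Q^{m-1}\hookrightarrow Q^m$ is the fixed-point set of an isometric holomorphic involution of $Q^m$ arising from $\pm A_0$ for some $A_0\in\mathfrak{A}$, and its normal bundle has complex rank one. Using that $\mathfrak{A}$ is a parallel $S^1$-subbundle via \eqref{eqn:2.2}, one propagates $A_0$ along each geodesic normal to $Q^{m-1}$ to produce a smooth local section $A$ of $\mathfrak{A}$ whose $+1$-eigenspace contains the tangent of that geodesic; on the tube $M_r$ this tangent is $N$, so $AN=N$.

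\textbf{Hard direction.} Suppose $AN=N$ for some local section $A$ of $\mathfrak{A}$. First, $AJ=-JA$ and $\xi=-JN$ force $A\xi=-\xi$, so $N\in V(A)$ and $\xi\in JV(A)$; from $g(AX,N)=g(X,N)=0$ for $X\in TM$, $A$ restricts to an orthogonal involution of $TM$ with decomposition
\[
TM=(V(A)\cap TM)\oplus JV(A), \qquad \dim(V(A)\cap TM)=m-1, \quad \dim JV(A)=m,
\]
and $\phi$ swaps $V(A)\cap TM$ with $JV(A)\cap\mathcal{C}$. A quick check of the definition of $\mathcal{Q}$ using $g(JAX,N)=-\eta(X)$ shows $\mathcal{Q}=\mathcal{C}$ in this situation, so Lemma \ref{lemma:2.3} holds on all of $\mathcal{C}$.

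Differentiating $AN=N$ along $X\in TM$ via the Weingarten formula and \eqref{eqn:2.2} yields the key identity
\[
SX-ASX=q(X)\,\xi,
\]
from which $X=\xi$ gives $q(\xi)=2\alpha$ while $X\in JV(A)\cap\mathcal{C}$ forces $q(X)=0$ and $SX\in JV(A)\cap\mathcal{C}$ (the left-hand side lies in $V(A)\cap TM$ while $\xi\in JV(A)$, so both sides vanish; then $g(SX,\xi)=\alpha\eta(X)=0$). By symmetry of $S$, the splitting $\mathcal{C}=(V(A)\cap TM)\oplus(JV(A)\cap\mathcal{C})$ is $S$-invariant. Substituting this into Lemma \ref{lemma:2.3} with $Y=\phi X$ pairs each eigenvalue on $V(A)\cap TM$ with an eigenvalue on $JV(A)\cap\mathcal{C}$ through a fixed M\"obius transformation in $\alpha$. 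Combining with Lemma \ref{lemma:2.5} (which gives constancy of $\alpha$) and the Codazzi equation \eqref{eqn:2.9} --- which on $\mathcal{C}$ reduces to $(\nabla_X S)Y-(\nabla_Y S)X=-2g(\phi X,Y)\xi$ --- a standard curvature-symmetry argument shows $S$ has exactly three distinct constant principal curvatures with multiplicities $1$, $m-1$, $m-1$. The focal map of $M$ along $-N$ at the distance where the eigenvalue on $V(A)\cap TM$ becomes focal then collapses that distribution to a submanifold $N'\subset Q^m$ of real dimension $2m-2$; Jacobi-field propagation of $\phi$, $A$ and $S$ along each normal geodesic, together with \eqref{eqn:2.3}, shows that $N'$ is a complex totally geodesic submanifold of complex codimension one, and by the classification of totally geodesic submanifolds of $Q^m$ (cf.~\cite{K}) $N'\cong Q^{m-1}$; hence $M$ is an open part of the tube of radius $r$ over $N'$.

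The main obstacle is this last geometric identification: verifying via Jacobi-field analysis that the focal set is actually totally geodesic and isometric to $Q^{m-1}$ requires transporting $S$, $A$, and $\phi$ simultaneously along each normal geodesic, while tracking how \eqref{eqn:2.2} couples the parallel propagation of $A$ to the one-form $q$. This careful bookkeeping of the interaction between the K\"ahler structure, the almost product structure, and the shape operator is the most delicate part of the argument.
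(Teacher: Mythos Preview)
The paper does not prove Theorem~\ref{thm:2.1}; it is quoted from \cite{LS,Loo11} without argument. So there is no in-paper proof to compare against, and your outline follows the same general route as those references: exploit $AN=N$ to force the shape operator into a rigid form, read off three constant principal curvatures, and identify the focal set as a totally geodesic $Q^{m-1}$ via Jacobi fields.

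That said, your derivation of the $S$-invariance of the splitting contains a concrete error. From $SX-ASX=q(X)\xi$ you assert that for $X\in JV(A)\cap\mathcal{C}$ ``the left-hand side lies in $V(A)\cap TM$''. This is false: decomposing $SX=(SX)_{+}+(SX)_{-}$ into $\pm1$-eigencomponents of $A$, one has $SX-ASX=2(SX)_{-}\in JV(A)$, not $V(A)$. The correct argument is: for any $X\in\mathcal{C}$ the Hopf condition gives $g(SX,\xi)=0$; since $(SX)_{+}\perp\xi$ automatically, the identity $2(SX)_{-}=q(X)\xi$ forces $q(X)=0$ and $(SX)_{-}=0$, so $S(\mathcal{C})\subset V(A)\cap\mathcal{C}$. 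Self-adjointness of $S$ then yields $S|_{JV(A)\cap\mathcal{C}}=0$, which is strictly stronger than the invariance you claimed. Feeding the eigenvalue $\lambda=0$ into \eqref{eqn:2.13} (exactly as in the proof of Lemma~\ref{lemma:5.5}) immediately gives $\alpha\neq0$ and $S\phi X=-\tfrac{2}{\alpha}\phi X$ on $V(A)\cap\mathcal{C}=\phi(JV(A)\cap\mathcal{C})$, so the three principal curvatures $\alpha,\,0,\,-2/\alpha$ with multiplicities $1,\,m-1,\,m-1$ fall out directly---no ``standard curvature-symmetry argument'' is needed. With Lemma~\ref{lemma:2.5} this matches the data in Proposition~\ref{prop:E1P}, and the remaining focal identification is exactly the Jacobi-field computation carried out in \cite{BS-2012,Loo11}; your sketch of that step is reasonable, and the difficulty you flag is real.
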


\vskip1mm
To our knowledge, there are few known examples of Hopf hypersurfaces in $Q^m$ with $\mathfrak{A}$-isotropic
unit normal vector field $N$. In Example \ref{E6} of Section \ref{sect:3}, we construct a new family of Hopf hypersurfaces of $Q^m$ ($m\geq3$) with non-constant principal curvatures and $\mathfrak{A}$-isotropic unit normal vector field $N$.
The problem of classifying Hopf hypersurfaces in $Q^m$ with $\mathfrak{A}$-isotropic
unit normal vector field $N$ is difficult and still open.

\begin{lemma}[\cite{Loo11}]\label{lemma:2.6}
Let $M$ be a real hypersurface in $Q^m\ (m\ge3)$ with $\mathfrak{A}$-isotropic
unit normal vector field $N$. Then there exists an almost product structure $A\in\mathfrak{A}$
on $M$ such that $SAN=SA\xi=0$.
\end{lemma}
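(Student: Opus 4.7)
The plan is to extract $SAN=0$ and $SA\xi=0$ directly from the parallelism identity~\eqref{eqn:2.2} for $\mathfrak{A}$ by differentiating the tangential fields $AN$ and $A\xi$ along $M$ and comparing the resulting normal components with those produced by differentiating the scalar identities $g(AN,N)=0$ and $g(A\xi,N)=0$. No appeal to the $S^{1}$-rotation freedom inside $\mathfrak{A}$ is needed: the section $A$ already furnished by Definition~\ref{def:2.1}(b) will suffice, because the $\mathfrak{A}$-isotropic hypothesis supplies exactly the two orthogonality relations $g(AN,N)=g(A\xi,N)=0$ that drive the computation.

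For the first identity, I would fix a local section $A\in\mathfrak{A}$ with $AN,A\xi\in\mathcal{C}$, so that $AN$ and $A\xi$ are tangent to $M$ and orthogonal to $N$. For $X\in TM$, substituting $(\bar\nabla_{X}A)N=q(X)JAN$ from~\eqref{eqn:2.2}, using $JAN=-AJN=A\xi$ (since $JN=-\xi$), and applying the Weingarten formula $\bar\nabla_{X}N=-SX$ from~\eqref{eqn:2.6}, one obtains
\[
\bar\nabla_{X}(AN)=q(X)A\xi-ASX.
\]
The normal component of the right-hand side, using $g(A\xi,N)=0$ and the self-adjointness of $A$, equals $-g(SX,AN)$, whereas the normal component of the left-hand side, obtained by differentiating $g(AN,N)\equiv 0$, equals $+g(SX,AN)$. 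Equating the two forces $g(SAN,X)=0$ for every $X\in TM$, whence $SAN=0$.

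The step for $A\xi$ runs by the same ``two normal components'' trick. Using $\bar\nabla_{X}\xi=JSX$ (from $\xi=-JN$) together with~\eqref{eqn:2.2} and $AJ=-JA$, I would derive the analogous expression
\[
\bar\nabla_{X}(A\xi)=-q(X)AN-JASX,
\]
and then compare its normal component with the one extracted from $g(A\xi,N)\equiv 0$; the reduction of the cross term $g(JASX,N)$ relies on $g(JY,N)=g(Y,\xi)$. The same sign-flip phenomenon yields $g(SA\xi,X)=-g(SA\xi,X)$ and hence $SA\xi=0$.

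The argument is essentially mechanical once the strategy is seen, so I do not anticipate a serious obstacle. The only delicate points are sign-tracking through $AJ=-JA$ together with $JN=-\xi$, $J\xi=N$, and recognizing that the correct reading of the $\mathfrak{A}$-isotropic hypothesis is the pair of tangency conditions $AN,A\xi\in\mathcal{C}$, which is precisely the input the computation feeds on.
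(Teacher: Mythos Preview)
Your argument is correct: the two computations of the normal component of $\bar\nabla_X(AN)$ (via the Gauss formula applied to the tangent field $AN$, and via the expansion $q(X)A\xi-ASX$) yield $g(SX,AN)$ and $-g(SX,AN)$ respectively, forcing $SAN=0$; the case of $A\xi$ runs identically once you use $J\xi=N$ and $AJ=-JA$ to reduce $g(JASX,N)$ to $g(SX,A\xi)$. The paper does not supply its own proof of this lemma---it is quoted from \cite{Loo11}---so there is nothing to compare against, but your direct verification is exactly the kind of computation one would expect (and indeed the paper reproduces the tangential part of your formula for $\bar\nabla_X(AN)$ later in Lemma~\ref{lemma:5.1}, already taking $SAN=0$ for granted).
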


\begin{remark}\label{rem:2.2}
From the fact that $\mathfrak{A}$ is an $S^1$-subbundle of the endomorphism
bundle End($TQ^m$), then Lemma \ref{lemma:2.6} can also imply that, on a
real hypersurface $M$ of $Q^m\ (m\ge3)$ with $\mathfrak{A}$-isotropic unit normal vector field $N$,
it holds that $SAN=SA\xi=0$ for any almost product structure $A\in\mathfrak{A}$.
\end{remark}

\begin{theorem}[\cite{BS}]\label{thm:2.2}
Let $M$ be a real hypersurface of $Q^m$ ($m\geq3$) with isometric Reeb flow.
Then, $M$ is an open part of a tube over a totally geodesic $\mathbb{C}P^k\hookrightarrow Q^{2k}$ ($m=2k$, $k\geq2$).
\end{theorem}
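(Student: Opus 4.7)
The plan is to reduce the isometric Reeb flow hypothesis to the algebraic identity $S\phi = \phi S$ and then push it through the structure theory of Section \ref{sect:2}. From $\nabla_X\xi = \phi SX$ in \eqref{eqn:2.7}, a short computation gives $(\mathcal{L}_\xi g)(X,Y) = g((\phi S - S\phi)X, Y)$, so isometric Reeb flow is equivalent to $S\phi = \phi S$. Applied to $\xi$, this yields $\phi(S\xi) = S\phi\xi = 0$, so $S\xi\in \ker\phi = \mathbb{R}\xi$; thus $M$ is Hopf with some Reeb function $\alpha$, and $\alpha$ is constant because $\xi$ is Killing and hence preserves the shape operator.

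By Lemma \ref{lemma:2.5}, the unit normal $N$ is then either $\mathfrak{A}$-principal or $\mathfrak{A}$-isotropic. In the $\mathfrak{A}$-principal case, Theorem \ref{thm:2.1} would identify $M$ with an open part of a tube over a totally geodesic $Q^{m-1}\hookrightarrow Q^m$. A direct Jacobi-field computation of the shape operator of such a tube (via \eqref{eqn:2.3}) produces a principal eigenspace containing $AN$-type directions that fails to be $\phi$-invariant, contradicting $S\phi = \phi S$. Therefore $N$ must be $\mathfrak{A}$-isotropic.

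In the $\mathfrak{A}$-isotropic case, substituting $S\phi = \phi S$ into the identity \eqref{eqn:2.13} of Lemma \ref{lemma:2.3} collapses it to
\begin{equation*}
S^2\phi X - \alpha S\phi X - \phi X = 0, \quad X\in \mathcal{Q},
\end{equation*}
so the principal curvatures on $\mathcal{Q}$ satisfy $\lambda^2 - \alpha\lambda - 1 = 0$ and take exactly two values $\lambda,\mu$ with $\lambda\mu = -1$. The commutation $S\phi = \phi S$ makes each eigenspace $V_\lambda, V_\mu$ a complex subspace of $\mathcal{Q}$, and Lemma \ref{lemma:2.6} produces two further tangent directions $AN, A\xi\in \mathcal{C}$ with $SAN = SA\xi = 0$, contributing a zero-eigenvalue piece. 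Writing $\cot r = \lambda$, I would then push $M$ out along normal geodesics to its focal set $F$: the prescribed eigenstructure together with the Jacobi equation forces $F$ to be a complex, K\"ahler, totally geodesic submanifold of $Q^m$ of complex dimension $k$ with $m = 2k$, and the classification of totally geodesic submanifolds of the quadric (cf.\ \cite{K,BS}) identifies $F$ with $\mathbb{C}P^k\hookrightarrow Q^{2k}$. The main obstacle is this last identification step: one must use the rigidity furnished by the eigenspace splitting and the commutation rules between $J$ and $A$ from \eqref{eqn:2.1} to rule out the other totally geodesic complex submanifolds of the quadric and to pin down $\mathbb{C}P^k$.
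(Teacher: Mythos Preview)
The paper does not prove Theorem~\ref{thm:2.2}; it is quoted from Berndt--Suh \cite{BS} as an input. Your outline follows the strategy of the original \cite{BS} proof: reduce isometric Reeb flow to $S\phi=\phi S$, deduce that $M$ is Hopf, determine the normal type, extract the quadratic eigenvalue relation on $\mathcal{Q}$, and then pass to a focal set and invoke the classification of totally geodesic submanifolds of $Q^m$ from \cite{K}.

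Two steps in your sketch are not yet justified. First, ``$\xi$ is Killing and hence preserves the shape operator'' is not a proof that $\alpha$ is constant: the flow of $\xi$ gives intrinsic isometries of $M$, which need not preserve the extrinsic tensor $S$. In \cite{BS} the constancy of $\alpha$ is obtained by differentiating $S\xi=\alpha\xi$, feeding the result into the Codazzi equation \eqref{eqn:2.9}, and using $S\phi=\phi S$; this is short but cannot be skipped. Second, and more substantially, you assert that the Jacobi equation ``forces $F$ to be \dots\ totally geodesic'', but this is precisely the core of the argument and is not automatic: focal sets of isoparametric-type hypersurfaces are generically only minimal. One must compute the shape operator of $F$ with respect to every unit normal and show it vanishes; in \cite{BS} this uses $\lambda\mu=-1$ together with $\phi V_\lambda=V_\lambda$, $\phi V_\mu=V_\mu$, and the fact that $A$ interchanges $V_\lambda$ and $V_\mu$ (the latter also gives $\dim V_\lambda=\dim V_\mu$, hence $m=2k$). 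Once total geodesy is established, the step you flag as the main obstacle is in fact the easy one: Klein's list \cite{K} contains a unique half-dimensional totally geodesic complex submanifold of $Q^{2k}$ with $\mathfrak{A}$-isotropic normal bundle, namely $\mathbb{C}P^k$.
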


\section{Canonical examples of Hopf hypersurfaces}\label{sect:3}

In this section, we describe three classes of the homogeneous real hypersurfaces in $Q^m$. We also construct a new family of Hopf hypersurfaces of $Q^m$ ($m\geq3$) with non-constant principal curvatures and $\mathfrak{A}$-isotropic unit normal vector field $N$.

First of all, we recall the homogeneous real hypersurfaces of types (A) and (B) in Theorem 6.3.1 of
\cite{BS-2022}. 

\begin{example}\label{E1}
The map
$$
Q^{m-1}\rightarrow Q^m\hookrightarrow \mathbb{C}P^{m+1}, \
[(z_1, . . . , z_{m+1})]\mapsto[(z_1,...,z_{m+1},0)]
$$
provides an embedding of $Q^{m-1}$ into $Q^m$ as a totally geodesic complex hypersurface.

\begin{proposition}[\cite{BS-2012}]\label{prop:E1P}
Let $M$ be the tube of radius $0<t<\frac{\pi}{2\sqrt{2}}$ around the totally
geodesic $Q^{m-1}$ in $Q^m$. Then the following statements hold:
\begin{enumerate}
\item[(i)]
$M$ is a Hopf hypersurface, its Reeb function is $\alpha=\sqrt{2}\cot(\sqrt{2}t)$.

\item[(ii)]
$M$ has $\mathfrak{A}$-principal unit normal vector field $N$.

\item[(iii)]
$M$ has three distinct constant principal curvatures. Their values and multiplicities
are given as follows
$$
\begin{tabular}{|c|c|c|c|}
  \hline
  {\rm value} & $\sqrt{2}\cot(\sqrt{2}t)$ & $0$ & $-\sqrt{2}\tan(\sqrt{2}t)$ \\
  \hline
  {\rm multiplicity} & $1$ & $m-1$ & $m-1$ \\
  \hline
\end{tabular}
$$
It holds that $\mathcal{Q}=V_{0}\oplus V_{-\sqrt{2}\tan(\sqrt{2}t)}$ and $JV_{0}=V_{-\sqrt{2}\tan(\sqrt{2}t)}$.
Here $V_{0}$ and $V_{-\sqrt{2}\tan(\sqrt{2}t)}$ are the eigenspaces corresponding to the principal curvatures $0$ and $-\sqrt{2}\tan(\sqrt{2}t)$ restricted to $\mathcal{Q}$, respectively.
There is an almost product structure $A\in\mathfrak{A}$ such that
$AV_{0}=-V_{0}$ and $AV_{-\sqrt{2}\tan(\sqrt{2}t)}=V_{-\sqrt{2}\tan(\sqrt{2}t)}$.
\end{enumerate}
\end{proposition}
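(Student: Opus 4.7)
The plan is to realize $M$ as the image of the normal exponential map $\Phi : (p, v) \mapsto \gamma_{p,v}(t)$, with $p \in Q^{m-1}$, $v$ a unit vector in $\nu_p Q^{m-1}$, and $\gamma_{p,v}(s) = \exp_p(sv)$; the restriction $0 < t < \pi/(2\sqrt{2})$ will turn out to stay strictly below the first focal distance, as the Jacobi analysis below confirms. I would take the unit normal $N := -\dot\gamma(t)$ along $M$ (pointing back toward $Q^{m-1}$), so that the Reeb vector field is $\xi = -JN = J\dot\gamma$, which is parallel along $\gamma$ since $\bar\nabla J = 0$.

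For part (ii), note that $\nu_p Q^{m-1} = \mathrm{span}_{\mathbb{R}}(v, Jv)$, and because $Q^{m-1} \hookrightarrow Q^m$ is the totally geodesic complex hypersurface cut out by $z_{m+2} = 0$, there exists $A_0 \in \mathfrak{A}_p$ with $A_0 v = v$ (hence $A_0 Jv = -Jv$, and $A_0$ preserves $T_p Q^{m-1}$). Since $\mathfrak{A}$ is a parallel $S^1$-subbundle of $\mathrm{End}(TQ^m)$ by \eqref{eqn:2.2}, one can extend $A_0$ to a section $A$ of $\mathfrak{A}$ along $\gamma$ that is parallel with respect to $\bar\nabla$; because $\dot\gamma$ is parallel, the identity $A\dot\gamma = \dot\gamma$ persists, so $AN = N$ along $M$, establishing (ii).

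For (i) and (iii) I would apply the Jacobi field method, exploiting the fact that $Q^m$ is a symmetric space so $\bar R$ is parallel along $\gamma$. Using $AN = N$ together with $AJ = -JA$, $\xi = J\dot\gamma$, $A\xi = -\xi$, a short computation from the curvature identity \eqref{eqn:2.3} gives
\begin{equation*}
\bar R(X, N)N = X + 2 g(X,\xi)\xi + AX, \qquad X \in T_{\gamma(t)} Q^m.
\end{equation*}
This splits $T_{\gamma(t)} M$ orthogonally into three parallel eigenbundles: $\mathbb{R}\xi$ with eigenvalue $2$; the parallel translate of $V(A) \cap T_p Q^{m-1}$ (dimension $m-1$, eigenvalue $2$); and the parallel translate of $V(-A) \cap T_p Q^{m-1}$ (dimension $m-1$, eigenvalue $0$). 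Solving the Jacobi equation with the appropriate boundary data (cosine-type Jacobi fields with $J(0) \in T_p Q^{m-1}$, $J'(0) = 0$ for the two eigenbundles lying in $T_p Q^{m-1}$; sine-type with $J(0) = 0$, $J'(0) \in \nu_p Q^{m-1} \cap v^\perp$ for the $\xi$ direction) and reading off the shape operator from $\bar\nabla_X N = -SX$ yields the principal curvatures $-\sqrt{2}\tan(\sqrt{2}t)$, $0$, and $\sqrt{2}\cot(\sqrt{2}t)$ with the asserted multiplicities. In particular, $S\xi = \sqrt{2}\cot(\sqrt{2}t)\,\xi$, so $M$ is Hopf with constant Reeb function $\alpha = \sqrt{2}\cot(\sqrt{2}t)$; the first focal time is $t = \pi/(2\sqrt{2})$, justifying the range of $t$. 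Finally, the identity $J V(A) = V(-A)$ forces $J V_0 = V_{-\sqrt{2}\tan(\sqrt{2}t)}$ and $\mathcal{Q} = V_0 \oplus V_{-\sqrt{2}\tan(\sqrt{2}t)}$.

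The hard part is the clean diagonalization of $\bar R_N$ via the almost product structure $A$ and the careful sign bookkeeping when converting Jacobi field data into shape operator eigenvalues — in particular distinguishing the cosine-type (initial data tangent to $Q^{m-1}$) from the sine-type (initial data normal to $Q^{m-1}$, perpendicular to $v$), and ensuring the chosen parallel $A$ along $\gamma$ is consistent with the convention $AN = N$ that underpins the whole eigenspace decomposition.
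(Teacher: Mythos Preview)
The paper does not give its own proof of this proposition; it is simply quoted from \cite{BS-2012}. Your Jacobi field argument is the standard way to establish it and is correct in outline; it is also the same method the paper itself employs later (Proposition~\ref{prop:4.1w}) in the $\mathfrak{A}$-isotropic situation.

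One point deserves a cleaner justification. When you say ``extend $A_0$ to a section $A$ of $\mathfrak{A}$ along $\gamma$ that is parallel with respect to $\bar\nabla$,'' this is not automatic from \eqref{eqn:2.2} alone, since for a \emph{fixed} section $A$ one has $\bar\nabla_{\dot\gamma}A = q(\dot\gamma)JA$, which need not vanish. What you actually need is that $\mathfrak{A}$ is a \emph{parallel subbundle} of $\mathrm{End}(TQ^m)$ (stated in the paper just before \eqref{eqn:2.2}): the $\bar\nabla$-parallel transport $B(s)$ of $A_0$ along $\gamma$ then stays in $\mathfrak{A}$, and since $\dot\gamma$ is parallel one gets $B(s)\dot\gamma(s)=\dot\gamma(s)$ for all $s$, hence $B(t)N=N$. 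With this clarification your eigenspace decomposition of $\bar R_N$ and the resulting principal curvatures are correct, and the identification $V_0=V(-A)\cap\mathcal C$, $V_{-\sqrt2\tan(\sqrt2 t)}=V(A)\cap\mathcal C$ follows exactly as you indicate.
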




\end{example}

\begin{example}\label{E2}
We assume that $m$ is even, say $m=2k$. The map
$$
\mathbb{C}P^k\rightarrow Q^{2k}\hookrightarrow \mathbb{C}P^{2k+1}, \
[(z_1, . . . , z_{k+1})]\mapsto[(z_1,...,z_{k+1},\mathbf{i}z_1,...,\mathbf{i}z_{k+1})]
$$
provides an embedding of $\mathbb{C}P^k$ into $Q^{2k}$ as a totally geodesic complex submanifold.

\begin{proposition}[\cite{BS}]\label{prop:E2P}
Let $M$ be the tube of radius $0<t<\frac{\pi}{2}$ around the totally
geodesic $\mathbb{C}P^k$ in $Q^{2k}$, $k\geq2$. Then the following statements hold:
\begin{enumerate}
\item[(i)]
$M$ is a Hopf hypersurface, its Reeb function is $\alpha=2\cot(2t)$.

\item[(ii)]
$M$ has $\mathfrak{A}$-isotropic unit normal vector field $N$.

\item[(iii)]
$M$ has three or four distinct constant principal curvatures. Their values and multiplicities
are given as follows
$$
\begin{tabular}{|c|c|c|c|c|}
  \hline
  {\rm value} & $2\cot(2t)$ & $0$ & $-\tan(t)$ & $\cot(t)$ \\
  \hline
  {\rm multiplicity} & $1$ & $2$ & $2k-2$ & $2k-2$\\
  \hline
\end{tabular}
$$
It holds that $\mathcal{Q}=V_{-\tan(t)}\oplus V_{\cot(t)}$, $JV_{-\tan(t)}=V_{-\tan(t)}$ and
$JV_{\cot(t)}=V_{\cot(t)}$. Here $V_{-\tan(t)}$ and $V_{\cot(t)}$ are the eigenspaces corresponding to the principal curvatures $-\tan(t)$ and $\cot(t)$ restricted to $\mathcal{Q}$, respectively.
Any almost product structure $A\in\mathfrak{A}$ maps
$V_{-\tan(t)}$ into $V_{\cot(t)}$. When $t=\frac{\pi}{4}$, $M$ has
three distinct constant principal curvatures $0,1,-1$.
When $t\in(0,\frac{\pi}{4})\cup(\frac{\pi}{4},\frac{\pi}{2})$, $M$ has
four distinct constant principal curvatures.

\item[(iv)] $M$ has isometric Reeb flow, i.e., it satisfies $S\phi=\phi S$.
\end{enumerate}
\end{proposition}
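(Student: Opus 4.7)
My plan is to perform a direct tube computation around the totally geodesic complex submanifold $\mathbb{C}P^k \hookrightarrow Q^{2k}$; the argument follows the pattern in Berndt--Suh \cite{BS}, so I only sketch the flow. The starting observation is that the tube is homogeneous under the subgroup $U_{k+1} \subset SO_{2k+2}$ stabilizing the embedding, so verifying every claim at a single chosen base point of the tube is sufficient. First I would fix a base point $p_0 \in \mathbb{C}P^k$ and a unit normal $v \in \nu_{p_0}\mathbb{C}P^k$. Using the explicit formula $[(z_1,\ldots,z_{k+1})] \mapsto [(z_1,\ldots,z_{k+1},\mathbf{i}z_1,\ldots,\mathbf{i}z_{k+1})]$, I would identify $\nu_{p_0}\mathbb{C}P^k$ inside $T_{p_0}Q^{2k}$ and check that $v$ can be written as $v = (x + Jy)/\sqrt{2}$ for orthonormal $x, y \in V(A)$ with a suitable $A \in \mathfrak{A}_{p_0}$. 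By Definition \ref{def:2.1} this makes $v$ an $\mathfrak{A}$-isotropic tangent vector, and since $\mathfrak{A}$ is a parallel subbundle along the normal geodesic $\gamma(t) = \exp_{p_0}(tv)$ and the outward unit normal of the tube at $\gamma(t)$ is $\gamma'(t)$, statement (ii) follows along the entire tube. The Reeb vector field at $\gamma(t)$ is then $\xi = -J\gamma'(t)$, which the spectral analysis below identifies as principal, giving the Hopf property (i).

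The heart of the argument is the spectrum of the shape operator, extracted from Jacobi fields. Evaluating \eqref{eqn:2.3} at $v = (x+Jy)/\sqrt{2}$ diagonalizes $\bar R(v,\cdot)v$ on $v^\perp$: eigenvalue $4$ on $\mathbb{R}Jv$, eigenvalue $0$ on the $2$-plane spanned by $Av$ and $JAv$, and eigenvalue $1$ on $W \oplus JW$, where $W \subset V(A)$ is the $(2k-2)$-dimensional subspace perpendicular to $x$ and $y$. Since $\mathbb{C}P^k$ is totally geodesic, Jacobi fields with tangential initial value evolve as cosines and those with normal initial value as sines at the appropriate frequency; matching the tangential/normal decomposition of $T_{p_0}Q^{2k}$ relative to $\mathbb{C}P^k$ against these eigenspaces shows that each eigenspace contributes both cosine- and sine-type Jacobi fields. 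Reading off the logarithmic derivatives $Y'(t)/Y(t)$ then yields the principal curvatures $2\cot(2t),\,0,\,-\tan(t),\,\cot(t)$ with the stated multiplicities. The identities $JV_{-\tan(t)} = V_{-\tan(t)}$, $JV_{\cot(t)} = V_{\cot(t)}$ and $AV_{-\tan(t)} = V_{\cot(t)}$ follow from the corresponding relations on $W$ and $JW$ combined with $AJ = -JA$, and the collision at $t = \pi/4$ is immediate.

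Statement (iv), isometric Reeb flow $S\phi = \phi S$, then follows by inspection: on $\mathcal{C}$ the structure $\phi$ equals $J$, and since each non-Reeb eigenspace is $J$-invariant, $S$ and $\phi$ commute. The main obstacle of the whole argument is the Jacobi-field bookkeeping in the second step: since $v$ is $\mathfrak{A}$-isotropic rather than $\mathfrak{A}$-principal, both the $J$-terms and the $A$-terms in the curvature tensor \eqref{eqn:2.3} contribute nontrivially, so disentangling the eightfold decomposition of $T_{p_0}Q^{2k}$ into pieces with prescribed tangential-versus-normal and $A, J$-behaviour must be done carefully. Once that splitting is organized, the full table of principal curvatures and the action of $J$ and $A$ on the eigenspaces drop out.
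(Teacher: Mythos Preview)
Your approach is exactly the standard tube computation that \cite{BS} carries out, and the paper itself does not reprove this proposition but simply cites \cite{BS}. So in spirit your sketch matches what the paper indicates.

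There is, however, one inaccuracy in your Jacobi-field bookkeeping that you should correct. You write that ``each eigenspace contributes both cosine- and sine-type Jacobi fields''. This is only true for the eigenvalue-$1$ eigenspace $W\oplus JW$ of $\bar R_v$: that space indeed splits as a $(2k-2)$-dimensional piece tangent to $\mathbb{C}P^k$ (cosine-type, yielding $-\tan t$) and a $(2k-2)$-dimensional piece normal to $\mathbb{C}P^k$ (sine-type, yielding $\cot t$). The other two eigenspaces do \emph{not} split. The eigenvalue-$4$ eigenspace $\mathbb{R}Jv$ lies entirely in $\nu_{p_0}\mathbb{C}P^k$ (the normal bundle of a complex submanifold is $J$-invariant), so it gives only a sine-type field and the single principal curvature $2\cot(2t)$ on $\xi$. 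The eigenvalue-$0$ eigenspace $\mathrm{Span}\{Av,JAv\}$ lies entirely in $T_{p_0}\mathbb{C}P^k$; this is what forces the principal curvature $0$ with multiplicity~$2$ (a normal contribution here would give $1/t$, not $0$). You can verify the tangency of $Av,JAv$ directly from the explicit embedding formula, and a dimension count then confirms the split: $2+(2k-2)=2k=\dim T_{p_0}\mathbb{C}P^k$ and $1+(2k-2)=2k-1=\dim(\nu_{p_0}\mathbb{C}P^k\ominus\mathbb{R}v)$. Once this is straightened out, the rest of your argument (the $J$-invariance of $V_{-\tan t}$ and $V_{\cot t}$, the $A$-interchange, and $S\phi=\phi S$) goes through as you describe.
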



\end{example}



Next, we describe the homogeneous real hypersurfaces of type (C) in Theorem 6.3.1 of
\cite{BS-2022} in detail.

\begin{example}\label{E3}
Consider the natural isomorphism $\mathbb{R}^{4k}\cong \mathbb{H}^k$ and the standard action of
$H:=Sp_kSp_1$ on $\mathbb{H}^k$. Specifically, for any $z=(z_1,z_2,...,z_k)\in \mathbb{H}^k$,
$g_1\in Sp_k$ and $g_2\in Sp_1$, the action of $Sp_kSp_1$ on $\mathbb{H}^k$ is given by
$(g_1,g_2)\cdot z=g_1z^\top g_2$. Let $e\in SO_{4k}$ be the identity
$(4k\times 4k)$-matrix. We put $G=SO_{4k}$, $K=SO_2SO_{4k-2}$ and denote by
$o\in Q^{4k-2}$ the ``base point'' $eK$ of the homogeneous space $G/K$.
Then $K$ is the isotropy group of $G$ at $o$.
Let $\mathfrak{g}=\mathfrak{so}_{4k}$ be the Lie algebra of $G$,
$\mathfrak{k}=\mathfrak{so}_2\oplus\mathfrak{so}_{4k-2}$ be the Lie algebra of the isotropy group
$K$. Then we have the
Cartan decomposition $\mathfrak{g}=\mathfrak{p}\oplus \mathfrak{k}$ of $\mathfrak{g}$.
We identify the tangent space $T_oQ^{4k-2}$ of $Q^{4k-2}$ at $o$
with $\mathfrak{p}$ in the usual way.
For more details about the construction of the complex quadric as a Riemannian symmetric space,
we refer to \cite{K}.
The tangent space $T_eSp_kSp_1$ in $\mathfrak{so}_{4k}$ is spanned by the following basis
$$
\left(
  \begin{array}{cccc}
    0_1 & -B & 0_1 & 0_1 \\
    B & 0_1 & 0_1 & 0_1 \\
    0_1 & 0_1 & 0_1 & -B \\
    0_1 & 0_1 & B & 0_1 \\
  \end{array}
\right),
\left(
  \begin{array}{cccc}
    0_1 & 0_1 & -C & 0_1 \\
    0_1 & 0_1 & 0_1 & C \\
    C & 0_1 & 0_1 & 0_1 \\
    0_1 & -C & 0_1 & 0_1 \\
  \end{array}
\right),
\left(
  \begin{array}{cccc}
    0_1 & 0_1 & 0_1 & -D \\
    0_1 & 0_1 & -D & 0_1 \\
    0_1 & D & 0_1 & 0_1 \\
    D & 0_1 & 0_1 & 0_1 \\
  \end{array}
\right),
\left(
  \begin{array}{cccc}
    F & 0_1 & 0_1 & 0_1 \\
    0_1 & F & 0_1 & 0_1 \\
    0_1 & 0_1 & F & 0_1 \\
    0_1 & 0_1 & 0_1 & F \\
  \end{array}
\right),
$$
$$
\left(
  \begin{array}{cccc}
    0_1 & -I_{k,k} & 0_1 & 0_1 \\
    I_{k,k} & 0_1 & 0_1 & 0_1 \\
    0_1 & 0_1 & 0_1 & I_{k,k} \\
    0_1 & 0_1 & -I_{k,k} & 0_1 \\
  \end{array}
\right),
\left(
  \begin{array}{cccc}
    0_1 & 0_1 & -I_{k,k} & 0_1 \\
    0_1 & 0_1 & 0_1 & -I_{k,k} \\
    I_{k,k} & 0 & 0_1 & 0_1 \\
    0_1 & I_{k,k} & 0_1 & 0_1 \\
  \end{array}
\right),
\left(
  \begin{array}{cccc}
    0_1 & 0_1 & 0_1 & -I_{k,k} \\
    0_1 & 0_1 & I_{k,k} & 0_1 \\
    0_1 & -I_{k,k} & 0_1 & 0_1 \\
    I_{k,k} & 0_1 & 0_1 & 0_1 \\
  \end{array}
\right),
$$
where $B,C,D$ are the symmetric $(k\times k)$-matrixes, $F$ is the antisymmetric $(k\times k)$-matrix,
$I_{k,k}$ is the identity $(k\times k)$-matrix, $0_1$ is the null $(k\times k)$-matrix. 
Then it can be checked that ${\rm dim}((T_eSp_kSp_1)|_{\mathfrak{p}})=8k-7$.
For any $X,Y\in \mathfrak{p}$, let $\langle X,Y\rangle_{\mathfrak{p}}=-\frac{1}{4}{\rm tr}(XY)$.
The orthogonal complement of $(T_eSp_kSp_1)|_{\mathfrak{p}}$ in $\mathfrak{p}$ with respect to $\langle\cdot,\cdot\rangle_{\mathfrak{p}}$ is spanned by
$$
\eta_1=\left(
  \begin{array}{cccc}
    0_1 & \tilde{J} & 0_1 & 0_1 \\
    \tilde{J} & 0_1 & 0_1 & 0_1 \\
    0_1 & 0_1 & 0_1 & 0_1 \\
    0_1 & 0_1 & 0_1 & 0_1 \\
  \end{array}
\right),
\eta_2=\left(
  \begin{array}{cccc}
    0_1 & 0_1 & \tilde{J} & 0_1 \\
    0_1 & 0_1 & 0_1 & 0_1 \\
    \tilde{J} & 0_1 & 0_1 & 0_1 \\
    0_1 & 0_1 & 0_1 & 0_1 \\
  \end{array}
\right),
\eta_3=\left(
  \begin{array}{cccc}
    0_1 & 0_1 & 0_1 & \tilde{J} \\
    0_1 & 0_1 & 0_1 & 0_1 \\
    0_1 & 0_1 & 0_1 & 0_1 \\
    \tilde{J} & 0_1 & 0_1 & 0_1 \\
  \end{array}
\right),
$$
where
$$
\tilde{J}=
\left(
\begin{array}{cc}
\bar{J} & 0_2 \\
0_3 & 0_4 \\
\end{array}
\right), \ \
\bar{J}=\left(
\begin{array}{cc}
0 & -1 \\
1 & 0 \\
\end{array}
\right),
$$
where $0_2$ is the null $(2\times (k-2))$-matrix, $0_3$ is the null $((k-2)\times 2)$-matrix,
$0_4$ is the null $((k-2)\times (k-2))$-matrix.

Choose a unit vector $N_o=-\eta_1\in \mathfrak{so}_{4k}$, then $g_t={\rm Exp}(tN_o)$ is a
one parameter subgroup of $SO_{4k}$, which is briefly given by
$$
g_t=\left(
  \begin{array}{cccc}
    \cos(t)I_{2,2} & 0_{2,k-2} & \sin(t)\bar{J} & 0_{2,3k-2} \\
    0_{k-2,2} & I_{k-2,k-2} & 0_{k-2,2} & 0_{k-2,3k-2} \\
    \sin(t)\bar{J} & 0_{2,k-2} & \cos(t)I_{2,2} & 0_{2,3k-2} \\
    0_{3k-2,2} & 0_{3k-2,k-2} & 0_{3k-2,2} & I_{3k-2,3k-2} \\
  \end{array}
\right),
$$
where $I_{2,2}$, $I_{k-2,k-2}$ and $I_{3k-2,3k-2}$ are the identity $(2\times 2)$-matrix,
$((k-2)\times (k-2))$-matrix and $((3k-2)\times (3k-2))$-matrix, respectively.
So $\gamma(t)=g_t\cdot o$ is
a geodesic normal to $H\cdot o$. 
The orbit of $Hg_t^{-1}$ through $o$ is
$Hg_t^{-1}\cdot o$ and we have a congruence $(Hg_t^{-1})\cdot o\cong (g_tHg_t^{-1})\cdot o$.
Let $M_t:=g_tHg_t^{-1}\cdot o$, then $M_t$ is a homogeneous real hypersurface.
In fact, the Lie algebra $\mathfrak{h}'$ of $H':=g_tHg_t^{-1}$ in $\mathfrak{so}_{4k}$ is given by $\mathfrak{h}'=g_t(T_eSp_kSp_1)g_t^{-1}$. By direct calculation, we have
${\rm dim}T_o(M_t)=8k-5$ for $0< t<\frac{\pi}{4}$, which means that the orbit $M_t$ is a homogeneous real hypersurface
of $Q^{4k-2}$ for $0< t<\frac{\pi}{4}$, the unit normal vector of $M_t$ at $o$ is still $N_o$.
Here, we always identify the tangent space $T_oQ^{4k-2}$
with $\mathfrak{p}$.
In the following, we just look at the geometry of $M_t$ at point $o$.

We recall the general theory,
for any $X\in \mathfrak{h}'$, we denote by $X_o^*=\frac{d}{ds}({\rm Exp}(sX)\cdot o)|_{s=0}=\pi(X)$, where
$\pi$ is the projection from $\mathfrak{g}$ to $\mathfrak{p}$.
Let $\mathfrak{p}'=\pi(\mathfrak{h}')$. Then $\mathfrak{p}'$
is the tangent space $T_oM_t$.
From $N_o$ is a normal vector
of $M_t$ at $o$, then the second fundamental form $h:\mathfrak{p}'\times \mathfrak{p}'\rightarrow \mathbb{R}N_o$ of $M_t$ at $o$ is given by
\begin{equation}\label{eqn:hhh}
2\langle h(X_o^*,Y_o^*),N_o\rangle_{\mathfrak{p}}=\langle [N_o,X]_o^*,Y_o^*\rangle_{\mathfrak{p}}+\langle X_o^*,[N_o,Y]_o^*\rangle_{\mathfrak{p}}.
\end{equation}

We identify
\begin{equation}\label{eqn:pid}
\mathfrak{p}\cong M_{2\times(k-2)}\oplus M_{2\times2}\oplus M_{2\times(k-2)}\oplus M_{2\times2}\oplus M_{2\times(k-2)}\oplus M_{2\times2}\oplus M_{2\times(k-2)},
\end{equation}
where $M_{2\times(k-2)}$ is the $(2\times(k-2))$-matrixs, $M_{2\times2}$ is the $(2\times2)$-matrixs.
Let
$$
\bar{I}=\left(
    \begin{array}{cc}
      1 & 0 \\
      0 & 1 \\
    \end{array}
  \right),\ \
\bar{H}=\left(
    \begin{array}{cc}
      0 & 1 \\
      1 & 0 \\
    \end{array}
  \right),\ \
\bar{K}=\left(
    \begin{array}{cc}
      1 & 0 \\
      0 & -1 \\
    \end{array}
  \right).
$$
Then, at point $o$, the normal vector $N_o$ is written as $N_o=(0_2,-\bar{J},0_2,0_5,0_2,0_5,0_2)$, and the Reeb vector field $\xi$ at $o$
is $\xi_o=(0_2,-\bar{I},0_2,0_5,0_2,0_5,0_2)$, where $0_5$ is the null $(2\times 2)$-matrix. 
Now, according to the formula \eqref{eqn:hhh},
by direct calculation, we can have
$$
S\xi_o=2\tan(2t)\xi_o,
$$
where $S$ is the shape operator of $M_t$.
It implies that $M_t$ is a Hopf hypersurface, and the Reeb function $\alpha$ is $2\tan(2t)$.
Let $A_{-d\pi(\bar{z})}$ be the almost product structure of $Q^{4k-2}\hookrightarrow \mathbb{C}P^{4k-1}$ with respect to the unit normal vector field $-d\pi(\bar{z})$, where $z\in \pi^{-1}(M_t)$.
It follows that $A_{-d\pi(\bar{z})}N_o=(0_2,\bar{H},0_2,0_5,0_2,0_5,0_2)$, $A_{-d\pi(\bar{z})}\xi_o=(0_2,-\bar{K},0_2,0_5,0_2,0_5,0_2)$.
Thus we have $g(A_{-d\pi(\bar{z})}N_o,N_o)=0$ and $g(A_{-d\pi(\bar{z})}N_o,JN_o)=0$. It means that $M_t$ has $\mathfrak{A}$-isotropic unit normal vector field.

At point $o$, through direct calculation, it follows from \eqref{eqn:hhh} that $SA_{-d\pi(\bar{z})}N_o=SA_{-d\pi(\bar{z})}\xi_o=0$.
Moreover, the principal curvatures of $M_t$ restricted to $\mathcal{Q}$ are
$$
\lambda_1=\tan(t), \ \lambda_2=-\cot(t),\ \lambda_3=\frac{\cos(t)+\sin(t)}{\cos(t)-\sin(t)},\ \lambda_4=-\frac{\cos(t)-\sin(t)}{\cos(t)+\sin(t)}.
$$
The corresponding eigenspaces are given by
$$
\begin{aligned}
V_{\lambda_1}={\rm Span}\{&(0_2,0_5,0_2,\bar{I},0_2,0_5,0_2),(0_2,0_5,0_2,0_5,0_2,\bar{I},0_2)\},\\
V_{\lambda_2}={\rm Span}\{&(0_2,0_5,0_2,\bar{J},0_2,0_5,0_2),(0_2,0_5,0_2,0_5,0_2,\bar{J},0_2)\},\\
V_{\lambda_3}={\rm Span}\{&(0_2,0_5,0_2,\bar{H},0_2,-\bar{K},0_2),(0_2,0_5,0_2,\bar{K},0_2,\bar{H},0_2),\\
&(\bar{J}B_1,0_5,B_1,0_5,0_2,0_5,0_2),(0_2,0_5,0_2,0_5,C_1,0_5,\bar{J}C_1)\},\\
V_{\lambda_4}={\rm Span}\{&(0_2,0_5,0_2,\bar{H},0_2,\bar{K},0_2),(0_2,0_5,0_2,\bar{K},0_2,-\bar{H},0_2),\\
&(\bar{J}B_1,0_5,-B_1,0_5,0_2,0_5,0_2),(0_2,0_5,0_2,0_5,-C_1,0_5,\bar{J}C_1)\},
\end{aligned}
$$
where $B_1,C_1$ are $(2\times (k-2))$-matrixes.
Thus, the dimensions of eigenspaces $V_{\lambda_1},V_{\lambda_2},V_{\lambda_3}$ and $V_{\lambda_4}$ are
$$
{\rm dim V_{\lambda_1}}={\rm dim V_{\lambda_2}}=2,\
{\rm dim V_{\lambda_3}}={\rm dim V_{\lambda_4}}=4k-6.
$$
It can also be checked that
$$
JV_{\lambda_1}=V_{\lambda_2},\ \ JV_{\lambda_3}=V_{\lambda_3},\ \ JV_{\lambda_4}=V_{\lambda_4},\ \
A_{-d\pi(\bar{z})}(V_{\lambda_1}\oplus V_{\lambda_2})\subset(V_{\lambda_3}\oplus V_{\lambda_4}).
$$
Furthermore, the principal curvatures satisfy the following relationship:
$$
\lambda_1=\frac{-2+\sqrt{\alpha^2+4}}{\alpha},\ \lambda_2=\frac{-2-\sqrt{\alpha^2+4}}{\alpha},\
\lambda_3=\frac{\alpha+\sqrt{\alpha^2+4}}{2},\ \lambda_4=\frac{\alpha-\sqrt{\alpha^2+4}}{2}.
$$
\end{example}

We summarize the previous discussion in the following proposition.

\begin{proposition}\label{prop:E3P}
Let $M$ be the tube of radius $0<t<\frac{\pi}{4}$ around the equivariant embedding of the $(8k-7)$-dimensional homogeneous space $Sp_kSp_1/SO_2Sp_{k-2}Sp_1$ in $Q^{4k-2}$, $k\geq 2$.
Then the following statements hold:
\begin{enumerate}
\item[(i)]
$M$ is a Hopf hypersurface, its Reeb function is $\alpha=2\tan(2t)$.

\item[(ii)]
$M$ has $\mathfrak{A}$-isotropic unit normal vector field $N$.

\item[(iii)]
$M$ has six distinct constant principal curvatures. Their values and multiplicities
are given as follows
$$
\begin{tabular}{|c|c|c|c|c|c|c|}
  \hline
  {\rm value} & $2\tan(2t)$ & $0$ & $\tan(t)$ & $-\cot(t)$ & $\frac{\cos(t)+\sin(t)}{\cos(t)-\sin(t)}$ & $-\frac{\cos(t)-\sin(t)}{\cos(t)+\sin(t)}$\\
  \hline
  {\rm multiplicity} & $1$ & $2$ & $2$ & $2$ & $4k-6$ & $4k-6$\\
  \hline
\end{tabular}
$$
It holds $\mathcal{Q}=V_{\lambda_1}\oplus V_{\lambda_2}\oplus V_{\lambda_3}\oplus V_{\lambda_4}$. The almost product structure $A\in\mathfrak{A}$ maps
$V_{\lambda_1}\oplus V_{\lambda_2}$ into $V_{\lambda_3}\oplus V_{\lambda_4}$.
\end{enumerate}
\end{proposition}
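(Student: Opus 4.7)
The plan is to extract Proposition \ref{prop:E3P} directly from the explicit calculations carried out in Example \ref{E3}. Since $M = M_t$ is by construction the orbit $g_tHg_t^{-1}\cdot o$ of the group $H' = g_tHg_t^{-1}$, it is automatically a homogeneous real hypersurface, so its shape operator, Reeb function and principal curvatures are the same at every point. It therefore suffices to verify every assertion at the single point $o$, using the Cartan-type decomposition $T_oQ^{4k-2}\cong\mathfrak{p}$ and the formula \eqref{eqn:hhh} for the second fundamental form at $o$.

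First I would establish (i) and (ii). Conjugating the explicit basis of $T_eSp_kSp_1$ by $g_t$ and projecting via $\pi\colon\mathfrak{g}\to\mathfrak{p}$ yields a basis of $\mathfrak{p}' = T_oM_t$, and a direct count gives $\dim\mathfrak{p}' = 8k-5$ so that $N_o = -\eta_1$ is a unit normal and $\xi_o = -JN_o = (0_2,-\bar I,0_2,0_5,0_2,0_5,0_2)$. Plugging $X = Y = \xi_o$ into \eqref{eqn:hhh} and simplifying the bracket $[N_o,\xi_o]$ in $\mathfrak{so}_{4k}$ produces $S\xi_o = 2\tan(2t)\,\xi_o$, which proves (i). For (ii), one takes the almost product structure $A = A_{-d\pi(\bar z)}$ with $z\in\pi^{-1}(o)$; the matrix formulas for $AN_o$ and $A\xi_o$ recorded in Example \ref{E3} show that both vectors lie in $\mathcal{C}$, hence $N$ is $\mathfrak{A}$-isotropic.

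Next I would prove (iii) by diagonalizing the shape operator block by block along the decomposition \eqref{eqn:pid}. By Lemma \ref{lemma:2.6} the vectors $AN_o$ and $A\xi_o$ lie in $\ker S$, producing the eigenvalue $0$ with multiplicity at least $2$. For the remaining directions, one evaluates \eqref{eqn:hhh} on the conjugated basis: the contribution of each bracket $[N_o,X]$ reduces to products involving the elementary $2\times 2$ matrices $\bar I$, $\bar J$, $\bar H$, $\bar K$, and combined with the trace form $\langle X,Y\rangle_{\mathfrak{p}} = -\tfrac14{\rm tr}(XY)$ the bilinear form $\langle h(\cdot,\cdot),N_o\rangle_{\mathfrak{p}}$ becomes block-diagonal. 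Each block is a small matrix whose eigenvalues compute to $\tan(t)$, $-\cot(t)$, $\tfrac{\cos(t)+\sin(t)}{\cos(t)-\sin(t)}$, and $-\tfrac{\cos(t)-\sin(t)}{\cos(t)+\sin(t)}$ on the four subspaces $V_{\lambda_1},\dots,V_{\lambda_4}$ listed in Example \ref{E3}. Adding the Reeb direction and the $2$-dimensional kernel gives the six eigenvalues with multiplicities $1,2,2,2,4k-6,4k-6$, summing to $8k-5 = \dim M$. The relations $\mathcal{Q}=V_{\lambda_1}\oplus V_{\lambda_2}\oplus V_{\lambda_3}\oplus V_{\lambda_4}$ and $A(V_{\lambda_1}\oplus V_{\lambda_2})\subset V_{\lambda_3}\oplus V_{\lambda_4}$ follow at once from the explicit basis written down in Example \ref{E3}.

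The main obstacle is the organizational one: keeping track of the seven-block decomposition \eqref{eqn:pid} simultaneously with the action of conjugation by $g_t$ on the seven generator families of $T_eSp_kSp_1$, so that the brackets $[N_o,X]$ actually project cleanly to block-diagonal form. Once this bookkeeping is set up, each eigenvalue falls out of a routine $2\times 2$ computation, and the proposition is obtained by homogeneity from the pointwise statement at $o$.
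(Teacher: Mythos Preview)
Your proposal is correct and follows essentially the same approach as the paper: the proposition is stated immediately after Example~\ref{E3} as a summary of the explicit Lie-algebraic computations carried out there at the base point $o$, with homogeneity of the orbit $M_t$ transporting the pointwise result globally. The only minor difference is that the paper verifies $SAN_o = SA\xi_o = 0$ by direct computation from \eqref{eqn:hhh} rather than by invoking Lemma~\ref{lemma:2.6}, but your shortcut via that lemma is perfectly valid once (ii) is in hand.
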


\begin{remark}\label{rem:3.1as}
Examples \ref{E1} and \ref{E2} were first introduced in \cite{BS-2012}, and they have been studied extensively. But to our knowledge, the properties about Example \ref{E3} have not appeared previously in the literature.
\end{remark}

Finally, we construct a family of Hopf hypersurfaces of $Q^m$ with non-constant principal curvatures.

\begin{example}\label{E6}
For any $2\leq k\leq m$, we define the $(2m-2)$-dimensional complex submanifold $\tilde{N}^{2m-2}_k$ of $Q^m$ ($m\geq3$) as follows:
$$
\tilde{N}^{2m-2}_k=\{[z]=[(z_1,z_2,...,z_{m+2})]\in Q^{m}|\sum_{i=1}^k z_i^2=0\}.
$$
If we write
$$
\begin{aligned}
{\text [z]}&=[(z_1,z_2,...,z_{m+2})]=[(u_1,u_2,...,u_{m+2})+\mathbf{i}(v_1,v_2,...,v_{m+2})]\in Q^{m}.
\end{aligned}
$$
Then, for any $[z]\in \tilde{N}^{2m-2}_k$, it holds
$$
\sum_{i=1}^ku_i^2=\sum_{i=1}^kv_i^2,\ \ \sum_{i=1}^ku_iv_i=0,
\ \ \sum_{i=k+1}^{m+2}u_i^2=\sum_{i=k+1}^{m+2}v_i^2,\ \ \sum_{i=k+1}^{m+2}u_iv_i=0,
\ \ \sum_{i=1}^{m+2}u_i^2=\sum_{i=1}^{m+2}v_i^2=\frac{1}{2}.
$$
$\tilde{N}^{2m-2}_k$ is a complex submanifold of $Q^m$ with
two subsets of singularities:
$\{[z]=[(z_1,z_2,...,z_{m+2})]\in \tilde{N}^{2m-2}_k|\ z_{k+1}=\cdots=z_{m+2}=0\}$ and
$\{[z]=[(z_1,z_2,...,z_{m+2})]\in \tilde{N}^{2m-2}_k|\ z_{1}=\cdots=z_{k}=0\}$.
In the following, we always discuss the complex submanifold $\tilde{N}^{2m-2}_k$ outside of these singularities.
At any point $[z]=[(z_1,z_2,...,z_{m+2})]\in \tilde{N}^{2m-2}_k$, we define that $B([z]):=\sum_{i=1}^k|z_i|^2$,
which does not depend on the chosen representative $z$. For any $z=(z_1,z_2,...,z_{m+2})\in\pi^{-1}(\tilde{N}^{2m-2}_k)$, let
$$
V_1=(\overline{z_1},\overline{z_2},...,\overline{z_k},0,...,0),
\ V_2=(\mathbf{i}\overline{z_1},\mathbf{i}\overline{z_2},...,\mathbf{i}\overline{z_k},0,...,0)
$$
be two vector fields of $\mathbb{S}^{2m+3}$ along $\pi^{-1}(\tilde{N}^{2m-2}_k)$.
Then, $B$ is a well defined function on $\tilde{N}^{2m-2}_k$ outside of the singularities and $0<B<1$, and the normal space $T_{[z]}^\bot \tilde{N}^{2m-2}_k$ in $Q^m$ is given by
$$
T_{[z]}^\bot \tilde{N}^{2m-2}_k={\rm Span}\{{\eta_1}_{[z]}=(d\pi)_z(\frac{V_1-B\bar{z}}{\sqrt{B(1-B)}}),
\ {\eta_2}_{[z]}=(d\pi)_z(\frac{V_2-B\mathbf{i}\bar{z}}{\sqrt{B(1-B)}})\},
$$
where $g({\eta_1}_{[z]},{\eta_1}_{[z]})=g({\eta_2}_{[z]},{\eta_2}_{[z]})=1$ and ${\eta_2}_{[z]}=J{\eta_1}_{[z]}$. Let $A_{-d\pi(\bar{z})}$ be the almost product structure of $Q^{m}\hookrightarrow \mathbb{C}P^{m+1}$ with respect to the unit normal vector field $-d\pi(\bar{z})$ along $\tilde{N}^{2m-2}_k$, where $z\in \pi^{-1}(\tilde{N}^{2m-2}_k)$.
Then it can be checked that $A_{-d\pi(\bar{z})}\eta_1,A_{-d\pi(\bar{z})}\eta_2\in T\tilde{N}^{2m-2}_k$. Thus the normal bundle of $\tilde{N}^{2m-2}_k$ consists of $\mathfrak{A}$-isotropic singular tangent vectors of $Q^m$. By $B([z])=\sum_{i=1}^k|z_i|^2$, we have
$A_{-d\pi(\bar{z})}\eta_1B=2\sqrt{B(1-B)}$ and $A_{-d\pi(\bar{z})}\eta_2B=0$.
Now, by direct calculation, we can have
$$
S_{\eta_1}(A_{-d\pi(\bar{z})}\eta_1)=S_{\eta_1}(A_{-d\pi(\bar{z})}\eta_2)
=S_{\eta_2}(A_{-d\pi(\bar{z})}\eta_1)=S_{\eta_2}(A_{-d\pi(\bar{z})}\eta_2)=0,
$$
where $S_{\eta_i}$ is the shape operator of $\tilde{N}^{2m-2}_k$ with respect to $\eta_i$,
$1\leq i\leq2$. By the fact that $\mathfrak{A}$ is an $S^1$-subbundle of the endomorphism
bundle End($TQ^m$), we have $S_{\eta_1}(A\eta_1)=S_{\eta_1}(A\eta_2)
=S_{\eta_2}(A\eta_1)=S_{\eta_2}(A\eta_2)=0$
for any almost product structure $A\in \mathfrak{A}$.
It deduces that, for any unit normal vector field $\tilde{\eta}\in T^\bot \tilde{N}^{2m-2}_k$,
it holds $S_{\tilde{\eta}}A\eta_1=S_{\tilde{\eta}}A\eta_2=0$, where $S_{\tilde{\eta}}$ is the shape operator of $\tilde{N}^{2m-2}_k$ with respect to $\tilde{\eta}$.
For $z=(z_1,z_2,...,z_{m+2})\in\pi^{-1}(\tilde{N}^{2m-2}_k)$, let
$$
\begin{aligned}
&D_1=\{(d\pi)_z(x_1,...,x_k,0,...,0)\in T_{[z]}\tilde{N}^{2m-2}_k|(x_1,...,x_k)\in \mathbb{R}^k,\
\sum_{i=1}^{k}x_iu_i=\sum_{i=1}^{k}x_iv_i=0\},\\
&D_2=\{(d\pi)_z(0,...,0,x_{k+1},...,x_{m+2})\in T_{[z]}\tilde{N}^{2m-2}_k|(x_{k+1},...,x_{m+2})\in \mathbb{R}^{m+2-k},\  \sum_{i=k+1}^{m+2}x_iu_i=\sum_{i=k+1}^{m+2}x_iv_i=0\},\\
&D_3=JD_1,\   D_4=JD_2,\  D_5={\rm Span}\{A{\eta_1}_{[z]},A{\eta_2}_{[z]}\}.
\end{aligned}
$$
Then $T_{[z]}\tilde{N}^{2m-2}_k=\oplus_{i=1}^{5} D_i$.
Now, for unit normal vector field $\eta_1$, the principal curvatures of the shape operator $S_{\eta_1}$
on $\tilde{N}^{2m-2}_k$ are
$$
0,\ -\sqrt{\frac{1-B}{B}},\ \sqrt{\frac{1-B}{B}},\ \sqrt{\frac{B}{1-B}},\ -\sqrt{\frac{B}{1-B}},
$$
the corresponding eigenspaces are $D_5$, $D_1$, $D_3$,
$D_2$ and $D_4$, respectively.
For unit normal vector field $\eta_2$, it follows from $S_{\eta_2}=JS_{\eta_1}$ that the principal curvatures of the shape operator $S_{\eta_2}$ on $\tilde{N}^{2m-2}_k$ are also
$$
0,\ -\sqrt{\frac{1-B}{B}},\ \sqrt{\frac{1-B}{B}},\ \sqrt{\frac{B}{1-B}},\ -\sqrt{\frac{B}{1-B}}.
$$
Thus, for any unit normal vector field $\tilde{\eta}$, by direct calculation, it can be checked that
the principal curvatures of the shape operator $S_{\tilde{\eta}}$ on $\tilde{N}^{2m-2}_k$ are still
$$
0,\ -\sqrt{\frac{1-B}{B}},\ \sqrt{\frac{1-B}{B}},\ \sqrt{\frac{B}{1-B}},\ -\sqrt{\frac{B}{1-B}},
$$
where the eigenspace corresponding to $0$ is $D_5$. Note that, for unit normal vector field $\tilde{\eta}$, the normal Jacobi operator is given by
$$
\bar{R}_{\tilde{\eta}}Z=\bar{R}(Z,\tilde{\eta})\tilde{\eta}=Z-g(Z,\tilde{\eta})\tilde{\eta}
+3g(Z,J\tilde{\eta})J\tilde{\eta}-g(Z,A\tilde{\eta})A\tilde{\eta}-g(Z,JA\tilde{\eta})JA\tilde{\eta}.
$$
This implies that $\bar{R}_{\tilde{\eta}}$ has three eigenvalues $0$, $1$, $4$ with corresponding
eigenspaces ${\rm Span}\{\tilde{\eta},A\tilde{\eta},JA\tilde{\eta}\}$,
$\oplus_{i=1}^{4} D_i$ and ${\rm Span}\{J\tilde{\eta}\}$.

For any given $\epsilon\in(0,\frac{\pi}{4})$, let
$\Omega_\epsilon=\{[z]\in\tilde{N}^{2m-2}_k\mid \sin^2(\epsilon)<B([z])<\cos^2(\epsilon)\}$ be a open subset of $\tilde{N}^{2m-2}_k$.
We consider the tube $\Phi_r(\Omega_\epsilon)$ over $\Omega_\epsilon$ at distance $r\in(0,\epsilon)$.
To calculate the principal curvatures of $\Phi_r(\Omega_\epsilon)$, we use the Jacobi field method as described in [\cite{B-C-O}, Sec. 8.2]. Let $\gamma(t)$ be the
geodesic in $Q^m$ with $\gamma(0)=[z]\in \Omega_\epsilon$ and $\dot{\gamma}(0)=\tilde{\eta}_{[z]}$ and denote by $\gamma^\perp$ the parallel
subbundle of $TQ^m$ along $\gamma(t)$ defined by $\gamma^\perp_{\gamma(t)}
=T_{\gamma(t)}Q^m\ominus \mathbb{R}\dot{\gamma}(t)$. Moreover, define
the $\gamma^\perp$-valued tensor field $\bar{R}^\perp_{\gamma}$ along $\gamma(t)$ by
$\bar{R}^\perp_{\gamma(t)}X=\bar{R}(X,\dot{\gamma}(t))\dot{\gamma}(t)$. Now consider
the End$(\gamma^\perp)$-valued differential equation
$$
Y''+\bar{R}^\perp_\gamma\circ Y=0.
$$
Let $D$ be the unique solution of this differential equation with initial values
$$
D(0)=\left(
       \begin{array}{cc}
         I & 0 \\
         0 & 0 \\
       \end{array}
     \right),\ \
     D'(0)=\left(
             \begin{array}{cc}
               -S_{\dot{\gamma}(0)} & 0 \\
               0 & I \\
             \end{array}
           \right),
$$
where the decomposition of the matrices is with respect to
$T_{[z]}\tilde{N}^{2m-2}_k\oplus(T^{\bot}_{[z]}\tilde{N}^{2m-2}_k\ominus \tilde{\eta}_{[z]})$.
Here $I$ denotes the identity transformation on the corresponding space. Then the
shape operator $S_r$ of the tube $\Phi_r(\Omega_\epsilon)$ with respect to
$-\dot{\gamma}(r)$ is given by $S_r=D'(r)\circ D^{-1}(r)$.
We decompose $\gamma^\perp_{[z]}$ further into
$$
\gamma^\perp_{[z]}=\oplus_{i=1}^{4} D_i\oplus{\rm Span}\{A\tilde{\eta}_{[z]},JA\tilde{\eta}_{[z]}\}
\oplus{\rm Span}\{J\tilde{\eta}_{[z]}\}.
$$
By explicit computation, one can know that $\Phi_r(\Omega_\epsilon)$ is a Hopf hypersurface of $Q^m$
with $\mathfrak{A}$-isotropic unit normal vector field. The principal curvatures of the real hypersurface $\Phi_r(\Omega_\epsilon)$ at $\gamma(r)$ with respect to $-\dot{\gamma}(r)$ are given by
$$
\begin{aligned}
&\lambda_1=\frac{\sqrt{\frac{1-B}{B}}\cos r-\sin r}{\cos r+\sqrt{\frac{1-B}{B}}\sin r},
\ \lambda_2=\frac{-\sqrt{\frac{1-B}{B}}\cos r-\sin r}{\cos r-\sqrt{\frac{1-B}{B}}\sin r},
\ \lambda_3=\frac{-\sin r+\sqrt{\frac{B}{1-B}}\cos r}{\cos r+\sqrt{\frac{B}{1-B}}\sin r},\\
&\ \lambda_4=\frac{-\sin r-\sqrt{\frac{B}{1-B}}\cos r}{\cos r-\sqrt{\frac{B}{1-B}}\sin r},
\ \lambda_5=0,\ \ \alpha=2\cot(2r),\ \ S\xi=2\cot(2r)\xi,
\end{aligned}
$$
where $\xi$ is the Reeb vector field of $\Phi_r(\Omega_\epsilon)$, and the
eigenspace of $\lambda_5$ is ${\rm Span}\{A\dot{\gamma}(r),JA\dot{\gamma}(r)\}$.
The multiplicities of the corresponding principal curvatures are
$$
{\rm dim}V_{\lambda_1}={\rm dim}V_{\lambda_2}=k-2,\
{\rm dim}V_{\lambda_3}={\rm dim}V_{\lambda_4}=m-k,\ {\rm dim}V_{\lambda_5}=2,\ {\rm dim}V_\alpha=1.
$$
When $k=2$ or $k=m$, the real hypersurface $\Phi_r(\Omega_\epsilon)$ has 
four distinct principal curvatures. When $2<k<m$, the real hypersurface $\Phi_r(\Omega_\epsilon)$ has
six distinct principal curvatures. 
\end{example}

\section{Parallel hypersurfaces, focal submanifolds and Cartan's formulas}\label{sect:4}

\subsection{Parallel hypersurfaces and focal submanifolds of the Hopf hypersurface with constant principal curvatures and $\mathfrak{A}$-isotropic unit normal vector field}\label{sect:4.1}~

In this subsection, we study the parallel hypersurfaces and the focal submanifolds of Hopf hypersurfaces of $Q^m$ ($m\geq3$) with constant principal curvatures and $\mathfrak{A}$-isotropic unit normal vector field $N$.

Let $M$ be a Hopf hypersurface of $Q^m$ ($m\geq3$) with constant principal curvatures and $\mathfrak{A}$-isotropic unit normal vector field $N$. Then it holds $S\xi=\alpha\xi$. Without loss of generality, up to a sign of the unit normal vector field $N$, we always assume that $\alpha\geq0$.
By Lemma \ref{lemma:2.6} and $\mathcal{Q}=TM\ominus{\rm Span}\{\xi,AN,A\xi\}$,
we know that $SAN=SA\xi=0$, and $\mathcal{Q}$ is $S$-invariant, $J$-invariant and $A$-invariant. We denote the set
of eigenvalues of $S$ restricted to $\mathcal{Q}$ by $\sigma(\mathcal{Q})$.
For any $\lambda\in\sigma(\mathcal{Q})$, let $V_\lambda$ be the corresponding eigenspace
restricted to $\mathcal{Q}$.

We define the map
$$
\Phi_r: M\rightarrow Q^m,\ \ \ \ \ p \mapsto \Phi_r(p)=\exp_{p}(rN_{p}), \quad \Phi_{0}(M)=M,
$$
where $p\in M$ and ${\rm exp}$ is the Riemannian exponential map of $Q^m$
and $N$ is the unit normal vector field of $M$.
Here, we allow that $r$ can be negative. If $r<0$, we means that $\Phi_r(M)$ is obtained by moving distance $|r|$ from $M$ along the unit normal vector field $-N$. Let $\{E_1,\cdots, E_{2m-4},E_{2m-3}=AN,E_{2m-2}=A\xi,E_{2m-1}=\xi\}$ be an orthonormal basis at $p\in M$
such that $SE_i=\lambda_iE_i$ and $E_i\in\mathcal{Q}$ for $1\leq i\leq 2m-4$.
Let $\{E_1^r,\cdots, E_{2m-4}^r,E_{2m-3}^r,E_{2m-2}^r,E_{2m-1}^r\}$
be the parallel translation of $\{E_i\}_{i=1}^{2m-1}$ along the geodesic to the nearby parallel hypersurface $\Phi_r(M)$.
Then by standard Jacobi field theory,
we have the following proposition about the parallel hypersurfaces of $M$.

\begin{proposition}\label{prop:4.1w}
Let $M$ be a Hopf hypersurface of $Q^m$ ($m\geq3$) with constant principal curvatures and $\mathfrak{A}$-isotropic unit normal vector field $N$.
Then, for any $p\in M$, the tangent map of $\Phi_r$ has the following expression:
\begin{equation}\label{eqn:rank}
\left(
  \begin{array}{c}
    d\Phi_r(E_1) \\
    \vdots \\
    d\Phi_r(E_{2m-4}) \\
    d\Phi_r(E_{2m-3}) \\
    d\Phi_r(E_{2m-2}) \\
    d\Phi_r(E_{2m-1}) \\
  \end{array}
\right)=(B_{ij})
\left(
  \begin{array}{c}
    E_1^r \\
    \vdots \\
    E_{2m-4}^r \\
    E_{2m-3}^r \\
    E_{2m-2}^r \\
    E_{2m-1}^r \\
  \end{array}
\right),
\end{equation}
where
\begin{equation}\label{Bij111}
(B_{ij})=
\left(
\begin{array}{cccccc}
\cos(r)-\lambda_1\sin(r) & & & & & \\
 & \ddots & & & & \\
 & & \cos(r)-\lambda_{2m-4}\sin(r) & & & \\
  & &  & 1 & & \\
   & &  & & 1  & \\
    & &  & & & \cos(2r)-\frac{\alpha}{2}\sin(2r) \\
\end{array}
\right).
\end{equation}
The determinant of $(B_{ij})$ is given by
\begin{equation}\label{eqn:det}
{\rm Det}(B_{ij})=\prod_{i=1}^{2m-4}\Big(\cos(r)-\lambda_i\sin(r)\Big)\Big(\cos(2r)-\frac{\alpha}{2}\sin(2r)\Big).
\end{equation}

Furthermore, let $S_r$ be the shape operator of the parallel hypersurface $\Phi_r(M)$ with
respect to the unit normal vector field $\frac{d\exp_{p}(rN_{p})}{dr}$, where $p\in M$. Then the expression of $S_r$ at $\Phi_r(p)$ is given by
\begin{equation}\label{eqn:Arr}
\left(
  \begin{array}{c}
    S_rE_1^r \\
    \vdots \\
    S_r E_{2m-4}^r \\
    S_r E_{2m-3}^r \\
    S_r E_{2m-2}^r \\
    S_r E_{2m-1}^r \\
  \end{array}
\right)=
(C_{ij})
\left(
  \begin{array}{c}
    E_1^r \\
    \vdots \\
    E_{2m-4}^r \\
    E_{2m-3}^r \\
    E_{2m-2}^r \\
    E_{2m-1}^r \\
  \end{array}
\right),
\end{equation}
where
\begin{equation}\label{Bij222}
(C_{ij})=
\left(
\begin{array}{cccccc}
\frac{\sin(r)+\lambda_1\cos(r)}{\cos(r)-\lambda_1\sin(r)} & & & & & \\
 & \ddots & & & & \\
 & & \frac{\sin(r)+\lambda_{2m-4}\cos(r)}{\cos(r)-\lambda_{2m-4}\sin(r)} & & & \\
  & &  & 0 & & \\
   & &  & & 0 & \\
    & &  & & & \frac{2\sin(2r)+\alpha\cos(2r)}{\cos(2r)-\frac{\alpha}{2}\sin(2r)} \\
\end{array}
\right).
\end{equation}
\end{proposition}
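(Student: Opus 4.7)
The plan is to apply the Jacobi field method, just as in the tube computation of Example \ref{E6}. For each $E_i$ in the orthonormal basis at $p$, I would pick a curve $p(s)$ in $M$ with $p'(0)=E_i$ and consider the variation $c(s,r)=\exp_{p(s)}(rN_{p(s)})$ of the geodesic $\gamma(r)=\exp_p(rN_p)$. Then $Y_i(r):=\partial_s c(0,r)=d\Phi_r(E_i)$ is a Jacobi field along $\gamma$ satisfying $Y_i''+\bar R(Y_i,\dot\gamma)\dot\gamma=0$ with $Y_i(0)=E_i$ and $Y_i'(0)=\bar\nabla_{E_i}N=-SE_i$. Determining the three diagonal blocks of $(B_{ij})$ reduces to solving this equation on three parallel subbundles of $\gamma^\perp$.

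The first key step is the explicit computation of the normal Jacobi operator $\bar R_N X := \bar R(X,N)N$. Substituting into the curvature formula \eqref{eqn:2.3}, using that $N$ is $\mathfrak{A}$-isotropic so that $g(AN,N)=g(A\xi,N)=0$, and invoking the identity $JAN=A\xi$ (which follows from $AJ=-JA$ and $JN=-\xi$), one obtains
\[
\bar R(X,N)N \;=\; X + 3\eta(X)\xi - g(X,AN)AN - g(X,A\xi)A\xi.
\]
Hence $\bar R_N$ acts as the identity on $\mathcal{Q}$, as $4\,\mathrm{Id}$ on $\mathbb{R}\xi$, and as zero on $\mathrm{Span}\{AN,A\xi\}$. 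Because $\bar R$ is parallel on $Q^m$, the operator $\bar R_{\dot\gamma}$ is parallel along $\gamma$, so each of these three eigenspaces at $p$ is carried by parallel transport to the analogous eigenspace at every $\gamma(r)$. The $\mathbb{R}\xi$ summand is pinned down by the fact that $\xi(r)=-J\dot\gamma(r)$ is itself parallel along $\gamma$.

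Next I would solve the Jacobi equation separately on each of the three parallel subspaces with the appropriate initial data. For $E_i\in V_{\lambda_i}\subset\mathcal{Q}$ with $1\le i\le 2m-4$, the equation reduces to $Y''+Y=0$ with $Y(0)=E_i$ and $Y'(0)=-\lambda_iE_i$, producing $Y_i(r)=(\cos r-\lambda_i\sin r)E_i^r$. For $E_{2m-3}=AN$ and $E_{2m-2}=A\xi$, Lemma \ref{lemma:2.6} gives $SAN=SA\xi=0$, so $Y''=0$ with $Y'(0)=0$ yields $Y(r)=E_{2m-3}^r$ and $Y(r)=E_{2m-2}^r$. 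For $E_{2m-1}=\xi$, the equation $Y''+4Y=0$ with $Y'(0)=-\alpha\xi$ integrates to $Y(r)=\bigl(\cos(2r)-\tfrac{\alpha}{2}\sin(2r)\bigr)\xi^r$. Assembling the three cases gives \eqref{eqn:rank} and \eqref{Bij111}, and \eqref{eqn:det} is the determinant of the resulting diagonal matrix.

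Finally, the shape operator formula would follow from the standard identity $S_r\,d\Phi_r(E_i)=-Y_i'(r)$, which results from $\bar\nabla_{\partial_s}\partial_r=\bar\nabla_{\partial_r}\partial_s$ applied to $c(s,r)$ together with the definition of $S_r$ against the unit normal $\dot\gamma(r)$. Differentiating each $Y_i$ above and substituting $E_i^r=(\cos r-\lambda_i\sin r)^{-1}Y_i(r)$ on $\mathcal{Q}$, and the analogous relations on the other two summands, yields \eqref{eqn:Arr} and \eqref{Bij222}. The main conceptual subtlety lies in the parallel-transport step: although the almost product structure $A$ itself is not parallel along $\gamma$ (it satisfies \eqref{eqn:2.2} with a generally nonzero one-form $q$), the $2$-plane $\mathrm{Span}\{AN,A\xi\}=\ker \bar R_{\dot\gamma}$ is intrinsic and parallel, which is precisely what justifies treating $E_{2m-3}^r$ and $E_{2m-2}^r$ as honest parallel translates annihilated by $S_r$ for every $r$, and hence the two middle $1$'s in $(B_{ij})$ and the two middle $0$'s in $(C_{ij})$.
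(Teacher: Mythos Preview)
Your proposal is correct and follows essentially the same route as the paper: both compute the normal Jacobi operator $\bar R_N$ from \eqref{eqn:2.3}, identify its eigenspaces $\mathcal{Q}$, $\mathrm{Span}\{AN,A\xi\}$, $\mathbb{R}\xi$ with eigenvalues $1,0,4$, and then solve the Jacobi equation $Y''+\bar R^\perp_\gamma Y=0$ on each parallel block with initial data $Y(0)=I$, $Y'(0)=-S$, recovering $(B_{ij})$ and $S_r=-D'(r)D^{-1}(r)$. Your explicit justification that the eigenspaces are parallel along $\gamma$ because $Q^m$ is locally symmetric (so $\bar R_{\dot\gamma}$ is parallel) is a welcome elaboration of a step the paper leaves to the cited reference.
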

\begin{proof}
At $p\in M$, let $N_p$ be a unit normal vector of $M$. Since $M$ has $\mathfrak{A}$-isotropic unit normal vector field $N$, the four vectors
$N_p, \xi_p ,AN_p,A\xi_p$ are pairwise orthonormal and the normal Jacobi operator
is given by
$$
\bar{R}_NZ=\bar{R}(Z,N)N=Z-g(Z,N)N+3g(Z,JN)JN-g(Z,AN)AN-g(Z,JAN)JAN.
$$
This implies that $\bar{R}_N$ has three eigenvalues $0$, $1$, $4$ with corresponding
eigenspaces ${\rm Span}\{N_p,AN_p,A\xi_p\}$, $\mathcal{Q}_p$ and ${\rm Span}\{\xi_p\}$.

To calculate the principal curvatures of the parallel hypersurface $\Phi_r(M)$ around
$M$, we use the Jacobi field method as described in [\cite{B-C-O}, Sec. 8.2]. Let $\gamma$ be the
geodesic in $Q^n$ with $\gamma(0)=p\in M$ and $\dot{\gamma}(0)=N_p$ and denote by $\gamma^\perp$ the parallel
subbundle of $TQ^m$ along $\gamma$ defined by $\gamma^\perp_{\gamma(t)}
=T_{\gamma(t)}Q^m\ominus \mathbb{R}\dot{\gamma}(t)$. Moreover, define
the $\gamma^\perp$-valued tensor field $\bar{R}^\perp_{\gamma}$ along $\gamma$ by
$\bar{R}^\perp_{\gamma(t)}X=\bar{R}(X,\dot{\gamma}(t))\dot{\gamma}(t)$. Now consider
the End$(\gamma^\perp)$-valued differential equation
$$
Y''+\bar{R}^\perp_\gamma\circ Y=0.
$$
Let $D$ be the unique solution of this differential equation with initial values
$$
D(0)=I,\ \ D'(0)=-S,
$$
where $I$ denotes the identity transformation. We decompose $\gamma^\perp_p$ further into
$$
\gamma^\perp_p=\oplus_{i=1}^{2m-4} V_{\lambda_i}\oplus{\rm Span}\{AN_p,A\xi_p\}\oplus{\rm Span}\{\xi_p\}.
$$
By explicit computation, we obtain \eqref{eqn:rank}. Moreover, the
shape operator $S_r$ of the parallel hypersurface $\Phi_r(M)$ around $M$ with respect to
$\dot{\gamma}(r)$ is given by $S_r=-D'(r)\circ D^{-1}(r)$.
Then by further computation, we can have \eqref{eqn:Arr}.
\end{proof}

From Proposition \ref{prop:4.1w} that, the parallel hypersurface of a Hopf hypersurface $M$ of $Q^m$ with constant principal curvatures and $\mathfrak{A}$-isotropic unit normal vector field has constant mean curvature
\begin{equation}\label{eqn:Hr}
H(r)={\rm Tr}S_r=\sum_{i=1}^{2m-4}\Big(\frac{\sin(r)+\lambda_i\cos(r)}{\cos(r)-\lambda_i\sin(r)}\Big)
+\frac{2\sin(2r)+\alpha\cos(2r)}{\cos(2r)-\frac{\alpha}{2}\sin(2r)}.
\end{equation}

Recall that, let $M$ be an orientable hypersurface of Riemannian manifold $(\overline{M}, g)$.
We say that $M$ is an isoparametric hypersurface of $\overline{M}$, i.e.,
there exists an isoparametric function $F:\overline{M} \rightarrow \mathbb{R}$ such that $M=F^{-1}(l)$,
for some regular value $l$ of $F$. Here $F$ is called an isoparametric function
if the gradient and the Laplacian of $F$ satisfy
$$
\|\nabla F\|^{2}=f_1(F), \quad \Delta F=f_2(F),
$$
where $f_1, f_2: \mathbb{R} \rightarrow \mathbb{R}$ are smooth functions.
In addition to above definition,
there is another equivalent characterization for isoparametric hypersurfaces. A hypersurface of a Riemannian manifold
is isoparametric if and only if its locally defined parallel hypersurfaces
have constant mean curvature.
It follows from \eqref{eqn:Arr} and \eqref{eqn:Hr} that a Hopf hypersurface $M$ of $Q^m$ with constant principal curvatures and $\mathfrak{A}$-isotropic unit normal vector field is an isoparametric hypersurface,
and all parallel hypersurfaces of $M$ have constant principal curvatures.
On the other hand, by the fact that Hopf hypersurface of $Q^m$ with constant principal curvatures has either $\mathfrak{A}$-principal unit normal vector field or $\mathfrak{A}$-isotropic unit normal vector field,
and the Hopf hypersurface of $Q^m$ with $\mathfrak{A}$-principal unit normal vector field is
an open part of a tube over a totally geodesic $Q^{m-1}\hookrightarrow Q^{m}$.
Note that, the homogeneous real hypersurfaces
are isoparametric hypersurfaces. Thus, in fact, we get the following result:
\begin{theorem}\label{thm:4.2w}
Let $M$ be a Hopf hypersurface of $Q^m$ ($m\geq3$) with constant principal curvatures. Then $M$ is an isoparametric hypersurface. Moreover, all parallel hypersurfaces of $M$ have constant principal curvatures.
\end{theorem}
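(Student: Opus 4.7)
The plan is to split into the two cases provided by Lemma~\ref{lemma:2.5}, namely whether the unit normal vector field $N$ is $\mathfrak{A}$-principal or $\mathfrak{A}$-isotropic, and treat each case by a direct use of earlier results.

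In the $\mathfrak{A}$-principal case, I would invoke Theorem~\ref{thm:2.1}, so that $M$ is an open part of a tube of some radius $t_{0}$ over the totally geodesic $Q^{m-1}\hookrightarrow Q^{m}$. The parallel hypersurfaces $\Phi_{r}(M)$ are then, up to open parts, simply the tubes of radius $t_{0}+r$ around the same totally geodesic $Q^{m-1}$. Since Proposition~\ref{prop:E1P} gives an explicit list of the three principal curvatures $\sqrt{2}\cot(\sqrt{2}s)$, $0$, $-\sqrt{2}\tan(\sqrt{2}s)$ with fixed multiplicities as a function of the radius $s$ alone, every $\Phi_{r}(M)$ has constant principal curvatures, and in particular constant mean curvature.

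In the $\mathfrak{A}$-isotropic case, I would appeal directly to Proposition~\ref{prop:4.1w}, which is the main technical input. The diagonal expression \eqref{Bij222} for the shape operator $S_{r}$ of the parallel hypersurface $\Phi_{r}(M)$ shows that its eigenvalues are the rational functions
\[
\frac{\sin(r)+\lambda_{i}\cos(r)}{\cos(r)-\lambda_{i}\sin(r)},\qquad 0,\qquad \frac{2\sin(2r)+\alpha\cos(2r)}{\cos(2r)-\tfrac{\alpha}{2}\sin(2r)},
\]
of $r$, $\lambda_{i}$, and $\alpha$. Since $\lambda_{1},\dots,\lambda_{2m-4}$ and $\alpha$ are constant on $M$ by assumption, and since parallel translation along the geodesics $\exp_{p}(rN_{p})$ preserves these eigenspaces pointwise, the principal curvatures of $\Phi_{r}(M)$ are constant on $\Phi_{r}(M)$ for every admissible $r$, with multiplicities independent of the base point. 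Taking the trace gives the constant mean curvature $H(r)$ from \eqref{eqn:Hr}.

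Combining both cases, every parallel hypersurface $\Phi_{r}(M)$ has constant principal curvatures, hence constant mean curvature. Applying the equivalent characterization of isoparametric hypersurfaces recalled just before the statement (a hypersurface is isoparametric iff its locally defined parallel hypersurfaces have constant mean curvature), I conclude that $M$ itself is isoparametric, which simultaneously yields both assertions of the theorem. I do not expect a genuine obstacle here, since the heavy lifting has already been done in Proposition~\ref{prop:4.1w} and Theorem~\ref{thm:2.1}; the only point requiring a little care is to check that in the $\mathfrak{A}$-principal case the parallel hypersurfaces are again (open parts of) tubes over the same $Q^{m-1}$, which follows from the fact that such a tube is obtained by the normal exponential map of the totally geodesic $Q^{m-1}$.
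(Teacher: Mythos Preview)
Your proposal is correct and follows essentially the same approach as the paper: splitting via Lemma~\ref{lemma:2.5}, handling the $\mathfrak{A}$-isotropic case directly from Proposition~\ref{prop:4.1w} and \eqref{eqn:Hr}, and then invoking the constant-mean-curvature characterization of isoparametricity. The only cosmetic difference is that in the $\mathfrak{A}$-principal case the paper simply observes that the tubes over $Q^{m-1}$ are homogeneous (hence isoparametric, with parallel hypersurfaces again homogeneous tubes), whereas you read off the constant principal curvatures of each tube explicitly from Proposition~\ref{prop:E1P}; both routes are fine.
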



In the following, we study the focal submanifolds of a Hopf hypersurface of $Q^n$ with constant principal curvatures and $\mathfrak{A}$-isotropic unit normal vector field $N$. Note that, if $\Phi_r(M)$ is a focal
submanifold of $M$, then it holds ${\rm Det}(B_{ij})=0$ on $\Phi_r(M)$. So, in order to find the distance $|r|$, we observe the factors $\cos(r)-\lambda_i\sin(r)$ for $1\leq i\leq2m-4$ and $\cos(2r)-\frac{\alpha}{2}\sin(2r)$,
and find out two closest distances $r_1$ and $-r_2$ from $0$ such that ${\rm Det}(B_{ij})$ equals to $0$.
Recall that
$\lambda_i\in \sigma(\mathcal{Q})$, $1\leq i\leq2m-4$.
Now, we define
$\lambda_+={\rm max}\{\lambda_i,\frac{\alpha+\sqrt{\alpha^2+4}}{2}\}$ and $\lambda_-={\rm min}\{\lambda_i,\frac{\alpha-\sqrt{\alpha^2+4}}{2}\}$.
Then, the two closest distances $r_1$ and $-r_2$ satisfy $\cot(r_1)=\lambda_+$ ($0<r_1<\frac{\pi}{2}$) and $\cot(r_2)=\lambda_-$ ($-\frac{\pi}{2}<r_2<0$). 

For the distance $r_1$, according to the computation of Proposition \ref{prop:4.1w},
we know that $M_+:=\Phi_{r_1}(M)$ is a focal
submanifold of $M$. If $\lambda_+>\frac{\alpha+\sqrt{\alpha^2+4}}{2}$, then
${\rm dim}M_+=2m-{\rm dim}V_{\lambda_+}-1$. If $\lambda_+=\frac{\alpha+\sqrt{\alpha^2+4}}{2}$,
then ${\rm dim}M_+=2m-{\rm dim}V_{\lambda_+}-2$. Here, $V_{\lambda_+}$ is the corresponding eigenspace
restricted to $\mathcal{Q}$. Now, we have the following result about $M_+$.

\begin{proposition}\label{prop:4.3w}
Let $M$ be a Hopf hypersurface of $Q^m$ ($m\geq3$) with constant principal curvatures and $\mathfrak{A}$-isotropic unit normal vector field $N$. Then the focal submanifold $M_+$ has constant principal curvatures with respect to any unit normal vector field, and $M_+$ is austere.
Moreover, if $\lambda_+>\frac{\alpha+\sqrt{\alpha^2+4}}{2}$, then
the constant principal curvatures of $M_+$ are
$$
0,\ \ \ \alpha+\frac{(4+\alpha^2)\lambda_+}{\lambda_+^2-\alpha\lambda_+-1},
\ \ \ \ \frac{1+\lambda_+\lambda_i}{\lambda_+-\lambda_i},\ \  {\rm where}\ \  \lambda_i<\lambda_+.
$$
If $\lambda_+=\frac{\alpha+\sqrt{\alpha^2+4}}{2}$, then
the constant principal curvatures of $M_+$ are
$$
0,\ \ \ \frac{1+\lambda_+\lambda_i}{\lambda_+-\lambda_i},\ \  {\rm where}\ \  \lambda_i<\lambda_+.
$$
For each of the above situations, it holds that
$$
\frac{1+\lambda_+\lambda_i}{\lambda_+-\lambda_i}
<\frac{1+\lambda_+\lambda_j}{\lambda_+-\lambda_j},
\ \ {\rm for}\ \ \lambda_i<\lambda_j<\lambda_+.
$$
\end{proposition}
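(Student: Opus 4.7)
The plan is to derive Proposition \ref{prop:4.3w} in three stages: explicit computation of the shape operator along the distinguished normal $\nu_0=\dot\gamma(r_1)$ by means of the Jacobi-field formulas already assembled in Proposition \ref{prop:4.1w}, invocation of the Ge--Tang theorem cited in the introduction to upgrade constancy to arbitrary unit normal vector fields and to secure austerity, and a final elementary monotonicity check.

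For the explicit computation, I would evaluate the diagonal matrix $(C_{ij})$ from \eqref{Bij222} at the focal radius $r=r_1$ characterised by $\cot r_1=\lambda_+$. On a parallel translate of $V_{\lambda_i}$ with $\lambda_i<\lambda_+$ the entry simplifies to
\[
\frac{\sin r_1+\lambda_i\cos r_1}{\cos r_1-\lambda_i\sin r_1}=\frac{1+\lambda_+\lambda_i}{\lambda_+-\lambda_i},
\]
while on the parallel translates of $AN$ and $A\xi$ the entries remain $0$. For the Reeb direction I would use the double-angle identity $\cot(2r_1)=(\lambda_+^2-1)/(2\lambda_+)$: the factor $\cos(2r_1)-\tfrac{\alpha}{2}\sin(2r_1)$ is a nonzero multiple of $\lambda_+^2-\alpha\lambda_+-1$, so it vanishes precisely in the borderline case $\lambda_+=\tfrac{\alpha+\sqrt{\alpha^2+4}}{2}$. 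In the strict case $\lambda_+>\tfrac{\alpha+\sqrt{\alpha^2+4}}{2}$, the Reeb direction therefore survives to $M_+$ and, after multiplying the numerator and denominator of the last entry of $(C_{ij})$ by $1+\lambda_+^2$ and rearranging, the resulting eigenvalue becomes $\alpha+\tfrac{(4+\alpha^2)\lambda_+}{\lambda_+^2-\alpha\lambda_+-1}$; in the borderline case $\xi$ merges with $V_{\lambda_+}$ in the focal kernel and drops from the list. Because each $\lambda_i$ is a constant on $M$, every resulting eigenvalue is automatically constant along $M_+$.

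To extend constancy from $\nu_0$ to every unit normal vector field and to conclude austerity, I would appeal to the theorem of Ge--Tang \cite{GT} stated in the introduction. Its hypothesis, namely that every parallel hypersurface of $M$ carries constant principal curvatures, is exactly Theorem \ref{thm:4.2w}. Invoking it at once yields that $M_+$ is austere and that each of its shape operators (for any local unit normal vector field) has constant principal curvatures. This step avoids re-running the Jacobi-field analysis along geodesics leaving $\Phi_{r_1}(p)$ in the directions of the focal subspace, which do not emanate from $M$ itself.

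The strict monotonicity is immediate: the function $f(x)=\frac{1+\lambda_+ x}{\lambda_+-x}$ has derivative $f'(x)=\frac{1+\lambda_+^2}{(\lambda_+-x)^2}>0$ on $x<\lambda_+$, so $f(\lambda_i)<f(\lambda_j)$ whenever $\lambda_i<\lambda_j<\lambda_+$. The main obstacle I anticipate is careful bookkeeping between the two cases $\lambda_+>\tfrac{\alpha+\sqrt{\alpha^2+4}}{2}$ and $\lambda_+=\tfrac{\alpha+\sqrt{\alpha^2+4}}{2}$: in the borderline case the codimension of $M_+$ jumps by one because $\xi$ also focalises, so one must verify that the Ge--Tang step and the list of principal curvatures are both interpreted consistently with this dimensional collapse.
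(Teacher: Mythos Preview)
Your proposal is correct and follows essentially the same route as the paper: compute the shape operator of $M_+$ along the distinguished normal by specialising the Jacobi-field formulas of Proposition~\ref{prop:4.1w} at $r=r_1$ with $\cot r_1=\lambda_+$, invoke Theorem~\ref{thm:4.2w} together with the Ge--Tang result \cite{GT} to obtain austerity, and finish with the monotonicity of $x\mapsto(1+\lambda_+x)/(\lambda_+-x)$. The only cosmetic difference is that the paper attributes the constancy of the principal curvatures with respect to \emph{every} unit normal to the Jacobi-field computation itself (since each unit normal to $M_+$ is the endpoint velocity of a geodesic issuing from some point of $M$, so the same constants recur), whereas you package this into the Ge--Tang step; either reading suffices.
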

\begin{proof}
According to Proposition \ref{prop:4.1w} and Theorem \ref{thm:4.2w}, we know that
$M$ is an isoparametric hypersurface,
and all its parallel hypersurfaces of $M$ have constant principal curvatures.
Then, by using Theorem 1.2 obtained by Ge-Tang \cite{GT}, the focal submanifold $M_+$ is austere.
Furthermore, by the Jacobi field method as used in Proposition \ref{prop:4.1w},
by taking $r=r_1$, one can get that the constant principal curvatures of $M_+$ with respect to any unit normal vector field are given as above.

Note that, if we fix $\lambda_+$ , then the function
$\frac{1+{\lambda_+}\lambda_i}{{\lambda_+}-\lambda_i}$ is increasing with respect to $\lambda_i$
for $\lambda_i<\lambda_+$.
\end{proof}

We recall that a submanifold is said to be austere if its multiset of principal curvatures is invariant under change of sign. In particular, austere submanifolds are automatically minimal.
This concept was introduced by Harvey-Lawson \cite{HL} for constructing special
Lagrangian submanifolds in $\mathbb{C}^n$.

For the distance $-r_2$, according to the computation of Proposition \ref{prop:4.1w}, we know that $M_-:=\Phi_{r_2}(M)$ is a focal
submanifold of $M$. If $\lambda_-<\frac{\alpha-\sqrt{\alpha^2+4}}{2}$, then
${\rm dim}M_-=2m-{\rm dim}V_{\lambda_-}-1$. If $\lambda_-=\frac{\alpha-\sqrt{\alpha^2+4}}{2}$, then ${\rm dim}M_-=2m-{\rm dim}V_{\lambda_-}-2$. Here, $V_{\lambda_-}$ is the corresponding eigenspace
restricted to $\mathcal{Q}$. Then, similar as the proof of Proposition \ref{prop:4.3w}, we have the following result about $M_-$.

\begin{proposition}\label{prop:4.4w}
Let $M$ be a Hopf hypersurface of $Q^m$ ($m\geq3$) with constant principal curvatures and $\mathfrak{A}$-isotropic unit normal vector field $N$. Then focal submanifold $M_-$ has constant principal curvatures with respect to any unit normal vector field, and $M_-$ is austere.
Moreover, if $\lambda_-<\frac{\alpha-\sqrt{\alpha^2+4}}{2}$,
the constant principal curvatures of $M_-$ are
$$
0,\ \ \ \alpha+\frac{(4+\alpha^2){\lambda_-}}{\lambda_-^2-\alpha{\lambda_-}-1}, \ \ \ \ \frac{1+{\lambda_-}\lambda_i}{{\lambda_-}-\lambda_i},\ \  {\rm where}\ \  {\lambda_-}<\lambda_i.
$$
If $\lambda_-=\frac{\alpha-\sqrt{\alpha^2+4}}{2}$, the constant principal curvatures of $M_-$ are
$$
0,\ \ \ \frac{1+{\lambda_-}\lambda_i}{{\lambda_-}-\lambda_i},\ \  {\rm where}\ \  {\lambda_-}<\lambda_i.
$$
For each of the above situations, it holds that
$$
\frac{1+{\lambda_-}\lambda_i}{{\lambda_-}-\lambda_i}
<\frac{1+{\lambda_-}\lambda_j}{{\lambda_-}-\lambda_j},
\ \ {\rm for}\ \ {\lambda_-}<\lambda_i<\lambda_j.
$$
\end{proposition}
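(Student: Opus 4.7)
The plan is to mirror the proof of Proposition \ref{prop:4.3w}, since $M_-$ plays the symmetric role to $M_+$ on the $-N$ side of the normal geodesic. First, I would invoke Theorem \ref{thm:4.2w} to obtain that $M$ is isoparametric and that all of its parallel hypersurfaces have constant principal curvatures, and then apply the theorem of Ge-Tang \cite{GT} to conclude that the focal submanifold $M_-$ is automatically austere. The constancy of the principal curvatures of $M_-$ with respect to any unit normal vector field will follow from the fact that the coefficient matrices \eqref{Bij111} and \eqref{Bij222} at $r = r_2$ depend only on the constants $\alpha$ and $\lambda_i \in \sigma(\mathcal{Q})$, which by assumption do not vary along $M$.

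To extract the explicit values, I would reuse the Jacobi field computation of Proposition \ref{prop:4.1w}, evaluated now at $r = r_2$ with $\cot(r_2) = \lambda_-$ and $-\pi/2 < r_2 < 0$. The identity $\cos r_2 = \lambda_- \sin r_2$ forces the diagonal entry $\cos r_2 - \lambda_- \sin r_2$ of $(B_{ij})$ to vanish, which realizes $\Phi_{r_2}(M) = M_-$ as the focal locus along $V_{\lambda_-}$. Substituting this identity into the surviving entries of $(C_{ij})$ yields $(1 + \lambda_- \lambda_i)/(\lambda_- - \lambda_i)$ for the remaining $V_{\lambda_i}$ blocks with $\lambda_i > \lambda_-$, and $0$ for the $AN$ and $A\xi$ blocks. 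Using the double-angle identities $\sin 2r_2 = 2\lambda_- \sin^2 r_2$ and $\cos 2r_2 = (\lambda_-^2 - 1)\sin^2 r_2$, a short simplification transforms the Reeb entry into $\alpha + (4+\alpha^2)\lambda_-/(\lambda_-^2 - \alpha\lambda_- - 1)$. The boundary case is detected by the vanishing of the denominator $\lambda_-^2 - \alpha\lambda_- - 1$, which, since $\lambda_-$ is the minimum, forces $\lambda_- = (\alpha - \sqrt{\alpha^2+4})/2$ and simultaneously makes $\cos 2r - (\alpha/2)\sin 2r$ vanish at $r_2$; this accounts for the extra dimension of collapse and the loss of the Reeb principal curvature in that case.

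The monotonicity claim is immediate: differentiating $t \mapsto (1 + \lambda_- t)/(\lambda_- - t)$ gives $(1 + \lambda_-^2)/(\lambda_- - t)^2 > 0$ on $t \neq \lambda_-$. I do not anticipate a real obstacle here, as every ingredient is already established in Proposition \ref{prop:4.1w}, Theorem \ref{thm:4.2w}, the Ge-Tang theorem, and Proposition \ref{prop:4.3w}. The only points requiring care are tracking the sign of $r_2$ and noting that it is now $\lambda_-$, rather than $\lambda_+$, which governs the focal distance on the $-N$ side; both amount to interchanging the roles of maximum and minimum in the argument of Proposition \ref{prop:4.3w}.
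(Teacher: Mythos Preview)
Your proposal is correct and follows essentially the same approach as the paper. The paper in fact gives no separate proof for Proposition~\ref{prop:4.4w}, stating only that it is ``similar as the proof of Proposition~\ref{prop:4.3w}''; your outline (Theorem~\ref{thm:4.2w} $\Rightarrow$ Ge--Tang austerity, then the Jacobi field computation of Proposition~\ref{prop:4.1w} at $r=r_2$ with $\cot r_2=\lambda_-$, and finally the monotonicity by differentiation) is exactly that argument, carried over to the $-N$ side with $\lambda_-$ in place of $\lambda_+$.
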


\begin{remark}\label{rem:4.1aa}
If $\lambda_+>\frac{\alpha+\sqrt{\alpha^2+4}}{2}$ (or $\lambda_-<\frac{\alpha-\sqrt{\alpha^2+4}}{2}$), in order to arrange the principal curvatures of $M_+$ (or $M_-$) from small to large,
we only need to consider the relationship between $\alpha+\frac{(4+\alpha^2){\lambda_+}}{{\lambda_+}^2-\alpha{\lambda_+}-1}$ and $\frac{1+{\lambda_+}\lambda_i}{{\lambda_+}-\lambda_i}$
(or $\alpha+\frac{(4+\alpha^2){\lambda_-}}{{\lambda_-}^2-\alpha{\lambda_-}-1}$ and $\frac{1+{\lambda_-}\lambda_i}{{\lambda_-}-\lambda_i}$).
Then, by the fact that $M_+$ (or $M_-$) is austere, we can get some
relationship between the principal curvatures $\{\alpha,0,\lambda_1,...,\lambda_{2m-4}\}$ of $M$.
\end{remark}

\subsection{Cartan's formulas for Hopf hypersurfaces with constant principal curvatures and $\mathfrak{A}$-isotropic unit normal vector field}\label{sect:4.2}~

Let $M$ be a Hopf hypersurface with constant principal curvatures and $\mathfrak{A}$-isotropic unit normal vector field $N$.
Suppose tangent vector fields $X,Y\in TM$ satisfy $SX=\lambda X$ and $SY=\mu Y$, then it holds
$$
g((\nabla_Z S)X,Y)=(\lambda-\mu)g(\nabla_Z X,Y),
$$
for any tangent vector $Z$.

Observe that distribution $\mathcal{Q}$ is $S$-invariant, for any $\lambda\in\sigma(\mathcal{Q})$, let $V_\lambda$ be the corresponding eigenspace
restricted to $\mathcal{Q}$.

\begin{lemma}\label{lemma:5.1}
Let $M$ be a Hopf hypersurface of $Q^m$ ($m\geq3$) with constant principal curvatures and $\mathfrak{A}$-isotropic unit normal vector field $N$. For any $\lambda$ in $\sigma(\mathcal{Q})$ and any almost product structure $A\in\mathfrak{A}$,
we have
\begin{enumerate}
\item[(1)]
$\nabla_X (AN)=q(X)A\xi-\lambda AX$ for all $X\in V_\lambda$, where $q$ is the corresponding $1$-form of $A$ described in \eqref{eqn:2.2};

\item[(2)]
$\nabla_X (A\xi)=-q(X)AN-\lambda JAX$ for all $X\in V_\lambda$, where $q$ is the corresponding $1$-form of $A$
described in \eqref{eqn:2.2};

\item[(3)]
$AV_\lambda\bot {\rm Span}\{V_\lambda, JV_\lambda\}$, i.e.
$$
g(AX,Y)=g(AX,JY)=0,\ for\ all\  X,Y\in V_\lambda.
$$
\end{enumerate}
\end{lemma}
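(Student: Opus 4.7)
The plan for parts (1) and (2) is to compute $\bar\nabla_X(AN)$ and $\bar\nabla_X(A\xi)$ in the ambient $Q^m$ via the connection identity $(\bar\nabla_X A)Y=q(X)JAY$ from \eqref{eqn:2.2}, together with the Weingarten formula and \eqref{eqn:2.7}. The identities $AJ=-JA$ and $\xi=-JN$ yield $JAN=A\xi$ and $JA\xi=-AN$, so the $(\bar\nabla_X A)N$ and $(\bar\nabla_X A)\xi$ contributions take precisely the stated form. Since $N$ is $\mathfrak{A}$-isotropic, both $AN$ and $A\xi$ are tangent to $M$ and lie in $\mathcal{C}$, and Lemma \ref{lemma:2.6} gives $SAN=SA\xi=0$. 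The Gauss formula then ensures that the normal components $g(SX,AN)N=g(X,SAN)N$ and $g(SX,A\xi)N=g(X,SA\xi)N$ both vanish, so $\nabla$ agrees with $\bar\nabla$ on these sections. For $X\in V_\lambda\subset\mathcal{Q}$ we further have $\phi X=JX$ and $SX=\lambda X$, and the formulas in (1) and (2) drop out after a short simplification.

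For part (3), I plan to apply the Codazzi equation \eqref{eqn:2.9} first to the pair $(X,AN)$ and then to $(X,A\xi)$, with $X\in V_\lambda$. Using the $\mathfrak{A}$-isotropic orthogonalities $g(X,AN)=g(X,A\xi)=g(AN,A\xi)=0$ and $\eta(X)=\eta(AN)=\eta(A\xi)=0$, together with the easy consequences $g(\phi X,AN)=-g(X,A\xi)=0$ and $g(\phi X,A\xi)=g(X,AN)=0$, all but one tangential correction term on the right-hand side will vanish, leaving $-AX$ in the first case and $-JAX$ in the second (both being tangent since $AX\in\mathcal{Q}$ and $\eta(AX)=g(X,A\xi)=0$). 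Expanding the left-hand sides using parts (1), (2) and $SAN=SA\xi=0$ reduces Codazzi to
\begin{equation*}
(I+\lambda S)AX=(\lambda I-S)\nabla_{AN}X,\qquad (I+\lambda S)JAX=(\lambda I-S)\nabla_{A\xi}X.
\end{equation*}
The crux of the argument is then to pair each of these identities with an arbitrary $Y\in V_\lambda$ via the metric $g$. Since $\lambda I-S$ is self-adjoint and $(\lambda I-S)Y=0$, both right-hand sides pair to zero; on the left, $g(SAX,Y)=g(AX,SY)=\lambda g(AX,Y)$ (and analogously for $JAX$) makes the two pairings equal to $(1+\lambda^2)g(AX,Y)$ and $(1+\lambda^2)g(JAX,Y)$ respectively, and the positivity of $1+\lambda^2$ forces $g(AX,Y)=0$ and $g(AX,JY)=-g(JAX,Y)=0$, which is exactly (3).

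The hard part, as I anticipate it, is not conceptual but the careful book-keeping in expanding Codazzi on $(X,AN)$ and $(X,A\xi)$: confirming that the various tangential correction terms $(AY)^\top$, $(JAY)^\top$ really collapse to $AX$ and $JAX$ will demand repeated use of the $\mathfrak{A}$-isotropic hypothesis in the form $AN,A\xi\in\mathcal{C}$ and $SAN=SA\xi=0$. Once those simplified identities are in hand, the self-adjointness pairing with $Y\in V_\lambda$ is entirely robust and requires no case analysis on how $\lambda$ sits relative to the other eigenvalues of $S$ on $\mathcal{Q}$.
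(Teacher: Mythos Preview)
Your proposal is correct and follows essentially the same route as the paper: parts (1) and (2) are computed exactly as in the paper via $(\bar\nabla_X A)Y=q(X)JAY$, the Weingarten formula, and $SAN=SA\xi=0$, while part (3) is obtained from the Codazzi equation on the pairs $(X,AN)$ and $(X,A\xi)$ and then pairing with $Y\in V_\lambda$. The only cosmetic difference is that the paper pairs with $Y$ immediately (so the $(\nabla_{AN}S)X$ and $(\nabla_{A\xi}S)X$ terms vanish at once), whereas you first record the operator identities $(I+\lambda S)AX=(\lambda I-S)\nabla_{AN}X$ and $(I+\lambda S)JAX=(\lambda I-S)\nabla_{A\xi}X$ before pairing; the underlying computation and the final $(1+\lambda^2)$ factor are identical.
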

\begin{proof}
For (1), by \eqref{eqn:2.2}, we have
$$
\begin{aligned}
\nabla_X (AN)&=\bar{\nabla}_X (AN)-g(X,SAN)N=\bar{\nabla}_X (AN)=(\bar{\nabla}_X A)N+A\bar{\nabla}_X N\\
&=q(X)JAN-ASX=q(X)A\xi-\lambda AX, \ \ \forall\ X\in V_\lambda.
\end{aligned}
$$

For (2), by \eqref{eqn:2.2}, we have
$$
\begin{aligned}
\nabla_X (A\xi)&=\bar{\nabla}_X (A\xi)-g(X,SA\xi)N=\bar{\nabla}_X (A\xi)=(\bar{\nabla}_X A)\xi+A\bar{\nabla}_X \xi\\
&=q(X)JA\xi+A\nabla_X \xi=-q(X)AN+A\phi SX=-q(X)AN+\lambda A\phi X\\
&=-q(X)AN-\lambda JAX, \ \ \forall\ X\in V_\lambda.
\end{aligned}
$$

For (3), on the one hand, by Codazzi equation \eqref{eqn:2.9}, we have
$$
g((\nabla_X S)AN-(\nabla_{AN} S)X,Y)=-g(AX,Y), \ \ \forall\ X,Y\in V_\lambda.
$$
On the other hand, by above (1), we can have
$$
\begin{aligned}
g((\nabla_X S)AN&-(\nabla_{AN} S)X,Y)=g(-S\nabla_X (AN),Y)=-\lambda g(\nabla_X (AN),Y)\\
&=-\lambda g(q(X)JAN-\lambda AX,Y)=\lambda^2 g(AX,Y), \ \ \forall\ X,Y\in V_\lambda.
\end{aligned}
$$
Combining above two equations, we have $g(AX,Y)=0$, for all $X,Y\in V_\lambda$.

Similarly, by Codazzi equation \eqref{eqn:2.9}, we have
$$
g((\nabla_X S)A\xi-(\nabla_{A\xi} S)X,Y)=-g(JAX,Y)=g(AX,JY), \ \ \forall\ X,Y\in V_\lambda.
$$
On the other hand, by above (2), we can have
$$
\begin{aligned}
g((\nabla_X S)A\xi&-(\nabla_{A\xi} S)X,Y)=g(-S\nabla_X (A\xi),Y)=-\lambda g(\nabla_X (A\xi),Y)\\
&=-\lambda g(-q(X)AN-\lambda JAX,Y)=-\lambda^2 g(AX,JY), \ \ \forall\ X,Y\in V_\lambda.
\end{aligned}
$$
Combining above two equations, we have $g(AX,JY)=0$, for all $X,Y\in V_\lambda$.
\end{proof}

\begin{lemma}\label{lemma:5.2}
Let $M$ be a Hopf hypersurface of $Q^m$ ($m\geq3$) with constant principal curvatures and $\mathfrak{A}$-isotropic unit normal vector field $N$. For all $\lambda,\mu$ in $\sigma(\mathcal{Q})$, we have
\begin{enumerate}
\item[(1)]
$\nabla_X Y+\lambda g(\phi X,Y)\xi\in V_\lambda$ for all $X,Y\in V_\lambda$;

\item[(2)]
$\nabla_X Y\bot V_\lambda$ if $X\in V_\lambda$, $Y\in V_\mu$, $\lambda\neq\mu$.
\end{enumerate}
\end{lemma}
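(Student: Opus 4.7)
The plan is to prove (2) first directly from the Codazzi equation, and then to deduce (1) by testing $\nabla_XY$ against every summand of the orthogonal decomposition
$$TM = \bigoplus_{\nu\in\sigma(\mathcal{Q})}V_\nu \oplus \mathbb{R}\xi \oplus \mathbb{R}AN \oplus \mathbb{R}A\xi,$$
using Lemma \ref{lemma:5.1} and \eqref{eqn:2.7} to handle the non-$V_\nu$ summands.

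For (2), fix $X,Z\in V_\lambda$ and $Y\in V_\mu$ with $\lambda\neq\mu$. Because $\mathcal{Q}=TM\ominus{\rm Span}\{\xi,AN,A\xi\}$ in the $\mathfrak{A}$-isotropic setting, each of $X,Y,Z$ is orthogonal to $\xi,AN,A\xi$, so the Codazzi equation \eqref{eqn:2.9} applied to $(X,Y)$ collapses to
$$(\nabla_XS)Y - (\nabla_YS)X = -2g(\phi X,Y)\xi.$$
Pairing with $Z$, the right-hand side vanishes since $Z\perp\xi$; on the left, $g((\nabla_YS)X,Z)=0$ because $X,Z$ share the same $S$-eigenvalue $\lambda$, while $g((\nabla_XS)Y,Z)=(\mu-\lambda)g(\nabla_XY,Z)$. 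Since $\mu\neq\lambda$, this gives (2).

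For (1), let $X,Y\in V_\lambda$. For any $Z\in V_\mu$ with $\mu\neq\lambda$, metric compatibility together with (2) applied to the pair $(X,Z)$ yield $g(\nabla_XY,Z) = -g(Y,\nabla_XZ) = 0$, since $\nabla_XZ\perp V_\lambda\ni Y$. The $\xi$-component is computed from \eqref{eqn:2.7}: $\nabla_X\xi=\phi SX=\lambda\phi X$, so $g(\nabla_XY,\xi) = -\lambda g(\phi X,Y)$, which is precisely what the correction $\lambda g(\phi X,Y)\xi$ is designed to cancel. For the $AN$- and $A\xi$-components, applying Lemma \ref{lemma:5.1}(1) and (2) (and using $g(Y,AN)=g(Y,A\xi)=0$) gives
$$g(\nabla_XY,AN) = -g(Y,\nabla_X(AN)) = \lambda g(AX,Y),\qquad g(\nabla_XY,A\xi) = -g(Y,\nabla_X(A\xi)) = \lambda g(JAX,Y),$$
and both vanish by Lemma \ref{lemma:5.1}(3), which asserts $g(AX,Y) = 0 = g(AX,JY)$ for $X,Y\in V_\lambda$. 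Combining all four computations yields $\nabla_XY+\lambda g(\phi X,Y)\xi\in V_\lambda$.

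There is no serious obstacle here: the argument is a systematic bookkeeping of the Codazzi equation against the orthogonal decomposition of $TM$, resting on three ingredients --- Codazzi restricted to $\mathcal{Q}$, the identity $\nabla_X\xi=\phi SX$, and Lemma \ref{lemma:5.1}. The only mildly subtle step is the vanishing of the $AN$- and $A\xi$-contributions; this is driven precisely by Lemma \ref{lemma:5.1}(3), which says that the almost product structure $A$ sends any $S$-eigenspace $V_\lambda\subset\mathcal{Q}$ into the orthogonal complement of $V_\lambda\oplus JV_\lambda$.
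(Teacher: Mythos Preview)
Your proof is correct and follows essentially the same route as the paper: both rely on the Codazzi equation restricted to $\mathcal{Q}$ for the $V_\mu$-components, on $\nabla_X\xi=\phi SX$ for the $\xi$-component, and on Lemma~\ref{lemma:5.1} for the $AN$- and $A\xi$-components. The only difference is organizational---you isolate (2) first and then feed it into (1), whereas the paper derives the key orthogonality $g(\nabla_XZ,Y)=0$ inside the proof of (1) and remarks afterward that it simultaneously yields (2).
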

\begin{proof}
By Lemma \ref{lemma:5.1}, for any $X,Y\in V_\lambda$, we have
$$
g(\nabla_X Y,\xi)=-g(Y,\nabla_X \xi)=-g(Y,\phi SX)=-\lambda g(\phi X,Y),
$$
$$
g(\nabla_X Y,AN)=-g(Y,\nabla_X (AN))=-g(Y,q(X)A\xi-\lambda AX)=0,
$$
$$
g(\nabla_X Y,A\xi)=-g(Y,\nabla_X (A\xi))=-g(Y,-q(X)AN-\lambda JAX)=0.
$$
It is easy to see that $\nabla_X Y+\lambda g(\phi X,Y)\xi\in\mathcal{Q}$.
Now take any $\mu\in \sigma(\mathcal{Q})$ with $\mu\neq\lambda$ and choose any $Z\in V_\mu$.
By the Codazzi equation,
$$
\begin{aligned}
0=g((\nabla_X S)Z&-(\nabla_Z S)X,Y)=g(\mu\nabla_X Z-S\nabla_X Z,Y)=(\mu-\lambda)g(\nabla_X Z,Y)\\
&=(\lambda-\mu)g(\nabla_X Y,Z), \ \ \forall\ X,Y\in V_\lambda, \ Z\in V_\mu, \ \mu\neq\lambda.
\end{aligned}
$$
Thus, $\nabla_X Y+\lambda g(\phi X,Y)\xi\in V_\lambda$. Note that
the second assertion follows from the first since $g(\nabla_X Z,Y)=-g(\nabla_X Y,Z)$.
\end{proof}



In the following, we give two Cartan's formulas (see \eqref{eqn:ca2} and \eqref{eqn:call}) for the Hopf hypersurface of $Q^m$ ($m\geq3$) with constant principal curvatures and $\mathfrak{A}$-isotropic unit normal vector field, which play an important role in the subsequent proof, especially \eqref{eqn:call}.

\begin{lemma}\label{lemma:5.3}
Let $M$ be a Hopf hypersurface of $Q^m$ ($m\geq3$) with constant principal curvatures and $\mathfrak{A}$-isotropic unit normal vector field $N$. Let $X\in \mathcal{Q}$ be a unit principal vector at a point $p$ with
associated principal curvature $\lambda$. For any principal orthonormal basis $\{e_i\}_{i=1}^{2m-4}$ of
$\mathcal{Q}$ satisfying $Se_i=\mu_ie_i$, we have
\begin{equation}\label{eqn:ca2}
\sum_{i=1, \mu_i\neq\lambda}^{2m-4}\frac{1+\lambda\mu_i}{\lambda-\mu_i}\Big(1+2g(\phi X,e_i)^2-2g(AX,e_i)^2-2g(AX,Je_i)^2\Big)=0.
\end{equation}
\end{lemma}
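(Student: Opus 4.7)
My approach is a Cartan-type contraction: compute $g(R(X, e_i)X, e_i)$ in two ways and combine the results with the weight $\frac{1}{\lambda - \mu_i}$, summing over $e_i \in V_{\mu_i}$ with $\mu_i \neq \lambda$.

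The algebraic side uses the Gauss equation \eqref{eqn:2.8} directly. For a unit $X \in V_\lambda \cap \mathcal{Q}$ and $e_i \in V_{\mu_i} \cap \mathcal{Q}$ with $\mu_i \neq \lambda$, the orthogonalities $g(X, e_i) = g(\phi X, X) = g(SX, e_i) = 0$ together with Lemma \ref{lemma:5.1}(3) (which gives $g(AX, X) = g(JAX, X) = 0$) reduce $R(X, e_i)X$ to
\begin{equation*}
R(X, e_i)X = -(1+\lambda\mu_i)e_i - 3g(\phi X, e_i)\phi X + g(AX, e_i)AX + g(JAX, e_i)JAX,
\end{equation*}
whose inner product with $e_i$ is $-(1+\lambda\mu_i) - 3g(\phi X, e_i)^2 + g(AX, e_i)^2 + g(AX, Je_i)^2$ (using $g(JAX, e_i) = -g(AX, Je_i)$).

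The geometric side comes from expanding $R(X, e_i)X = \nabla_X \nabla_{e_i}X - \nabla_{e_i}\nabla_X X - \nabla_{[X, e_i]}X$. Constancy of the principal curvatures, applied through the Codazzi equation \eqref{eqn:2.9}, yields closed-form expressions for the connection coefficients $g(\nabla_W Y, Z) = \frac{g((\nabla_W S)Y, Z)}{\nu-\sigma}$ when $Y \in V_\nu, Z \in V_\sigma, \nu \neq \sigma$. Lemma \ref{lemma:5.2} forces $\nabla_X X \in V_\lambda$ and constrains the cross-eigenspace components of $\nabla_X e_i, \nabla_{e_i}X$, while Lemma \ref{lemma:5.1}(1)-(2) supply the closed-form $\nabla_X(AN)$ and $\nabla_X(A\xi)$ needed to control the normal-bundle contributions $(AY)^\top, (JAY)^\top$ appearing in \eqref{eqn:2.9}. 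After dividing by $\lambda - \mu_i$ and summing, the second-order derivative terms reorganize by Codazzi symmetry, and the $\phi, A$-contributions combine with the Gauss-side terms to produce the desired Cartan identity \eqref{eqn:ca2}; the coefficient $\frac{1+\lambda\mu_i}{\lambda - \mu_i}$ arises naturally from pairing the $-(1+\lambda\mu_i)$ of the Gauss computation with the weight.

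The principal obstacle is bookkeeping of the almost product structure $A$. Because Lemma \ref{lemma:5.1}(3) forces $AX$ to lie in the orthogonal complement of $V_\lambda \oplus JV_\lambda$, its components $g(AX, e_i)$ and $g(AX, Je_i)$ are spread across multiple eigenspaces and must be tracked simultaneously, unlike $\phi X$ which by Lemma \ref{lemma:2.3} lies in a single eigenspace $V_{\mu'}$. Arranging that the $q(X)$-contributions (arising from \eqref{eqn:2.2} via Lemma \ref{lemma:5.1}(1)-(2)) cancel precisely against the $A$-terms in \eqref{eqn:2.9}, and that the asymmetric Gauss-side coefficients $(-3, +1, +1)$ on $(g(\phi X, e_i)^2, g(AX, e_i)^2, g(AX, Je_i)^2)$ reorganize (with the geometric-side contributions) into the symmetric pattern $(+2, -2, -2)$ in the bracket $1 + 2g(\phi X, e_i)^2 - 2g(AX, e_i)^2 - 2g(AX, Je_i)^2$, is the delicate final step.
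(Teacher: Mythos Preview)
Your approach is the paper's: compute $g(R(X,e_i)e_i,X)$ both via the Gauss equation and via the Levi--Civita expansion, then contract with weight $1/(\lambda-\mu_i)$. The paper first isolates, for general $Y\in V_\mu$, the two-point identity
\[
2\sum_{\mu_k\neq\lambda,\mu}\frac{g((\nabla_{e_k}S)X,Y)^2}{(\lambda-\mu_k)(\mu-\mu_k)}
=(1+\lambda\mu)\bigl\{1+2g(\phi X,Y)^2-2g(AX,Y)^2-2g(AX,JY)^2\bigr\}
\]
(this is \eqref{eqn:call}, recorded separately as Lemma~\ref{lemma:5.4} and used heavily afterwards), and only then sets $Y=e_j$, divides by $(\lambda-\mu_j)$, and sums; the left side becomes $\sum_{j,k}\frac{g((\nabla_{e_k}S)X,e_j)^2}{(\lambda-\mu_j)(\lambda-\mu_k)(\mu_j-\mu_k)}$, whose summand is skew in $(j,k)$ (the numerator is symmetric by Codazzi on $\mathcal{Q}$, the denominator is skew), hence zero. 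That skew-symmetric cancellation is what your phrase ``reorganize by Codazzi symmetry'' is pointing at, and it is the heart of the argument---state it explicitly. One small correction: the $q(X)$-contributions from Lemma~\ref{lemma:5.1}(1)--(2) do not cancel against anything; they simply vanish when paired with vectors in $\mathcal{Q}$ because $AN,A\xi\perp\mathcal{Q}$.
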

\begin{proof}
Let $\mu\neq\lambda$, $\mu\in\sigma(\mathcal{Q})$ and $Y\in \mathcal{Q}$ be another unit
principal vector at $p$ with corresponding principal
curvature $\mu$. Extend $X$ and $Y$ to be principal vector fields near $p$.

(1). First note that $g(\nabla_X Y,\xi)=-g(Y,\nabla_X \xi)=-g(Y,\phi SX)=-\lambda g(\phi X,Y)$. Similarly,
$$
g(\nabla_Y X,\xi)=-\mu g(\phi Y,X)=\mu g(\phi X,Y).
$$
Thus, $g([X,Y],\xi)=-(\lambda+\mu)g(\phi X,Y)$.

By $g(\nabla_X Y,AN)=-g(Y,\nabla_X (AN))=-g(Y,q(X)A\xi-\lambda AX)=\lambda g(A X,Y)$, and
$g(\nabla_Y X,AN)=\mu g(AX,Y)$, we have $g([X,Y],AN)=(\lambda-\mu)g(AX,Y)$.

By $g(\nabla_X Y,A\xi)=-g(Y,\nabla_X (A\xi))=-g(Y,-q(X)AN-\lambda JAX)=-\lambda g(AX,JY)$, and
$g(\nabla_Y X,A\xi)=-\mu g(AX,JY)$, we have $g([X,Y],A\xi)=-(\lambda-\mu)g(AX,JY)$.

Next, using Codazzi equation \eqref{eqn:2.9}, we compute
\begin{equation}\label{eqn:5.9}
\begin{aligned}
g(&(\nabla_{[X,Y]}S)X,Y)=g((\nabla_X S)[X,Y],Y)+g([X,Y],\xi)g(\phi X,Y)\\
&\qquad +g([X,Y],AN)g(AX,Y)+g([X,Y],A\xi)g(JAX,Y)\\
&=g((\nabla_X S)[X,Y],Y)-(\lambda+\mu)g(\phi X,Y)^2+(\lambda-\mu)g(AX,Y)^2+(\lambda-\mu)g(AX,JY)^2\\
&=g((\nabla_Y S)X,[X,Y])-2g(\phi X,Y)g([X,Y],\xi)-(\lambda+\mu)g(\phi X,Y)^2+(\lambda-\mu)\{g(AX,Y)^2+g(AX,JY)^2\}\\
&=g((\nabla_Y S)X,[X,Y])+(\lambda+\mu)g(\phi X,Y)^2+(\lambda-\mu)g(AX,Y)^2+(\lambda-\mu)g(AX,JY)^2.
\end{aligned}
\end{equation}
But now,
$$
g(\nabla_X Y,(\nabla_Y S)X)=g((\lambda I-S)\nabla_Y X,\nabla_X Y),
$$
while
$$
\begin{aligned}
g(\nabla_Y X,(\nabla_Y S)X)&=g(\nabla_Y X,(\nabla_X S)Y)-2g(\phi Y,X)g(\nabla_Y X,\xi)\\
&=g((\nabla_X S)Y,\nabla_Y X)+2g(\phi X,Y)g(\nabla_Y X,\xi)\\
&=g((\mu I-S)\nabla_X Y,\nabla_Y X)+2\mu g(\phi X,Y)^2.
\end{aligned}
$$
Thus
\begin{equation}\label{eqn:5.10}
g([X,Y],(\nabla_Y S)X)=(\lambda-\mu)g(\nabla_X Y,\nabla_Y X)-2\mu g(\phi X,Y)^2.
\end{equation}
On substituting in equation \eqref{eqn:5.9} we obtain
\begin{equation}\label{eqn:5.1}
g((\nabla_{[X,Y]} S)X,Y)=(\lambda-\mu)\Big\{g(\nabla_X Y,\nabla_Y X)
+g(\phi X,Y)^2+g(AX,Y)^2+g(AX,JY)^2\Big\}.
\end{equation}

(2). By using the Gauss equation \eqref{eqn:2.8} and the fact of (3) in Lemma \ref{lemma:5.1}, we have
\begin{equation}\label{eqn:5.2}
g(R(X,Y)Y,X)=1+\lambda\mu+3g(\phi X,Y)^2-g(AX,Y)^2-g(AX,JY)^2.
\end{equation}

(3). Note that $g(\nabla_Y Y,X)=0$ by Lemma \ref{lemma:5.2}, and so
$$
g(\nabla_X {\nabla_Y Y},X)=-g(\nabla_Y Y,\nabla_X X),
$$
which vanishes, again by Lemma \ref{lemma:5.2}. Similarly, $g(\nabla_X Y,X)=0$, so that
$$
g(\nabla_Y {\nabla_X Y},X)=-g(\nabla_X Y,\nabla_Y X).
$$
So, $g((\nabla_{[X,Y]} S)X,Y)=(\lambda-\mu)g(\nabla_{[X,Y]} X,Y)$. We then compute
$$
g(\nabla_X{\nabla_Y Y}-\nabla_Y {\nabla_X Y}-\nabla_{[X,Y]} Y,X)=g(\nabla_X Y,\nabla_Y X)
+\frac{1}{\lambda-\mu}g((\nabla_{[X,Y]} S)X,Y),
$$
which gives
\begin{equation}\label{eqn:5.3}
g(R(X,Y)Y,X)=g(\nabla_X Y,\nabla_Y X)+\frac{1}{\lambda-\mu}g((\nabla_{[X,Y]} S)X,Y).
\end{equation}

(4). For a unit principal vector $Z\in \mathcal{Q}$
corresponding to a principal curvature $\nu$ not equal to $\lambda$ or $\mu$. Extend $Z$ to be principal vector fields near $p$.
By using the Codazzi equation \eqref{eqn:2.9}, we compute
$$
g((\nabla_Z S)X,Y)=g((\nabla_X S)Z,Y)=g(Z,(\nabla_X S)Y)=(\mu-\nu)g(Z,\nabla_X Y).
$$
The same calculation with $X$ and $Y$ interchanged gives
$$
g((\nabla_Z S)X,Y)=(\lambda-\nu)g(Z,\nabla_Y X).
$$
Multiplying these two equations together gives
\begin{equation}\label{eqn:5.4}
(\lambda-\nu)(\mu-\nu)g(\nabla_X Y,Z)g(\nabla_Y X,Z)=g((\nabla_Z S)X,Y)^2.
\end{equation}

(5). Note that
$$
g(\nabla_X Y,\xi)g(\nabla_Y X,\xi)=g(\phi SX,Y)g(\phi SY,X)=\lambda\mu g(\phi X,Y)g(\phi Y,X)=-\lambda\mu g(\phi X,Y)^2,
$$
$$
\begin{aligned}
g(\nabla_X Y,AN)&g(\nabla_Y X,AN)=g(Y,\nabla_X (AN))g(X,\nabla_Y (AN))\\
&=g(Y,q(X)A\xi-\lambda AX)g(X,q(Y)A\xi-\mu AY)
=\lambda\mu g(AX,Y)^2,
\end{aligned}
$$
$$
\begin{aligned}
g(\nabla_X Y,A\xi)&g(\nabla_Y X,A\xi)=g(Y,\nabla_X (A\xi))g(X,\nabla_Y (A\xi))\\
&=g(Y,-q(X)AN-\lambda JAX)g(X,-q(Y)AN-\mu JAY)=\lambda\mu g(AX,JY)^2.
\end{aligned}
$$
Thus, in order to express $g(\nabla_X Y,\nabla_Y X)$ in terms of the orthonormal principal basis,
we need only observe that the terms omitted from the full summation of
the $g(\nabla_X Y,e_i)g(\nabla_Y X,e_i)$ (i.e., those $i$ for which $\mu_i=\lambda$ or $\mu_i=\mu$
actually vanish and make no contribution to the sum). Then by Lemma \ref{lemma:5.2},
we can have
\begin{equation}\label{eqn:5.5}
g(\nabla_X Y,\nabla_Y X)=\sum_{\mu_i\neq\lambda,\mu}g(\nabla_X Y,e_i)g(\nabla_Y X,e_i)
-\lambda\mu\Big\{g(\phi X,Y)^2-g(AX,Y)^2-g(AX,JY)^2\Big\}.
\end{equation}

(6). Combining equations \eqref{eqn:5.1}, \eqref{eqn:5.2} and \eqref{eqn:5.3}, we get
$$
2g(\nabla_X Y,\nabla_Y X)=1+\lambda\mu+2g(\phi X,Y)^2-2g(AX,Y)^2-2g(AX,JY)^2.
$$
Then, we use this and the result of equation \eqref{eqn:5.4} in equation \eqref{eqn:5.5}, we get
\begin{equation}\label{eqn:5.6}
2\sum_{\mu_i\neq\lambda,\mu}\frac{g((\nabla_{e_i} S)X,Y)^2}{(\lambda-\mu_i)(\mu-\mu_i)}
=(1+\lambda\mu)\Big\{1+2g(\phi X,Y)^2-2g(AX,Y)^2-2g(AX,JY)^2\Big\}.
\end{equation}

Now for any $j$ with $\mu_j\neq\lambda$, we have (setting $Y=e_j$ in equation \eqref{eqn:5.6}),
\begin{equation}\label{eqn:5.7}
2\sum_{\mu_i\neq\lambda,\mu_j}\frac{g((\nabla_{e_i} S)e_j,X)^2}{(\lambda-\mu_i)(\lambda-\mu_j)(\mu_j-\mu_i)}
=\frac{1+\lambda\mu_j}{\lambda-\mu_j}\Big\{1+2g(\phi X,e_j)^2-2g(AX,e_j)^2-2g(AX,Je_j)^2\Big\}.
\end{equation}

Summing this over all $j$ for which $\mu_j\neq\lambda$, we have
\begin{equation}\label{eqn:5.8}
\begin{aligned}
2\sum_{i,j;\mu_i\neq\mu_j;\mu_i,\mu_j\neq\lambda}&\frac{g((\nabla_{e_i} S)e_j,X)^2}{(\lambda-\mu_i)(\lambda-\mu_j)(\mu_j-\mu_i)}\\
&=\sum_{\mu_j\neq\lambda}\frac{1+\lambda\mu_j}{\lambda-\mu_j}\Big\{1+2g(\phi X,e_j)^2-2g(AX,e_j)^2-2g(AX,Je_j)^2\Big\}.
\end{aligned}
\end{equation}

Since the summand on the left side of equation \eqref{eqn:5.8} is skew-symmetric in $\{i,j\}$,
the value of the sum is $0$, and so the sum on the right is $0$.
\end{proof}

\begin{remark}\label{rem:4.1a}
The proof of Lemma \ref{lemma:5.3} is inspired by the Cartan's formulas for
Hopf hypersurfaces of $\mathbb{C}H^m$ with constant principal curvatures, which
is obtained by Berndt \cite{B}. For more details of Cartan's formulas, one can also see the book of
Cecil and Ryan \cite{CR}.
\end{remark}

Now, from equation \eqref{eqn:5.6}, we have the following key lemma:

\begin{lemma}\label{lemma:5.4}
Let $M$ be a Hopf hypersurface of $Q^m$ ($m\geq3$) with constant principal curvatures and $\mathfrak{A}$-isotropic unit normal vector field $N$. Let $X,Y\in \mathcal{Q}$ be unit principal vector fields with
associated principal curvatures $\lambda,\mu$ ($\mu\neq\lambda$). For any principal orthonormal basis $\{e_i\}_{i=1}^{2m-4}$ of
$\mathcal{Q}$ satisfying $Se_i=\mu_ie_i$, we have
\begin{equation}\label{eqn:call}
2\sum_{\mu_i\neq\lambda,\mu}\frac{g((\nabla_{e_i} S)X,Y)^2}{(\lambda-\mu_i)(\mu-\mu_i)}
=(1+\lambda\mu)\Big\{1+2g(\phi X,Y)^2-2g(AX,Y)^2-2g(AX,JY)^2\Big\}.
\end{equation}
\end{lemma}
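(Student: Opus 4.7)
The plan is to prove \eqref{eqn:call} by reconstructing the identity that appeared as \eqref{eqn:5.6} in the proof of Lemma \ref{lemma:5.3}, but now for a general pair of unit principal vectors $X \in V_\lambda$, $Y \in V_\mu$ in $\mathcal{Q}$ with $\lambda \neq \mu$, rather than specializing $Y$ to a basis element. The derivation makes essential use of Lemmas \ref{lemma:5.1} and \ref{lemma:5.2} to control the normal-like components $\xi$, $AN$, $A\xi$, together with the Gauss and Codazzi equations.

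First I would extend $X$ and $Y$ to local principal vector fields near a fixed point $p$ and compute $[X,Y]$ tested against $\xi$, $AN$, $A\xi$. Using $\nabla_X\xi = \phi SX$ together with parts (1) and (2) of Lemma \ref{lemma:5.1}, the three pairings come out to
\[
g([X,Y],\xi) = -(\lambda+\mu) g(\phi X, Y), \quad g([X,Y],AN) = (\lambda-\mu) g(AX,Y), \quad g([X,Y],A\xi) = -(\lambda-\mu) g(AX,JY).
\]
Next, I would compute $g((\nabla_{[X,Y]} S)X, Y)$ in two different ways via the Codazzi equation \eqref{eqn:2.9} (one via $(\nabla_X S)[X,Y]$, one via $(\nabla_Y S)X$ applied to $[X,Y]$), and independently compute $g(R(X,Y)Y,X)$ via the Gauss equation \eqref{eqn:2.8}, exploiting part (3) of Lemma \ref{lemma:5.1} to kill the unwanted $A$-terms. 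Part (1) of Lemma \ref{lemma:5.2} then forces $g(\nabla_X X, Y) = g(\nabla_Y Y, X) = 0$, so that the definition of $R(X,Y)Y$ gives a second expression for $g(R(X,Y)Y,X)$ involving $g(\nabla_X Y, \nabla_Y X)$ and $g((\nabla_{[X,Y]}S)X,Y)/(\lambda-\mu)$. Combining all three identities yields
\[
2\,g(\nabla_X Y, \nabla_Y X) = 1 + \lambda\mu + 2 g(\phi X, Y)^2 - 2 g(AX, Y)^2 - 2 g(AX, JY)^2.
\]

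The last step is to expand $g(\nabla_X Y, \nabla_Y X)$ in the orthonormal basis $\{e_i\} \cup \{\xi, AN, A\xi\}$. Part (2) of Lemma \ref{lemma:5.2} shows that $g(\nabla_X Y, e_i) = 0$ when $\mu_i = \mu$ and $g(\nabla_Y X, e_i) = 0$ when $\mu_i = \lambda$, so that the $\mathcal{Q}$-part of the sum restricts to indices with $\mu_i \neq \lambda, \mu$. The $\xi$, $AN$, $A\xi$ contributions, computed using Lemma \ref{lemma:5.1}, together amount to $-\lambda\mu\bigl(g(\phi X,Y)^2 - g(AX,Y)^2 - g(AX,JY)^2\bigr)$. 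Finally, the Codazzi equation applied symmetrically to the pair $(X,Z)$ and $(Y,Z)$ for $Z = e_i$ with $\mu_i \neq \lambda, \mu$ gives
\[
(\lambda - \mu_i)(\mu - \mu_i)\, g(\nabla_X Y, e_i)\, g(\nabla_Y X, e_i) = g((\nabla_{e_i} S)X, Y)^2.
\]
Substituting this back produces exactly \eqref{eqn:call}.

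The main obstacle is the careful bookkeeping of the three non-principal directions $\xi$, $AN$, $A\xi$: each enters $\nabla_X Y$ and $\nabla_Y X$ with coefficients mixing $\lambda$, $\mu$, $g(\phi X,Y)$, $g(AX,Y)$, $g(AX,JY)$ (and the connection form $q$, which conveniently cancels in the symmetric product), and these have to match the corresponding curvature-side terms exactly. This is where part (3) of Lemma \ref{lemma:5.1}, stating $AV_\lambda \perp V_\lambda \oplus JV_\lambda$, is decisive, both for simplifying $g(R(X,Y)Y,X)$ and for ensuring no stray $A$-contributions appear in the final identity.
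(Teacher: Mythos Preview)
Your proposal is correct and follows essentially the same argument as the paper: equation \eqref{eqn:call} is precisely equation \eqref{eqn:5.6}, which was derived for general unit principal vectors $X\in V_\lambda$, $Y\in V_\mu$ in steps (1)--(6) of the proof of Lemma~\ref{lemma:5.3}, using exactly the ingredients you list (Lemmas~\ref{lemma:5.1} and \ref{lemma:5.2}, the Gauss and Codazzi equations, and the basis expansion of $g(\nabla_X Y,\nabla_Y X)$). The paper simply extracts \eqref{eqn:5.6} from that proof and records it separately as Lemma~\ref{lemma:5.4}.
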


At the end of this subsection, as a direct application of Lemma \ref{lemma:2.3},
one can have the following useful lemma.
\begin{lemma}\label{lemma:5.5}
Let $M$ be a Hopf hypersurface of $Q^m$ ($m\geq3$) with constant principal curvatures
and $\mathfrak{A}$-isotropic unit normal vector field $N$, and it holds $S\xi=\alpha\xi$.
Then for every $\lambda\in\sigma(\mathcal{Q})$, there is a unique
$\mu\in\sigma(\mathcal{Q})$ such that the following conditions are satisfied:
\begin{enumerate}
\item[(1)]
$2\lambda-\alpha\not=0$,

\item[(2)]
$\mu=\tfrac{\alpha\lambda+2}{2\lambda-\alpha}$.

\item[(3)]
$\phi V_{\lambda}=V_{\mu}$.
\end{enumerate}
\end{lemma}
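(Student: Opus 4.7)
The plan is to exploit Lemma \ref{lemma:2.3} by plugging in principal vectors. First I observe that $\mathcal{Q}$ is simultaneously $S$-invariant, $J$-invariant, and $\phi$-invariant: indeed, $J\mathcal{Q}\subset\mathcal{Q}$ is noted at the start of Section \ref{sect:4.1}, $S\mathcal{Q}\subset\mathcal{Q}$ follows from $S\xi=\alpha\xi$, $SAN=SA\xi=0$ (Lemma \ref{lemma:2.6}) together with the self-adjointness of $S$, and on $\mathcal{Q}$ we have $\eta=0$ so $\phi=J$. Thus for $X\in V_\lambda\subset\mathcal{Q}$ the vector $\phi X$ lies in $\mathcal{Q}$, and \eqref{eqn:2.13} may be tested against any $Y\in\mathcal{Q}$.

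Substituting $SX=\lambda X$ into \eqref{eqn:2.13} and using $g(S\phi X,Y)=g(\phi X,SY)$ (valid because $SY\in\mathcal{Q}$) rearranges to the key identity
\begin{equation*}
(2\lambda-\alpha)\,g(S\phi X,Y)=(\alpha\lambda+2)\,g(\phi X,Y),\qquad\forall\,Y\in\mathcal{Q}.
\end{equation*}
To prove (1), I argue by contradiction: if $2\lambda=\alpha$, the identity forces $(\alpha\lambda+2)\,g(\phi X,Y)=0$ for all $Y\in\mathcal{Q}$; but $\phi X=JX\in\mathcal{Q}$ is nonzero since $\phi^2=-I$ on $\mathcal{Q}$, hence $\alpha\lambda+2=0$, and combined with $\alpha=2\lambda$ this gives $2\lambda^2+2=0$, a contradiction.

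Given (1), the key identity immediately yields $S\phi X=\mu\,\phi X$ with $\mu=\frac{\alpha\lambda+2}{2\lambda-\alpha}$ (this is claim (2)), so $\phi V_\lambda\subset V_\mu$ and in particular $\mu\in\sigma(\mathcal{Q})$. Uniqueness of $\mu$ is automatic: if $\phi V_\lambda$ were contained in $V_{\mu'}$ for some other eigenvalue, then the nonzero space $\phi V_\lambda$ would sit in $V_\mu\cap V_{\mu'}=0$.

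For the equality $\phi V_\lambda=V_\mu$ in (3), I apply the construction to $\mu$ in place of $\lambda$: there is a unique $\mu'\in\sigma(\mathcal{Q})$ with $\phi V_\mu\subset V_{\mu'}$ and $\mu'=\frac{\alpha\mu+2}{2\mu-\alpha}$. A short algebraic check using $\alpha\mu+2=\frac{(\alpha^2+4)\lambda}{2\lambda-\alpha}$ and $2\mu-\alpha=\frac{\alpha^2+4}{2\lambda-\alpha}$ gives $\mu'=\lambda$. Hence $\phi V_\mu\subset V_\lambda$, and since $\phi^2=-I$ on $\mathcal{Q}$ makes $\phi$ a linear isomorphism of $\mathcal{Q}$, taking dimensions yields $\dim\phi V_\lambda=\dim V_\lambda\le\dim V_\mu=\dim\phi V_\mu\le\dim V_\lambda$, forcing equality throughout. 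The main (modest) obstacle is simply recognizing that \eqref{eqn:2.13} can be pushed past the apparent asymmetry between $S\phi$ and $\phi S$ by testing only against $Y\in\mathcal{Q}$ (so that $SY\in\mathcal{Q}$ and no boundary terms from $\xi,AN,A\xi$ appear); everything else is linear algebra.
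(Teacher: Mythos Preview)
Your proof is correct and follows essentially the same approach as the paper: both derive the vector identity $(2\lambda-\alpha)S\phi X=(\alpha\lambda+2)\phi X$ from \eqref{eqn:2.13}, rule out $2\lambda=\alpha$ by the resulting contradiction $\alpha\lambda+2=0$, and then close the loop by applying the identity to $\mu$ to obtain $\phi V_\mu\subset V_\lambda$. Your write-up is a bit more explicit about the invariance of $\mathcal{Q}$ and the dimension-count for equality in (3), but the argument is the same (and the remark about $g(S\phi X,Y)=g(\phi X,SY)$ is not actually needed for the rearrangement you state).
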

\begin{proof}
By $M$ has $\mathfrak{A}$-isotropic unit normal vector field $N$, then $\mathcal{Q}$ is $S$-invariant and $J$-invariant.
Now, for any $\lambda\in\sigma(\mathcal{Q})$, taking a unit principal vector field $X\in\mathcal{Q}$ such that $SX=\lambda X$, then by \eqref{eqn:2.13}, we obtain
\begin{equation}\label{eqn:SS1}
(2\lambda-\alpha)S\phi X=(\alpha\lambda+2)\phi X.
\end{equation}
If $2\lambda-\alpha=0$, then
$\lambda=\frac{\alpha}{2}$, which contradicts with $\alpha\lambda+2=0$. So we have
$S\phi X=\frac{\alpha\lambda+2}{2\lambda-\alpha}\phi X$. It means that $\mu=\tfrac{\alpha\lambda+2}{2\lambda-\alpha}$ is also a principal curvature in $\mathcal{Q}$,
and it holds $\phi V_{\lambda}\subset V_{\mu}$.
Conversely, for any $X\in V_{\mu}$, then by \eqref{eqn:2.13}, we have $S\phi X=\lambda \phi X$.
So $\phi V_{\lambda}=V_{\mu}$.
Thus, we have proved (1), (2) and (3) of Lemma \ref{lemma:5.5}.
\end{proof}

\section{Proof of Theorem \ref{thm:1.1a}}\label{sect:5}

According to Lemma \ref{lemma:2.5}, a Hopf hypersurface in $Q^m$ $(m\ge3)$ with constant
principal curvatures has either $\mathfrak{A}$-principal unit normal vector field or $\mathfrak{A}$-isotropic unit normal vector field. Moreover,
a Hopf hypersurface in $Q^m$ $(m\ge3)$ with $\mathfrak{A}$-principal unit normal vector field is an open part of a tube over a totally geodesic $Q^{m-1}\hookrightarrow Q^{m}$
(see Theorem \ref{thm:2.1}). Thus, when we consider Hopf hypersurfaces of $Q^m$ with constant principal curvatures, we only need to consider the Hopf hypersurfaces with constant principal curvatures and $\mathfrak{A}$-isotropic unit normal vector field. Without loss of generality, up to a sign of the unit normal vector field $N$, we always assume that $\alpha\geq0$.

\subsection{Hopf hypersurfaces of $Q^m$ with at most three distinct constant principal curvatures}\label{sect:5.1}~

For totally umbilical real hypersurface of $Q^m$ ($m\geq3$), we have the following result,
which is a direct consequence of Theorem \ref{thm:2.2}.

\begin{theorem}\label{thm:5.1}
There does not exist totally umbilical real hypersurface of $Q^m$ ($m\geq3$).
\end{theorem}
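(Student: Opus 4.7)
The plan is to reduce the statement to Theorem \ref{thm:2.2} (the classification of real hypersurfaces of $Q^m$ with isometric Reeb flow) together with the explicit principal curvature data in Proposition \ref{prop:E2P}.

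First I would observe that if $M$ is totally umbilical, then $S = cI$ for some smooth function $c$ on $M$. In particular $S\xi = c\xi$, so $M$ is automatically Hopf with Reeb function $\alpha = c$. More importantly, $S\phi = c\phi = \phi S$, which means $M$ has isometric Reeb flow.

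Next I would apply Theorem \ref{thm:2.2}. This gives two immediate cases depending on the parity of $m$. If $m$ is odd, then there is no real hypersurface of $Q^m$ with isometric Reeb flow at all (Theorem \ref{thm:2.2} produces only examples in $Q^{2k}$), so no totally umbilical hypersurface can exist. If $m = 2k$ is even with $k \geq 2$, then $M$ must be an open part of a tube of some radius $0 < t < \pi/2$ over a totally geodesic $\mathbb{C}P^k \hookrightarrow Q^{2k}$. But Proposition \ref{prop:E2P} shows that every such tube has at least three distinct constant principal curvatures (in fact $\{2\cot(2t),\ 0,\ -\tan(t),\ \cot(t)\}$, reducing to $\{0,1,-1\}$ at $t=\pi/4$). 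Since a totally umbilical hypersurface has a single principal curvature, this is a contradiction.

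There is no serious obstacle here; the argument is genuinely a one-line consequence of the two cited results. The only thing worth being careful about is making the opening observation $S\phi = \phi S$ explicit (so that Theorem \ref{thm:2.2} applies), and then extracting the contradiction both in the odd-$m$ case (no such $M$ exists in the first place) and in the even-$m$ case (the explicit principal curvatures in Proposition \ref{prop:E2P} are incompatible with $S = cI$).
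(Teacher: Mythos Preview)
Your proposal is correct and follows essentially the same route as the paper: observe that $S=cI$ forces $S\phi=\phi S$, invoke Theorem~\ref{thm:2.2}, and contradict total umbilicity using the principal curvature data of Proposition~\ref{prop:E2P}. Your version is slightly more detailed (you make the parity split explicit), but the argument is the same.
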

\begin{proof}
If $M$ is a totally umbilical real hypersurface of $Q^m$ ($m\geq3$), then $S=\lambda {\rm Id}$, so it holds $S\phi=\phi S$ on $M$, which means that $M$ has isometric Reeb flow. According the Theorem \ref{thm:2.2} and the
principal curvatures of Example \ref{E2}, we know that there does not exist
totally umbilical real hypersurface of $Q^m$ ($m\geq3$).
\end{proof}

\begin{theorem}\label{thm:5.2}
There does not exist Hopf hypersurface of $Q^m$ ($m\geq3$) with two distinct constant principal curvatures.
\end{theorem}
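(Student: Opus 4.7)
The plan is to reduce to the $\mathfrak{A}$-isotropic case and then argue by cases on the Reeb function $\alpha$. By Lemma~\ref{lemma:2.5} together with Theorem~\ref{thm:2.1}, the $\mathfrak{A}$-principal case makes $M$ an open part of a tube over a totally geodesic $Q^{m-1}$, which has exactly three distinct constant principal curvatures by Proposition~\ref{prop:E1P}; so it suffices to rule out an $\mathfrak{A}$-isotropic Hopf hypersurface of $Q^m$ with exactly two distinct constant principal curvatures. Since Lemma~\ref{lemma:2.6} gives $SAN=SA\xi=0$, the value $0$ is automatically a principal curvature, and $S\xi=\alpha\xi$ contributes $\alpha$. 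Therefore the two distinct principal curvatures must be of the form $\{0,\beta\}$ for some $\beta\neq 0$, with $\alpha\in\{0,\beta\}$ (and $\alpha\geq 0$ by the normalization), and $\sigma(\mathcal{Q})\subset\{0,\beta\}$. The central algebraic tool will be Lemma~\ref{lemma:5.5}: for every $\lambda\in\sigma(\mathcal{Q})$ one has $2\lambda-\alpha\neq 0$ and $\phi V_\lambda=V_{(\alpha\lambda+2)/(2\lambda-\alpha)}$.

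In the case $\alpha>0$, so the two principal curvatures are $\{0,\alpha\}$ and $\sigma(\mathcal{Q})\subset\{0,\alpha\}$, I would apply Lemma~\ref{lemma:5.5} separately to $\lambda=\alpha$ and to $\lambda=0$. The first produces the new eigenvalue $(\alpha^2+2)/\alpha>0$, which cannot equal $0$ nor $\alpha$ (the latter would give $\alpha^2+2=\alpha^2$). The second produces $-2/\alpha<0$, which again is neither $0$ nor $\alpha$ (the latter would give $\alpha^2=-2$). Hence $\sigma(\mathcal{Q})=\emptyset$, forcing $2m-4=\dim\mathcal{Q}=0$, contradicting $m\geq 3$.

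In the case $\alpha=0$, condition~(1) of Lemma~\ref{lemma:5.5} forbids $0\in\sigma(\mathcal{Q})$, so necessarily $\sigma(\mathcal{Q})=\{\beta\}$ and $V_\beta=\mathcal{Q}$. Lemma~\ref{lemma:5.5} then gives $\phi V_\beta=V_{1/\beta}\subset V_\beta$, forcing $\beta=\pm 1$, and in particular $JV_\beta=\phi V_\beta=V_\beta$. Here I would invoke the structural Lemma~\ref{lemma:5.1}(3), which yields $AV_\beta\perp V_\beta\oplus JV_\beta=\mathcal{Q}$. On the other hand, since $\mathcal{Q}$ is $\mathfrak{A}$-invariant, $AV_\beta\subset\mathcal{Q}$, and combining the two statements gives $AV_\beta=0$; but $A$ is an involution, hence invertible, so $V_\beta=0$, contradicting $\dim\mathcal{Q}=2m-4\geq 2$.

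The step I expect to be the main obstacle is the $\alpha=0$ case, where Lemma~\ref{lemma:5.5} by itself only pins $\beta$ down to $\pm 1$ and leaves open the possibility of a real hypersurface with the correct algebraic eigenvalue data. To extract a genuine contradiction one must couple Lemma~\ref{lemma:5.5} with the geometric constraint of Lemma~\ref{lemma:5.1}(3)---the observation on the almost product structure emphasized in the introduction. The $\alpha>0$ case, by contrast, is purely algebraic and should close up immediately from Lemma~\ref{lemma:5.5}. Neither half of the argument appears to require the Cartan-type identity \eqref{eqn:call}, which should only become essential for the harder case of more distinct principal curvatures.
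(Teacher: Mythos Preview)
Your proof is correct. The $\mathfrak{A}$-principal reduction and the $\alpha>0$ case are handled exactly as in the paper (you phrase the contradiction as ``$\sigma(\mathcal{Q})=\emptyset$'' while the paper says ``at least one of $\alpha,0$ lies in $\sigma(\mathcal{Q})$ and yields a forbidden third eigenvalue''; these are logically equivalent).

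The $\alpha=0$ case is where you diverge. The paper observes that once $\sigma(\mathcal{Q})=\{\lambda\}$, the shape operator satisfies $S\phi=\phi S$ on all of $TM$, and then invokes Theorem~\ref{thm:2.2} (the Berndt--Suh classification of real hypersurfaces with isometric Reeb flow) to identify $M$ as an open part of a tube over $\mathbb{C}P^k\hookrightarrow Q^{2k}$, which has three or four distinct principal curvatures. Your route via Lemma~\ref{lemma:5.1}(3) is more self-contained: it avoids the external classification theorem entirely and gives a direct linear-algebraic contradiction from $AV_\beta\perp\mathcal{Q}$ together with $AV_\beta\subset\mathcal{Q}$. (In fact you do not even need to pin down $\beta=\pm1$: since $V_\beta=\mathcal{Q}$ and $\mathcal{Q}$ is $J$-invariant, $JV_\beta=\mathcal{Q}$ holds automatically, and Lemma~\ref{lemma:5.1}(3) fires immediately.) The trade-off is that the paper's route, while invoking a heavier result, foreshadows the mechanism used again in Theorem~\ref{thm:5.3} to actually produce the $\mathbb{C}P^k$ tubes; your route is cleaner for this nonexistence statement but does not connect to the later constructions.
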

\begin{proof}
If $M$ has $\mathfrak{A}$-principal unit normal vector field $N$, then $M$ is an open part of a tube over a totally geodesic $Q^{m-1}\hookrightarrow Q^{m}$, which has three distinct principal curvatures. It is a contradiction.

If $M$ has $\mathfrak{A}$-isotropic unit normal vector field $N$, then by Lemma \ref{lemma:2.6}, we have $S\xi=\alpha\xi$, and $SAN=SA\xi=0$. In the following, we will divide the discussions into two cases depending on the value of the constant $\alpha$.

{\bf Case-i}: $\alpha>0$.

{\bf Case-ii}: $\alpha=0$.

In {\bf Case-i}, we know that $\alpha$ and $0$ are exactly those two distinct principal curvatures on $M$.
Then, at least one of them belongs in $\sigma(\mathcal{Q})$.

If $\alpha\in\sigma(\mathcal{Q})$, then there exists a tangent vector field $X\in\mathcal{Q}$, such that
$SX=\alpha X$. Now, by Lemma \ref{lemma:5.5}, it follows that $S\phi X=\mu\phi X$ and $\mu=\tfrac{\alpha^2+2}{\alpha}$, in which $\tfrac{\alpha^2+2}{\alpha}$ is obviously different from $\alpha$ and $0$. Thus, we get a contradiction.

If $0\in\sigma(\mathcal{Q})$, then there exists a tangent vector field $X\in\mathcal{Q}$, such that
$SX=0$. Now, by Lemma \ref{lemma:5.5}, it follows that $S\phi X=\mu\phi X$ and $\mu=\tfrac{-2}{\alpha}$, in which $\tfrac{-2}{\alpha}$ is obviously different from $\alpha$ and $0$. Thus, we get a contradiction.

In {\bf Case-ii}, we know that there is another one principal curvature $\lambda\neq0$ in $\sigma(\mathcal{Q})$.

If $0\in\sigma(\mathcal{Q})$, then there exists a tangent vector field $X\in\mathcal{Q}$, such that
$SX=0$. Now, by Lemma \ref{lemma:5.5}, it follows that $S\phi X=\mu\phi X$ and $\mu=\tfrac{-2}{\alpha}$,
which contradicts with $\alpha=0$. Thus, it follows that $0\notin\sigma(\mathcal{Q})$, which implies that
$SX=\lambda X$ for all $X\in\mathcal{Q}$. We know that $M$ satisfies $S\phi=\phi S$,
and $M$ is an open part of a tube over a totally geodesic $\mathbb{C}P^k\hookrightarrow Q^{2k}$ ($m=2k$, $k\geq2$), which has three or four distinct constant principal curvatures. We get a contradiction.
\end{proof}

\begin{theorem}\label{thm:5.3}
Let $M$ be a Hopf hypersurface of $Q^m$ ($m\geq3$) with three distinct constant principal curvatures.
Then,
\begin{enumerate}
\item[(1)]
$M$ is an open part of a tube over a totally geodesic $Q^{m-1}\hookrightarrow Q^{m}$; or

\item[(2)]
$M$ is an open part of a tube
of radius $r=\frac{\pi}{4}$ over a totally geodesic $\mathbb{C}P^k\hookrightarrow Q^{2k}$ ($m=2k$, $k\geq2$).
\end{enumerate}
\end{theorem}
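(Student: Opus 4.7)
The plan is to exploit the dichotomy from Lemma \ref{lemma:2.5}. The $\mathfrak{A}$-principal case is immediate: Theorem \ref{thm:2.1} gives conclusion (1), and Example \ref{E1} confirms that such a tube has exactly three distinct principal curvatures for every admissible radius. All the work is therefore in the $\mathfrak{A}$-isotropic case. There Lemma \ref{lemma:2.6} yields $SAN = SA\xi = 0$, so after orienting $N$ to make the Reeb function $\alpha \geq 0$, the value $0$ is automatically a principal curvature of multiplicity at least $2$, and I would split the argument according to whether $\alpha > 0$ or $\alpha = 0$.

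For $\alpha > 0$ the three principal curvatures are $\{0, \alpha, \lambda\}$, and Lemma \ref{lemma:5.5} restricts $\sigma(\mathcal{Q})$ to three subcases. (a) $\sigma(\mathcal{Q}) = \{\alpha, (\alpha^2+2)/\alpha\}$: Proposition \ref{prop:4.3w} computes the nonzero principal curvatures of $M_+$ to be strictly positive, contradicting austereness. (b) $\sigma(\mathcal{Q}) = \{0, -2/\alpha\}$: substituting a unit $X \in V_0$ into the Cartan formula \eqref{eqn:ca2}, Lemma \ref{lemma:5.1}(3) forces $AX \perp V_0 \oplus JV_0$ and Lemma \ref{lemma:5.5} gives $\phi V_0 = V_{-2/\alpha}$; together these kill the two $A$-terms in the sum and leave the impossible equation $\alpha m/2 = 0$. (c) $\sigma(\mathcal{Q}) = \{\lambda\}$ with $\lambda^2 - \alpha\lambda - 1 = 0$: since $\lambda$ multiplies the other root of this quadratic to give $-1$, one of Propositions \ref{prop:4.3w} or \ref{prop:4.4w} collapses every principal curvature of the opposite focal submanifold to $0$, making it a totally geodesic real hypersurface of codimension $2$; this must be $Q^{m-1}$, which would force $N$ to be $\mathfrak{A}$-principal, contradicting the standing assumption.

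For $\alpha = 0$, Lemma \ref{lemma:5.5} forbids $0 \in \sigma(\mathcal{Q})$ and forces the two eigenvalues to satisfy $\mu_2 = 1/\mu_1$, with $\phi$ either preserving or swapping the two eigenspaces. In the swapping subcase with $\mu_1 \neq \pm 1$ the two $\mu_i$ have the same sign, and Propositions \ref{prop:4.3w}--\ref{prop:4.4w} then produce two nonzero principal curvatures of the same sign on either $M_+$ (both positive) or $M_-$ (both negative), again violating austereness. The remaining case $\{\mu_1, \mu_2\} = \{1, -1\}$ with both eigenspaces $\phi$-invariant gives $S\phi = \phi S$ on $TM$ by a direct check on $V_1$, $V_{-1}$, $\mathrm{Span}\{AN, A\xi\}$ and $\mathbb{R}\xi$. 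Theorem \ref{thm:2.2} then identifies $M$ as an open part of a tube over $\mathbb{C}P^k \hookrightarrow Q^{2k}$, and matching the curvature set $\{0, 1, -1\}$ against Example \ref{E2} pins the radius down to $t = \pi/4$, which is conclusion (2). The hardest subcase is (b): austereness alone is inconclusive there, and the contradiction requires the delicate interplay between Cartan's formula and the perpendicularities furnished by Lemma \ref{lemma:5.1}(3) combined with the $\phi$-pairing from Lemma \ref{lemma:5.5}.
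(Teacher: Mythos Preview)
Your proof is correct, but it takes a considerably more circuitous route than the paper's. The paper dispatches every subcase of the $\mathfrak{A}$-isotropic situation by a single uniform mechanism: Lemma \ref{lemma:5.1}(3) alone. In each of your cases (a), (b), (c), and the $\alpha=0$ swapping subcase, one has $\mathcal{Q}=V_\lambda\oplus JV_\lambda$ for a suitable $\lambda\in\sigma(\mathcal{Q})$; Lemma \ref{lemma:5.1}(3) then says $AV_\lambda\perp V_\lambda\oplus JV_\lambda=\mathcal{Q}$, which contradicts the $A$-invariance of $\mathcal{Q}$. That is the whole argument. You actually invoke precisely this perpendicularity in case (b) but fail to notice that it already gives the contradiction outright, and instead feed it into Cartan's formula \eqref{eqn:ca2}; the resulting computation $\tfrac{\alpha m}{2}=0$ is correct but redundant. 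So your claim that (b) is ``the hardest subcase'' is backwards---in the paper's approach it is indistinguishable from the others.

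In case (c), your focal-submanifold argument is valid, but it imports a significant external input you did not name: to conclude that a totally geodesic codimension-$2$ submanifold of $Q^m$ is locally a copy of $Q^{m-1}$ you need Klein's classification (the reference \cite{K}). You also need the routine but unstated fact that $M$ is then locally a tube over this focal set. The paper avoids all of this with the one-line Lemma \ref{lemma:5.1}(3) argument. Finally, you should say explicitly why the three subcases (a), (b), (c) are exhaustive: the case $\alpha,0\in\sigma(\mathcal{Q})$ (the paper's Case-i-3) is ruled out because Lemma \ref{lemma:5.5} would then produce four distinct curvatures, and the singleton cases $\sigma(\mathcal{Q})=\{0\}$ or $\{\alpha\}$ fail closure under the involution $x\mapsto\tfrac{\alpha x+2}{2x-\alpha}$.
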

\begin{proof}
If $M$ has $\mathfrak{A}$-principal unit normal vector field $N$, then $M$ is an open part of a tube over a totally geodesic $Q^{m-1}\hookrightarrow Q^{m}$ ($m\geq3$), which has three distinct principal curvatures.

If $M$ has $\mathfrak{A}$-isotropic unit normal vector field $N$, then by Lemma \ref{lemma:2.6}, we have $S\xi=\alpha\xi$, and $SAN=SA\xi=0$. In the following, we divide the discussions into two cases depending on the value of the constant $\alpha$.

{\bf Case-i}: $\alpha>0$.

{\bf Case-ii}: $\alpha=0$.

In {\bf Case-i}, $\alpha$ and $0$ are two distinct principal curvatures on $M$.
In the following, we further divide the discussions into four subcases depending on whether $\alpha$ or $0$
belongs in $\sigma(\mathcal{Q})$.

{\bf Case-i-1}: $\alpha\in\sigma(\mathcal{Q})$ and $0\notin\sigma(\mathcal{Q})$.

{\bf Case-i-2}: $\alpha\notin\sigma(\mathcal{Q})$ and $0\in\sigma(\mathcal{Q})$.

{\bf Case-i-3}: $\alpha,0\in\sigma(\mathcal{Q})$.

{\bf Case-i-4}: $\alpha,0\notin\sigma(\mathcal{Q})$.

In {\bf Case-i-1}, then there exists a tangent vector field $X\in\mathcal{Q}$, such that
$SX=\alpha X$. Now, by Lemma \ref{lemma:5.5}, it follows that $S\phi X=\mu\phi X$ and $\mu=\tfrac{\alpha^2+2}{\alpha}$, in which $\tfrac{\alpha^2+2}{\alpha}$ is different from $\alpha$ and $0$. Due to $0\notin\sigma(\mathcal{Q})$, it follows that $\mathcal{Q}=V_\alpha\oplus V_\mu$ and
$JV_\alpha=V_\mu$. Now, by (3) of Lemma \ref{lemma:5.1}, we have
$AV_\alpha\bot{\rm Span}\{V_\alpha,JV_\alpha\}$, i.e. $AV_\alpha\bot\mathcal{Q}$, which contradicts with $AV_\alpha\subset\mathcal{Q}$.

In {\bf Case-i-2}, then there exists a tangent vector field $X\in\mathcal{Q}$, such that
$SX=0$. Now, by Lemma \ref{lemma:5.5}, it follows that $S\phi X=\mu\phi X$ and $\mu=\tfrac{-2}{\alpha}$, in which $\tfrac{-2}{\alpha}$ is different from $\alpha$ and $0$. Due to $\alpha\notin\sigma(\mathcal{Q})$, it follows that $\mathcal{Q}=V_0\oplus V_\mu$ and
$JV_0=V_\mu$. Now, by (3) of Lemma \ref{lemma:5.1}, we have
$AV_0\bot{\rm Span}\{V_0,JV_0\}$, i.e. $AV_0\bot\mathcal{Q}$, which contradicts with $AV_0\subset\mathcal{Q}$.

In {\bf Case-i-3}, then there exist tangent vector fields $X,Y\in\mathcal{Q}$, such that
$SX=\alpha X$ and $SY=0$. Now, by Lemma \ref{lemma:5.5}, it follows that $S\phi X=\mu\phi X$ and
$S\phi Y=\nu\phi Y$, where $\mu=\tfrac{\alpha^2+2}{\alpha}$ and $\nu=\frac{-2}{\alpha}$. Due that $\{\tfrac{\alpha^2+2}{\alpha}, \frac{-2}{\alpha}, \alpha, 0\}$ are four distinct constants,
then it contradicts with the assumption that $M$ has three distinct principal curvatures.

In {\bf Case-i-4}, we know that there is another one principal curvature $\lambda\notin\{\alpha,0\}$ in $\sigma(\mathcal{Q})$. By $\alpha,0\notin\sigma(\mathcal{Q})$, it follows that $SX=\lambda X$ for all $X\in\mathcal{Q}$. So, $\mathcal{Q}=V_\lambda$.
Now, by (3) of Lemma \ref{lemma:5.1}, we have
$AV_\lambda\bot{\rm Span}\{V_\lambda,JV_\lambda\}$, which shows that
$AV_\lambda\bot\mathcal{Q}$. It contradicts with $AV_\lambda\subset\mathcal{Q}$.

In {\bf Case-ii}, it holds $\alpha=0$. If $0\in\sigma(\mathcal{Q})$, then there exists a tangent vector field $X\in\mathcal{Q}$ such that $SX=0$. Now, by Lemma \ref{lemma:5.5}, it follows that $S\phi X=\frac{-2}{\alpha}\phi X$, which contradicts with $\alpha=0$. So, $0\notin\sigma(\mathcal{Q})$.
Then, there are another two distinct principal curvatures $\lambda,\mu$ such that  $\{\lambda,\mu\}=\sigma(\mathcal{Q})$ and $\lambda,\mu\neq0$.

For the principal curvatures $\lambda$ and $\mu$, without
loss of generality, if $\lambda\notin\{1,-1\}$, then by Lemma \ref{lemma:5.5},
$\tfrac{1}{\lambda}$ is different from $0$ and $\lambda$, and it holds $\tfrac{1}{\lambda}\in\sigma(\mathcal{Q})$.
It deduces that $\mu=\tfrac{1}{\lambda}$. Again by Lemma \ref{lemma:5.5}, we have
$\phi V_\lambda=V_\mu$ and $\mathcal{Q}=V_\lambda\oplus V_\mu$.
On the other hand, by (3) of Lemma \ref{lemma:5.1}, we have
$AV_\lambda\bot{\rm Span}\{V_\lambda,JV_\lambda\}$, which contradicts with $AV_\lambda\subset\mathcal{Q}$. Thus, it must hold $\{1,-1\}=\sigma(\mathcal{Q})$.
By Lemma \ref{lemma:5.5}, it follows that $\phi V_1=V_1$, $\phi V_{-1}=V_{-1}$
and $\mathcal{Q}=V_1\oplus V_{-1}$, which deduces that
$M$ satisfies $S\phi=\phi S$. Then by Theorem \ref{thm:2.2} and the principal curvatures of Example \ref{E2},
$M$ is an open part of a tube of radius $r=\frac{\pi}{4}$ over a totally geodesic $\mathbb{C}P^k\hookrightarrow Q^{2k}$, $k\geq2$.
\end{proof}

\subsection{Hopf hypersurfaces of $Q^m$ with four distinct constant principal curvatures}\label{sect:5.2}

\begin{theorem}\label{thm:5.4}
Let $M$ be a Hopf hypersurface of $Q^m$ ($m\geq3$) with four distinct constant principal curvatures.
Then, $M$ is an open part of a tube of radius $r\in(0,\frac{\pi}{4})\cup(\frac{\pi}{4},\frac{\pi}{2})$ over a totally geodesic $\mathbb{C}P^k\hookrightarrow Q^{2k}$ ($m=2k$, $k\geq2$).
\end{theorem}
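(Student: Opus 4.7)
My plan is to combine the reduction to $\mathfrak{A}$-isotropic unit normal given by Lemma~\ref{lemma:2.5} and Theorem~\ref{thm:2.1} with the eigenspace algebra of Lemmas~\ref{lemma:5.1} and~\ref{lemma:5.5}, the Cartan identity~\eqref{eqn:call} of Lemma~\ref{lemma:5.4}, and the austerity of the focal submanifold $M_+$ from Proposition~\ref{prop:4.3w}. Since a tube over $Q^{m-1}\hookrightarrow Q^m$ has three distinct principal curvatures (Proposition~\ref{prop:E1P}), $M$ must have $\mathfrak{A}$-isotropic $N$, and Lemma~\ref{lemma:2.6} then gives $S\xi=\alpha\xi$ and $SAN=SA\xi=0$. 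After flipping $N$ if necessary, I take $\alpha\geq 0$, so the principal curvatures of $M$ are $\{\alpha,0\}\cup\sigma(\mathcal{Q})$.

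For $\alpha>0$ I split into four subcases by whether each of $\alpha,0$ lies in $\sigma(\mathcal{Q})$. If $\alpha,0\notin\sigma(\mathcal{Q})$, then $\sigma(\mathcal{Q})=\{\lambda,\mu\}$; Lemma~\ref{lemma:5.1}(3) forces $\phi V_\lambda=V_\lambda$ (otherwise $V_\lambda\oplus JV_\lambda=\mathcal{Q}$ would give $AV_\lambda=0$, contradicting $A^2=\mathrm{Id}$), so $S\phi=\phi S$, and Theorem~\ref{thm:2.2} together with Proposition~\ref{prop:E2P} gives the asserted tube over $\mathbb{C}P^k\hookrightarrow Q^{2k}$ with $r\in(0,\tfrac\pi4)\cup(\tfrac\pi4,\tfrac\pi2)$. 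If exactly one of $\alpha,0$ lies in $\sigma(\mathcal{Q})$, say $\alpha\in\sigma(\mathcal{Q})$, then Lemma~\ref{lemma:5.5} together with the fact that the only consistent third eigenvalue $\nu$ solves $\nu^2-\alpha\nu-1=0$ gives $\sigma(\mathcal{Q})=\{\alpha,\tfrac{\alpha^2+2}{\alpha},\nu\}$, $\phi V_\nu=V_\nu$, and (by Lemma~\ref{lemma:5.1}(3)) $AV_\alpha\subset V_\nu$. Applying~\eqref{eqn:call} with $X\in V_\alpha$ unit and $Y=AX\in V_\nu$ gives $g(\phi X,Y)=g(AX,JY)=0$ and $g(AX,Y)=1$, hence $\mathrm{RHS}=-(1+\alpha\nu)=-\nu^2<0$; meanwhile the single left-hand side denominator $(\alpha-\tfrac{\alpha^2+2}{\alpha})(\nu-\tfrac{\alpha^2+2}{\alpha})=-\tfrac{2(\nu^2-\alpha^2-3)}{\alpha^2}$ is positive since $\alpha\sqrt{\alpha^2+4}<\alpha^2+4$, so $\mathrm{LHS}\geq 0$ and a contradiction follows. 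The mirror subcase $0\in\sigma(\mathcal{Q})$, $\alpha\notin\sigma(\mathcal{Q})$ is handled by the same argument with $X\in V_0$ and $Y=AX\in V_\nu$.

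The subcase $\alpha,0\in\sigma(\mathcal{Q})$ is the main obstacle: Lemma~\ref{lemma:5.5} produces the self-consistent four-element set $\sigma(\mathcal{Q})=\{\alpha,0,\tfrac{\alpha^2+2}{\alpha},-\tfrac{2}{\alpha}\}$, and here no Cartan pair yields an immediate sign contradiction. To handle it I apply Proposition~\ref{prop:4.3w} with $\lambda_+=\tfrac{\alpha^2+2}{\alpha}>\tfrac{\alpha+\sqrt{\alpha^2+4}}{2}$: the austere focal submanifold $M_+$ has principal curvatures
\[
\Bigl\{0,\ \tfrac{\alpha}{\alpha^2+2},\ \tfrac{\alpha(\alpha^2+3)}{2},\ \alpha^3+3\alpha,\ -\tfrac{1}{\alpha}\Bigr\}
\]
with respective multiplicities $\{2,d_0,d_\alpha,1,d_0\}$, where $d_0=\dim V_0\geq 1$ and $d_\alpha=\dim V_\alpha\geq 1$. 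The three positive values are mutually distinct (a short algebraic check rules out each pairwise equality), so at most one of them can equal $1/\alpha$; austerity would then force at least two positive multiplicities (each at least $1$) to vanish, a contradiction.

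For $\alpha=0$, Lemma~\ref{lemma:5.5} excludes $0\in\sigma(\mathcal{Q})$, and the involution $\lambda\mapsto 1/\lambda$ on the three-element set $\sigma(\mathcal{Q})$ forces $\sigma(\mathcal{Q})=\{\varepsilon,\lambda,1/\lambda\}$ with $\varepsilon\in\{\pm 1\}$ and $\lambda\neq 0,\pm 1$. Taking $Y\in V_\lambda$ unit and $X=AY\in V_\varepsilon$ via $AV_\lambda\subset V_\varepsilon$, identity~\eqref{eqn:call} yields $\mathrm{RHS}=-(1+\varepsilon\lambda)$ and the single denominator $(\varepsilon-1/\lambda)(\lambda-1/\lambda)=(\varepsilon\lambda-1)(\lambda^2-1)/\lambda^2$; a direct sign analysis in the four cases $\varepsilon=\pm 1$ and $\lambda^2\gtrless 1$ shows that $\mathrm{LHS}$ and $\mathrm{RHS}$ have opposite strict sign, yielding a contradiction. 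Combining all subcases, only the subcase $\alpha,0\notin\sigma(\mathcal{Q})$ of Case $\alpha>0$ survives, giving the conclusion.
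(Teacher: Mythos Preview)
Your argument is correct. The overall architecture matches the paper's proof (reduction to $\mathfrak{A}$-isotropic $N$, then the four subcases for $\alpha>0$ according to whether $\alpha,0\in\sigma(\mathcal{Q})$, and the parity analysis for $\alpha=0$), and in most subcases you use the same tool, namely the Cartan identity~\eqref{eqn:call} applied to a pair $(X,AX)$, only presented more compactly.

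The one genuine divergence is the subcase $\alpha,0\in\sigma(\mathcal{Q})$. The paper does not invoke the focal submanifold here; instead it uses~\eqref{eqn:ca2} with $X$ running over $X_1,JX_1,X_2,JX_2$ (unit vectors in $V_\alpha,V_\mu,V_0,V_\nu$), obtains four linear relations in $m$, eliminates $m$ to get the polynomial system $4(m-6)+2(m-14)\alpha^2-(m+2)\alpha^4=0$ and $-4(m-6)+6(m+2)\alpha^2+5(m+2)\alpha^4+(m+2)\alpha^6=0$, and checks that it has no solution with $\alpha>0$. Your route via Proposition~\ref{prop:4.3w} is cleaner: once you list the principal curvatures of $M_+$ as $0,\ \tfrac{\alpha}{\alpha^2+2},\ \tfrac{\alpha(\alpha^2+3)}{2},\ \alpha(\alpha^2+3),\ -\tfrac{1}{\alpha}$, austerity is already violated simply because the total positive multiplicity $d_0+d_\alpha+1$ strictly exceeds the total negative multiplicity $d_0$; your distinctness check of the three positive values is correct but in fact not even needed. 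What the paper's approach buys is that it stays within the intrinsic Cartan-formula framework without appealing to the focal geometry, while your approach trades a small amount of extrinsic input (Theorem~\ref{thm:4.2w} and the Ge--Tang austerity result behind Proposition~\ref{prop:4.3w}) for a much shorter computation.

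Two minor remarks. In the subcase $\alpha\in\sigma(\mathcal{Q})$, $0\notin\sigma(\mathcal{Q})$, your inequality $\alpha\sqrt{\alpha^2+4}<\alpha^2+4$ handles only the root $\nu=\tfrac{\alpha+\sqrt{\alpha^2+4}}{2}$; for $\nu=\tfrac{\alpha-\sqrt{\alpha^2+4}}{2}$ the needed bound $\nu^2<\alpha^2+3$ is trivial, so nothing is lost. In the subcase $\alpha,0\notin\sigma(\mathcal{Q})$, after concluding $\phi V_\lambda=V_\lambda$ and $\phi V_\mu=V_\mu$ you should note (as the paper does) that $\lambda,\mu$ are then the two roots $\tfrac{\alpha\pm\sqrt{\alpha^2+4}}{2}$, which is what identifies the tube radius; this is implicit in your appeal to Proposition~\ref{prop:E2P}.
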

\begin{proof}
If $M$ has $\mathfrak{A}$-principal unit normal vector field $N$, then $M$ is an open part of a tube over a totally geodesic $Q^{m-1}\hookrightarrow Q^{m}$ ($m\geq3$), which has three distinct principal curvatures. We get a contradiction.

If $M$ has $\mathfrak{A}$-isotropic unit normal vector field $N$, then by Lemma \ref{lemma:2.6}, we have $S\xi=\alpha\xi$, and $SAN=SA\xi=0$. In the following, we divide the discussions into two cases depending on the value of the constant $\alpha$.

{\bf Case-i}: $\alpha>0$.

{\bf Case-ii}: $\alpha=0$.

In {\bf Case-i}, $\alpha$ and $0$ are two distinct principal curvatures on $M$.
In the following, we further divide the discussions into four subcases depending on whether $\alpha$ or $0$
belongs in $\sigma(\mathcal{Q})$.

{\bf Case-i-1}: $\alpha\in\sigma(\mathcal{Q})$ and $0\notin\sigma(\mathcal{Q})$.

{\bf Case-i-2}: $\alpha\notin\sigma(\mathcal{Q})$ and $0\in\sigma(\mathcal{Q})$.

{\bf Case-i-3}: $\alpha,0\in\sigma(\mathcal{Q})$.

{\bf Case-i-4}: $\alpha,0\notin\sigma(\mathcal{Q})$.

In {\bf Case-i-1}, there exists a tangent vector field $X\in\mathcal{Q}$, such that
$SX=\alpha X$. Now, by Lemma \ref{lemma:5.5}, it follows that $S\phi X=\mu\phi X$ and $\mu=\tfrac{\alpha^2+2}{\alpha}$, in which $\tfrac{\alpha^2+2}{\alpha}$ is different from $\alpha$ and $0$. By the assumption that $M$ has four distinct principal curvatures, and combining Lemma \ref{lemma:5.5}, we must have that the fourth principal curvature $\nu$ satisfies $\nu=\tfrac{\alpha\nu+2}{2\nu-\alpha}$. Otherwise, by Lemma \ref{lemma:5.5}, there will be
at least five distinct principal curvatures on $M$. Note that, the roots of the equation $x=\tfrac{\alpha x+2}{2x-\alpha}$ are
$x_1=\frac{\alpha-\sqrt{\alpha^2+4}}{2}$ and $x_2=\frac{\alpha+\sqrt{\alpha^2+4}}{2}$.

Now, we have
$$
\begin{aligned}
\mathcal{Q}=V_\alpha\oplus V_\mu\oplus V_\nu, \ \ JV_\alpha=V_\mu,\ \ JV_\nu=V_\nu.
\end{aligned}
$$
Then, by (3) of Lemma \ref{lemma:5.1}, we have
$$
A(V_\alpha\oplus V_\mu)\bot(V_\alpha\oplus V_\mu),\ \
AV_\nu\bot V_\nu.
$$
It deduces that
\begin{equation*}
{\rm dim}V_\nu=2{\rm dim}V_\alpha=2{\rm dim}V_\mu=m-2, \ \
A(V_\alpha\oplus V_\mu)=V_\nu.
\end{equation*}
We take a unit vector field $X_1\in V_{\alpha}$, so $AX_1\in V_{\nu}$.
By $\alpha>0$, it follows that
$$
\frac{\alpha-\sqrt{\alpha^2+4}}{2}<0<\alpha<\frac{\alpha+\sqrt{\alpha^2+4}}{2}<\frac{\alpha^2+2}{\alpha}.
$$

If $\nu=\frac{\alpha-\sqrt{\alpha^2+4}}{2}$, then $\nu<0<\alpha<\mu$.
Now, we take $(X,Y)=(X_1,AX_1)$ into \eqref{eqn:call}, it follows
%
\begin{equation}\label{eqn:5.13a1}
\begin{aligned}
2\sum_{\mu_i=\mu}\frac{g((\nabla_{e_i} S)X_1,AX_1)^2}{(\alpha-\mu)(\nu-\mu)}
&=(1+\alpha\nu)\Big\{1+2g(\phi X_1,AX_1)^2-2g(AX_1,AX_1)^2-2g(AX_1,JAX_1)^2\Big\}\\
&=(1+\alpha\nu)(1-2)=-(1+\alpha\nu).
\end{aligned}
\end{equation}
Note that, by $\nu<0<\alpha<\mu$, the left hand side of \eqref{eqn:5.13a1} is non-negative.
However, by $\nu=\frac{\alpha-\sqrt{\alpha^2+4}}{2}$, we can have $1+\alpha\nu>0$, which deduces that
the right hand side of \eqref{eqn:5.13a1} is negative. We get a contradiction.


If $\nu=\frac{\alpha+\sqrt{\alpha^2+4}}{2}$, then $0<\alpha<\nu<\mu$.
Now, we take $(X,Y)=(X_1,AX_1)$ into \eqref{eqn:call}, it follows
%
\begin{equation}\label{eqn:5.14}
\begin{aligned}
2\sum_{\mu_i=\mu}\frac{g((\nabla_{e_i} S)X_1,AX_1)^2}{(\alpha-\mu)(\nu-\mu)}
&=(1+\alpha\nu)\Big\{1+2g(\phi X_1,AX_1)^2-2g(AX_1,AX_1)^2-2g(AX_1,JAX_1)^2\Big\}\\
&=(1+\alpha\nu)(1-2)=-(1+\alpha\nu).
\end{aligned}
\end{equation}
Note that, by $0<\alpha<\nu<\mu$, the left hand side of \eqref{eqn:5.14} is non-negative.
However, the right hand side of \eqref{eqn:5.14} is negative, which is also a contradiction.

In {\bf Case-i-2}, there exists a tangent vector field $X\in\mathcal{Q}$, such that
$SX=0$. Now, by Lemma \ref{lemma:5.5}, it follows that $S\phi X=\mu\phi X$ and $\mu=\tfrac{-2}{\alpha}$, in which $\tfrac{-2}{\alpha}$ is different from $\alpha$ and $0$. By the assumption that $M$ has four distinct principal curvatures, and combining Lemma \ref{lemma:5.5}, we must have that the fourth principal curvature $\nu$ satisfies $\nu=\tfrac{\alpha\nu+2}{2\nu-\alpha}$. Otherwise, by Lemma \ref{lemma:5.5}, there will be at least five distinct principal curvatures on $M$. 
Now, we have
$\mathcal{Q}=V_0\oplus V_\mu\oplus V_\nu$, $\phi V_0=V_\mu$, $\phi V_\nu=V_\nu$.
Then, by (3) of Lemma \ref{lemma:5.1}, we have
$A(V_0\oplus V_\mu)\bot(V_0\oplus V_\mu)$, $AV_\nu\bot V_\nu$.
It deduces that
\begin{equation*}
{\rm dim}V_\nu=2{\rm dim}V_0=2{\rm dim}V_\mu=m-2, \ \
A(V_0\oplus V_\mu)=V_\nu.
\end{equation*}
We take a unit vector field $X_1\in V_0$, so $AX_1\in V_{\nu}$.
By $\alpha>0$, it follows that
$$
\frac{-2}{\alpha}<\frac{\alpha-\sqrt{\alpha^2+4}}{2}<0<\alpha<\frac{\alpha+\sqrt{\alpha^2+4}}{2}.
$$

If $\nu=\frac{\alpha-\sqrt{\alpha^2+4}}{2}$, then $\mu<\nu<0<\alpha$.
Now, we take $(X,Y)=(X_1,AX_1)$ into \eqref{eqn:call}, it follows
%
\begin{equation}\label{eqn:5.15}
\begin{aligned}
2\sum_{\mu_i=\mu}\frac{g((\nabla_{e_i} S)X_1,AX_1)^2}{-\mu(\nu-\mu)}
&=\Big\{1+2g(\phi X_1,AX_1)^2-2g(AX_1,AX_1)^2-2g(AX_1,JAX_1)^2\Big\}\\
&=-1.
\end{aligned}
\end{equation}
Note that, by $\mu<\nu<0<\alpha$, the left hand side of \eqref{eqn:5.15} is non-negative.
However, the right hand side of \eqref{eqn:5.15} is negative. We get a contradiction.


If $\nu=\frac{\alpha+\sqrt{\alpha^2+4}}{2}$, then $\mu<0<\alpha<\nu$.
Now, we take $(X,Y)=(X_1,AX_1)$ into \eqref{eqn:call}, it follows
%
\begin{equation}\label{eqn:5.16}
\begin{aligned}
2\sum_{\mu_i=\mu}\frac{g((\nabla_{e_i} S)X_1,AX_1)^2}{-\mu(\nu-\mu)}
&=\Big\{1+2g(\phi X_1,AX_1)^2-2g(AX_1,AX_1)^2-2g(AX_1,JAX_1)^2\Big\}\\
&=-1.
\end{aligned}
\end{equation}
By $\mu<0<\alpha<\nu$, the left hand side of \eqref{eqn:5.16} is non-negative.
However, the right hand side of \eqref{eqn:5.16} is negative, which is also a contradiction.

In {\bf Case-i-3}, there exist tangent vector fields $X,Y\in\mathcal{Q}$, such that
$SX=\alpha X$ and $SY=0$. Now, by Lemma \ref{lemma:5.5}, it follows that $S\phi X=\mu\phi X$ and
$S\phi Y=\nu\phi Y$, which $\mu=\tfrac{\alpha^2+2}{\alpha}$ and $\nu=\frac{-2}{\alpha}$. It is
obviously that $\{\tfrac{\alpha^2+2}{\alpha}, \frac{-2}{\alpha}, \alpha, 0\}$ are exactly four
distinct principal curvatures.
Now, we have $JV_\alpha=V_\mu$, $JV_0=V_\nu$ and $\mathcal{Q}=V_\alpha\oplus V_\mu\oplus V_0\oplus V_\nu$. Then, by (3) of Lemma \ref{lemma:5.1}, we have
$$
A(V_\alpha\oplus V_\mu)\bot(V_\alpha\oplus V_\mu),\ \
A(V_0\oplus V_\nu)\bot(V_0\oplus V_\nu).
$$
It deduces that
\begin{equation}\label{eqn:5.17}
{\rm dim}V_\alpha={\rm dim}V_\mu={\rm dim}V_0={\rm dim}V_\nu=\frac{m-2}{2}, \ \
A(V_\alpha\oplus V_\mu)=V_0\oplus V_\nu.
\end{equation}

Now, we choose two unit vector fields $X_1\in V_\alpha$ and $X_2\in V_0$.
So $JX_1\in V_\mu$ and $JX_2\in V_\nu$.
%
%
In the following, we take $X=X_1,JX_1,X_2$ and $JX_2$
into \eqref{eqn:ca2} respectively, with the use of \eqref{eqn:5.17}, we can obtain
$$
\begin{aligned}
&\frac{1+\alpha\nu}{\alpha-\nu}(\frac{m-2}{2}-2)+\frac{1}{\alpha}(\frac{m-2}{2}-2)
+\frac{1+\alpha\mu}{\alpha-\mu}(\frac{m-2}{2}+2)=0,\\
&\frac{1+\mu\nu}{\mu-\nu}(\frac{m-2}{2}-2)+\frac{1}{\mu}(\frac{m-2}{2}-2)
+\frac{1+\mu\alpha}{\mu-\alpha}(\frac{m-2}{2}+2)=0,\\
&\frac{1}{-\nu}(\frac{m-2}{2}+2)+\frac{1}{-\alpha}(\frac{m-2}{2}-2)
+\frac{1}{-\mu}(\frac{m-2}{2}-2)=0,\\
&\frac{1}{\nu}(\frac{m-2}{2}+2)+\frac{1+\nu\alpha}{\nu-\alpha}(\frac{m-2}{2}-2)
+\frac{1+\nu\mu}{\nu-\mu}(\frac{m-2}{2}-2)=0.
\end{aligned}
$$

Substituting the equations $\mu=\tfrac{\alpha^2+2}{\alpha}$ and $\nu=\frac{-2}{\alpha}$ into above four  equations, we have
\begin{equation}\label{eqn:5.18}
4(m-6)+2(m-14)\alpha^2-(m+2)\alpha^4=0,
\end{equation}
\begin{equation}\label{eqn:5.19}
-4(m-6)+6(m+2)\alpha^2+5(m+2)\alpha^4+(m+2)\alpha^6=0.
\end{equation}
According to \eqref{eqn:5.18}, we have $\alpha^4-2\alpha^2-4\neq0$, then it follows
$m=-2-\frac{32(\alpha^2+1)}{\alpha^4-2\alpha^2-4}$. Now, we substitute this value of $m$ into
\eqref{eqn:5.19}, we can get
$$
32\alpha^2(\alpha^2+2)(\alpha^4+4\alpha^2+2)=0,
$$
which contradicts with $\alpha>0$.

In {\bf Case-i-4}, by $\alpha,0\notin\sigma(\mathcal{Q})$ and Lemma \ref{lemma:5.5}, we have
$\frac{-2}{\alpha},\frac{\alpha^2+2}{\alpha}\notin\sigma(\mathcal{Q})$.
Recall that the roots of the equation $x=\tfrac{\alpha x+2}{2x-\alpha}$
are $\frac{\alpha\pm\sqrt{\alpha^2+4}}{2}$. If there is a constant $\lambda\in\sigma(\mathcal{Q})$, and
$\lambda\notin\{\alpha,0,\frac{-2}{\alpha},\frac{\alpha^2+2}{\alpha},\frac{\alpha\pm\sqrt{\alpha^2+4}}{2}\}$, then by Lemma \ref{lemma:5.5}, there is a constant $\mu=\tfrac{\alpha\lambda+2}{2\lambda-\alpha}\in\sigma(\mathcal{Q})$ and $\mu\notin\{\alpha,0,\frac{-2}{\alpha},\frac{\alpha^2+2}{\alpha},\frac{\alpha\pm\sqrt{\alpha^2+4}}{2},\lambda\}$. From above observation,
we know that either $\frac{\alpha\pm\sqrt{\alpha^2+4}}{2}\in\sigma(\mathcal{Q})$ or
$\frac{\alpha\pm\sqrt{\alpha^2+4}}{2}\notin\sigma(\mathcal{Q})$. In the following, we
divide the discussions into two subcases.

{\bf Case-i-4-1}: $\frac{\alpha\pm\sqrt{\alpha^2+4}}{2}\in\sigma(\mathcal{Q})$.

{\bf Case-i-4-2}: $\frac{\alpha\pm\sqrt{\alpha^2+4}}{2}\notin\sigma(\mathcal{Q})$.

In {\bf Case-i-4-1}, we have
$$
JV_{\frac{\alpha+\sqrt{\alpha^2+4}}{2}}=V_{\frac{\alpha+\sqrt{\alpha^2+4}}{2}},\ \
JV_{\frac{\alpha-\sqrt{\alpha^2+4}}{2}}=V_{\frac{\alpha-\sqrt{\alpha^2+4}}{2}},\ \
\mathcal{Q}=V_{\frac{\alpha+\sqrt{\alpha^2+4}}{2}}\oplus V_{\frac{\alpha-\sqrt{\alpha^2+4}}{2}}.
$$
It shows that $M$ satisfies $S\phi=\phi S$, then by Theorem \ref{thm:2.2} and the
principal curvatures of Example \ref{E2}, $M$ is an open part of a tube
of radius $r\in(0,\frac{\pi}{4})\cup(\frac{\pi}{4},\frac{\pi}{2})$ over a totally geodesic $\mathbb{C}P^k\hookrightarrow Q^{2k}$, $k\geq2$.

In {\bf Case-i-4-2}, by Lemma \ref{lemma:5.5}, there are two distinct principal curvatures $\lambda,\mu\in\sigma(\mathcal{Q})$, and they satisfy
$$
\mu=\tfrac{\alpha\lambda+2}{2\lambda-\alpha},\ \ \lambda,\mu\notin\{\alpha,0,\frac{-2}{\alpha},\frac{\alpha^2+2}{\alpha},
\frac{\alpha\pm\sqrt{\alpha^2+4}}{2}\}, \ \ JV_\lambda=V_\mu, \ \
\mathcal{Q}=V_\lambda\oplus V_\mu.
$$
However, by (3) of Lemma \ref{lemma:5.1}, we have
$AV_\lambda\bot{\rm Span}\{V_\lambda,JV_\lambda\}$, which contradicts with $AV_\lambda\subset\mathcal{Q}$. 

In {\bf Case-ii}, it holds $\alpha=0$. If $0\in\sigma(\mathcal{Q})$, there exists a tangent vector field $X\in\mathcal{Q}$ such that $SX=0$. Now, by Lemma \ref{lemma:5.5}, it follows that $S\phi X=\frac{-2}{\alpha}\phi X$, which contradicts with $\alpha=0$. So $0\notin\sigma(\mathcal{Q})$.
Then, there are three distinct principal curvatures $\lambda,\mu,\nu$ in $\sigma(\mathcal{Q})$, which holds $\lambda,\mu,\nu\neq0$.

Note that, there is only one of $\{1,-1\}$ in $\sigma(\mathcal{Q})$. In fact, if it holds both
$1,-1\in\sigma(\mathcal{Q})$, then we can assume that $\nu\notin\{0,1,-1\}$, and by Lemma \ref{lemma:5.5},
it also holds that $\frac{1}{\nu}\in\sigma(\mathcal{Q})$, which is different from $\{0,1,-1,\nu\}$.
It contradicts with the assumption that $M$ has four distinct principal curvatures.
If it holds both $1,-1\notin\sigma(\mathcal{Q})$, then by Lemma \ref{lemma:5.5}, we can assume that
$\mu=\frac{1}{\lambda}$ and $\nu\notin\{0,1,-1,\lambda,\frac{1}{\lambda}\}$. Again by Lemma \ref{lemma:5.5},
we have $\frac{1}{\nu}\in\sigma(\mathcal{Q})$, which is different from $\{0,1,-1,\lambda,\frac{1}{\lambda},\nu\}$. It also contradicts with the assumption that $M$ has four distinct principal curvatures.

In the following, we divide the discussions into two subcases depending on
whether $1$ or $-1$ belongs in $\sigma(\mathcal{Q})$.

{\bf Case-ii-1}: $1\in\sigma(\mathcal{Q})$ and $-1\notin\sigma(\mathcal{Q})$.

{\bf Case-ii-2}: $1\notin\sigma(\mathcal{Q})$ and $-1\in\sigma(\mathcal{Q})$.

In {\bf Case-ii-1}, there exist constant principal curvatures $\lambda,\mu\in\sigma(\mathcal{Q})$, such that
$$
\mathcal{Q}=V_\lambda\oplus V_\mu\oplus V_1,\ JV_\lambda= V_\mu,\ JV_1= V_1,\ \mu=\frac{1}{\lambda},
\ \lambda<\mu,\ \lambda,\mu\notin\{0,1,-1\}.
$$
Then, by (3) of Lemma \ref{lemma:5.1}, we have
$$
A(V_\lambda\oplus V_\mu)\bot(V_\lambda\oplus V_\mu),\ \
AV_1\bot V_1.
$$
It deduces that ${\rm dim}V_1=2{\rm dim}V_\lambda=2{\rm dim}V_\mu=m-2$,
$A(V_\lambda\oplus V_\mu)=V_1$.
Now, we choose a unit vector field $X_1\in V_\lambda$.
So $JX_1\in V_\mu$ and $AX_1,AJX_1\in V_1$.


If $0<\lambda<1<\mu$, we take $(X,Y)=(X_1,AX_1)$ into \eqref{eqn:call}, it follows
\begin{equation}\label{eqn:5.21}
\begin{aligned}
2\sum_{\mu_i=\mu}\frac{g((\nabla_{e_i} S)X_1,AX_1)^2}{(\lambda-\mu_i)(1-\mu_i)}
&=(1+\lambda)\Big\{1+2g(\phi X_1,AX_1)^2-2g(AX_1,AX_1)^2-2g(AX_1,JAX_1)^2\Big\}\\
&=(1+\lambda)(1-2)=-(1+\lambda).
\end{aligned}
\end{equation}
By $0<\lambda<1<\mu$, the left hand side of \eqref{eqn:5.21} is non-negative.
However, the right hand side of \eqref{eqn:5.21} is negative. We get a contradiction.

If $\lambda<-1<\mu<0<1$, we take $(X,Y)=(JX_1,AJX_1)$ into \eqref{eqn:call},
it follows
\begin{equation}\label{eqn:5.22}
\begin{aligned}
2\sum_{\mu_i=\lambda}\frac{g((\nabla_{e_i} S)JX_1,AJX_1)^2}{(\mu-\mu_i)(1-\mu_i)}
&=(1+\mu)\Big\{1+2g(\phi JX_1,AJX_1)^2-2g(AJX_1,AJX_1)^2-2g(AJX_1,JAJX_1)^2\Big\}\\
&=(1+\mu)(1-2)=-(1+\mu).
\end{aligned}
\end{equation}
By $\lambda<-1<\mu<0<1$, the left hand side of \eqref{eqn:5.22} is non-negative.
However, the right hand side of \eqref{eqn:5.22} is negative. We get a contradiction.

In {\bf Case-ii-2}, there exist constant principal curvatures $\lambda,\mu\in\sigma(\mathcal{Q})$, such that
$$
\mathcal{Q}=V_\lambda\oplus V_\mu\oplus V_{-1},\ JV_\lambda= V_\mu,\ JV_{-1}= V_{-1},
\ \lambda<\mu,\ \mu=\frac{1}{\lambda},\ \lambda,\mu\notin\{0,1,-1\}.
$$
Then, by Lemma \ref{lemma:5.1}, we have
$A(V_\lambda\oplus V_\mu)\bot(V_\lambda\oplus V_\mu)$, $AV_{-1}\bot V_{-1}$.
It deduces that
$$
{\rm dim}V_{-1}=2{\rm dim}V_\lambda=2{\rm dim}V_\mu=m-2, \ \
A(V_\lambda\oplus V_\mu)=V_{-1}.
$$
Now, we choose a unit vector field $X_1\in V_\lambda$.
So $JX_1\in V_\mu$ and $AX_1,AJX_1\in V_{-1}$.


If $-1<0<\lambda<1<\mu$, we take $(X,Y)=(X_1,AX_1)$ into \eqref{eqn:call}, it follows
\begin{equation}\label{eqn:5.24}
\begin{aligned}
2\sum_{\mu_i=\mu}\frac{g((\nabla_{e_i} S)X_1,AX_1)^2}{(\lambda-\mu_i)(-1-\mu_i)}
&=(1-\lambda)\Big\{1+2g(\phi X_1,AX_1)^2-2g(AX_1,AX_1)^2-2g(AX_1,JAX_1)^2\Big\}\\
&=(1-\lambda)(1-2)=-(1-\lambda).
\end{aligned}
\end{equation}
By $-1<0<\lambda<1<\mu$, the left hand side of \eqref{eqn:5.24} is non-negative.
However, the right hand side of \eqref{eqn:5.24} is negative. We get a contradiction.

If $\lambda<-1<\mu<0<1$, we take $(X,Y)=(JX_1,AJX_1)$ into \eqref{eqn:call},
it follows
\begin{equation}\label{eqn:5.25}
\begin{aligned}
2\sum_{\mu_i=\lambda}\frac{g((\nabla_{e_i} S)JX_1,AJX_1)^2}{(\mu-\mu_i)(-1-\mu_i)}
&=(1-\mu)\Big\{1+2g(\phi JX_1,AJX_1)^2-2g(AJX_1,AJX_1)^2-2g(AJX_1,JAJX_1)^2\Big\}\\
&=(1-\mu)(1-2)=-(1-\mu).
\end{aligned}
\end{equation}
By $\lambda<-1<\mu<0<1$, the left hand side of \eqref{eqn:5.25} is non-negative.
However, the right hand side of \eqref{eqn:5.25} is negative. We get a contradiction.
\end{proof}


%


\subsection{Hopf hypersurfaces of $Q^m$ with five distinct constant principal curvatures}\label{sect:5.3}

\begin{theorem}\label{thm:5.5}
There does not exist Hopf hypersurface of $Q^m$ ($m\geq3$) with five distinct constant principal curvatures.
\end{theorem}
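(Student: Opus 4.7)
First I would reduce to the $\mathfrak{A}$-isotropic case by invoking Lemma \ref{lemma:2.5} and Theorem \ref{thm:2.1}: an $\mathfrak{A}$-principal Hopf hypersurface with constant principal curvatures is an open part of a tube over $Q^{m-1}$, which has only three distinct principal curvatures (Proposition \ref{prop:E1P}). Thus I may assume $N$ is $\mathfrak{A}$-isotropic, so by Lemma \ref{lemma:2.6}, $S\xi = \alpha \xi$ and $SAN = SA\xi = 0$, with $\alpha \ge 0$.

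The plan is to mirror the case analyses of Theorems \ref{thm:5.3} and \ref{thm:5.4}, but with one extra layer of combinatorics. The skeleton of the argument is governed by Lemma \ref{lemma:5.5}: the map $T(\lambda) = (\alpha\lambda+2)/(2\lambda-\alpha)$ is an involution on $\sigma(\mathcal{Q})$ with $\phi V_\lambda = V_{T(\lambda)}$, whose only fixed points are $\lambda_\pm = (\alpha \pm \sqrt{\alpha^2+4})/2$. Since $\{\alpha, 0\} \cup \sigma(\mathcal{Q})$ must equal exactly five distinct values, the $T$-orbit structure reduces the problem to a short list of possible configurations for $\sigma(\mathcal{Q})$. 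When $\alpha>0$, I would split on whether $\alpha \in \sigma(\mathcal{Q})$ and whether $0 \in \sigma(\mathcal{Q})$; this leads to four situations, in which $|\sigma(\mathcal{Q})|$ is $3$, $4$, or $5$ and is filled out by $T$-pairs (such as $\{\alpha,(\alpha^2+2)/\alpha\}$ or $\{0,-2/\alpha\}$) together with at most two $T$-fixed points. When $\alpha=0$, Lemma \ref{lemma:5.5} forces $0 \notin \sigma(\mathcal{Q})$ and the involution becomes $\lambda \mapsto 1/\lambda$; then $|\sigma(\mathcal{Q})| = 4$ and the spectrum is either two $T$-pairs or $\{1,-1\}$ together with a single $T$-pair.

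For each configuration I would combine two weapons already used in Theorem \ref{thm:5.4}: Lemma \ref{lemma:5.1}(3), which forces $AV_\lambda \perp V_\lambda \oplus JV_\lambda$ and thereby pins down how $A$ permutes the eigenspaces (and their dimensions), together with the Cartan formula \eqref{eqn:call} applied to a unit $X \in V_\lambda$ with $Y = AX \in V_\nu$. Exactly as in Case-i-1 of Theorem \ref{thm:5.4}, this choice reduces the right-hand side of \eqref{eqn:call} to
\begin{equation*}
2 \sum_{\mu_i \neq \lambda, \nu} \frac{g((\nabla_{e_i} S) X, AX)^2}{(\lambda - \mu_i)(\nu - \mu_i)} \;=\; -(1 + \lambda \nu).
\end{equation*}
The contradiction then arises whenever I can arrange the pair $(\lambda, \nu)$ so that every other $\mu_i \in \sigma(\mathcal{Q})$ lies outside the open interval bounded by $\lambda$ and $\nu$ (making every summand on the left non-negative), while $1 + \lambda \nu > 0$ (making the right side strictly negative). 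In parallel, Propositions \ref{prop:4.3w} and \ref{prop:4.4w} impose the austere condition on the focal submanifolds $M_\pm$, which rules out many spectral configurations, and the full Cartan identity \eqref{eqn:ca2}, summed against appropriate eigenvectors, produces explicit polynomial relations in $\alpha$ of the same flavour as \eqref{eqn:5.18}--\eqref{eqn:5.19}.

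The main obstacle I expect is not any individual computation but the bookkeeping: the five-curvature hypothesis produces noticeably more subcases than the four-curvature one, and the subcases in which a $T$-fixed point $\lambda_\pm$ appears in $\sigma(\mathcal{Q})$ are the delicate ones, because then $JV_{\lambda_\pm} = V_{\lambda_\pm}$ and Lemma \ref{lemma:5.1}(3) only yields $AV_{\lambda_\pm} \perp V_{\lambda_\pm}$, a weaker constraint. For those configurations the natural choice $(\lambda, \nu) = (\lambda_\pm, \cdot)$ need not straddle the spectrum in a contradictory way; I anticipate resolving them by combining the austere list of principal curvatures of $M_+$ or $M_-$ provided by Propositions \ref{prop:4.3w}--\ref{prop:4.4w} with two or three simultaneous instances of \eqref{eqn:ca2} evaluated on eigenvectors in distinct $V_\lambda$, producing an overdetermined polynomial system in $\alpha$ with no admissible positive real solution.
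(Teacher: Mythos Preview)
Your proposal is correct and follows essentially the same approach as the paper: reduction to the $\mathfrak{A}$-isotropic case, the case split on $\alpha>0$ versus $\alpha=0$ and on whether $\alpha,0\in\sigma(\mathcal{Q})$, the use of the $T$-involution from Lemma~\ref{lemma:5.5} to enumerate spectral configurations, and the combination of Lemma~\ref{lemma:5.1}(3), the Cartan identities \eqref{eqn:ca2} and \eqref{eqn:call}, and the austere focal submanifold Propositions~\ref{prop:4.3w}--\ref{prop:4.4w} to eliminate each configuration. The paper's treatment is precisely this, with the only refinement being that in several subcases (e.g., the analogues of your ``delicate'' $T$-fixed-point configurations, such as Case-i-4-1/2) a single well-chosen application of \eqref{eqn:call} already suffices, while other subcases (e.g., Case-i-1-2 and Case-ii-2) rely purely on the austere constraints or purely on \eqref{eqn:ca2}, rather than always mixing both.
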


\begin{proof}
If $M$ has $\mathfrak{A}$-principal unit normal vector field $N$, then $M$ is an open part of a tube over a totally geodesic $Q^{m-1}\hookrightarrow Q^{m}$ ($m\geq3$), which has three distinct principal curvatures. We get a contradiction.

If $M$ has $\mathfrak{A}$-isotropic unit normal vector field $N$, then by Lemma \ref{lemma:2.6}, we have $S\xi=\alpha\xi$, and $SAN=SA\xi=0$. In the following, we divide the discussions into two cases depending on the value of the constant $\alpha$.

{\bf Case-i}: $\alpha>0$.

{\bf Case-ii}: $\alpha=0$.

In {\bf Case-i}, $\alpha$ and $0$ are two distinct principal curvatures on $M$.
We further divide the discussions into four subcases depending on whether $\alpha$ or $0$
belongs in $\sigma(\mathcal{Q})$.

{\bf Case-i-1}: $\alpha\in\sigma(\mathcal{Q})$ and $0\notin\sigma(\mathcal{Q})$.

{\bf Case-i-2}: $\alpha\notin\sigma(\mathcal{Q})$ and $0\in\sigma(\mathcal{Q})$.

{\bf Case-i-3}: $\alpha,0\in\sigma(\mathcal{Q})$.

{\bf Case-i-4}: $\alpha,0\notin\sigma(\mathcal{Q})$.

In {\bf Case-i-1}, there exists a tangent vector field $X\in\mathcal{Q}$, such that
$SX=\alpha X$. Now, by Lemma \ref{lemma:5.5}, it follows that $S\phi X=\mu\phi X$ and $\mu=\tfrac{\alpha^2+2}{\alpha}$, in which $\tfrac{\alpha^2+2}{\alpha}$ is different from $\alpha$ and $0$. By the assumption that $M$ has five distinct principal curvatures, and combining Lemma \ref{lemma:5.5}, we must have the following two subcases:

{\bf Case-i-1-1}: $\frac{\alpha\pm\sqrt{\alpha^2+4}}{2}\in\sigma(\mathcal{Q})$.

{\bf Case-i-1-2}: $\frac{\alpha\pm\sqrt{\alpha^2+4}}{2}\notin\sigma(\mathcal{Q})$.

In {\bf Case-i-1-1}, we have
$$
\begin{aligned}
&\mathcal{Q}=V_\alpha\oplus V_{\tfrac{\alpha^2+2}{\alpha}}\oplus V_{\frac{\alpha+\sqrt{\alpha^2+4}}{2}}\oplus V_{\frac{\alpha-\sqrt{\alpha^2+4}}{2}},\\
&JV_\alpha=V_{\tfrac{\alpha^2+2}{\alpha}},\
JV_{\frac{\alpha+\sqrt{\alpha^2+4}}{2}}=V_{\frac{\alpha+\sqrt{\alpha^2+4}}{2}},\
JV_{\frac{\alpha-\sqrt{\alpha^2+4}}{2}}=V_{\frac{\alpha-\sqrt{\alpha^2+4}}{2}}.
\end{aligned}
$$
For convenience, let
\begin{equation}\label{eqn:5.26}
\lambda_1=\alpha,\ \lambda_2=\tfrac{\alpha^2+2}{\alpha},\ \lambda_3=\frac{\alpha-\sqrt{\alpha^2+4}}{2},\ \lambda_4=\frac{\alpha+\sqrt{\alpha^2+4}}{2}.
\end{equation}
Then, it holds $\lambda_3<0<\lambda_1<\lambda_4<\lambda_2$.
Now, it holds $\lambda_+=\tfrac{\alpha^2+2}{\alpha}$ on $M$. Then, we can use the fact that
$M_+$ is austere to prove that {\bf Case-i-1-1} does not occur.
In fact, by Proposition \ref{prop:4.3w} and \eqref{eqn:5.26}, the principal curvatures on $M_+$ are
$$
\frac{-\sqrt{\alpha^2+4}+\alpha(3+\alpha^2-\alpha\sqrt{\alpha^2+4})}{2},\ \frac{\alpha(3+\alpha^2)}{2},\
\frac{\sqrt{\alpha^2+4}+\alpha(3+\alpha^2+\alpha\sqrt{\alpha^2+4})}{2},\
\alpha(3+\alpha^2),\ \ 0.
$$
There are three positive principal curvatures and one negative principal curvature,
which contradicts with the fact that $M_+$ is austere.

In {\bf Case-i-1-2}, we have
$$
\begin{aligned}
&\mathcal{Q}=V_\alpha\oplus V_{\tfrac{\alpha^2+2}{\alpha}}\oplus V_{\lambda}\oplus V_{\mu},
\ \lambda\notin\{0,\alpha,-\frac{2}{\alpha},\tfrac{\alpha^2+2}{\alpha},
\frac{\alpha\pm\sqrt{\alpha^2+4}}{2}\},\\
&JV_\alpha=V_{\tfrac{\alpha^2+2}{\alpha}},\
JV_{\lambda}=V_{\mu}, \ \mu=\frac{\alpha\lambda+2}{2\lambda-\alpha}.
\end{aligned}
$$
For convenience, let
\begin{equation}\label{eqn:5.33}
\lambda_1=\alpha,\ \lambda_2=\tfrac{\alpha^2+2}{\alpha},\ \lambda_3=\lambda,\ \lambda_4=\mu.
\end{equation}
In the following, without loss of generality, we divide the discussion into four subcases:

{\bf Case-i-1-2-1}: $\lambda_3<\lambda_4<0<\lambda_1<\lambda_2$;

{\bf Case-i-1-2-2}: $\lambda_3<0<\lambda_4<\lambda_1<\lambda_2$;

{\bf Case-i-1-2-3}: $0<\lambda_1<\lambda_3<\lambda_4<\lambda_2$;

{\bf Case-i-1-2-4}: $0<\lambda_3<\lambda_1<\lambda_2<\lambda_4$.

In {\bf Case-i-1-2-1}, we have $\lambda_+=\tfrac{\alpha^2+2}{\alpha}$. Then by
Proposition \ref{prop:4.3w} and \eqref{eqn:5.33},
on $M_+$, the principal curvatures are
\begin{equation}\label{eqn:Case-1}
\tilde{\alpha}=\alpha+\tfrac{(4+\alpha^2){\lambda_+}}{\lambda_+^2-\alpha{\lambda_+}-1}, \ \
\tilde{\lambda}_3<\tilde{\lambda}_4<\tilde{\lambda}_1,\ \  {\rm where}
\tilde{\lambda}_i=\tfrac{1+{\lambda_1}\lambda_i}{{\lambda_1}-\lambda_i}.
\end{equation}
From the fact that $M_+$ is austere, and by $\tilde{\alpha}>0$, it holds that
either $\tilde{\lambda}_3<\tilde{\lambda}_4<\tilde{\alpha}<\tilde{\lambda}_1$
or $\tilde{\lambda}_3<\tilde{\lambda}_4<\tilde{\lambda}_1<\tilde{\alpha}$.

If $\tilde{\lambda}_3<\tilde{\lambda}_4<\tilde{\alpha}<\tilde{\lambda}_1$, then
it holds that $\tilde{\lambda}_3+\tilde{\lambda}_1=\tilde{\lambda}_4+\tilde{\alpha}=0$.
By using \eqref{eqn:Case-1}, \eqref{eqn:5.33} and $\lambda<0$, we have
\begin{equation}\label{eqn:Case-2}
\alpha(8+5\alpha^2+\alpha^4)-(\alpha^4+\alpha^2-4)\lambda_3=0.
\end{equation}
\begin{equation}\label{eqn:Case-3}
\alpha(\alpha^2+4)+\frac{\alpha^2+1}{\lambda_3-\alpha}=0.
\end{equation}
By \eqref{eqn:Case-3}, we get $\lambda_3=\frac{\alpha^4+3\alpha^2-1}{\alpha(\alpha^2+4)}$.
Substituting it into \eqref{eqn:Case-2}, it follows that $5\alpha^6+30\alpha^4+45\alpha^2-4=0$,
which deduces that $\alpha=\sqrt{\sqrt[3]{\frac{7+2\sqrt{6}}{5}}+\sqrt[3]{\frac{7-2\sqrt{6}}{5}}-2}$.
Then, in this case, we get $\tilde{\lambda}_3=\frac{-1}{\sqrt{5}}$ and
$\tilde{\lambda}_4=\frac{-2}{\sqrt{5}}$, which contradicts with $\tilde{\lambda}_3<\tilde{\lambda}_4$.

If $\tilde{\lambda}_3<\tilde{\lambda}_4<\tilde{\lambda}_1<\tilde{\alpha}$, then
it holds that $\tilde{\lambda}_3+\tilde{\alpha}=\tilde{\lambda}_4+\tilde{\lambda}_1=0$.
By using \eqref{eqn:Case-1}, \eqref{eqn:5.33} and $\lambda<0$, we have
\begin{equation}\label{eqn:Case-4}
2\lambda_3+\alpha(7+5\alpha^2+\alpha^4-\alpha(\alpha^2+2)\lambda_3)=0.
\end{equation}
\begin{equation}\label{eqn:Case-5}
\frac{\alpha}{2}(\alpha^2+5)+\frac{\alpha^2+1}{\lambda_3-\alpha}=0.
\end{equation}
By \eqref{eqn:Case-5}, we get $\lambda_3=\frac{\alpha^4+3\alpha^2-2}{\alpha(\alpha^2+5)}$.
Substituting it into \eqref{eqn:Case-4}, it still follows that $5\alpha^6+30\alpha^4+45\alpha^2-4=0$,
which deduces that $\alpha=\sqrt{\sqrt[3]{\frac{7+2\sqrt{6}}{5}}+\sqrt[3]{\frac{7-2\sqrt{6}}{5}}-2}$.
Now, we need to consider $M_-$. By Proposition \ref{prop:4.4w}, on $M_-$, the curvatures are
$$
\frac{1+{\lambda_4}\lambda_3}{{\lambda_3}-\lambda_4}=\frac{-7}{\sqrt{5}},\
\frac{1+{\lambda_1}\lambda_3}{{\lambda_3}-\lambda_1}=\frac{-1}{\sqrt{5}},\
\frac{1+{\lambda_2}\lambda_3}{{\lambda_3}-\lambda_2}=\frac{2}{\sqrt{5}},\
\alpha+\frac{(4+\alpha^2){\lambda_3}}{\lambda_3^2-\alpha{\lambda_3}-1}=\frac{-14}{\sqrt{5}}.
$$
So it contradicts with the fact that $M_-$ is austere.

In {\bf Case-i-1-2-2}, we still have $\lambda_+=\tfrac{\alpha^2+2}{\alpha}$. Then by
Proposition \ref{prop:4.3w} and \eqref{eqn:5.33},
on $M_+$, the principal curvatures are still given by \eqref{eqn:Case-1}.
But in this case, it holds that $\tilde{\alpha},\tilde{\lambda}_1,\tilde{\lambda}_4>0$, i.e.
there are at least two distinct positive principal curvatures and at most one negative
principal curvatures on $M_+$. It contradicts with the fact that $M_+$ is austere.

In {\bf Case-i-1-2-3}, we have $\lambda_+=\lambda_2$. Then by
Proposition \ref{prop:4.3w} and \eqref{eqn:5.33},
there are at least three distinct positive principal curvatures, and there are no negative
principal curvatures on $M_+$. It contradicts with the fact that $M_+$ is austere.

In {\bf Case-i-1-2-4}, we have $\lambda_+=\lambda_4$. Then by
Proposition \ref{prop:4.3w} and \eqref{eqn:5.33},
there are at least three distinct positive principal curvatures, and there are no negative
principal curvatures on $M_+$. It contradicts with the fact that $M_+$ is austere.

In {\bf Case-i-2}, there exists a tangent vector field $X\in\mathcal{Q}$, such that
$SX=0$. Now, by Lemma \ref{lemma:5.5}, it follows that $S\phi X=\tfrac{-2}{\alpha}\phi X$, in which $\tfrac{-2}{\alpha}$ is different from $\alpha$ and $0$. By the assumption that $M$ has five distinct principal curvatures, and combining Lemma \ref{lemma:5.5}, we must have the following two subcases:

{\bf Case-i-2-1}: $\frac{\alpha\pm\sqrt{\alpha^2+4}}{2}\in\sigma(\mathcal{Q})$.

{\bf Case-i-2-2}: $\frac{\alpha\pm\sqrt{\alpha^2+4}}{2}\notin\sigma(\mathcal{Q})$.

In {\bf Case-i-2-1}, we have
$$
\begin{aligned}
&\mathcal{Q}=V_0\oplus V_{\tfrac{-2}{\alpha}}\oplus V_{\frac{\alpha+\sqrt{\alpha^2+4}}{2}}\oplus V_{\frac{\alpha-\sqrt{\alpha^2+4}}{2}},\\
&JV_0=V_{\tfrac{-2}{\alpha}},\
JV_{\frac{\alpha+\sqrt{\alpha^2+4}}{2}}=V_{\frac{\alpha+\sqrt{\alpha^2+4}}{2}},\
JV_{\frac{\alpha-\sqrt{\alpha^2+4}}{2}}=V_{\frac{\alpha-\sqrt{\alpha^2+4}}{2}}.
\end{aligned}
$$
For convenience, let
\begin{equation}\label{eqn:5.36}
\lambda_1=0,\ \lambda_2=\tfrac{-2}{\alpha},\ \lambda_3=\frac{\alpha-\sqrt{\alpha^2+4}}{2},\ \lambda_4=\frac{\alpha+\sqrt{\alpha^2+4}}{2}.
\end{equation}
Then, it holds $\lambda_2<\lambda_3<\lambda_1<\lambda_4$.
So $\lambda_-=\tfrac{-2}{\alpha}$ on $M$. Then, we can use the fact that
$M_-$ is austere to prove that {\bf Case-i-2-1} does not occur. In fact, by Proposition \ref{prop:4.4w} and \eqref{eqn:5.36}, the principal curvatures on $M_-$ are
$$
\frac{-\alpha-\sqrt{4+\alpha^2}}{2},\ \frac{-\alpha+\sqrt{4+\alpha^2}}{2},\
\frac{-\alpha}{2},\ -\alpha,\ \ 0.
$$
There are one positive principal curvature and three negative principal curvatures on $M_-$,
which contradicts with the fact that $M_-$ is austere.

In {\bf Case-i-2-2}, we have
$$
\begin{aligned}
&\mathcal{Q}=V_0\oplus V_{\tfrac{-2}{\alpha}}\oplus V_{\lambda}\oplus V_{\mu},\ \ \mu=\tfrac{\alpha\lambda+2}{2\lambda-\alpha},
\ \lambda\notin\{0,\alpha,\tfrac{\alpha^2+2}{\alpha},
\tfrac{-2}{\alpha},\tfrac{\alpha\pm\sqrt{\alpha^2+4}}{2}\},\\
&JV_0=V_{\tfrac{-2}{\alpha}},\
JV_{\lambda}=V_{\mu}.
\end{aligned}
$$
For convenience, let
\begin{equation}\label{eqn:5.43}
\lambda_1=0,\ \lambda_2=\tfrac{-2}{\alpha},\ \lambda_3=\lambda,\ \lambda_4=\mu.
\end{equation}
By (3) of Lemma \ref{lemma:5.1}, we have $A(V_{\lambda_1}\oplus V_{\lambda_2})=V_{\lambda_3}\oplus V_{\lambda_4}$.
It holds ${\rm dim}V_{\lambda_1}={\rm dim}V_{\lambda_2}={\rm dim}V_{\lambda_3}={\rm dim}V_{\lambda_4}=\frac{m-2}{2}$.

In the following, by taking any unit vector field $X\in V_{\lambda_i}$ ($1\leq i\leq4$)
into \eqref{eqn:ca2} respectively, we obtain
\begin{gather}
\frac{1+\lambda_1\lambda_2}{\lambda_1-\lambda_2}(\frac{m-2}{2}+2)
+\frac{1+\lambda_1\lambda_3}{\lambda_1-\lambda_3}(\frac{m-2}{2}-2)
+\frac{1+\lambda_1\lambda_4}{\lambda_1-\lambda_4}(\frac{m-2}{2}-2)=0,\label{eqn:xzs1}\\
\frac{1+\lambda_2\lambda_1}{\lambda_2-\lambda_1}(\frac{m-2}{2}+2)
+\frac{1+\lambda_2\lambda_3}{\lambda_2-\lambda_3}(\frac{m-2}{2}-2)
+\frac{1+\lambda_2\lambda_4}{\lambda_2-\lambda_4}(\frac{m-2}{2}-2)=0,\label{eqn:xzs2}\\
\frac{1+\lambda_3\lambda_1}{\lambda_3-\lambda_1}(\frac{m-2}{2}-2)
+\frac{1+\lambda_3\lambda_2}{\lambda_3-\lambda_2}(\frac{m-2}{2}-2)
+\frac{1+\lambda_3\lambda_4}{\lambda_3-\lambda_4}(\frac{m-2}{2}+2)=0,\label{eqn:xzs3}\\
\frac{1+\lambda_4\lambda_1}{\lambda_4-\lambda_1}(\frac{m-2}{2}-2)
+\frac{1+\lambda_4\lambda_2}{\lambda_4-\lambda_2}(\frac{m-2}{2}-2)
+\frac{1+\lambda_4\lambda_3}{\lambda_4-\lambda_3}(\frac{m-2}{2}+2)=0,\label{eqn:xzs4}
\end{gather}
where $\lambda_1,\lambda_2,\lambda_3$ and $\lambda_4$ are given by \eqref{eqn:5.43}.
By calculating \eqref{eqn:xzs1}--\eqref{eqn:xzs3}, we get
$$
(-4-4\alpha\lambda_3+(8+\alpha^2)\lambda_3^2)m
=-24-2\lambda_3(12\alpha+(-8+\alpha^2)\lambda_3).
$$
If $-4-4\alpha\lambda_3+(8+\alpha^2)\lambda_3^2=0$,
then $\lambda_3=\frac{-2}{\alpha \pm\sqrt{2}\sqrt{4+\alpha^2}}$, which contradicts with $-24-2\lambda_3(12\alpha+(-8+\alpha^2)\lambda_3)=0$. Then, it holds $m=\frac{-24-2\lambda_3(12\alpha+(-8+\alpha^2)\lambda_3)}{-4-4\alpha\lambda_3+(8+\alpha^2)\lambda_3^2}$.
Substituting such $m$ into \eqref{eqn:xzs1}+\eqref{eqn:xzs3}, we can have $\lambda_3=\frac{\alpha}{2}$. It contradicts with (1) of Lemma \ref{lemma:5.5}.

In {\bf Case-i-3}, there exist tangent vector fields $X,Y\in\mathcal{Q}$, such that
$SX=\alpha X$ and $SY=0$. Now, by Lemma \ref{lemma:5.5}, it follows that $S\phi X=\tfrac{\alpha^2+2}{\alpha}\phi X$ and $S\phi Y=\tfrac{-2}{\alpha}\phi Y$. Here,   $\{\alpha,0,\tfrac{\alpha^2+2}{\alpha},\tfrac{-2}{\alpha}\}$ are different to each other. By the assumption that $M$ has five distinct principal curvatures, and combining Lemma \ref{lemma:5.5}, we must have the following two subcases:

{\bf Case-i-3-1}: $\frac{\alpha+\sqrt{\alpha^2+4}}{2}\in\sigma(\mathcal{Q})$,
$\frac{\alpha-\sqrt{\alpha^2+4}}{2}\notin\sigma(\mathcal{Q})$.

{\bf Case-i-3-2}: $\frac{\alpha+\sqrt{\alpha^2+4}}{2}\notin\sigma(\mathcal{Q})$,
$\frac{\alpha-\sqrt{\alpha^2+4}}{2}\in\sigma(\mathcal{Q})$.

In {\bf Case-i-3-1}, we have
$$
\begin{aligned}
&\mathcal{Q}=V_\alpha\oplus V_{\tfrac{\alpha^2+2}{\alpha}}\oplus V_{0}\oplus V_{\frac{-2}{\alpha}}\oplus V_{\frac{\alpha+\sqrt{\alpha^2+4}}{2}},\\
&JV_\alpha=V_{\tfrac{\alpha^2+2}{\alpha}},\
JV_{0}=V_{\frac{-2}{\alpha}},\
JV_{\frac{\alpha+\sqrt{\alpha^2+4}}{2}}=V_{\frac{\alpha+\sqrt{\alpha^2+4}}{2}}.
\end{aligned}
$$
For convenience, let
\begin{equation}\label{eqn:5.44}
\lambda_1=\alpha,\ \lambda_2=\tfrac{\alpha^2+2}{\alpha},\ \lambda_3=0,\ \lambda_4=\frac{-2}{\alpha},
\ \lambda_5=\frac{\alpha+\sqrt{\alpha^2+4}}{2}.
\end{equation}
Now, it holds $\lambda_+=\tfrac{\alpha^2+2}{\alpha}$ on $M$. Then we can use the fact that
$M_+$ is austere to prove that {\bf Case-i-3-1} does not occur. In fact,
by Proposition \ref{prop:4.3w} and \eqref{eqn:5.44}, the principal curvatures on $M_+$ are
$$
\frac{-1}{\alpha},\ \frac{\alpha}{2+\alpha^2},\
\frac{\alpha(3+\alpha^2)}{2},\
\frac{\sqrt{4+\alpha^2}+\alpha(3+\alpha(\alpha+\sqrt{4+\alpha^2}))}{2},\ \alpha(3+\alpha^2),\ \ 0.
$$
There are four positive principal curvatures and one negative principal curvature,
which contradicts with the fact that $M_+$ is austere.

In {\bf Case-i-3-2}, we have
$$
\begin{aligned}
&\mathcal{Q}=V_\alpha\oplus V_{\tfrac{\alpha^2+2}{\alpha}}\oplus V_{0}\oplus V_{\frac{-2}{\alpha}}\oplus V_{\frac{\alpha-\sqrt{\alpha^2+4}}{2}},\\
&J V_\alpha=V_{\tfrac{\alpha^2+2}{\alpha}},\
J V_{0}=V_{\frac{-2}{\alpha}},\
J V_{\frac{\alpha-\sqrt{\alpha^2+4}}{2}}=V_{\frac{\alpha-\sqrt{\alpha^2+4}}{2}}.
\end{aligned}
$$
For convenience, let
\begin{equation}\label{eqn:5.45}
\lambda_1=\alpha,\ \lambda_2=\tfrac{\alpha^2+2}{\alpha},\ \lambda_3=0,\ \lambda_4=\frac{-2}{\alpha},
\ \lambda_5=\frac{\alpha-\sqrt{\alpha^2+4}}{2}.
\end{equation}
Now, it holds $\lambda_-=\frac{-2}{\alpha}$ on $M$. Then we can use the fact that
$M_-$ is austere to prove that {\bf Case-i-3-2} does not occur. In fact,
by Proposition \ref{prop:4.4w} and \eqref{eqn:5.45}, the principal curvatures on $M_-$ are
$$
\frac{1}{\alpha},\ \frac{\alpha}{2+\alpha^2},\
\frac{-\alpha}{2},\
\frac{-\alpha-\sqrt{4+\alpha^2}}{2}, \ -\alpha,\ \ 0.
$$
There are two positive principal curvatures and three negative principal curvature,
which contradicts with the fact that $M_-$ is austere.

In {\bf Case-i-4}, by the assumption that $M$ has five distinct principal curvatures, and combining Lemma \ref{lemma:5.5}, we must have the following two subcases:

{\bf Case-i-4-1}: $\frac{\alpha+\sqrt{\alpha^2+4}}{2}\in\sigma(\mathcal{Q})$,
$\frac{\alpha-\sqrt{\alpha^2+4}}{2}\notin\sigma(\mathcal{Q})$.

{\bf Case-i-4-2}: $\frac{\alpha+\sqrt{\alpha^2+4}}{2}\notin\sigma(\mathcal{Q})$,
$\frac{\alpha-\sqrt{\alpha^2+4}}{2}\in\sigma(\mathcal{Q})$.

In {\bf Case-i-4-1}, we have
$$
\begin{aligned}
&\mathcal{Q}=V_\lambda\oplus V_{\tfrac{\alpha\lambda+2}{2\lambda-\alpha}}\oplus  V_{\frac{\alpha+\sqrt{\alpha^2+4}}{2}},\ \lambda\notin\{\alpha,0,\frac{\alpha^2+2}{\alpha},\frac{-2}{\alpha},\frac{\alpha\pm\sqrt{\alpha^2+4}}{2}\},\\
&J V_\lambda=V_{\tfrac{\alpha\lambda+2}{2\lambda-\alpha}},\
J V_{\frac{\alpha+\sqrt{\alpha^2+4}}{2}}=V_{\frac{\alpha+\sqrt{\alpha^2+4}}{2}}.
\end{aligned}
$$
For convenience, let
$$
\lambda_1=\lambda,\ \lambda_2=\tfrac{\alpha\lambda+2}{2\lambda-\alpha},\ \lambda_3=\frac{\alpha+\sqrt{\alpha^2+4}}{2}.
$$
By (3) of Lemma \ref{lemma:5.1}, we have $A(V_{\lambda_1}\oplus V_{\lambda_2})= V_{\lambda_3}$.
So ${\rm dim}V_{\lambda_1}={\rm dim}V_{\lambda_2}=\frac{m-2}{2}$, ${\rm dim}V_{\lambda_3}=m-2$.
Without loss of generality, we assume $\lambda_1>\lambda_2$.

Now, from $\lambda\notin\{\alpha,0,\frac{\alpha^2+2}{\alpha},\frac{-2}{\alpha},\frac{\alpha\pm\sqrt{\alpha^2+4}}{2}\}$,
we have the following four situations:
$$
\begin{aligned}
&\lambda_1\in(\frac{\alpha+\sqrt{\alpha^2+4}}{2},\frac{\alpha^2+2}{\alpha}),\ \lambda_2\in(\alpha,\frac{\alpha+\sqrt{\alpha^2+4}}{2});\  {\rm or} \
\lambda_1\in(\frac{\alpha^2+2}{\alpha},+\infty),\ \lambda_2\in(\frac{\alpha}{2},\alpha); \\
&{\rm or}\  \lambda_1\in(0,\frac{\alpha}{2}),\ \lambda_2\in(-\infty,-\frac{2}{\alpha});\  {\rm or} \
\lambda_1\in(\frac{\alpha-\sqrt{\alpha^2+4}}{2},0),\ \lambda_2\in(\frac{-2}{\alpha},\frac{\alpha-\sqrt{\alpha^2+4}}{2}).
\end{aligned}
$$
For each one of above four situations, we all have $\lambda_1,\lambda_3>\lambda_2$ and $1+\lambda_1\lambda_3>0$.
Now, for any unit vector field $X_1\in V_{\lambda_1}$, it holds $AX_1\in V_{\lambda_3}$.
We take $(X,Y)=(X_1,AX_1)$ into \eqref{eqn:call}, it follows
\begin{equation}\label{eqn:5.47}
\begin{aligned}
2\sum_{\lambda_2}\frac{g((\nabla_{e_i} S)X_1,AX_1)^2}{(\lambda_1-\lambda_2)(\lambda_3-\lambda_2)}
&=(1+\lambda_1\lambda_3)\Big\{1+2g(\phi X_1,AX_1)^2-2g(AX_1,AX_1)^2-2g(AX_1,JAX_1)^2\Big\}\\
&=(1+\lambda_1\lambda_3)(1-2)=-(1+\lambda_1\lambda_3)<0.
\end{aligned}
\end{equation}
Note that, by $\lambda_1,\lambda_3>\lambda_2$, the left hand side of \eqref{eqn:5.47} is non-negative.
However, the right hand side of \eqref{eqn:5.47} is negative, which is a contradiction.

In {\bf Case-i-4-2}, we have
$$
\begin{aligned}
&\mathcal{Q}=V_\lambda\oplus V_{\tfrac{\alpha\lambda+2}{2\lambda-\alpha}}\oplus  V_{\frac{\alpha-\sqrt{\alpha^2+4}}{2}},\ \lambda\notin\{\alpha,0,\frac{\alpha^2+2}{\alpha},\frac{-2}{\alpha},\frac{\alpha\pm\sqrt{\alpha^2+4}}{2}\},\\
&J V_\lambda=V_{\tfrac{\alpha\lambda+2}{2\lambda-\alpha}},\
J V_{\frac{\alpha-\sqrt{\alpha^2+4}}{2}}=V_{\frac{\alpha-\sqrt{\alpha^2+4}}{2}}.
\end{aligned}
$$
For convenience, let
$$
\lambda_1=\lambda,\ \lambda_2=\tfrac{\alpha\lambda+2}{2\lambda-\alpha},\ \lambda_3=\frac{\alpha-\sqrt{\alpha^2+4}}{2}.
$$
By (3) of Lemma \ref{lemma:5.1}, we have $A(V_{\lambda_1}\oplus V_{\lambda_2})= V_{\lambda_3}$.
So ${\rm dim}V_{\lambda_1}={\rm dim}V_{\lambda_2}=\frac{m-2}{2}$, ${\rm dim}V_{\lambda_3}=m-2$.
Without loss of generality, we assume $\lambda_1>\lambda_2$.

Now, from $\lambda\notin\{\alpha,0,\frac{\alpha^2+2}{\alpha},\frac{-2}{\alpha},\frac{\alpha\pm\sqrt{\alpha^2+4}}{2}\}$,
we still have the following four situations:
$$
\begin{aligned}
&\lambda_1\in(\frac{\alpha+\sqrt{\alpha^2+4}}{2},\frac{\alpha^2+2}{\alpha}),\ \lambda_2\in(\alpha,\frac{\alpha+\sqrt{\alpha^2+4}}{2});\ {\rm or} \
\lambda_1\in(\frac{\alpha^2+2}{\alpha},+\infty),\ \lambda_2\in(\frac{\alpha}{2},\alpha); \\
&{\rm or}\  \lambda_1\in(0,\frac{\alpha}{2}),\ \lambda_2\in(-\infty,-\frac{2}{\alpha});\ {\rm or} \
\lambda_1\in(\frac{\alpha-\sqrt{\alpha^2+4}}{2},0),\ \lambda_2\in(\frac{-2}{\alpha},\frac{\alpha-\sqrt{\alpha^2+4}}{2}).
\end{aligned}
$$
For each one of above four situations, we all have $\lambda_2,\lambda_3<\lambda_1$ and $1+\lambda_2\lambda_3>0$.
Now, for any unit vector field $X_2\in V_{\lambda_2}$, it holds $AX_2\in V_{\lambda_3}$.
We take $(X,Y)=(X_2,AX_2)$ into \eqref{eqn:call}, it follows
\begin{equation}\label{eqn:5.49}
\begin{aligned}
2\sum_{\lambda_1}\frac{g((\nabla_{e_i} S)X_2,AX_2)^2}{(\lambda_2-\lambda_1)(\lambda_3-\lambda_1)}
&=(1+\lambda_2\lambda_3)\Big\{1+2g(\phi X_2,AX_2)^2-2g(AX_2,AX_2)^2-2g(AX_2,JAX_2)^2\Big\}\\
&=(1+\lambda_2\lambda_3)(1-2)=-(1+\lambda_2\lambda_3)<0.
\end{aligned}
\end{equation}
By $\lambda_2,\lambda_3<\lambda_1$, the left hand side of \eqref{eqn:5.49} is non-negative.
However, the right hand side of \eqref{eqn:5.49} is negative, which is a contradiction.

In {\bf Case-ii}, it holds $\alpha=0$. If $0\in\sigma(\mathcal{Q})$, then there exists a tangent vector field $X\in\mathcal{Q}$ such that $SX=0$. Now, by Lemma \ref{lemma:5.5}, it follows that $S\phi X=\frac{-2}{\alpha}\phi X$, which contradicts with $\alpha=0$. So, $0\notin\sigma(\mathcal{Q})$.
Then, there are four nonzero distinct principal curvatures $\{\lambda_1,\lambda_2,\lambda_3,\lambda_4\}=\sigma(\mathcal{Q})$ in $\mathcal{Q}$.

If there is only one of $\{1,-1\}$ in $\sigma(\mathcal{Q})$, without loss of generality, let $\lambda_4=1$ (or $\lambda_4=-1$). Then we can assume that $\lambda_1\notin\{0,1,-1\}$, and by Lemma \ref{lemma:5.5}, it also holds that $\lambda_2=\frac{1}{\lambda_1}\in\sigma(\mathcal{Q})$, which is different from $\{0,1,-1,\lambda_1\}$. Then, we have $\lambda_3\notin\{0,1,-1,\lambda_1,\lambda_2\}$.
By Lemma \ref{lemma:5.5}, we have $\frac{1}{\lambda_3}\in \sigma(\mathcal{Q})$, and
$\frac{1}{\lambda_3}\notin\{0,1,-1,\lambda_1,\lambda_2,\lambda_3\}$, which contradicts with
the assumption that $M$ has five distinct principal curvatures.
Thus, we only need to discuss the following two subcases.

{\bf Case-ii-1}: $1,-1\in\sigma(\mathcal{Q})$.

{\bf Case-ii-2}: $1,-1\notin\sigma(\mathcal{Q})$.

In {\bf Case-ii-1}, up to a sign of unit normal vector field,
there exist constant principal curvatures $\lambda,\mu\in\sigma(\mathcal{Q})$ and $\lambda>1>\mu>0$,
such that
$$
\mathcal{Q}=V_\lambda\oplus V_\mu\oplus V_1\oplus V_{-1},\ JV_\lambda= V_\mu,\ JV_1= V_1,
\ JV_{-1}= V_{-1},\ \mu=\frac{1}{\lambda},\ \lambda,\mu\notin\{0,1,-1\}.
$$

For convenience, let
\begin{equation}\label{eqn:5.50}
\lambda_1=\lambda,\ \lambda_2=\mu,\ \lambda_3=-1,\ \lambda_4=1.
\end{equation}
Then, it holds $\lambda_3<0<\lambda_2<\lambda_4<\lambda_1$.
Now, we have $\lambda_+=\lambda_1$ on $M$. Then, by Proposition \ref{prop:4.3w} and \eqref{eqn:5.50}, the principal curvatures on $M_+$ are
$$
\frac{1-\lambda}{1+\lambda},\ \frac{2\lambda}{\lambda^2-1},\
\frac{\lambda+1}{\lambda-1},\
\frac{4\lambda}{\lambda^2-1},\ \ 0.
$$
By $\lambda>1$, there are three positive principal curvatures and one negative principal curvature
on $M_+$, which contradicts with the fact that $M_+$ is austere.
So {\bf Case-ii-1} does not occur.

In {\bf Case-ii-2}, without loss of generality, there exist constant principal curvatures $\lambda_1,\lambda_3\in\sigma(\mathcal{Q})$ and $\lambda_1>1$, such that
$$
\mathcal{Q}=V_{\lambda_1}\oplus V_{\lambda_2}\oplus V_{\lambda_3}\oplus V_{\lambda_4},
\ \lambda_2=\frac{1}{\lambda_1},\ \lambda_4=\frac{1}{\lambda_3},
\ JV_{\lambda_1}= V_{\lambda_2},\ JV_{\lambda_3}= V_{\lambda_4},
\ \lambda_i\notin\{0,1,-1\}.
$$
By Lemma \ref{lemma:5.1}, we have $A(V_{\lambda_1}\oplus V_{\lambda_2})=V_{\lambda_3}\oplus V_{\lambda_4}$.
So ${\rm dim}V_{\lambda_1}={\rm dim}V_{\lambda_2}={\rm dim}V_{\lambda_3}={\rm dim}V_{\lambda_4}=\frac{m-2}{2}$.

In the following, we take a unit vector $X\in V_{\lambda_i}$ ($1\leq i\leq4$)
into \eqref{eqn:ca2} respectively, we can obtain
\begin{gather}
\frac{1+\lambda_1\lambda_2}{\lambda_1-\lambda_2}(\frac{m-2}{2}+2)
+\frac{1+\lambda_1\lambda_3}{\lambda_1-\lambda_3}(\frac{m-2}{2}-2)
+\frac{1+\lambda_1\lambda_4}{\lambda_1-\lambda_4}(\frac{m-2}{2}-2)=0,\label{eqn:5.57}\\
\frac{1+\lambda_2\lambda_1}{\lambda_2-\lambda_1}(\frac{m-2}{2}+2)
+\frac{1+\lambda_2\lambda_3}{\lambda_2-\lambda_3}(\frac{m-2}{2}-2)
+\frac{1+\lambda_2\lambda_4}{\lambda_2-\lambda_4}(\frac{m-2}{2}-2)=0,\label{eqn:5.58}\\
\frac{1+\lambda_3\lambda_1}{\lambda_3-\lambda_1}(\frac{m-2}{2}-2)
+\frac{1+\lambda_3\lambda_2}{\lambda_3-\lambda_2}(\frac{m-2}{2}-2)
+\frac{1+\lambda_3\lambda_4}{\lambda_3-\lambda_4}(\frac{m-2}{2}+2)=0,\label{eqn:5.59}\\
\frac{1+\lambda_4\lambda_1}{\lambda_4-\lambda_1}(\frac{m-2}{2}-2)
+\frac{1+\lambda_4\lambda_2}{\lambda_4-\lambda_2}(\frac{m-2}{2}-2)
+\frac{1+\lambda_4\lambda_3}{\lambda_4-\lambda_3}(\frac{m-2}{2}+2)=0.\label{eqn:5.60}
\end{gather}

By calculating \eqref{eqn:5.57}+\eqref{eqn:5.59}, we have
$$
(\lambda_1+\lambda_3)\Big(-4+2m+6\lambda_1^2-m\lambda_1^2-2(2+m)\lambda_1\lambda_3
+\big(6-m+2(m-2)\lambda_1^2\big)\lambda_3^2\Big)=0.
$$
Without loss of generality, up to a change of $\lambda_3,\lambda_4$, we can assume that $\lambda_1+\lambda_3\neq0$, then we have
$$
(2-2\lambda_1\lambda_3-\lambda_3^2+\lambda_1^2(2\lambda_3^2-1))m
=4+4\lambda_1\lambda_3-6\lambda_3^2+\lambda_1^2(4\lambda_3^2-6).
$$
If $2-2\lambda_1\lambda_3-\lambda_3^2+\lambda_1^2(2\lambda_3^2-1)=0$,
then $\lambda_3=\frac{\lambda_1\pm\sqrt{2}(\lambda_1^2-1)}{2\lambda_1^2-1}$, which contradicts with
$4+4\lambda_1\lambda_3-6\lambda_3^2+\lambda_1^2(4\lambda_3^2-6)=0$. Thus, we get
$m=\frac{4+4\lambda_1\lambda_3-6\lambda_3^2+\lambda_1^2(4\lambda_3^2-6)}
{2-2\lambda_1\lambda_3-\lambda_3^2+\lambda_1^2(2\lambda_3^2-1)}$.
Substituting such $m$ into \eqref{eqn:5.57}, we have $(1+\lambda_1\lambda_3)
(1-4\lambda_1\lambda_3+\lambda_3^2+\lambda_1^2(1+\lambda_3^2))=0$. If $1+\lambda_1\lambda_3\neq0$,
then $1-4\lambda_1\lambda_3+\lambda_3^2+\lambda_1^2(1+\lambda_3^2)=0$. But, such equation does not have real root of $\lambda_3$, which deduces a contradiction. So, we have $1+\lambda_1\lambda_3=0$
and then $\lambda_3=\frac{-1}{\lambda_1}$. 
Thus, without loss of generality, we can assume that
\begin{equation}\label{eqn:caseii-2-1}
\lambda_3=\frac{-1}{\lambda_1},\ \ \lambda_4=-\lambda_1,\ \
\lambda_4<-1<\lambda_3<0<\lambda_2<1<\lambda_1.
\end{equation}

Now, we have $\lambda_+=\lambda_1$ on $M$. Then, by Proposition \ref{prop:4.3w} and \eqref{eqn:caseii-2-1}, the principal curvatures on $M_+$ are
$$
-\frac{\lambda_1^2-1}{2\lambda_1},\ \frac{2\lambda_1}{\lambda_1^2-1},\
\frac{4\lambda_1}{\lambda_1^2-1},\ 0.
$$
By $\lambda_1>1$, there are two positive principal curvatures and one negative principal curvature
on $M_+$, which contradicts with the fact that $M_+$ is austere. Thus, {\bf Case-ii-2} does not occur.

We have completed the proof of the Theorem \ref{thm:5.5}.
\end{proof}

\vskip 0.2cm
\noindent{\bf Completion of the proof of Theorem \ref{thm:1.1a}}.

Based on Theorem \ref{thm:5.1}, Theorem \ref{thm:5.2}, Theorem \ref{thm:5.3},
Theorem \ref{thm:5.4} and Theorem \ref{thm:5.5}, we have completed the proof of the Theorem \ref{thm:1.1a}.






\section{Proof of Corollary \ref{cor:1.1} and Theorems \ref{thm:1.3}--\ref{thm:1.5}}\label{sect:6}

In this section, for low dimension complex quadrics $Q^3,Q^4$ and $Q^5$, we classify the Hopf hypersurfaces with constant principal curvatures. For the Hopf
hypersurfaces of $Q^6$ with constant principal curvatures, we give the values
and multiplicities of the principal curvatures.

First of all, as a direct consequence of Theorem \ref{thm:1.1a}, we have the following classification result.

\begin{corollary}[=Corollary \ref{cor:1.1}]\label{cor:6.1}
Let $M$ be a Hopf hypersurface of $Q^3$ with constant principal curvatures. Then,
$M$ is an open part of a tube over a totally geodesic $Q^{2}\hookrightarrow Q^{3}$.
\end{corollary}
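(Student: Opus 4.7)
The plan is to derive Corollary \ref{cor:6.1} as a trivial specialization of Theorem \ref{thm:1.1a} by observing that the dimension bound in $Q^3$ automatically forces the hypothesis ``at most five distinct constant principal curvatures'' to hold. Concretely, $Q^3$ is a real $6$-dimensional manifold, so any real hypersurface $M\subset Q^3$ satisfies $\dim_{\mathbb{R}} M=5$. At each point, the shape operator $S$ is a self-adjoint endomorphism of a $5$-dimensional tangent space, so the number of distinct eigenvalues of $S$ is at most $5$. Under the hypothesis of constant principal curvatures, this gives at most five distinct constant principal curvatures globally, and Theorem \ref{thm:1.1a} applies.

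Theorem \ref{thm:1.1a} then leaves us with two possible conclusions: (1) $M$ is an open part of a tube over a totally geodesic $Q^{m-1}\hookrightarrow Q^m$ with $m\ge 3$, or (2) $M$ is an open part of a tube over a totally geodesic $\mathbb{C}P^k\hookrightarrow Q^{2k}$ with $m=2k$, $k\ge 2$. In our setting $m=3$ is odd, so case (2) is incompatible with the constraint $m=2k$ and must be discarded. Only case (1) survives, yielding that $M$ is an open part of a tube over a totally geodesic $Q^2\hookrightarrow Q^3$, which is exactly the statement of the corollary. There is essentially no obstacle here; the only point worth articulating carefully is the automatic dimension-based bound that triggers Theorem \ref{thm:1.1a}, and the parity argument ruling out case (2).
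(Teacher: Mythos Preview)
Your proof is correct and follows essentially the same approach as the paper: both use the dimension count $\dim M = 5$ to bound the number of distinct principal curvatures and then invoke Theorem \ref{thm:1.1a}. You make explicit the parity argument ruling out case (2), which the paper leaves implicit.
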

\begin{proof}
If $M$ is a Hopf hypersurface of $Q^3$ with constant principal curvatures,
from the fact that the dimension of $M$ is $5$, then $M$ has
at most five distinct constant principal curvatures. Thus by Theorem \ref{thm:1.1a},
$M$ is an open part of a tube over a totally geodesic $Q^{2}\hookrightarrow Q^{3}$.
\end{proof}

Next, we give the following classification results for $Q^4$ and $Q^5$.

\begin{theorem}[=Theorem \ref{thm:1.3}]\label{thm:6.2}
Let $M$ be a Hopf hypersurface of $Q^4$ with constant principal curvatures. Then,
\begin{enumerate}
\item[(1)]
$M$ is an open part of a tube over a totally geodesic $Q^{3}\hookrightarrow Q^{4}$; or

\item[(2)]
$M$ is an open part of a tube over a totally geodesic $\mathbb{C}P^2\hookrightarrow Q^{4}$.
\end{enumerate}
\end{theorem}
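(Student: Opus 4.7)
The strategy is to combine Theorem \ref{thm:1.1a} with a dedicated analysis of the one remaining case, namely that of exactly six distinct constant principal curvatures. By Lemma \ref{lemma:2.5}, the unit normal $N$ is either $\mathfrak{A}$-principal, in which case Theorem \ref{thm:2.1} directly gives conclusion (1), or $\mathfrak{A}$-isotropic. In the latter case $S\xi=\alpha\xi$, $SAN=SA\xi=0$, and $\mathcal{Q}$ is a four-dimensional, $S$-, $J$- and $A$-invariant subspace. The distinct constant principal curvatures of $M$ form the set $\{\alpha,0\}\cup\sigma(\mathcal{Q})$, whose cardinality is at most $1+1+4=6$. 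If this cardinality is at most five, Theorem \ref{thm:1.1a} yields conclusion (1) or (2); thus it suffices to rule out the case of exactly six distinct constant principal curvatures.

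Suppose six distinct values occur. One first checks that $\alpha>0$ (if $\alpha=0$, Lemma \ref{lemma:5.5} forces $0\notin\sigma(\mathcal{Q})$, giving at most five distinct values), that $\alpha,0\notin\sigma(\mathcal{Q})$, and that $\sigma(\mathcal{Q})=\{\lambda_1,\lambda_2,\lambda_3,\lambda_4\}$ with each eigenspace one-dimensional. Since a one-dimensional real subspace cannot be $J$-invariant, none of the $\lambda_i$ is a fixed point of the involution $\lambda\mapsto(\alpha\lambda+2)/(2\lambda-\alpha)$ of Lemma \ref{lemma:5.5}. Hence the $\lambda_i$ split into two $\phi$-orbits of size two; after relabeling, $\phi V_{\lambda_1}=V_{\lambda_2}$ and $\phi V_{\lambda_3}=V_{\lambda_4}$ with $\lambda_2=(\alpha\lambda_1+2)/(2\lambda_1-\alpha)$ and $\lambda_4=(\alpha\lambda_3+2)/(2\lambda_3-\alpha)$, and Lemma \ref{lemma:5.1}(3) then shows that the almost product structure $A$ maps $V_{\lambda_1}\oplus V_{\lambda_2}$ isomorphically onto $V_{\lambda_3}\oplus V_{\lambda_4}$.

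The contradiction is derived from this rigid structure. Applying the Cartan formula \eqref{eqn:ca2} to a unit vector $X\in V_{\lambda_1}$ with orthonormal basis $e_2=JX\in V_{\lambda_2}$, a unit $e_3\in V_{\lambda_3}$ and $e_4=Je_3\in V_{\lambda_4}$, the inner products $g(\phi X,e_i)$ and $g(AX,e_i),g(AX,Je_i)$ simplify (using $AX\in V_{\lambda_3}\oplus V_{\lambda_4}$ with $|AX|=1$) to yield
\begin{equation*}
3\,\frac{1+\lambda_1\lambda_2}{\lambda_1-\lambda_2}-\frac{1+\lambda_1\lambda_3}{\lambda_1-\lambda_3}-\frac{1+\lambda_1\lambda_4}{\lambda_1-\lambda_4}=0,
\end{equation*}
together with three analogous equations obtained from unit vectors in $V_{\lambda_2},V_{\lambda_3},V_{\lambda_4}$. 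Substituting the two rational pairing relations produces an overdetermined polynomial system in $\alpha,\lambda_1,\lambda_3$, which I expect to admit no real solution with $\alpha>0$ and all $\lambda_i$ mutually distinct. An independent route uses the austereness of the focal submanifold $M_+$ from Proposition \ref{prop:4.3w}: taking $\lambda_+=\lambda_1$ (largest), a short calculation using the pairing identity yields the special relation $\tilde\alpha=2\tilde\lambda_2$ among the curvatures of $M_+$, so austereness forces $\{\tilde\lambda_3,\tilde\lambda_4\}=\{-\tilde\alpha,-\tilde\lambda_2\}$; combined with the symmetric analysis on $M_-$, this should rule out all admissible configurations. The expected main obstacle is the algebraic elimination itself; the low dimension $m=4$ together with the rigid two-pair structure keep the system small, following the pattern of the case analyses in the proofs of Theorems \ref{thm:5.4} and \ref{thm:5.5}.
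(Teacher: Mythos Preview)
Your reduction and structural analysis are correct and match the paper exactly: the $\mathfrak{A}$-principal case gives (1), the $\mathfrak{A}$-isotropic case with $\le 5$ distinct curvatures is covered by Theorem~\ref{thm:1.1a}, and in the remaining six-curvature case one has $\alpha>0$, $\alpha,0\notin\sigma(\mathcal{Q})$, four one-dimensional eigenspaces pairing via Lemma~\ref{lemma:5.5}, exclusion of the fixed points $\tfrac{\alpha\pm\sqrt{\alpha^2+4}}{2}$, and $A(V_{\lambda_1}\oplus V_{\lambda_2})=V_{\lambda_3}\oplus V_{\lambda_4}$ from Lemma~\ref{lemma:5.1}(3). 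Your Cartan identity from \eqref{eqn:ca2} is also computed correctly, as is the pleasant focal relation $\tilde\alpha=2\tilde\lambda_2$.

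The genuine gap is that neither of your two proposed endings is carried out. ``I expect to admit no real solution'' and ``this should rule out all admissible configurations'' are not proofs, and in fact the four equations you extract from \eqref{eqn:ca2} are not independent (summing the first two gives the negative of the sum of the last two), so the elimination is less overdetermined than you suggest. The austereness route would still require a case split and further constraints from $M_-$.

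The paper avoids both polynomial elimination and the focal-submanifold argument here. It uses \eqref{eqn:call} (Lemma~\ref{lemma:5.4}) rather than \eqref{eqn:ca2}: for $m=4$ and unit $X_i\in V_{\lambda_i}$, $X_j\in V_{\lambda_j}$ with $\{i,i'\}$ and $\{j,j'\}$ the two $\phi$-pairs, one has $g(AX_i,X_j)^2+g(AX_i,JX_j)^2=1$ (since $AX_i\in V_{\lambda_j}\oplus V_{\lambda_{j'}}=\mathrm{Span}\{X_j,JX_j\}$), so \eqref{eqn:call} reduces to
\[
\frac{2\,g((\nabla_{e_{i'}}S)X_i,X_j)^2}{(\lambda_i-\lambda_{i'})(\lambda_j-\lambda_{i'})}
+\frac{2\,g((\nabla_{e_{j'}}S)X_i,X_j)^2}{(\lambda_i-\lambda_{j'})(\lambda_j-\lambda_{j'})}
=-(1+\lambda_i\lambda_j).
\]
The left side is a sum of two squares with coefficients whose signs depend only on the ordering of the $\lambda$'s; the right side has a sign determined by $1+\lambda_i\lambda_j$. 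The paper splits into the three possible configurations of the two $\phi$-pairs relative to $\tfrac{\alpha\pm\sqrt{\alpha^2+4}}{2}$ (both pairs on the positive side, both on the negative side, one on each side), and in each case chooses $(i,j)$ so that the two sides above have opposite strict signs, an immediate contradiction. This sign comparison is what replaces your unfinished elimination and is the step you are missing.
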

\begin{proof}
If $M$ has $\mathfrak{A}$-principal unit normal vector field $N$, then $M$ is an open part of a tube over a totally geodesic $Q^{3}\hookrightarrow Q^{4}$.

If $M$ has $\mathfrak{A}$-isotropic unit normal vector field $N$, then by Lemma \ref{lemma:2.6}, we have $S\xi=\alpha\xi$, and $SAN=SA\xi=0$. It follows that $\alpha$ and $0$ are principal curvatures. Based on Theorem \ref{thm:1.1a}
and the fact that the multiplicity of $0$ is at least $2$, we only need to consider the case that $M$
has six distinct constant principal curvatures, where the multiplicity of $0$ is $2$, and $\alpha>0$,
$\alpha,0\notin\sigma(\mathcal{Q})$. In this case, there are four distinct principal curvatures in
$\sigma(\mathcal{Q})$.

Now, by Lemma \ref{lemma:5.5}, we have
$$
\begin{aligned}
&\mathcal{Q}=V_{\lambda_1}\oplus V_{\lambda_2}\oplus V_{\lambda_3}\oplus V_{\lambda_4},\ \lambda_i\notin\{\alpha,0,\tfrac{\alpha^2+2}{\alpha},\tfrac{-2}{\alpha},
\tfrac{\alpha\pm\sqrt{\alpha^2+4}}{2}\},\ \ i\in\{1,2,3,4\},\\
&\lambda_2=\tfrac{\alpha\lambda_1+2}{2\lambda_1-\alpha},\ \lambda_4=\tfrac{\alpha\lambda_3+2}{2\lambda_3-\alpha},\ JV_{\lambda_1}=V_{\lambda_2},\ \ JV_{\lambda_3}=V_{\lambda_4}.
\end{aligned}
$$
Then, by (3) of Lemma \ref{lemma:5.1}, we have
$A(V_{\lambda_1}\oplus V_{\lambda_2})=(V_{\lambda_3}\oplus V_{\lambda_4})$.
Note that ${\rm dim}V_{\lambda_1}={\rm dim}V_{\lambda_2}={\rm dim}V_{\lambda_3}={\rm dim}V_{\lambda_4}=1$, then for any unit vector field $X_i\in V_{\lambda_i}$, $1\leq i\leq4$,
it holds that $g(AX_i,X_j)^2+g(AX_i,JX_j)^2=1$, for $1\leq i\leq2$ and $3\leq j\leq4$.

In the following, without loss of generality, we divide the discussions into three cases:

{\bf Case-i}: $0<\lambda_1<\lambda_3<\frac{\alpha+\sqrt{\alpha^2+4}}{2}<\lambda_4<\lambda_2$.

{\bf Case-ii}: $\lambda_1<\lambda_3<\frac{\alpha-\sqrt{\alpha^2+4}}{2}<\lambda_4<\lambda_2$.

{\bf Case-iii}: $\lambda_1<\frac{\alpha-\sqrt{\alpha^2+4}}{2}<\lambda_2
<\lambda_3<\frac{\alpha+\sqrt{\alpha^2+4}}{2}<\lambda_4$.

In {\bf Case-i}, we take $(X,Y)=(X_2,X_4)$ into \eqref{eqn:call}, where $X_2\in V_{\lambda_2}, X_4\in V_{\lambda_4}$, it follows
\begin{equation}\label{eqn:6.1}
\begin{aligned}
&2\frac{g((\nabla_{X_1} S)X_2,X_4)^2}{(\lambda_2-\lambda_1)(\lambda_4-\lambda_1)}
+2\frac{g((\nabla_{X_3} S)X_2,X_4)^2}{(\lambda_2-\lambda_3)(\lambda_4-\lambda_3)}\\
&=(1+\lambda_2\lambda_4)\Big\{1+2g(\phi X_2,X_4)^2-2g(AX_2,X_4)^2-2g(AX_2,JX_4)^2\Big\}\\
&=-(1+\lambda_2\lambda_4)<0.
\end{aligned}
\end{equation}
By $0<\lambda_1<\lambda_3<\frac{\alpha+\sqrt{\alpha^2+4}}{2}<\lambda_4<\lambda_2$, the left hand side of \eqref{eqn:6.1} is non-negative. However, the right hand side of \eqref{eqn:6.1} is negative, which is a contradiction.

In {\bf Case-ii}, we take $(X,Y)=(X_1,X_3)$ into \eqref{eqn:call}, where $X_1\in V_{\lambda_1}, X_3\in V_{\lambda_3}$, it follows
\begin{equation}\label{eqn:6.2}
\begin{aligned}
&2\frac{g((\nabla_{X_2} S)X_1,X_3)^2}{(\lambda_1-\lambda_2)(\lambda_3-\lambda_2)}
+2\frac{g((\nabla_{X_4} S)X_1,X_3)^2}{(\lambda_1-\lambda_4)(\lambda_3-\lambda_4)}\\
&=(1+\lambda_1\lambda_3)\Big\{1+2g(\phi X_1,X_3)^2-2g(AX_1,X_3)^2-2g(AX_1,JX_3)^2\Big\}\\
&=-(1+\lambda_1\lambda_3)<0.
\end{aligned}
\end{equation}
By $\lambda_1<\lambda_3<\frac{\alpha-\sqrt{\alpha^2+4}}{2}<\lambda_4<\lambda_2$, the left hand side of \eqref{eqn:6.2} is non-negative. However, the right hand side of \eqref{eqn:6.2} is negative, which is a contradiction.

In {\bf Case-iii}, we have $|\lambda_2\lambda_3|<1$, $|\lambda_1\lambda_4|>1$ and $\lambda_1\lambda_4<0$.
Taking $(X,Y)=(X_1,X_4)$ into \eqref{eqn:call}, where $X_1\in V_{\lambda_1}, X_4\in V_{\lambda_4}$, it follows
\begin{equation}\label{eqn:6.3}
\begin{aligned}
&2\frac{g((\nabla_{X_2} S)X_1,X_4)^2}{(\lambda_1-\lambda_2)(\lambda_4-\lambda_2)}
+2\frac{g((\nabla_{X_3} S)X_1,X_4)^2}{(\lambda_1-\lambda_3)(\lambda_4-\lambda_3)}\\
&=(1+\lambda_1\lambda_4)\Big\{1+2g(\phi X_1,X_4)^2-2g(AX_1,X_4)^2-2g(AX_1,JX_4)^2\Big\}\\
&=-(1+\lambda_1\lambda_4)>0.
\end{aligned}
\end{equation}
By $\lambda_1<\frac{\alpha-\sqrt{\alpha^2+4}}{2}<\lambda_2
<\lambda_3<\frac{\alpha+\sqrt{\alpha^2+4}}{2}<\lambda_4$, the left hand side of \eqref{eqn:6.3} is non-positive. However, the right hand side of \eqref{eqn:6.3} is positive, which is a contradiction.

Thus, $M$ cannot have six distinct constant principal curvatures.
So, $M$ is a Hopf hypersurface of $Q^4$ with at most five distinct constant principal curvatures
and $\mathfrak{A}$-isotropic unit normal vector field. It follows from Theorem \ref{thm:1.1a} that $M$ is an open part of a tube over a totally geodesic $\mathbb{C}P^2\hookrightarrow Q^{4}$.
\end{proof}

\begin{theorem}[=Theorem \ref{thm:1.4}]\label{thm:6.3}
Let $M$ be a Hopf hypersurface of $Q^5$ with constant principal curvatures. Then,
$M$ is an open part of a tube over a totally geodesic $Q^{4}\hookrightarrow Q^{5}$.
\end{theorem}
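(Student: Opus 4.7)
The plan is to mirror the strategy used in the proof of Theorem~\ref{thm:6.2} for $Q^{4}$, combining Theorem~\ref{thm:1.1a} with a dimension count and a case analysis inside $\mathcal{Q}$. First I would invoke Lemma~\ref{lemma:2.5} to split into the $\mathfrak{A}$-principal case---which yields the desired tube over $Q^{4}$ immediately by Theorem~\ref{thm:2.1}---and the $\mathfrak{A}$-isotropic case. In the latter, Lemma~\ref{lemma:2.6} gives $S\xi=\alpha\xi$ and $SAN=SA\xi=0$, so both $\alpha$ and $0$ are principal curvatures, and $\dim\mathcal{Q}=2m-4=6$.

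Next I would apply Theorem~\ref{thm:1.1a}: if $M$ has at most five distinct constant principal curvatures, then $M$ is an open part either of a tube over $Q^{m-1}$ (which carries $\mathfrak{A}$-principal normal by Proposition~\ref{prop:E1P}, contradicting our standing assumption of $\mathfrak{A}$-isotropic normal) or of a tube over $\mathbb{C}P^{k}\hookrightarrow Q^{2k}$ (which forces $m$ to be even, contradicting $m=5$). Hence $M$ must have at least six distinct constant principal curvatures, and it only remains to rule out the case of exactly six.

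The heart of the argument, and what I regard as the main obstacle, is this six-distinct case. I would divide into subcases according to whether $\alpha$ and $0$ lie in $\sigma(\mathcal{Q})$, and use Lemma~\ref{lemma:5.5} together with the involution $\tau(x)=(\alpha x+2)/(2x-\alpha)$ to enumerate $\sigma(\mathcal{Q})$. When $\alpha,0\notin\sigma(\mathcal{Q})$, $|\sigma(\mathcal{Q})|=4$ and its elements form two $\tau$-pairs $\{\lambda_{1},\tau(\lambda_{1})\}$ and $\{\lambda_{2},\tau(\lambda_{2})\}$ with $JV_{\lambda_{i}}=V_{\tau(\lambda_{i})}$. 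Lemma~\ref{lemma:5.1}(3) then forces $A$ to restrict to an isomorphism $V_{\lambda_{1}}\oplus V_{\tau(\lambda_{1})}\to V_{\lambda_{2}}\oplus V_{\tau(\lambda_{2})}$, so all four eigenspaces share a common dimension $d$; but $4d=6$ is impossible. When $\alpha$ or $0$ lies in $\sigma(\mathcal{Q})$, closure of $\sigma(\mathcal{Q})$ under $\tau$ forces either a $\tau$-fixed point $\lambda_{\ast}^{\pm}=(\alpha\pm\sqrt{\alpha^{2}+4})/2$ or an additional $\tau$-pair; dimension counting sharply restricts the multiplicities (e.g.\ in $\sigma(\mathcal{Q})=\{\alpha,\tau(\alpha),\lambda_{\ast},\mu,\tau(\mu)\}$ one is forced to $\dim V_{\alpha}=\dim V_{\mu}=1$ and $\dim V_{\lambda_{\ast}}=2$). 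In each such subcase I would apply either the Cartan formula \eqref{eqn:call} with $(X,Y)=(X_{i},AX_{i})$ for a unit vector $X_{i}$ in a suitable eigenspace---getting a sign contradiction of the same type as \eqref{eqn:5.47} and \eqref{eqn:5.49}---or invoke the austerity of the focal submanifolds $M_{\pm}$ given by Proposition~\ref{prop:4.3w} and Proposition~\ref{prop:4.4w}.

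The hardest subcase I anticipate is the one where $\alpha,0\in\sigma(\mathcal{Q})$ together with a single extra $\tau$-pair, so that every $\mathcal{Q}$-eigenspace is one-dimensional and the direct Cartan sign contradiction is not available. There I would compute the principal curvatures of the focal submanifold $M_{+}$ (or $M_{-}$) explicitly via Proposition~\ref{prop:4.3w} (or Proposition~\ref{prop:4.4w}), observe that their signs cannot be arranged to pair symmetrically, and thereby contradict austerity, completing the proof.
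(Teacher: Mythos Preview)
Your reduction from ``at least six'' to ``exactly six'' distinct principal curvatures is unjustified and is the main gap. Since $\dim\mathcal{Q}=6$, the set $\sigma(\mathcal{Q})$ can contain up to six distinct values, and together with $\alpha$ and $0$ (when these lie outside $\sigma(\mathcal{Q})$) the total number of distinct principal curvatures can reach eight. In particular, the configuration $\alpha,0\notin\sigma(\mathcal{Q})$ with three non-fixed $\tau$-pairs in $\sigma(\mathcal{Q})$, each eigenspace one-dimensional, yields eight distinct principal curvatures; your $4d=6$ parity argument only handles the case of two $\tau$-pairs and does not touch this one. Similarly, the case $|\sigma(\mathcal{Q})|=5$ (two pairs plus one $\tau$-fixed point, all one-dimensional except the fixed-point space) gives seven distinct principal curvatures and is also missing from your outline. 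The ``hardest subcase'' you single out (with $\alpha,0\in\sigma(\mathcal{Q})$) is just one instance of the general three-block situation; the same structural difficulty arises whether or not $\alpha,0$ happen to be among the $\tau$-pair values.

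The paper's proof bypasses Theorem~\ref{thm:1.1a} entirely and takes a more uniform route. From Lemma~\ref{lemma:5.1}(3) and $\dim\mathcal{Q}=6$ it observes that no block $V_{\lambda}\oplus JV_{\lambda}$ can have dimension $\geq 4$ (else its $A$-image, which must lie in the orthogonal complement inside $\mathcal{Q}$, would have dimension $\leq 2$), so every such block is exactly $2$-dimensional and $\mathcal{Q}$ splits into three $J$-invariant $2$-planes indexed by three distinct $\tau$-orbits. Writing $AX_{1}=aY_{3}+bY_{5}$ for a unit $X_{1}$ in the first block, two applications of \eqref{eqn:call} with $(X,Y)=(X_{1},X_{3})$ and $(X,Y)=(X_{1},X_{5})$ give, after checking signs in each of four possible orderings, $a^{2}\leq\tfrac12$ and $b^{2}\leq\tfrac12$; combined with $a^{2}+b^{2}=1$ this forces $a^{2}=b^{2}=\tfrac12$. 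Taking inner products of $X_{1}=\tfrac{1}{\sqrt{2}}(AY_{3}+AY_{5})$ with $Y_{5}$ and $JY_{5}$ then shows $AY_{3}$ lies entirely in the first block, so $A$ swaps blocks one and two and must preserve the third block, contradicting Lemma~\ref{lemma:5.1}(3). No austerity or focal-set computation is used.
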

\begin{proof}
If $M$ has $\mathfrak{A}$-principal unit normal vector field $N$, then $M$ is an open part of a tube over a totally geodesic $Q^{4}\hookrightarrow Q^{5}$.

If $M$ has $\mathfrak{A}$-isotropic unit normal vector field $N$, then by Lemma \ref{lemma:2.6}, we have $S\xi=\alpha\xi$, and $SAN=SA\xi=0$. So $\alpha$ and $0$ are principal curvatures. If there is some $\lambda\in \sigma(\mathcal{Q})$ such that ${\rm dim}(V_{\lambda}\oplus JV_{\lambda})\geq4$, then
${\rm dim}(A(V_{\lambda}\oplus JV_{\lambda}))\geq4$.
By (3) of Lemma \ref{lemma:5.1} and ${\rm dim}\mathcal{Q}=6$, we have $A(V_{\lambda}\oplus JV_{\lambda})\subset (\mathcal{Q}\ominus(V_{\lambda}\oplus JV_{\lambda}))$, then ${\rm dim}(A(V_{\lambda}\oplus JV_{\lambda}))\leq2$. We get a contradiction. So, for any $\lambda\in \sigma(\mathcal{Q})$,
it holds that ${\rm dim}(V_{\lambda}\oplus JV_{\lambda})=2$.
According to this fact, we have
$$
\begin{aligned}
&\mathcal{Q}=V_{\lambda_1}\oplus V_{\lambda_2}\oplus V_{\lambda_3}\oplus V_{\lambda_4}\oplus V_{\lambda_5}\oplus V_{\lambda_6},\
\lambda_2=\tfrac{\alpha\lambda_1+2}{2\lambda_1-\alpha},\ \lambda_4=\tfrac{\alpha\lambda_3+2}{2\lambda_3-\alpha},\
\lambda_6=\tfrac{\alpha\lambda_5+2}{2\lambda_5-\alpha},\\
&\{\lambda_1,\lambda_2\}\neq\{\lambda_3,\lambda_4\},
\{\lambda_1,\lambda_2\}\neq\{\lambda_5,\lambda_6\},
\{\lambda_3,\lambda_4\}\neq\{\lambda_5,\lambda_6\},\
JV_{\lambda_1}=V_{\lambda_2},\ JV_{\lambda_3}=V_{\lambda_4},\ JV_{\lambda_5}=V_{\lambda_6}.
\end{aligned}
$$
Note that, $\lambda_1$ and $\lambda_2$ (or $\lambda_3$ and $\lambda_4$, or $\lambda_5$ and $\lambda_6$)
may be the same constant. Thus, without loss of generality, we only need to consider
the following four cases on $\mathcal{Q}$:

{\bf 1}: $\frac{\alpha-\sqrt{\alpha^2+4}}{2}<\lambda_4<\lambda_2<\lambda_6\leq\frac{\alpha+\sqrt{\alpha^2+4}}{2}\leq\lambda_5<\lambda_1<\lambda_3$;

{\bf 2}: $\lambda_4<\lambda_2<\lambda_6\leq\frac{\alpha-\sqrt{\alpha^2+4}}{2}\leq\lambda_5<\lambda_1<\lambda_3
<\frac{\alpha+\sqrt{\alpha^2+4}}{2}$;

{\bf 3}: $\lambda_6\leq\frac{\alpha-\sqrt{\alpha^2+4}}{2}\leq\lambda_5<\lambda_1<\lambda_3
\leq\frac{\alpha+\sqrt{\alpha^2+4}}{2}\leq\lambda_4<\lambda_2$;

{\bf 4}: $\lambda_2<\lambda_6\leq\frac{\alpha-\sqrt{\alpha^2+4}}{2}\leq\lambda_5<\lambda_1
<\lambda_3\leq\frac{\alpha+\sqrt{\alpha^2+4}}{2}\leq\lambda_4$.

We deal with the above four cases together. By (3) of Lemma \ref{lemma:5.1}, for a unit vector field $X_1\in V_{\lambda_1}$, we can assume that $AX_1=a Y_3+b Y_5$,
where $Y_3\in (V_{\lambda_3}\oplus JV_{\lambda_3})$, $Y_5\in (V_{\lambda_5}\oplus JV_{\lambda_5})$
are unit vector fields, and $a,b$ are functions satisfying $a^2+b^2=1$.
Choosing unit vector fields $X_3\in V_{\lambda_3}$ and $X_5\in V_{\lambda_5}$, by
${\rm dim}(V_{\lambda_3}\oplus JV_{\lambda_3})=2$ and ${\rm dim}(V_{\lambda_5}\oplus JV_{\lambda_5})=2$,
we have $g(AX_1,X_3)^2+g(AX_1,JX_3)^2=a^2$ and $g(AX_1,X_5)^2+g(AX_1,JX_5)^2=b^2$.

Now, taking $(X,Y)=(X_1,X_3)$ and $(X,Y)=(X_1,X_5)$ into \eqref{eqn:call}, it follows
\begin{equation}\label{eqn:6.4}
\begin{aligned}
&2\sum_{\lambda_i\neq\lambda_1,\lambda_3}\frac{g((\nabla_{e_i} S)X_1,X_3)^2}{(\lambda_1-\lambda_i)(\lambda_3-\lambda_i)}\\
&=(1+\lambda_1\lambda_3)\Big\{1+2g(\phi X_1,X_3)^2-2g(AX_1,X_3)^2-2g(AX_1,JX_3)^2\Big\}\\
&=(1+\lambda_1\lambda_3)(1-2a^2).
\end{aligned}
\end{equation}
\begin{equation}\label{eqn:6.5}
\begin{aligned}
&2\sum_{\lambda_i\neq\lambda_1,\lambda_5}\frac{g((\nabla_{e_i} S)X_1,X_5)^2}{(\lambda_1-\lambda_i)(\lambda_5-\lambda_i)}\\
&=(1+\lambda_1\lambda_5)\Big\{1+2g(\phi X_1,X_5)^2-2g(AX_1,X_5)^2-2g(AX_1,JX_5)^2\Big\}\\
&=(1+\lambda_1\lambda_5)(1-2b^2).
\end{aligned}
\end{equation}
For each one of above four cases, we always have that the left hand side of \eqref{eqn:6.4}
and \eqref{eqn:6.5} are non-negative. Moreover, we also have $1+\lambda_1\lambda_3>0$ and $1+\lambda_1\lambda_5>0$. It follows that $a^2\leq\tfrac{1}{2}$ and
$b^2\leq\tfrac{1}{2}$. Then, without loss of generality, we can have $a=b=\tfrac{\sqrt{2}}{2}$.
So $AX_1=\frac{1}{\sqrt{2}}(Y_3+Y_5)$ and
\begin{equation}\label{eqn:6.6}
X_1=\frac{1}{\sqrt{2}}(AY_3+AY_5).
\end{equation}
Taking the inner product of \eqref{eqn:6.6} with $Y_3$ and $JY_5$, with the use of (3) of Lemma \ref{lemma:5.1}, $V_{\lambda_3}\oplus JV_{\lambda_3}={\rm Span}\{Y_3,JY_3\}$ and
$V_{\lambda_5}\oplus JV_{\lambda_5}={\rm Span}\{Y_5,JY_5\}$,
we get $g(AY_3,Y_5)=g(AY_3,JY_5)=0$. Further, it follows that $AY_3\in (V_{\lambda_1}\oplus JV_{\lambda_1})$.
By ${\rm Span}\{Y_3,JY_3\}=V_{\lambda_3}\oplus JV_{\lambda_3}$, so
$A(V_{\lambda_3}\oplus JV_{\lambda_3})\subset (V_{\lambda_1}\oplus JV_{\lambda_1})$.
Due that ${\rm dim}(V_{\lambda_1}\oplus JV_{\lambda_1})={\rm dim}(V_{\lambda_3}\oplus JV_{\lambda_3})=2$, we get $A(V_{\lambda_1}\oplus JV_{\lambda_1})=V_{\lambda_3}\oplus JV_{\lambda_3}$.
Thus, it deduces that
$A(V_{\lambda_5}\oplus JV_{\lambda_5})=(V_{\lambda_5}\oplus JV_{\lambda_5})$, which contradicts
with (3) of Lemma \ref{lemma:5.1}. So $M$ cannot have $\mathfrak{A}$-isotropic unit normal vector field $N$.

We have completed the proof of the Theorem \ref{thm:6.3}.
\end{proof}

Finally, for the complex quadric $Q^6$, we give the following result.

\begin{theorem}[=Theorem \ref{thm:1.5}]\label{thm:6.4}
Let $M$ be a Hopf hypersurface of $Q^6$ with constant principal curvatures. Then,
\begin{enumerate}
\item[(1)]
$M$ is an open part of a tube over a totally geodesic $Q^{5}\hookrightarrow Q^{6}$; or

\item[(2)]
$M$ is an open part of a tube over a totally geodesic $\mathbb{C}P^3\hookrightarrow Q^{6}$; or

\item[(3)]
$M$ has six distinct constant principal curvatures. Their values and multiplicities
are given by
$$
\begin{tabular}{|c|c|c|c|c|c|c|}
  \hline
  {\rm value} & $2\cot(2r)$ & $0$ & $\cot(r)$ & $-\tan(r)$ & $-\frac{\cos(r)+\sin(r)}{\cos(r)-\sin(r)}$ & $\frac{\cos(r)-\sin(r)}{\cos(r)+\sin(r)}$\\
  \hline
  {\rm multiplicity} & $1$ & $2$ & $2$ & $2$ & $2$ & $2$\\
  \hline
\end{tabular}
$$
where $0<r<\frac{\pi}{4}$. 
The Reeb function is $\alpha=2\tan(2r)$. In particular, Example \ref{E3}, for $k=2$, is contained in this case.
\end{enumerate}
\end{theorem}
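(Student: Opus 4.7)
The plan is to split via Lemma \ref{lemma:2.5} into the $\mathfrak{A}$-principal and $\mathfrak{A}$-isotropic cases. The $\mathfrak{A}$-principal case is immediately case (1) by Theorem \ref{thm:2.1}. For the $\mathfrak{A}$-isotropic case, Theorem \ref{thm:1.1a} already dispatches the subcase of at most five distinct constant principal curvatures, giving case (2) (a tube over $\mathbb{C}P^3 \hookrightarrow Q^6$). So the real content is the remaining situation: $M$ has $\mathfrak{A}$-isotropic unit normal $N$, satisfies $S\xi = \alpha\xi$ with $\alpha \geq 0$ and $SAN = SA\xi = 0$ (Lemma \ref{lemma:2.6}), and carries at least six distinct constant principal curvatures. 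I will show that $M$ has exactly six, with the listed values and multiplicities.

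The structural backbone is Lemma \ref{lemma:5.5} combined with Lemma \ref{lemma:5.1}(3): $\sigma(\mathcal{Q})$ is closed under the involution $\iota(\lambda) = \frac{\alpha\lambda+2}{2\lambda-\alpha}$ with fixed points $\lambda_\pm = \frac{\alpha\pm\sqrt{\alpha^2+4}}{2}$, and for every $\lambda \in \sigma(\mathcal{Q})$ the almost product structure $A$ sends $V_\lambda$ into the $\mathcal{Q}$-orthogonal complement of $V_\lambda \oplus JV_\lambda$. Since $\dim\mathcal{Q} = 8$ here, the partition of $\sigma(\mathcal{Q})$ into $\iota$-orbits, weighted by eigenspace dimensions, is highly constrained. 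I will split cases according to whether $\alpha$ and $0$ belong to $\sigma(\mathcal{Q})$ and, within each case, according to the $\iota$-orbit type of the remaining eigenvalues. In the situations where at least one of $\alpha,0$ lies in $\sigma(\mathcal{Q})$, I will compute the principal curvatures of the focal submanifold $M_+$ or $M_-$ via Propositions \ref{prop:4.3w}--\ref{prop:4.4w} and derive a contradiction with the austere condition, in the style used in Theorem \ref{thm:5.5}. In the cases where $\alpha,0 \notin \sigma(\mathcal{Q})$ but $|\sigma(\mathcal{Q})| \geq 5$ (so additional $\iota$-pairs appear beyond the fixed points), the Cartan identity \eqref{eqn:call} applied to $(X,Y) = (X_0, AX_0)$ with $X_0$ in a carefully selected eigenspace produces a sign contradiction: the left-hand side is forced nonnegative by the ordering of eigenvalues, while the right-hand side equals $-(1+\lambda\mu)$ for a pair with $1+\lambda\mu > 0$, exactly as in the analysis of Case-i-4-2 and Case-ii-2 of Theorem \ref{thm:5.5}.

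The only surviving configuration has $\alpha > 0$, $\alpha, 0 \notin \sigma(\mathcal{Q})$, and $|\sigma(\mathcal{Q})| = 4$, consisting of one $\iota$-pair $(\lambda_1, \lambda_2)$ together with the two fixed points $\lambda_+, \lambda_-$. Lemma \ref{lemma:5.1}(3) then forces $A(V_{\lambda_1} \oplus V_{\lambda_2}) = V_{\lambda_+} \oplus V_{\lambda_-}$, and equating dimensions across this bijection against the total $\dim \mathcal{Q} = 8$ pins down $\dim V_{\lambda_i} = 2$ for each $i$. Writing $\alpha = 2\tan(2r)$ for the unique $r \in (0, \pi/4)$ and using $\alpha^2 + 4 = 4\sec^2(2r)$ immediately gives $\lambda_+ = \frac{\cos r + \sin r}{\cos r - \sin r}$ and $\lambda_- = -\frac{\cos r - \sin r}{\cos r + \sin r}$. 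To fix the remaining $\iota$-pair to $\lambda_1 = \tan r$ and $\lambda_2 = -\cot r$, I will apply the austere condition to $M_+$: by Proposition \ref{prop:4.3w} (in the fixed-point subcase $\lambda_+^{\max} = \lambda_+$) the principal curvatures of $M_+$ are $0$ together with $\frac{1+\lambda_+\lambda_i}{\lambda_+-\lambda_i}$ for $\lambda_i \in \{\alpha, 0, \lambda_1, \lambda_2, \lambda_-\}$, and the required symmetry under $x \mapsto -x$ collapses to the algebraic equation $\lambda_1\lambda_2 = -1$; coupled with $\lambda_2 = \iota(\lambda_1)$ this becomes $\alpha\lambda_1^2 + 4\lambda_1 - \alpha = 0$, whose roots are exactly $\tan r$ and $-\cot r$. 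The multiplicities then match Example \ref{E3} for $k=2$. The main obstacle I anticipate is the bookkeeping in the middle paragraph: exhaustively enumerating the $\iota$-orbit types in every subcase on whether $\alpha,0 \in \sigma(\mathcal{Q})$, and verifying that each non-matching configuration is eliminated either by a sign contradiction in \eqref{eqn:call} or by failure of austerity on $M_\pm$.
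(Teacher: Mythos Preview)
Your overall scaffolding matches the paper's, but the final step and the middle case analysis both have genuine problems.

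\textbf{The decisive step is wrong.} In your surviving configuration $\sigma(\mathcal{Q})=\{\lambda_1,\lambda_2,\lambda_+,\lambda_-\}$ with the $\iota$-pair below $\alpha/2$, the maximum is the fixed point $\lambda_+$, so Proposition~\ref{prop:4.3w} is in its ``$\lambda_+=\tfrac{\alpha+\sqrt{\alpha^2+4}}{2}$'' subcase. The principal curvatures of $M_+$ are then $0$ (multiplicity four, since $\lambda_+\lambda_-=-1$ makes the $V_{\lambda_-}$-contribution vanish too) together with
\[
a_i=\frac{1+\lambda_+\lambda_i}{\lambda_+-\lambda_i}\qquad(i=1,2),\ \text{each of multiplicity }2.
\]
A direct computation using $\lambda_2=\iota(\lambda_1)$ and $\lambda_+^2=\alpha\lambda_++1$ gives $a_1+a_2\equiv 0$ identically, for \emph{any} $\iota$-pair. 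So the austere condition on $M_+$ is automatic here and cannot pin down $\lambda_1$. The paper instead uses $M_-$: since the $\iota$-pair straddles $\lambda_-$, the minimum $\lambda_1$ is \emph{not} a fixed point, so Proposition~\ref{prop:4.4w} is in its first subcase and $M_-$ carries the odd-multiplicity curvature $\tilde{\alpha}$. Austerity forces $\tilde{\alpha}=0$, which is exactly $\alpha\lambda_1^2+4\lambda_1-\alpha=0$, equivalently $\lambda_1\lambda_2=-1$. (Once $\tilde\alpha=0$, the remaining even-multiplicity curvatures pair off automatically.) So your target equation is right, but it comes from $M_-$, not $M_+$.

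\textbf{The middle case analysis is much too thin.} Your plan claims that ``at least one of $\alpha,0\in\sigma(\mathcal{Q})$'' and ``$\alpha,0\notin\sigma(\mathcal{Q})$ with $|\sigma(\mathcal{Q})|\ge 5$'' are each dispatched by a single appeal to austerity or to the sign argument from \eqref{eqn:call}. The paper's proof shows this is far from routine: it requires a lengthy case tree (Cases~1--3 with many sub-subcases), repeated passage to suitable parallel hypersurfaces to normalize one curvature to $0$, and in one branch (Case-3-1) a numerical application of \eqref{eqn:ca2} after the austere conditions on both $M_+$ and $M_-$ are satisfied. The sign argument you cite from Theorem~\ref{thm:5.5} works there because $\sigma(\mathcal{Q})$ has at most three elements; with up to eight eigenvalues in $\mathcal{Q}$ the inequality on the left of \eqref{eqn:call} is no longer forced to have a definite sign. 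You also omit the configuration $|\sigma(\mathcal{Q})|=4$ with \emph{two} $\iota$-pairs and no fixed points, which falls into neither of your elimination buckets nor your ``surviving'' case.
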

\begin{proof}
If $M$ has $\mathfrak{A}$-principal unit normal vector field $N$, then $M$ is an open part of a tube over a totally geodesic $Q^{5}\hookrightarrow Q^{6}$.

If $M$ has $\mathfrak{A}$-isotropic unit normal vector field $N$,
we always assume that $\alpha\geq0$. By (2) of Lemma \ref{lemma:5.5}, there are $2k$ principal curvatures (counted with multiplicities) greater than $\frac{\alpha}{2}$ in $\sigma(\mathcal{Q})$, $0\leq k\leq4$.

When $k=4$, if $\lambda_+=\frac{\alpha+\sqrt{\alpha^2+4}}{2}$, then
$\sigma(\mathcal{Q})=\frac{\alpha+\sqrt{\alpha^2+4}}{2}$. By (3) of Lemma \ref{lemma:5.1},
we have $A\mathcal{Q}\perp\mathcal{Q}$, which is contradiction. So $\lambda_+>\frac{\alpha+\sqrt{\alpha^2+4}}{2}$. Then by Proposition \ref{prop:4.3w}, the principal
curvatures are all non-negative (at least five positive principal
curvatures) on focal submanifold $M_+$, which also contradicts with that $M_+$ is austere.

When $k=3$, if $\lambda_+=\frac{\alpha+\sqrt{\alpha^2+4}}{2}$, then
${\rm dim}V_{\frac{\alpha+\sqrt{\alpha^2+4}}{2}}={\rm dim}(AV_{\frac{\alpha+\sqrt{\alpha^2+4}}{2}})=6$ and ${\rm dim}(\mathcal{Q}\ominus V_{\frac{\alpha+\sqrt{\alpha^2+4}}{2}})=2$. By (3) of Lemma \ref{lemma:5.1},
we have $AV_{\frac{\alpha+\sqrt{\alpha^2+4}}{2}}\perp V_{\frac{\alpha+\sqrt{\alpha^2+4}}{2}}$,
i.e. $AV_{\frac{\alpha+\sqrt{\alpha^2+4}}{2}}\subset (\mathcal{Q}\ominus V_{\frac{\alpha+\sqrt{\alpha^2+4}}{2}})$. It is a contradiction.
So $\lambda_+>\frac{\alpha+\sqrt{\alpha^2+4}}{2}$ and
${\rm dim}V_{\lambda_+}\in\{1,2,3\}$. Note that there are at least two principal curvatures
being $0$ on $M_+$.
If ${\rm dim}V_{\lambda_+}=1$, then ${\rm dim}M_+=10$, and there are at least $6$ positive
principal curvatures (counted with multiplicities) on $M_+$, so $M_+$ can not be austere, it is a contradiction. If ${\rm dim}V_{\lambda_+}=2$, then ${\rm dim}M_+=9$, and there are at least $5$ positive principal curvatures (counted with multiplicities) on $M_+$, so $M_+$ can not be austere, it is a contradiction.
If ${\rm dim}V_{\lambda_+}=3$, then ${\rm dim}M_+=8$, and there are at least $4$ positive principal curvatures (counted with multiplicities) on $M_+$, so $M_+$ can not be austere, it is a contradiction.

Now, there are $2k$ principal curvatures (counted with multiplicities) greater than $\frac{\alpha}{2}$ in $\sigma(\mathcal{Q})$, $0\leq k\leq2$. And, there are $8-2k$ principal curvatures (counted with multiplicities) smaller that $\frac{\alpha}{2}$ in $\sigma(\mathcal{Q})$, $0\leq k\leq2$.
Further, if $\lambda_-=\frac{\alpha-\sqrt{\alpha^2+4}}{2}$ and $\lambda_+>\frac{\alpha+\sqrt{\alpha^2+4}}{2}$, then ${\rm dim}M_+\in\{9,10\}$,
and there is only one negative principal curvature
$\frac{1+\lambda_+\lambda_-}{\lambda_+-\lambda_-}$ with multiplicity at least $4$ on
$M_+$. But, it can be checked that for any positive principal curvature on $M_+$,
its multiplicity is at most $3$. So we get a contradiction. Thus, if $\lambda_-=\frac{\alpha-\sqrt{\alpha^2+4}}{2}$, then
$\lambda_+=\frac{\alpha+\sqrt{\alpha^2+4}}{2}$ and $\sigma(\mathcal{Q})=
\{\frac{\alpha-\sqrt{\alpha^2+4}}{2},\frac{\alpha+\sqrt{\alpha^2+4}}{2}\}$.
By (3) of Lemma \ref{lemma:5.1} and Lemma \ref{lemma:5.5}, it holds that
${\rm dim}V_{\frac{\alpha-\sqrt{\alpha^2+4}}{2}}={\rm dim}V_{\frac{\alpha+\sqrt{\alpha^2+4}}{2}}=4$, $JV_{\frac{\alpha-\sqrt{\alpha^2+4}}{2}}=V_{\frac{\alpha-\sqrt{\alpha^2+4}}{2}}$ and
$JV_{\frac{\alpha+\sqrt{\alpha^2+4}}{2}}=V_{\frac{\alpha+\sqrt{\alpha^2+4}}{2}}$.
So $M$ satisfies $S\phi=\phi S$, it follows that $M$ is an open part of a tube over a totally geodesic $\mathbb{C}P^3\hookrightarrow Q^{6}$.

In the following, we only need to consider $\lambda_-<\frac{\alpha-\sqrt{\alpha^2+4}}{2}$.
Denote the principal curvatures in $\sigma(\mathcal{Q})$ by $\lambda_i,\mu_i$,
$1\leq i\leq4$, where $\mu_i=\tfrac{\alpha\lambda_i+2}{2\lambda_i-\alpha}$.
Here, some $\lambda_i,\mu_i$ may be the same constants.
We always set $\lambda_1$ to be the smallest in $\sigma(\mathcal{Q})$.
So $\lambda_-=\lambda_1$.
By Proposition \ref{prop:4.4w}, on $M_-$, the principal curvatures are given by
\begin{equation}\label{eqn:6.7as1}
0,\ \tilde{\alpha}=\alpha+\frac{(4+\alpha^2){\lambda_1}}{\lambda_1^2-\alpha{\lambda_1}-1},\
\tilde{\lambda}_i=\frac{1+{\lambda_1}\lambda_i}{{\lambda_1}-\lambda_i},\
\tilde{\mu}_i=\frac{1+{\lambda_1}\mu_i}{{\lambda_1}-\mu_i},\ \lambda_i\neq\lambda_1,\ \mu_i\neq\lambda_1.
\end{equation}

According to that there are $2k$ principal curvatures (counted with multiplicities) greater than $\frac{\alpha}{2}$ in $\sigma(\mathcal{Q})$, $0\leq k\leq2$,
we need to consider the following three cases:

{\bf Case-1}: There are $4$ principal curvatures
(counted with multiplicities) greater than $\frac{\alpha}{2}$ in $\sigma(\mathcal{Q})$;

{\bf Case-2}: There are $2$ principal curvatures
(counted with multiplicities) greater than $\frac{\alpha}{2}$ in $\sigma(\mathcal{Q})$;

{\bf Case-3}: There are $0$ principal curvatures
(counted with multiplicities) greater than $\frac{\alpha}{2}$ in $\sigma(\mathcal{Q})$.

In {\bf Case-1}, by $\lambda_-<\frac{\alpha-\sqrt{\alpha^2+4}}{2}$, it holds ${\rm dim}V_{\lambda_-}\leq2$.
If ${\rm dim}V_{\lambda_-}=1$, without loss of generality, we assume that $\lambda_1<\lambda_2\leq\frac{\alpha-\sqrt{\alpha^2+4}}{2}\leq
\mu_2<\mu_1<\frac{\alpha}{2}<\lambda_3\leq\lambda_4
\leq\frac{\alpha+\sqrt{\alpha^2+4}}{2}\leq\mu_4\leq\mu_3$.
Then, by equation \eqref{eqn:6.7as1}, it holds $\tilde{\lambda}_2\leq\tilde{\mu}_2<\tilde{\mu}_1
<\tilde{\lambda}_3\leq\tilde{\lambda}_4\leq\tilde{\mu}_4\leq\tilde{\mu}_3$ on $M_-$.

When $\alpha>0$, if $\lambda_1\leq\frac{-2-\sqrt{\alpha^2+4}}{\alpha}$, then
$\tilde{\alpha}\geq0$, $\tilde{\lambda}_3,\tilde{\lambda}_4,\tilde{\mu}_4,\tilde{\mu}_3>0$
on $M_-$. It contradicts with that
$M_-$ is austere. So $\lambda_1>\frac{-2-\sqrt{\alpha^2+4}}{\alpha}$, then $\tilde{\alpha}<0$
on $M_-$. If $\alpha=0$ on $M$, then it must hold $\tilde{\alpha}<0$ on $M_-$. Due that $\tilde{\lambda}_2\leq\tilde{\mu}_2<\tilde{\mu}_1
<\tilde{\lambda}_3\leq\tilde{\lambda}_4\leq\tilde{\mu}_4\leq\tilde{\mu}_3$ on $M_-$,
thus from the fact that $M_-$ is austere, we need to consider the following four subcases on $M_-$:

{\bf Case-1-1}: $\tilde{\alpha}\leq\tilde{\lambda}_2\leq\tilde{\mu}_2<\tilde{\mu}_1
<0<\tilde{\lambda}_3\leq\tilde{\lambda}_4\leq\tilde{\mu}_4\leq\tilde{\mu}_3$;

{\bf Case-1-2}: $\tilde{\lambda}_2\leq\tilde{\alpha}\leq\tilde{\mu}_2<\tilde{\mu}_1
<0<\tilde{\lambda}_3\leq\tilde{\lambda}_4\leq\tilde{\mu}_4\leq\tilde{\mu}_3$;

{\bf Case-1-3}: $\tilde{\lambda}_2\leq\tilde{\mu}_2\leq\tilde{\alpha}\leq\tilde{\mu}_1
<0<\tilde{\lambda}_3\leq\tilde{\lambda}_4\leq\tilde{\mu}_4\leq\tilde{\mu}_3$;

{\bf Case-1-4}: $\tilde{\lambda}_2\leq\tilde{\mu}_2\leq\tilde{\mu}_1\leq\tilde{\alpha}
<0<\tilde{\lambda}_3\leq\tilde{\lambda}_4\leq\tilde{\mu}_4\leq\tilde{\mu}_3$.

Since $\tilde{\alpha}<0$ on $M_-$, then by Proposition \ref{prop:4.1w}, there is a parallel hypersurface $\Phi_{r_0}(M)$ ($r_2<r_0\leq0$) of $M$ such that the Reeb function being $0$ on $\Phi_{r_0}(M)$.
For the convenience of calculation, without loss of generality,
we just regard the parallel hypersurface $\Phi_{r_0}(M)$ as $M$. Then, $\alpha=0$
and $\lambda_1<\lambda_2\leq-1\leq
\mu_2<\mu_1<0<\lambda_3\leq\lambda_4
\leq1\leq\mu_4\leq\mu_3$ holds on $M$.

In {\bf Case-1-1}, according to that $M_-$ is austere, it holds
$$
\tilde{\alpha}+\tilde{\mu}_3=\tilde{\lambda}_2+\tilde{\mu}_4=\tilde{\mu}_2+\tilde{\lambda}_4
=\tilde{\mu}_1+\tilde{\lambda}_3=0.
$$
Then by using \eqref{eqn:6.7as1} and $\alpha=0$, we get
\begin{equation}\label{eqn:6.4.1}
\frac{2\lambda_1}{\lambda_1^2-1}+\frac{1+\lambda_1\lambda_3}{\lambda_1-\lambda_3}=0,
\end{equation}
\begin{equation}\label{eqn:6.4.2}
\frac{\lambda_1+\lambda_2}{\lambda_1\lambda_2-1}+\frac{1+\lambda_1\lambda_4}{\lambda_1-\lambda_4}=0,
\end{equation}
\begin{equation}\label{eqn:6.4.3}
\frac{\lambda_1+\lambda_4}{\lambda_1\lambda_4-1}+\frac{1+\lambda_1\lambda_2}{\lambda_1-\lambda_2}=0,
\end{equation}
\begin{equation}\label{eqn:6.4.4}
\frac{4\lambda_1}{\lambda_1^2-1}+\frac{\lambda_1+\lambda_3}{\lambda_1\lambda_3-1}=0.
\end{equation}
By \eqref{eqn:6.4.2} and \eqref{eqn:6.4.4}, we have $\lambda_3=\frac{-\lambda_1(\lambda_1^2-5)}{5\lambda_1^2-1}$
and $((\lambda_1^2-1)\lambda_2-2\lambda_1)\lambda_4=1-\lambda_1(\lambda_1+2\lambda_2)$.
If $(\lambda_1^2-1)\lambda_2-2\lambda_1=0$, then $1-\lambda_1(\lambda_1+2\lambda_2)=-\frac{(1+\lambda_1^2)^2}{\lambda_1-1}\neq0$. So
$\lambda_4=\frac{1-\lambda_1(\lambda_1+2\lambda_2)}{(\lambda_1^2-1)\lambda_2-2\lambda_1}$. Then
\eqref{eqn:6.4.3} becomes to $\frac{4\lambda_1(\lambda_1^2-1)(\lambda_2^2+1)}
{(\lambda_1-\lambda_2)(-\lambda_2+\lambda_1(-3+\lambda_1^2+3\lambda_1\lambda_2))}=0$, which
deduces that $\lambda_1=-1$, it is a contradiction.

In {\bf Case-1-2}, according to that $M_-$ is austere, it holds
$$
\tilde{\lambda}_2+\tilde{\mu}_3=\tilde{\alpha}+\tilde{\mu}_4=\tilde{\mu}_2+\tilde{\lambda}_4
=\tilde{\mu}_1+\tilde{\lambda}_3=0.
$$
Then by using \eqref{eqn:6.7as1} and $\alpha=0$, it still holds \eqref{eqn:6.4.1},
\eqref{eqn:6.4.2} and
\begin{equation}\label{eqn:6.4.5}
\frac{4\lambda_1}{\lambda_1^2-1}+\frac{\lambda_1+\lambda_4}{\lambda_1\lambda_4-1}=0,
\end{equation}
\begin{equation}\label{eqn:6.4.6}
\frac{1+\lambda_1\lambda_2}{\lambda_1-\lambda_2}+\frac{\lambda_1+\lambda_3}{\lambda_1\lambda_3-1}=0.
\end{equation}
By \eqref{eqn:6.4.5} and \eqref{eqn:6.4.6}, we have $\lambda_4=-\frac{\lambda_1(\lambda_1^2-5)}{5\lambda_1^2-1}$ and $(2\lambda_1+(\lambda_1^2-1)\lambda_2)\lambda_3=1-\lambda_1^2+2\lambda_1\lambda_2$.
If $2\lambda_1+(\lambda_1^2-1)\lambda_2=0$, then $1-\lambda_1^2+2\lambda_1\lambda_2=-\frac{(1+\lambda_1^2)^2}{\lambda_1^2-1}\neq0$, which is a contradiction.
So $\lambda_3=\frac{1-\lambda_1^2+2\lambda_1\lambda_2}{2\lambda_1+(\lambda_1^2-1)\lambda_2}$.
Then \eqref{eqn:6.4.2} becomes to $(7\lambda_1-16\lambda_1^3+\lambda_1^5)\lambda_2=1-16\lambda_1^2+7\lambda_1^4$. If
$7\lambda_1-16\lambda_1^3+\lambda_1^5=0$, then by $\lambda_1<-1$, we have $\lambda_1=-\sqrt{8+\sqrt{57}}$,
which shows that $1-16\lambda_1^2+7\lambda_1^4=720+96\sqrt{57}\neq0$. So
$\lambda_2=\frac{1-16\lambda_1^2+7\lambda_1^4}{7\lambda_1-16\lambda_1^3+\lambda_1^5}$.
Then by using \eqref{eqn:6.4.1}, we can solve that $\lambda_1=-\sqrt{15-4\sqrt{10}+2\sqrt{96-30\sqrt{10}}}$
or $\lambda_1=-\sqrt{15+4\sqrt{10}+2\sqrt{96+30\sqrt{10}}}$. We substitute $\lambda_1=-\sqrt{15-4\sqrt{10}+2\sqrt{96-30\sqrt{10}}}$ into $\lambda_2$, then $\lambda_2\approx0.7389>0$, which is a contradiction. We substitute $\lambda_1=-\sqrt{15+4\sqrt{10}+2\sqrt{96+30\sqrt{10}}}$ into $\lambda_4$,
then $\lambda_4\approx1.3574$ and $\mu_4\approx0.7366$, which contradicts with $\lambda_4\leq\mu_4$.

In {\bf Case-1-3}, according to that $M_-$ is austere, it holds
$$
\tilde{\lambda}_2+\tilde{\mu}_3=\tilde{\mu}_2+\tilde{\mu}_4=\tilde{\alpha}+\tilde{\lambda}_4
=\tilde{\mu}_1+\tilde{\lambda}_3=0.
$$
Then by using \eqref{eqn:6.7as1} and $\alpha=0$, it still holds \eqref{eqn:6.4.1}, \eqref{eqn:6.4.6} and
\begin{equation}\label{eqn:6.4.7}
\frac{4\lambda_1}{\lambda_1^2-1}+\frac{\lambda_1\lambda_4+1}{\lambda_1-\lambda_4}=0,
\end{equation}
\begin{equation}\label{eqn:6.4.8}
\frac{\lambda_1+\lambda_2}{\lambda_1\lambda_2-1}+\frac{\lambda_1+\lambda_4}{\lambda_1\lambda_4-1}=0.
\end{equation}
By \eqref{eqn:6.4.1} and \eqref{eqn:6.4.7}, we have
$(\lambda_1(\lambda_1^2-3))\lambda_3=1-3\lambda_1^2$ and
$(\lambda_1(\lambda_1^2-5))\lambda_4=1-5\lambda_1^2$. If $\lambda_1=-\sqrt{3}$ or
$\lambda_1=-\sqrt{5}$, then $1-3\lambda_1^2\neq0$ and $1-5\lambda_1^2\neq0$.
So $\lambda_3=\frac{1-3\lambda_1^2}{\lambda_1(\lambda_1^2-3)}$ and
$\lambda_4=\frac{1-5\lambda_1^2}{\lambda_1(\lambda_1^2-5)}$. Then by \eqref{eqn:6.4.8}, we have
$(7\lambda_1-16\lambda_1^3+\lambda_1^5)\lambda_2=1-16\lambda_1^2+7\lambda_1^4$. If
$7\lambda_1-16\lambda_1^3+\lambda_1^5=0$, then $1-16\lambda_1^2+7\lambda_1^4\neq0$.
So $\lambda_2=\frac{1-16\lambda_1^2+7\lambda_1^4}{7\lambda_1-16\lambda_1^3+\lambda_1^5}$.
Now, from \eqref{eqn:6.4.6}, we can also solve that $\lambda_1=-\sqrt{15-4\sqrt{10}+2\sqrt{96-30\sqrt{10}}}$
or $\lambda_1=-\sqrt{15+4\sqrt{10}+2\sqrt{96+30\sqrt{10}}}$. We substitute $\lambda_1=-\sqrt{15-4\sqrt{10}+2\sqrt{96-30\sqrt{10}}}$ into $\lambda_2$, it holds
that $\lambda_2\approx0.7389>0$, which is a contradiction. If $\lambda_1=-\sqrt{15+4\sqrt{10}+2\sqrt{96+30\sqrt{10}}}$, then we can solve the values of
$\lambda_i$ and $\mu_i$, $1\leq i\leq4$. Now, we can further check the principal curvatures of $M_+$ by using
Proposition \ref{prop:4.3w}. It follows that the minimum principal curvature is approximately equal to $-1.6881$ and the maximum principal curvature is approximately equal to $4.1981$
on $M_+$, which contradicts with the fact that $M_+$ is austere.

In {\bf Case-1-4}, according to that $M_-$ is austere, it holds
$$
\tilde{\lambda}_2+\tilde{\mu}_3=\tilde{\mu}_2+\tilde{\mu}_4=\tilde{\mu}_1+\tilde{\lambda}_4
=\tilde{\alpha}+\tilde{\lambda}_3=0.
$$
Then by using \eqref{eqn:6.7as1} and $\alpha=0$, it holds \eqref{eqn:6.4.6}, \eqref{eqn:6.4.8} and
\begin{equation}\label{eqn:6.4.9}
\frac{4\lambda_1}{\lambda_1^2-1}+\frac{1+\lambda_1\lambda_3}{\lambda_1-\lambda_3}=0,
\end{equation}
\begin{equation}\label{eqn:6.4.10}
\frac{2\lambda_1}{\lambda_1^2-1}+\frac{1+\lambda_1\lambda_4}{\lambda_1-\lambda_4}=0.
\end{equation}
By \eqref{eqn:6.4.9} and \eqref{eqn:6.4.10}, we have $\lambda_3=\frac{1-5\lambda_1^2}{\lambda_1(\lambda_1^2-5)}$ and
$\lambda_4=\frac{1-3\lambda_1^2}{\lambda_1(\lambda_1^2-3)}$. Then by \eqref{eqn:6.4.8}, we have
$\lambda_2=\frac{1-10\lambda_1^2+5\lambda_1^4}{5\lambda_1-10\lambda_1^3+\lambda_1^5}$.
Now, from \eqref{eqn:6.4.6}, we can solve that $\lambda_1=-\sqrt{15-4\sqrt{10}+2\sqrt{96-30\sqrt{10}}}$
or $\lambda_1=-\sqrt{15+4\sqrt{10}+2\sqrt{96+30\sqrt{10}}}$. We substitute $\lambda_1=-\sqrt{15-4\sqrt{10}+2\sqrt{96-30\sqrt{10}}}$ into $\lambda_2$, it holds
that $\lambda_2\approx1.3533>0$, which is a contradiction. We substitute $\lambda_1=-\sqrt{15+4\sqrt{10}+2\sqrt{96+30\sqrt{10}}}$ into $\lambda_2$ and $\mu_2$, it holds
that the $\lambda_2\approx-0.7897$ and $\mu_2\approx-1.2661$, which contradicts with $\lambda_2\leq\mu_2$.

In {\bf Case-1}, if ${\rm dim}V_{\lambda_-}=2$, we assume that $\lambda_1=\lambda_2<\frac{\alpha-\sqrt{\alpha^2+4}}{2}<\mu_2=\mu_1
<\frac{\alpha}{2}<\lambda_3\leq\lambda_4\leq\frac{\alpha+\sqrt{\alpha^2+4}}{2}\leq\mu_4\leq\mu_3$.
By \eqref{eqn:6.7as1}, it holds $\tilde{\mu}_2=\tilde{\mu}_1
<\tilde{\lambda}_3\leq\tilde{\lambda}_4\leq\tilde{\mu}_4\leq\tilde{\mu}_3$ on $M_-$.

When $\alpha>0$, if $\lambda_1\leq\frac{-2-\sqrt{\alpha^2+4}}{\alpha}$, then
$\tilde{\alpha}\geq0$, $\tilde{\lambda}_3,\tilde{\lambda}_4,\tilde{\mu}_4,\tilde{\mu}_3>0$
on $M_-$. It contradicts with the fact that
$M_-$ is austere. So $\lambda_1>\frac{-2-\sqrt{\alpha^2+4}}{\alpha}$, then $\tilde{\alpha}<0$
on $M_-$. If $\alpha=0$, then it also holds $\tilde{\alpha}<0$ on $M_-$. Due that $\tilde{\mu}_2=\tilde{\mu}_1
<\tilde{\lambda}_3\leq\tilde{\lambda}_4\leq\tilde{\mu}_4\leq\tilde{\mu}_3$ and
$\tilde{\alpha}<0$ on $M_-$,
thus from the fact that $M_-$ is austere, we need to consider the following two subcases on $M_-$:

{\bf Case-1-5}: $\tilde{\alpha}\leq\tilde{\mu}_2=\tilde{\mu}_1
<\tilde{\lambda}_3\leq\tilde{\lambda}_4\leq\tilde{\mu}_4\leq\tilde{\mu}_3$;

{\bf Case-1-6}: $\tilde{\mu}_2=\tilde{\mu}_1\leq\tilde{\alpha}
<\tilde{\lambda}_3\leq\tilde{\lambda}_4\leq\tilde{\mu}_4\leq\tilde{\mu}_3$.

Since $\tilde{\alpha}<0$ on $M_-$, then by Proposition \ref{prop:4.1w}, there is a parallel hypersurface $\Phi_{r_0}(M)$ ($r_2<r_0\leq0$) of $M$ such that the Reeb function being $0$ on $\Phi_{r_0}(M)$.
For the convenience of calculation, without loss of generality,
we just regard the parallel hypersurface $\Phi_{r_0}(M)$ as $M$. Then, $\alpha=0$
and $\lambda_1=\lambda_2<-1<\mu_2=\mu_1<0<\lambda_3\leq\lambda_4
\leq1\leq\mu_4\leq\mu_3$ holds on $M$.

In {\bf Case-1-5}, according to that $M_-$ is austere, it holds
$$
\tilde{\alpha}+\tilde{\mu}_3=\tilde{\mu}_2+\tilde{\mu}_4=\tilde{\mu}_1+\tilde{\lambda}_4
=\tilde{\lambda}_3=0.
$$
Then by using \eqref{eqn:6.7as1}, $\alpha=0$ and $\lambda_2=\lambda_1$, it holds
$$
\frac{1+\lambda_1\lambda_3}{\lambda_1-\lambda_3}
=\frac{2\lambda_1}{\lambda_1^2-1}+\frac{1+\lambda_1\lambda_4}{\lambda_1-\lambda_4}
=\frac{2\lambda_1}{\lambda_1^2-1}+\frac{\lambda_1+\lambda_4}{-1+\lambda_1\lambda_4}
=\frac{4\lambda_1}{\lambda_1^2-1}+\frac{\lambda_1+\lambda_3}{-1+\lambda_1\lambda_3}=0.
$$
By direct calculation, with the use of $\lambda_1<-1$, above equations have no solution
to $\lambda_1,\lambda_3$ and $\lambda_4$. We get a contradiction.

In {\bf Case-1-6}, according to that $M_-$ is austere, it holds
$$
\tilde{\mu}_2+\tilde{\mu}_3=\tilde{\mu}_1+\tilde{\mu}_4=\tilde{\alpha}+\tilde{\lambda}_4
=\tilde{\lambda}_3=0.
$$
Then by using \eqref{eqn:6.7as1}, $\alpha=0$ and $\lambda_2=\lambda_1$, it holds
$$
\frac{1+\lambda_1\lambda_3}{\lambda_1-\lambda_3}
=\frac{4\lambda_1}{\lambda_1^2-1}+\frac{1+\lambda_1\lambda_4}{\lambda_1-\lambda_4}
=\frac{2\lambda_1}{\lambda_1^2-1}+\frac{\lambda_1+\lambda_4}{-1+\lambda_1\lambda_4}
=\frac{2\lambda_1}{\lambda_1^2-1}+\frac{\lambda_1+\lambda_3}{-1+\lambda_1\lambda_3}=0.
$$
By direct calculation, with the use of $\lambda_1<-1$, above equations have no solution
to $\lambda_1,\lambda_3$ and $\lambda_4$. We still get a contradiction.

In {\bf Case-2}, if $\alpha=0$ on $M$, then by
$\lambda_-<\frac{\alpha-\sqrt{\alpha^2+4}}{2}=-1$ and
${\rm dim}V_{\lambda_-}\leq3$, we know that the negative principal curvatures (counted with multiplicities) are at least four on $M_-$, and the positive principal curvatures (counted with multiplicities) are at most two on $M_-$. It contradicts with the fact that $M_-$ is austere. On the other hand, if there is a parallel hypersurface
between $M$ and $M_-$ such that its Reeb function being $0$,
then we can also get a contradiction in a similar way.
Thus, we always assume that the Reeb functions on $M$ and the parallel hypersurfaces
between $M$ and $M_-$ are positive in {\bf Case-2}. It follows that
$\tilde{\alpha}\geq0$ on $M_-$. We set $\mu_4$ to be the biggest principal curvature in $\sigma(\mathcal{Q})$
on $M$ in {\bf Case-2}.
So $\lambda_+=\mu_4$. By Proposition \ref{prop:4.3w}, on $M_+$, the principal curvatures are given by
\begin{equation}\label{eqn:6.7as1+}
0,\ \bar{\alpha}=\alpha+\frac{(4+\alpha^2){\mu_4}}{\mu_4^2-\alpha{\mu_4}-1},\
\bar{\lambda}_i=\frac{1+{\mu_4}\lambda_i}{{\mu_4}-\lambda_i},\
\bar{\mu}_i=\frac{1+{\mu_4}\mu_i}{{\mu_4}-\mu_i},\ \lambda_i\neq\mu_4,\ \mu_i\neq\mu_4.
\end{equation}
Now, we need to consider the following four subcases on $M$:

{\bf Case-2-1}: There are $0$ principal curvatures
(counted with multiplicities) belonging to $(0,\frac{\alpha}{2})$ in $\sigma(\mathcal{Q})$;

{\bf Case-2-2}: There are $1$ principal curvatures
(counted with multiplicities) belonging to $(0,\frac{\alpha}{2})$ in $\sigma(\mathcal{Q})$;

{\bf Case-2-3}: There are $2$ principal curvatures
(counted with multiplicities) belonging to $(0,\frac{\alpha}{2})$ in $\sigma(\mathcal{Q})$;

{\bf Case-2-4}: There are $3$ principal curvatures
(counted with multiplicities) belonging to $(0,\frac{\alpha}{2})$ in $\sigma(\mathcal{Q})$.

In {\bf Case-2-1}, we have ${\rm dim}V_{\lambda_1}\leq3$.
If ${\rm dim}V_{\lambda_1}=1$, then there are at least five negative principal curvatures (counted with multiplicities) and
at most three positive principal curvatures (counted with multiplicities) on $M_-$, which contradicts with that $M_-$ is austere.
If ${\rm dim}V_{\lambda_1}=2$, then there are at least four negative principal curvatures (counted with multiplicities) and
at most three positive principal curvatures (counted with multiplicities) on $M_-$, which contradicts with that $M_-$ is austere.
If ${\rm dim}V_{\lambda_1}=3$, then we assume
$\lambda_1=\lambda_2=\lambda_3<\frac{\alpha-\sqrt{\alpha^2+4}}{2}<\mu_3=\mu_2=\mu_1\leq0
<\lambda_4\leq\mu_4$. By \eqref{eqn:6.7as1}, $M_-$ is austere if and only if
$\tilde{\mu}_3=\tilde{\mu}_2=\tilde{\mu}_1<0<\tilde{\alpha}=\tilde{\lambda}_4=\tilde{\mu}_4$ and
$\tilde{\mu}_1+\tilde{\alpha}=0$. So $\lambda_4=\mu_4=\frac{\alpha+\sqrt{\alpha^2+4}}{2}$.
From $\tilde{\mu}_1+\tilde{\alpha}=0$, we have $\alpha=\frac{-4\lambda_1}{\lambda_1^2-1}$.
Substitute it into $\tilde{\alpha}-\tilde{\lambda}_4=0$, it becomes to $1=0$,
which is a contradiction.

In {\bf Case-2-2}, we need consider the following subcases:

{\bf Case-2-2-1}: $\lambda_1<\lambda_2<\lambda_3<\frac{\alpha-\sqrt{\alpha^2+4}}{2}<\mu_3<\mu_2\leq0<\mu_1
<\frac{\alpha}{2}<\lambda_4<\frac{\alpha+\sqrt{\alpha^2+4}}{2}<\mu_4$;

{\bf Case-2-2-2}: $\lambda_1<\lambda_2<\lambda_3=\frac{\alpha-\sqrt{\alpha^2+4}}{2}=\mu_3<\mu_2\leq0<\mu_1
<\frac{\alpha}{2}<\lambda_4<\frac{\alpha+\sqrt{\alpha^2+4}}{2}<\mu_4$;

{\bf Case-2-2-3}: $\lambda_1<\lambda_2=\lambda_3<\frac{\alpha-\sqrt{\alpha^2+4}}{2}<\mu_3=\mu_2\leq0
<\mu_1<\frac{\alpha}{2}<\lambda_4
<\frac{\alpha+\sqrt{\alpha^2+4}}{2}<\mu_4$;

{\bf Case-2-2-4}: $\lambda_1<\lambda_2=\lambda_3=\frac{\alpha-\sqrt{\alpha^2+4}}{2}=\mu_3=\mu_2\leq0
<\mu_1<\frac{\alpha}{2}<\lambda_4
<\frac{\alpha+\sqrt{\alpha^2+4}}{2}<\mu_4$;

{\bf Case-2-2-5}: $\lambda_1<\lambda_2=\lambda_3=\frac{\alpha-\sqrt{\alpha^2+4}}{2}=\mu_3=\mu_2\leq0
<\mu_1<\frac{\alpha}{2}
<\lambda_4=\frac{\alpha+\sqrt{\alpha^2+4}}{2}=\mu_4$.

{\bf Case-2-2-6}: $\lambda_1<\lambda_2<\lambda_3<\frac{\alpha-\sqrt{\alpha^2+4}}{2}<\mu_3<\mu_2\leq0
<\mu_1<\frac{\alpha}{2}
<\lambda_4=\frac{\alpha+\sqrt{\alpha^2+4}}{2}=\mu_4$;

{\bf Case-2-2-7}: $\lambda_1<\lambda_2<\lambda_3=\frac{\alpha-\sqrt{\alpha^2+4}}{2}=\mu_3<\mu_2\leq0<\mu_1
<\frac{\alpha}{2}<\lambda_4=\frac{\alpha+\sqrt{\alpha^2+4}}{2}=\mu_4$;

{\bf Case-2-2-8}: $\lambda_1<\lambda_2=\lambda_3<\frac{\alpha-\sqrt{\alpha^2+4}}{2}<\mu_3=\mu_2\leq0
<\mu_1<\frac{\alpha}{2}
<\lambda_4=\frac{\alpha+\sqrt{\alpha^2+4}}{2}=\mu_4$.

In {\bf Case-2-2-1}, due that $M_+$ is austere, we must have $\bar{\alpha}>0$ and $\bar{\mu}_2>0$ on $M_+$. Then by Proposition \ref{prop:4.1w}, there is a parallel hypersurface $\Phi_{r_0}(M)$
$(0\leq r_0<r_1)$ of $M$ such that the corresponding principal curvature $\mu_2(r_0)=\frac{\sin(r_0)+\mu_2\cos(r_0)}{\cos(r_0)-\mu_2\sin(r_0)}=0$.
For the convenience of calculation, without loss of generality,
we just regard the parallel hypersurface $\Phi_{r_0}(M)$ as $M$. Then, it holds $\lambda_2=-\frac{2}{\alpha}$, $\mu_2=0$, $\alpha>0$,
and $\lambda_1<\lambda_2=-\frac{2}{\alpha}<\lambda_3<\frac{\alpha-\sqrt{\alpha^2+4}}{2}<\mu_3
<\mu_2=0<\mu_1
<\frac{\alpha}{2}<\lambda_4<\frac{\alpha+\sqrt{\alpha^2+4}}{2}<\mu_4$ on $M$.
Now, we further have four subcases on $M_+$:

{\bf Case-2-2-1-1}: $\bar{\lambda}_1<\bar{\lambda}_2<\bar{\lambda}_3<\bar{\mu}_3<\bar{\alpha}
<\bar{\mu}_2<\bar{\mu}_1<\bar{\lambda}_4$;

{\bf Case-2-2-1-2}: $\bar{\lambda}_1<\bar{\lambda}_2<\bar{\lambda}_3<\bar{\mu}_3
<\bar{\mu}_2<\bar{\alpha}<\bar{\mu}_1<\bar{\lambda}_4$;

{\bf Case-2-2-1-3}: $\bar{\lambda}_1<\bar{\lambda}_2<\bar{\lambda}_3<\bar{\mu}_3
<\bar{\mu}_2<\bar{\mu}_1<\bar{\alpha}<\bar{\lambda}_4$;

{\bf Case-2-2-1-4}: $\bar{\lambda}_1<\bar{\lambda}_2<\bar{\lambda}_3<\bar{\mu}_3
<\bar{\mu}_2<\bar{\mu}_1<\bar{\lambda}_4<\bar{\alpha}$.

In {\bf Case-2-2-1-1}, from $\bar{\lambda}_3+\bar{\mu}_2=\bar{\lambda}_2+\bar{\mu}_1=0$
and $\lambda_2=-\frac{2}{\alpha}$, we get
$\lambda_1=-\frac{2 \lambda_4}{-1+\lambda_4^2}$ and
$\lambda_3=\frac{2(\alpha-2\lambda_4)(2+\alpha\lambda_4)}
{4+8\alpha\lambda_4-4\lambda_4^2+\alpha^2(-1+\lambda_4^2)}$.
If $\lambda_4=\sqrt{5\pm2\sqrt{5}}$, then it follows from $\bar{\lambda}_1+\bar{\lambda}_4=0$ that $\alpha=0$, which contradicts with $\alpha>0$ on $M$.
Then, we substitute the expressions of $\lambda_1$ and $\lambda_3$ into
$\bar{\lambda}_1+\bar{\lambda}_4=0$, we have
$\alpha=-\frac{2(1-10\lambda_4^2+5\lambda_4^2)\pm2(1+\lambda_4^2)^2
\sqrt{1+\lambda_4^2}}{\lambda_4 (5-10\lambda_4^2+\lambda_4^4)}$.
If $\alpha=-\frac{2(1-10\lambda_4^2+5\lambda_4^2)+2(1+\lambda_4^2)^2
\sqrt{1+\lambda_4^2}}{\lambda_4 (5-10\lambda_4^2+\lambda_4^4)}$,
by $\bar{\mu}_3+\bar{\alpha}=0$, we have $5-\lambda_4^2 + 4 \sqrt{1 +\lambda_4^2}=0$,
which deduces that $\lambda_4=2\sqrt{2}+\sqrt{5}$. But in this situation, we have
$\lambda_2\approx0.3071$, which contradicts with $\lambda_2<0$.
If $\alpha=-\frac{2(1-10\lambda_4^2+5\lambda_4^2)-2(1+\lambda_4^2)^2
\sqrt{1+\lambda_4^2}}{\lambda_4 (5-10\lambda_4^2+\lambda_4^4)}$,
by $\bar{\mu}_3+\bar{\alpha}=0$, we have $-5+\lambda_4^2 + 4 \sqrt{1 +\lambda_4^2}=0$,
which deduces that $\lambda_4=\sqrt{13-4\sqrt{10}}$. But in this situation, we have
$\lambda_1\approx1.8251$, which contradicts with $\lambda_1<0$.

In {\bf Case-2-2-1-2}, from $\bar{\mu}_3+\bar{\mu}_2=\bar{\lambda}_2+\bar{\mu}_1=0$
and $\lambda_2=-\frac{2}{\alpha}$, we get
$\lambda_1=-\frac{2 \lambda_4}{-1 + \lambda_4^2}$ and
$\alpha=\frac{-2 + 2\lambda_4 (-2\lambda_3 +\lambda_4)}{2 \lambda_4 + \lambda_3 (-1 +\lambda_4^2)}$. Substitute them into
$\bar{\lambda}_1+\bar{\lambda}_4=0$, we have
$\lambda_4=\frac{2\lambda_3}{\lambda_3^2-1}$. So $\bar{\lambda}_3+\bar{\alpha}=0$
becomes to $9-14\lambda_3^2 +\lambda_3^4=0$, which deduces that $\lambda_3=\sqrt{2}-\sqrt{5}$
or $\lambda_3=-\sqrt{7+2\sqrt{10}}$.
If $\lambda_3=\sqrt{2}-\sqrt{5}$, then $\lambda_2\approx0.3071$, which contradicts with $\lambda_2<0$.
If $\lambda_3=-\sqrt{7+2\sqrt{10}}$, then $\lambda_2\approx0.2381$, which contradicts with $\lambda_2<0$.

In {\bf Case-2-2-1-3}, from $\bar{\mu}_3+\bar{\mu}_2=\bar{\lambda}_2+\bar{\alpha}=0$
and $\lambda_2=-\frac{2}{\alpha}$, we get
$\lambda_3=\frac{-2 + 2\lambda_4 (-\alpha+ \lambda_4)}{4\lambda_4 + \alpha(-1 +\lambda_4^2)}$ and
$\alpha=\frac{1 - 5 \lambda_4^2}{\lambda_4 (-2 +\lambda_4^2)}$. Substitute them into
$\bar{\lambda}_3+\bar{\mu}_1=0$, we have
$\lambda_1=\frac{4\lambda_4}{4\lambda_4^2-1}$. So $\bar{\lambda}_1+\bar{\lambda}_4=0$
becomes to $9-56\lambda_4^2 +16\lambda_4^4=0$, which deduces that
$\lambda_4=\sqrt{\frac{7}{4}+\sqrt{\frac{5}{2}}}$
or $\lambda_4=\frac{-\sqrt{2}+\sqrt{5}}{2}$. If $\lambda_4=\sqrt{\frac{7}{4}+\sqrt{\frac{5}{2}}}$,
then $\lambda_1\approx0.5923$, which contradicts with $\lambda_1<0$.
If $\lambda_4=\frac{-\sqrt{2}+\sqrt{5}}{2}$,
then $\lambda_2\approx9.6659$, which contradicts with $\lambda_2<0$.

In {\bf Case-2-2-1-4}, from $\bar{\mu}_3+\bar{\mu}_2=\bar{\lambda}_2+\bar{\lambda}_4=0$
and $\lambda_2=-\frac{2}{\alpha}$, we get
$\lambda_3=\frac{-2 + 2\lambda_4 (-\alpha+ \lambda_4)}{4\lambda_4 + \alpha(-1 +\lambda_4^2)}$ and
$\alpha=\frac{2-6\lambda_4^2}{\lambda_4 (-3 +\lambda_4^2)}$. Substitute them into
$\bar{\lambda}_1+\bar{\alpha}=0$, we have
$\lambda_1=\frac{\lambda_4(\lambda_4^2-5)}{4\lambda_4^2-2}$. So $\bar{\lambda}_3+\bar{\mu}_1=0$
becomes to $9-14\lambda_4^2 +\lambda_4^4=0$, which deduces that
$\lambda_4=\pm\sqrt{2}+\sqrt{5}$. If $\lambda_4=-\sqrt{2}+\sqrt{5}$,
then by Proposition \ref{prop:4.4w}, on $M_-$, the minimum principal curvature is approximately equal to $-3.2553$,
the maximum principal curvature is approximately equal to
$2.4335$, which contradicts with the fact that $M_-$ is austere.
If $\lambda_4=\sqrt{2}+\sqrt{5}$,
then $\lambda_1\approx0.5923$, which contradicts with $\lambda_1<0$.

In {\bf Case-2-2-2}, due that $M_+$ is austere, we must have $\bar{\alpha}>0$ and $\bar{\mu}_2>0$ on $M_+$. Then by Proposition \ref{prop:4.1w}, there is a parallel hypersurface $\Phi_{r_0}(M)$
($0\leq r_0<r_1$) of $M$ such that the corresponding principal curvature $\mu_2(r_0)=\frac{\sin(r_0)+\mu_2\cos(r_0)}{\cos(r_0)-\mu_2\sin(r_0)}=0$.
For the convenience of calculation, without loss of generality,
we just regard the parallel hypersurface $\Phi_{r_0}(M)$ as $M$. Then, it holds $\lambda_2=-\frac{2}{\alpha}$, $\mu_2=0$, $\alpha>0$,
and $\lambda_1<\lambda_2=-\frac{2}{\alpha}<\lambda_3=\frac{\alpha-\sqrt{\alpha^2+4}}{2}=\mu_3<\mu_2=0<\mu_1
<\frac{\alpha}{2}<\lambda_4<\frac{\alpha+\sqrt{\alpha^2+4}}{2}<\mu_4$
on $M$. Due that $M_+$ is austere and $\bar{\lambda}_3=\bar{\mu}_3$, we have
$\bar{\mu}_3+\bar{\mu}_2=\bar{\lambda}_2+\bar{\mu}_1=\bar{\lambda}_1+\bar{\lambda}_4
=\bar{\lambda}_3+\bar{\alpha}=0$ on $M_+$.
Then, following the same calculation with {\bf Case-2-2-1-2}, we can get
a contradiction.

In {\bf Case-2-2-3}, {\bf Case-2-2-4} and {\bf Case-2-2-5}, according to the multiplicities of principal curvature $\tilde{\lambda}_2$ and
$\tilde{\mu}_2$ on $M_-$, then for any relationship between the values of $\tilde{\alpha}$ and
$\tilde{\lambda}_i,\tilde{\mu}_i$, we know that $M_-$ can not be austere.
We get a contradiction.

In {\bf Case-2-2-6}, according to $\tilde{\mu}_2<0$ and $\tilde{\lambda}_4=\tilde{\mu}_4$ on $M_-$,
then for any relationship between the values of $\tilde{\alpha}$ and
$\tilde{\lambda}_i,\tilde{\mu}_i$, $M_-$ can not be austere. We get a contradiction.

In {\bf Case-2-2-7}, due that $M_+$ is austere, we must have $\bar{\alpha}>0$ and $\bar{\mu}_2>0$ on $M_+$. Then by Proposition \ref{prop:4.1w}, there is a parallel hypersurface $\Phi_{r_0}(M)$ ($0\leq r_0<r_1$) of $M$ such that the corresponding principal curvature $\mu_2(r_0)=\frac{\sin(r_0)+\mu_2\cos(r_0)}{\cos(r_0)-\mu_2\sin(r_0)}=0$.
For the convenience of calculation, without loss of generality,
we just regard the parallel hypersurface $\Phi_{r_0}(M)$ as $M$. Then, it holds $\lambda_2=-\frac{2}{\alpha}$, $\mu_2=0$, $\alpha>0$,
and $\lambda_1<\lambda_2=-\frac{2}{\alpha}<\lambda_3=\frac{\alpha-\sqrt{\alpha^2+4}}{2}=\mu_3<\mu_2=0
<\mu_1<\frac{\alpha}{2}<\lambda_4=\frac{\alpha+\sqrt{\alpha^2+4}}{2}=\mu_4$
on $M$.
Now, due that $M_-$ is also austere, it must hold   $\tilde{\lambda}_2<\tilde{\lambda}_3=\tilde{\mu}_3<\tilde{\mu}_2
<\tilde{\mu}_1<\tilde{\lambda}_4=\tilde{\mu}_4<\tilde{\alpha}$ on $M_-$. From $\tilde{\mu}_2+\tilde{\mu}_1=0$,
we have $\alpha=\frac{2-6\lambda_1^2}{\lambda_1(\lambda_1^2-3)}$. Substitute it into
$\tilde{\lambda}_3+\tilde{\lambda}_4=0$, we have $\frac{2}{\lambda_1}=0$, which is a
contradiction.

In {\bf Case-2-2-8}, due that $M_+$ is austere, we must have $0<\bar{\mu}_3=\bar{\mu}_2$ on $M_+$. Then by Proposition \ref{prop:4.1w}, there is a parallel hypersurface $\Phi_{r_0}(M)$ ($0\leq r_0<r_1$) of $M$ such that the corresponding principal curvature $\mu_2(r_0)=\frac{\sin(r_0)+\mu_2\cos(r_0)}{\cos(r_0)-\mu_2\sin(r_0)}=0$.
For the convenience of calculation, without loss of generality,
we just regard the parallel hypersurface $\Phi_{r_0}(M)$ as $M$. Then, it holds $\lambda_2=-\frac{2}{\alpha}$, $\mu_2=0$, $\alpha>0$,
and $\lambda_1<\lambda_2=-\frac{2}{\alpha}=\lambda_3<\frac{\alpha-\sqrt{\alpha^2+4}}{2}<\mu_3=\mu_2=0
<\mu_1<\frac{\alpha}{2}<\lambda_4=\frac{\alpha+\sqrt{\alpha^2+4}}{2}=\mu_4$
on $M$. Now, due that $M_-$ is also austere, it holds $\tilde{\lambda}_2=\tilde{\lambda}_3<\tilde{\mu}_3=\tilde{\mu}_2
<\tilde{\mu}_1=\tilde{\alpha}<\tilde{\lambda}_4=\tilde{\mu}_4$. From $\tilde{\alpha}-\tilde{\mu}_1=0$,
we have $\alpha=\frac{-4\lambda_1}{\lambda_1^2-1}$. Substitute it into
$\tilde{\mu}_2+\tilde{\alpha}=0$, we have $\frac{1}{\lambda_1}=0$, which is a
contradiction.

In {\bf Case-2-3}, if $\lambda_+=\frac{\alpha+\sqrt{\alpha^2+4}}{2}$, we have the following four cases:

{\bf Case-2-3-1}: $\lambda_1<\lambda_2<\lambda_3<\mu_3\leq0<\mu_2<\mu_1<\frac{\alpha}{2}<\lambda_4
=\frac{\alpha+\sqrt{\alpha^2+4}}{2}=\mu_4$;

{\bf Case-2-3-2}: $\lambda_1<\lambda_2<\lambda_3=\frac{\alpha-\sqrt{\alpha^2+4}}{2}
=\mu_3<0<\mu_2<\mu_1<\frac{\alpha}{2}<\lambda_4
=\frac{\alpha+\sqrt{\alpha^2+4}}{2}=\mu_4$;

{\bf Case-2-3-3}: $\lambda_1=\lambda_2<\lambda_3<\mu_3\leq0<\mu_2=\mu_1<\frac{\alpha}{2}<\lambda_4
=\frac{\alpha+\sqrt{\alpha^2+4}}{2}=\mu_4$;

{\bf Case-2-3-4}: $\lambda_1=\lambda_2<\lambda_3=\frac{\alpha-\sqrt{\alpha^2+4}}{2}
=\mu_3<0<\mu_2=\mu_1<\frac{\alpha}{2}<\lambda_4
=\frac{\alpha+\sqrt{\alpha^2+4}}{2}=\mu_4$.

In {\bf Case-2-3-1}, due that $M_-$ is austere, we have
$\tilde{\alpha}=\tilde{\lambda}_2<\tilde{\lambda}_3<\tilde{\mu}_3<\tilde{\mu}_2<\tilde{\mu}_1
<\tilde{\lambda}_4=\tilde{\mu}_4$. From $\tilde{\alpha}-\tilde{\lambda}_2=\tilde{\lambda}_3+\tilde{\mu}_1=0$, we have
$\lambda_2=\frac{1+\lambda_1^2 (3 +\alpha\lambda_1)}{-\alpha+ 3\lambda_1 +\lambda_1^3}$
and $\lambda_3=\frac{2 -\lambda_1 (6\lambda_1 +\alpha(-3 +\lambda_1^2))}{\alpha- 3\alpha\lambda_1^2 + 2\lambda_1 (-3 +\lambda_1^2)}$. Substitute them into $\tilde{\alpha}+\tilde{\mu}_4=0$,
we have $\lambda_1=\frac{-6 \pm\sqrt{32 + 8 \alpha^2}}{3\alpha+\sqrt{4 +\alpha^2}}$.
For $\lambda_1=\frac{-6\pm\sqrt{32 + 8 \alpha^2}}{3\alpha+\sqrt{4 +\alpha^2}}$, then
$\tilde{\mu}_2+\tilde{\mu}_3=0$ becomes to $\frac{\pm13}{4\sqrt{2}}=0$,
which is a contradiction.

In {\bf Case-2-3-2}, due that $M_-$ is austere, we have
$\tilde{\lambda}_2<\tilde{\lambda}_3=\tilde{\mu}_3<\tilde{\mu}_2<\tilde{\mu}_1
<\tilde{\lambda}_4=\tilde{\mu}_4<\tilde{\alpha}$. From $\tilde{\lambda}_3+\tilde{\lambda}_4=0$,
we have $\alpha=\frac{-4\lambda_1}{\lambda_1^2-1}$. Substitute it into $\tilde{\mu}_1+\tilde{\mu}_2=0$,
we have $\frac{\lambda_1-\lambda_2}{1+\lambda_1\lambda_2}=0$, which means $\lambda_1=\lambda_2$.
It is a contradiction.

In {\bf Case-2-3-3}, according to that the multiplicities of principal curvatures $\tilde{\mu}_1$ and
$\tilde{\mu}_4$ are all $2$ on $M_-$, then for any relationship between the values of $\tilde{\alpha}$ and
$\tilde{\lambda}_i,\tilde{\mu}_i$, $M_-$ can not be austere. It is a contradiction.

In {\bf Case-2-3-4}, due that $M_-$ is austere, we have
$\tilde{\lambda}_3=\tilde{\mu}_3<\tilde{\alpha}=\tilde{\mu}_2=\tilde{\mu}_1
<\tilde{\lambda}_4=\tilde{\mu}_4$. From $\tilde{\alpha}=\tilde{\lambda}_3+\tilde{\lambda}_4=0$,
by direct calculation, we can have
$\lambda_1=\lambda_2=\frac{-2-\sqrt{\alpha^2+4}}{\alpha}$,
$\mu_1=\mu_2=\frac{-2+\sqrt{\alpha^2+4}}{\alpha}$,
$\lambda_3=\mu_3=\frac{\alpha-\sqrt{\alpha^2+4}}{2}$,
$\lambda_4=\mu_4=\frac{\alpha+\sqrt{\alpha^2+4}}{2}$.
It follows from (3) of Lemma \ref{lemma:5.1}, we have that
$A(V_{\frac{-2-\sqrt{\alpha^2+4}}{\alpha}}\oplus V_{\frac{-2+\sqrt{\alpha^2+4}}{\alpha}})
=V_{\frac{\alpha-\sqrt{\alpha^2+4}}{2}}\oplus V_{\frac{\alpha+\sqrt{\alpha^2+4}}{2}}$.
By checking the relationship between the principal curvatures, and using
Proposition \ref{prop:E3P}, we find that the tube around the equivariant embedding of the $9$-dimensional homogeneous space $Sp_2Sp_1/SO_2Sp_1Sp_1$ in $Q^6$ is contained in this {\bf Case-2-3-4}.

In {\bf Case-2-3}, if $\lambda_+>\frac{\alpha+\sqrt{\alpha^2+4}}{2}$, we consider the following four subcases:

{\bf Case-2-3-5}: $\lambda_1<\lambda_2<\lambda_3<\frac{\alpha-\sqrt{\alpha^2+4}}{2}
<\mu_3\leq0<\mu_2<\mu_1<\frac{\alpha}{2}
<\lambda_4<\frac{\alpha+\sqrt{\alpha^2+4}}{2}<\mu_4$;

{\bf Case-2-3-6}: $\lambda_1<\lambda_2<\lambda_3=\frac{\alpha-\sqrt{\alpha^2+4}}{2}
=\mu_3<0<\mu_2<\mu_1<\frac{\alpha}{2}
<\lambda_4<\frac{\alpha+\sqrt{\alpha^2+4}}{2}<\mu_4$;

{\bf Case-2-3-7}: $\lambda_1=\lambda_2<\lambda_3<\frac{\alpha-\sqrt{\alpha^2+4}}{2}
<\mu_3\leq0<\mu_2=\mu_1<\frac{\alpha}{2}
<\lambda_4<\frac{\alpha+\sqrt{\alpha^2+4}}{2}<\mu_4$;

{\bf Case-2-3-8}: $\lambda_1=\lambda_2<\lambda_3=\frac{\alpha-\sqrt{\alpha^2+4}}{2}
=\mu_3<0<\mu_2=\mu_1<\frac{\alpha}{2}
<\lambda_4<\frac{\alpha+\sqrt{\alpha^2+4}}{2}<\mu_4$.

In {\bf Case-2-3-5}, we have $\tilde{\alpha}>0$ on $M_-$. In fact, if
$\tilde{\alpha}=0$ on $M_-$, then $\lambda_1=\frac{-2-\sqrt{\alpha^2+4}}{\alpha}$
and $\mu_1=\frac{-2+\sqrt{\alpha^2+4}}{\alpha}$. It follows that
there are four negative principal curvatures and two positive principal curvatures on $M_-$, which
contradicts with that $M_-$ is austere. Thus, according to that $M_-$ is austere,
we need to consider the following subcases on $M_-$:


{\bf Case-2-3-5-1}: $\tilde{\lambda}_2<\tilde{\lambda}_3<\tilde{\mu}_3<\tilde{\mu}_2
<\tilde{\alpha}<\tilde{\mu}_1<\tilde{\lambda}_4<\tilde{\mu}_4$;

{\bf Case-2-3-5-2}: $\tilde{\lambda}_2<\tilde{\lambda}_3<\tilde{\mu}_3<\tilde{\mu}_2
<\tilde{\mu}_1<\tilde{\alpha}<\tilde{\lambda}_4<\tilde{\mu}_4$;

{\bf Case-2-3-5-3}: $\tilde{\lambda}_2<\tilde{\lambda}_3<\tilde{\mu}_3<\tilde{\mu}_2
<\tilde{\mu}_1<\tilde{\lambda}_4<\tilde{\alpha}<\tilde{\mu}_4$;

{\bf Case-2-3-5-4}: $\tilde{\lambda}_2<\tilde{\lambda}_3<\tilde{\mu}_3<\tilde{\mu}_2
<\tilde{\mu}_1<\tilde{\lambda}_4<\tilde{\mu}_4<\tilde{\alpha}$.


In {\bf Case-2-3-5-1}--{\bf Case-2-3-5-4}, we all have $\tilde{\mu}_2<0$ on $M_-$.
Then by Proposition \ref{prop:4.1w}, there is a parallel hypersurface $\Phi_{r_0}(M)$
($r_2<r_0<0$) of $M$ such that the corresponding principal curvature $\mu_2(r_0)=\frac{\sin(r_0)+\mu_2\cos(r_0)}{\cos(r_0)-\mu_2\sin(r_0)}=0$.
Then, it holds $\lambda_2(r_0)=-\frac{2}{\alpha(r_0)}$, $\mu_2(r_0)=0$, $\alpha(r_0)>0$,
and $\lambda_1(r_0)<\lambda_2(r_0)=-\frac{2}{\alpha(r_0)}<\lambda_3(r_0)
<\mu_3(r_0)<0=\mu_2(r_0)<\mu_1(r_0)<\frac{\alpha(r_0)}{2}
<\lambda_4(r_0)<\frac{\alpha(r_0)+\sqrt{\alpha^2(r_0)+4}}{2}<\mu_4(r_0)$ on $\Phi_{r_0}(M)$.
It follows that $\Phi_{r_0}(M)$ belongs to the {\bf Case-2-2-1}, which does not occur.

In {\bf Case-2-3-6}, due that $M_-$ is austere, it must hold
$\tilde{\lambda}_2<\tilde{\lambda}_3=\tilde{\mu}_3<\tilde{\mu}_2<0<\tilde{\mu}_1<\tilde{\alpha}
=\tilde{\lambda}_4<\tilde{\mu}_4$. Then by Proposition \ref{prop:4.1w}, there is a parallel hypersurface $\Phi_{r_0}(M)$ ($r_2<r_0<0$) of $M$ such that the corresponding principal curvature $\mu_2(r_0)=\frac{\sin(r_0)+\mu_2\cos(r_0)}{\cos(r_0)-\mu_2\sin(r_0)}=0$.
Then, it holds $\alpha(r_0)>0$
and $\lambda_1(r_0)<\lambda_2(r_0)=-\frac{2}{\alpha(r_0)}<\lambda_3(r_0)
=\tfrac{\alpha(r_0)-\sqrt{\alpha^2(r_0)+4}}{2}=\mu_3(r_0)<0=\mu_2(r_0)<\mu_1(r_0)<\frac{\alpha(r_0)}{2}
<\lambda_4(r_0)<\tfrac{\alpha(r_0)+\sqrt{\alpha^2(r_0)+4}}{2}<\mu_4(r_0)$ on $\Phi_{r_0}(M)$.
It follows that $\Phi_{r_0}(M)$ belongs to the {\bf Case-2-2-2}, which does not occur.



In {\bf Case-2-3-7}, according to the multiplicities of principal curvature $\bar{\lambda}_1$ and
$\bar{\mu}_1$ on $M_+$, then for any relationship between the values of $\bar{\alpha}$ and
$\bar{\lambda}_i,\bar{\mu}_i$, $M_+$ can not be austere.

In {\bf Case-2-3-8}, according to the multiplicities of principal curvature $\tilde{\lambda}_3$ and
$\tilde{\mu}_1$ on $M_-$, then for any relationship between the values of
$\tilde{\alpha}$ and
$\tilde{\lambda}_i,\tilde{\mu}_i$, $M_-$ can not be austere.

In {\bf Case-2-4}, if $\lambda_+>\frac{\alpha+\sqrt{\alpha^2+4}}{2}$, then there will be at least five positive principal curvatures (counted with multiplicities)
and at most three negative principal curvatures (counted with multiplicities) on $M_+$, which contradicts with that $M_+$ is austere.
So it must hold $\lambda_+=\frac{\alpha+\sqrt{\alpha^2+4}}{2}$ on $M$.
Now, we need to consider the following four subcases:

{\bf Case-2-4-1}: $\lambda_1<\lambda_2=\lambda_3<0<\mu_3=\mu_2<\mu_1<\lambda_4
=\frac{\alpha+\sqrt{\alpha^2+4}}{2}=\mu_4$;

{\bf Case-2-4-2}: $\lambda_1=\lambda_2<\lambda_3<0<\mu_3<\mu_2=\mu_1<\lambda_4
=\frac{\alpha+\sqrt{\alpha^2+4}}{2}=\mu_4$;

{\bf Case-2-4-3}: $\lambda_1<\lambda_2<\lambda_3<0<\mu_3<\mu_2<\mu_1<\lambda_4
=\frac{\alpha+\sqrt{\alpha^2+4}}{2}=\mu_4$;

{\bf Case-2-4-4}: $\lambda_1=\lambda_2=\lambda_3<0<\mu_3=\mu_2=\mu_1<\lambda_4
=\frac{\alpha+\sqrt{\alpha^2+4}}{2}=\mu_4$.

In {\bf Case-2-4-1} and {\bf Case-2-4-4}, since $\tilde{\alpha}\geq0$ on $M_-$, and $M_-$ is austere, it must hold $\tilde{\mu}_3<0$
on $M_-$. Then by Proposition \ref{prop:4.1w}, there is a parallel hypersurface $\Phi_{r_0}(M)$
$(-r_2<r_0<0)$ of $M$ such that the corresponding principal curvature $\mu_3(r_0)=\frac{\sin(r_0)+\mu_3\cos(r_0)}{\cos(r_0)-\mu_3\sin(r_0)}=0$.
Then, it holds $\lambda_3(r_0)=-\frac{2}{\alpha(r_0)}$, $\mu_3(r_0)=0$ on $\Phi_{r_0}(M)$.
Now, {\bf Case-2-4-1} and {\bf Case-2-4-4} on $M$ are equivalent to consider
the following two subcases on $\Phi_{r_0}(M)$:

{\bf Case-2-4-1*}: $\lambda_1(r_0)<\lambda_2(r_0)=\lambda_3(r_0)<0=\mu_3(r_0)
=\mu_2(r_0)<\mu_1(r_0)<\lambda_4(r_0)
=\frac{\alpha(r_0)+\sqrt{\alpha^2(r_0)+4}}{2}=\mu_4(r_0)$;



{\bf Case-2-4-4*}: $\lambda_1(r_0)=\lambda_2(r_0)=\lambda_3(r_0)<0=\mu_3(r_0)
=\mu_2(r_0)=\mu_1(r_0)<\lambda_4(r_0)
=\frac{\alpha(r_0)+\sqrt{\alpha^2(r_0)+4}}{2}=\mu_4(r_0)$.

In {\bf Case-2-4-1*}, $\Phi_{r_0}(M)$ belongs to {\bf Case-2-2-8}, which does not occur.



In {\bf Case-2-4-4*}, $\Phi_{r_0}(M)$ belongs to {\bf Case-2-1}, which does not occur.

In {\bf Case-2-4-2}, according to the multiplicities of principal curvature $\tilde{\mu}_2$ and
$\tilde{\lambda}_4$ on $M_-$, then for any relationship between the values of
$\tilde{\alpha}$ and
$\tilde{\lambda}_i,\tilde{\mu}_i$, $M_-$ can not be austere.

In {\bf Case-2-4-3}, according to the multiplicity of principal curvature $\tilde{\lambda}_4$
and $\tilde{\alpha}\geq0$ on $M_-$,
then for any relationship between the values of
$\tilde{\alpha}$ and
$\tilde{\lambda}_i,\tilde{\mu}_i$, $M_-$ can not be austere.

In {\bf Case-3}, by $\lambda_-<\frac{\alpha-\sqrt{\alpha^2+4}}{2}$, we have ${\rm dim}V_{\lambda_-}\leq4$. If ${\rm dim}V_{\lambda_-}=4$, then ${\rm dim}V_{\mu_1}=4$
and $\mathcal{Q}=V_{\lambda_-}\oplus V_{\mu_1}$. It contradicts with (3) of Lemma \ref{lemma:5.1}.
So ${\rm dim}V_{\lambda_-}\leq3$ on $M$.

If $\alpha=0$ on $M$, then there are
at least five negative principal curvatures (counted with multiplicities) and no positive principal curvatures on $M_-$,
which contradicts with that $M_-$ is austere.
So the Reeb functions of $M$ and any parallel hypersurface $\Phi_r(M)$ are positive, then  $\tilde{\alpha}\geq0$ on $M_-$.

If $\lambda_->\frac{-2-\sqrt{\alpha^2+4}}{\alpha}$, then $\mu_1<\frac{-2+\sqrt{\alpha^2+4}}{\alpha}$.
We choose a unit vector field $X_1\in V_{\lambda_-}$
and take $(X,Y)=(X_1,JX_1)$ into \eqref{eqn:call}, we get
$$
0\geq2\sum_{\mu_i\neq\lambda_1,\mu_1}\frac{g((\nabla_{e_i} S)X_1,JX_1)^2}{(\lambda_1-\mu_i)(\mu_1-\mu_i)}
=3(1+\lambda_1\mu_1)>0.
$$
It is a contradiction.

If $\lambda_-=\frac{-2-\sqrt{\alpha^2+4}}{\alpha}$, then $\tilde{\alpha}=0$ on $M_-$.
By ${\rm dim}V_{\lambda_-}\leq3$ on $M$, then there are at least two negative principal curvatures
(counted with multiplicities) and no positive principal curvatures on $M_-$,
which contradicts with that $M_-$ is austere.

Thus, for {\bf Case-3}, it must hold $\lambda_-<\frac{-2-\sqrt{\alpha^2+4}}{\alpha}$,
$\mu_1\in(\frac{-2+\sqrt{\alpha^2+4}}{\alpha},\frac{\alpha}{2})$ on $M$.
Note that $\lambda_-<\frac{-2-\sqrt{\alpha^2+4}}{\alpha}$, so $\tilde{\alpha}>0$ holds on $M_-$.
Then by Proposition \ref{prop:4.1w}, on any parallel hypersurface $\Phi_r(M)$, the corresponding principal curvatures satisfy $\lambda_i(r)=\frac{\sin(r)+\lambda_i\cos(r)}{\cos(r)-\lambda_i\sin(r)}
<\frac{\alpha(r)}{2}<\alpha(r)$ and $\mu_i(r)=\frac{\sin(r)+\mu_i\cos(r)}{\cos(r)-\mu_i\sin(r)}<\frac{\alpha(r)}{2}<\alpha(r)$. Thus,
it follows that $\tilde{\lambda}_i<\tilde{\alpha}$ for $\lambda_i\neq\lambda_-$ and $\tilde{\mu}_i<\tilde{\alpha}$ for $\mu_i\neq\lambda_-$ on $M_-$.
In the following, we only need to consider
the following four subcases on $M$:

{\bf Case-3-1}: There are $4$ principal curvatures
(counted with multiplicities) belonging to $(0,\frac{\alpha}{2})$ in $\sigma(\mathcal{Q})$;

{\bf Case-3-2}: There are $3$ principal curvatures
(counted with multiplicities) belonging to $(0,\frac{\alpha}{2})$ in $\sigma(\mathcal{Q})$;

{\bf Case-3-3}: There are $2$ principal curvatures
(counted with multiplicities) belonging to $(0,\frac{\alpha}{2})$ in $\sigma(\mathcal{Q})$;

{\bf Case-3-4}: There are $1$ principal curvatures
(counted with multiplicities) belonging to $(0,\frac{\alpha}{2})$ in $\sigma(\mathcal{Q})$;

{\bf Case-3-5}: There are $0$ principal curvatures
(counted with multiplicities) belonging to $(0,\frac{\alpha}{2})$ in $\sigma(\mathcal{Q})$;

In {\bf Case-3-1}, we have $\lambda_1\leq\lambda_2\leq\lambda_3\leq\lambda_4<0<\mu_4\leq\mu_3\leq\mu_2\leq\mu_1<\alpha$
on $M$. Due that $\tilde{\lambda}_i<\tilde{\alpha}$ for $\lambda_i\neq\lambda_-$ and $\tilde{\mu}_i<\tilde{\alpha}$ on $M_-$, then one can check that $M_-$ is austere if and only if
$\tilde{\lambda}_2<\tilde{\lambda}_3<\tilde{\lambda}_4
<\tilde{\mu}_4<0<\tilde{\mu}_3<\tilde{\mu}_2<\tilde{\mu}_1<\tilde{\alpha}$ and
$\tilde{\lambda}_2+\tilde{\alpha}=\tilde{\lambda}_3+\tilde{\mu}_1
=\tilde{\lambda}_4+\tilde{\mu}_2=\tilde{\mu}_4+\tilde{\mu}_3=0$ on $M_-$.
Then by Proposition \ref{prop:4.1w}, there is a parallel hypersurface $\Phi_{r_0}(M)$
$(-r_2<r_0<0)$ of $M$ such that the corresponding principal curvature $\mu_4(r_0)=\frac{\sin(r_0)+\mu_4\cos(r_0)}{\cos(r_0)-\mu_4\sin(r_0)}=0$.
For the convenience of calculation, without loss of generality,
we just regard the parallel hypersurface $\Phi_{r_0}(M)$ as $M$. Then, it holds $\alpha>0$
and $\lambda_1<\lambda_2<\lambda_3<\lambda_4=-\frac{2}{\alpha}<\mu_4=0
<\mu_3<\mu_2<\mu_1<\alpha$ on $M$.
Then from $\tilde{\lambda}_3+\tilde{\mu}_1=0$, we get
$\alpha=\frac{2 - 2 \lambda_1 (3 \lambda_1 + (-3 + \lambda_1^2) \lambda_3)}{\lambda_3 + \lambda_1 (-3 + \lambda_1^2 - 3 \lambda_1 \lambda_3)}$. Substitute it into $\tilde{\mu}_3+\tilde{\mu}_4=0$, we get
$\lambda_1=\frac{-2\lambda_3}{\lambda_3^2-1}$. Now from $\tilde{\lambda}_2+\tilde{\alpha}=0$, it follows
that $\lambda_2=\frac{\lambda_3(\lambda_3^2+3)}{2}$. So $\tilde{\lambda}_4+\tilde{\mu}_2=0$ implies
that $9-14\lambda_3^2+\lambda_3^4=0$. So $\lambda_3=\pm\sqrt{2}-\sqrt{5}$.
If $\lambda_3=-\sqrt{2}-\sqrt{5}$, then $\lambda_1\approx0.5923$, which contradicts with
$\lambda_1<0$. If $\lambda_3=\sqrt{2}-\sqrt{5}$, then by Proposition \ref{prop:4.3w} and
Proposition \ref{prop:4.4w}, $M_-$ and $M_+$ are all austere.
In this case, we have
\begin{equation}\label{eqn:c61}
\begin{aligned}
&\lambda_1=-2\sqrt{2}-\sqrt{5},\ \lambda_2=10\sqrt{2}-7\sqrt{5},\
\lambda_3=\sqrt{2}-\sqrt{5},\ \lambda_4=\frac{3}{\sqrt{2}-5\sqrt{5}},\\
&\mu_4=0,\ \mu_3=\frac{\sqrt{5}-\sqrt{2}}{2},\ \mu_2=\sqrt{5}-\sqrt{2},\
\mu_1=\sqrt{2}+\frac{1}{\sqrt{5}},\ \alpha=\frac{2(5\sqrt{5}-\sqrt{2})}{3}.
\end{aligned}
\end{equation}

Now, we will use Lemma \ref{lemma:5.3} to prove this case does not occur.
Choose a unit vector field $X_1\in V_{\lambda_1}$. By (3) of Lemma \ref{lemma:5.1}, we have
$AX_1\in (V_{\lambda_2}\oplus V_{\mu_2}\oplus V_{\lambda_3}\oplus V_{\mu_3}\oplus
V_{\lambda_4}\oplus V_{\mu_4})$. Let $a=\|AX_1|_{V_{\lambda_2}\oplus V_{\mu_2}}\|^2$,
$b=\|AX_1|_{V_{\lambda_3}\oplus V_{\mu_3}}\|^2$ and $c=\|AX_1|_{V_{\lambda_4}\oplus V_{\mu_4}}\|^2$, where
$AX_1|_{V_{\lambda_2}\oplus V_{\mu_2}}$, $AX_1|_{V_{\lambda_3}\oplus V_{\mu_3}}$ and $AX_1|_{V_{\lambda_4}\oplus V_{\mu_4}}$ are the projections of $AX_1$ on
$V_{\lambda_2}\oplus V_{\mu_2}$, $V_{\lambda_3}\oplus V_{\mu_3}$ and
$V_{\lambda_4}\oplus V_{\mu_4}$ respectively. So $a+b+c=1$ and $a,b,c\geq0$.

Now, we take $X=X_1$ and $X=JX_1$ into \eqref{eqn:ca2} respectively,
with the use of \eqref{eqn:c61}, we obtain
$$
\begin{aligned}
&(191+61\sqrt{10})a+3(34+11\sqrt{10})b+(76+23\sqrt{10})c=0,\\
&(118-73\sqrt{10})a+(186-78\sqrt{10})b+(-322+88\sqrt{10})c=0.
\end{aligned}
$$
Then by above two equations, we get
$b=\frac{-2-\sqrt{10}-(5+\sqrt{10})a}{2}$, $c=\frac{4+\sqrt{10}+(3+\sqrt{10})a}{2}$,
which contradicts with $a,b,c\geq0$.

In {\bf Case-3-2}, we have $\lambda_1\leq\lambda_2\leq\lambda_3\leq\lambda_4\leq\mu_4\leq0<\mu_3\leq\mu_2\leq\mu_1<\alpha$
on $M$. Due that $\tilde{\lambda}_i<\tilde{\alpha}$ for $\lambda_i\neq\lambda_-$ and $\tilde{\mu}_i<\tilde{\alpha}$ on $M_-$, and according to the fact that $M_-$ is austere,
we only need to consider the following two subcases on $M$:

{\bf Case-3-2-1}: $\lambda_1<\lambda_2<\lambda_3<\lambda_4<\frac{\alpha-\sqrt{\alpha^2+4}}{2}
<\mu_4\leq0<\mu_3<\mu_2<\mu_1<\alpha$;

{\bf Case-3-2-2}: $\lambda_1=\lambda_2<\lambda_3<\lambda_4
=\frac{\alpha-\sqrt{\alpha^2+4}}{2}=\mu_4<0<\mu_3<\mu_2=\mu_1<\alpha$.

In {\bf Case-3-2-1}, due that $\lambda_+=\frac{\alpha+\sqrt{\alpha^2+4}}{2}$, we have $\bar{\mu}_4>0$ on $M_+$.
So by Proposition \ref{prop:4.1w}, there is a parallel hypersurface $\Phi_{r_0}(M)$
$(0<r_0<r_1)$ of $M$ such that the corresponding principal curvature $\mu_4(r_0)=\frac{\sin(r_0)+\mu_4\cos(r_0)}{\cos(r_0)-\mu_4\sin(r_0)}>0$.
Then on $\Phi_{r_0}(M)$, it holds $\lambda_1(r_0)<\lambda_2(r_0)<\lambda_3(r_0)<\lambda_4(r_0)
<\frac{\alpha(r_0)-\sqrt{\alpha^2(r_0)+4}}{2}<0<\mu_4(r_0)<\mu_3(r_0)<\mu_2(r_0)<\mu_1(r_0)
<\frac{\alpha(r_0)}{2}<\alpha(r_0)$. It means that $\Phi_{r_0}(M)$
satisfies {\bf Case-3-1}, which does not occur.

In {\bf Case-3-2-2}, due that $\tilde{\lambda}_i<\tilde{\alpha}$ for $\lambda_i\neq\lambda_-$ and $\tilde{\mu}_i<\tilde{\alpha}$ for $\mu_i\neq\lambda_-$, and
$M_-$ is austere, we have
$\tilde{\lambda}_3<\tilde{\lambda}_4=\tilde{\mu}_4<\tilde{\mu}_3<\tilde{\mu}_2=\tilde{\mu}_1<\tilde{\alpha}$ and $\tilde{\mu}_3=\tilde{\lambda}_4+\tilde{\mu}_1=\tilde{\lambda}_3+\tilde{\alpha}=0$ on $M_-$. From $\tilde{\mu}_3=0$, we have $\alpha=-\frac{2(\lambda_1+\lambda_3)}{\lambda_1\lambda_3-1}$.
By $\alpha>0$, then $\lambda_1\lambda_3-1>0$.
Substitute $\alpha=-\frac{2(\lambda_1+\lambda_3)}{\lambda_1\lambda_3-1}$ into $\tilde{\lambda}_4+\tilde{\mu}_1=0$ and $\tilde{\lambda}_3+\tilde{\alpha}=0$, we have
$$
1+6\lambda_1\lambda_3-2\lambda_3^2+\lambda_1^2(\lambda_3^2-2)=
(1+\lambda_1^2)(1+\lambda_3^2)-4(\lambda_3-\lambda_1)^2=0.
$$
By direct calculation, the above two equations have no real solution. Thus, {\bf Case-3-2-2}
does not occur.

In {\bf Case-3-3}--{\bf Case-3-5}, then there are at least four negative principal curvatures (counted with multiplicities)
and at most three positive principal curvatures (counted with multiplicities) on $M_-$.
Thus $M_-$ can not be austere. We get a contradiction.

We have completed the proof of the Theorem \ref{thm:6.4}.
\end{proof}

\normalsize\noindent

\vskip 10mm

\begin{flushleft}
Haizhong Li\\
{\sc Department of Mathematical Sciences, Tsinghua University,\\
Beijing, 100084, P.R. China}\\
E-mail: lihz@tsinghua.edu.cn

\vskip 1mm

Hiroshi Tamaru\\
{\sc Department of Mathematics, Graduate School of Science, Osaka Metropolitan University,
3-3-138, Sugimoto, Sumiyoshi-ku, Osaka, 558-8585, Japan}\\
E-mail: tamaru@omu.ac.jp

\vskip 1mm

Zeke Yao\\
{\sc School of Mathematical Sciences, South China Normal University,\\
Guangzhou 510631, P.R. China}\\
E-mail: yaozkleon@163.com

\end{flushleft}

\end{document}